\newcommand{\abs}[1]{\ensuremath{\lvert#1\rvert}}
\newcommand{\norm}[1]{\ensuremath{\lVert#1\rVert}}
\newcommand{\floor}[1]{\lfloor #1 \rfloor}
\newcommand{\eps}{\epsilon}
\newcommand{\C}{\ensuremath{\mathbb{C}}}
\newcommand{\R}{\ensuremath{\mathbb{R}}}
\newcommand{\Z}{\ensuremath{\mathbb{Z}}}
\newtheorem{theorem}{Theorem}[section]
\newtheorem{lemma}[theorem]{Lemma}
\newtheorem{proposition}[theorem]{Proposition}
\newtheorem{corollary}[theorem]{Corollary}
\theoremstyle{definition}
\newtheorem{definition}[theorem]{Definition}
\newtheorem*{question}{Question}
\theoremstyle{remark}
\newtheorem{remark}[theorem]{Remark}
\DeclareMathOperator{\tr}{Tr}
\DeclareMathOperator{\rank}{rank}
\DeclareMathOperator{\im}{im}
\DeclareMathOperator{\spn}{span}
\DeclareMathOperator{\diag}{diag}
\title{Limit operators for circular ensembles}
\author{Kenneth Maples}
\address{Institut f\"ur Mathematik, Universit\"at Z\"urich, Winterthurerstrasse 190, CH-8057 Z\"urich, Switzerland.}
\email{kenneth.maples@math.uzh.ch}
\author{Joseph Najnudel}
\address{Institut de Math\'ematiques de Toulouse, Universit\'e Paul Sabatier, 118 route de Narbonne F-31062 Toulouse Cedex 9, France.}
\email{joseph.najnudel@math.univ-toulouse.fr}
\author{Ashkan Nikeghbali}
\address{Institut f\"ur Mathematik, Universit\"at Z\"urich, Winterthurerstrasse 190, CH-8057 Z\"urich, Switzerland.}
\email{ashkan.nikeghbali@math.uzh.ch}
\date{\today}
\begin{document}
    
\begin{abstract}
It is known that a unitary matrix can be decomposed into a product of reflections, one for each dimension, and that the Haar measure on the unitary group pushes forward to independent uniform measures on the reflections. We consider the sequence of unitary matrices given by successive products of random reflections.

In this coupling, we show that powers of the sequence of matrices converge in a suitable sense to a flow of operators which acts on a random vector space. The vector space has an explicit description as a subspace of the space of sequences of complex numbers. The eigenvalues of the matrices converge almost surely to the eigenvalues of the flow, which are distributed in law according to a sine-kernel point process. The eigenvectors of the matrices converge almost surely to vectors which are distributed in law as Gaussian random fields on a countable set.

This flow gives the first example of a random operator with a spectrum distributed according to a sine-kernel point process which is naturally constructed from finite dimensional random matrix theory.
\end{abstract}

\maketitle
\hrule\tableofcontents
\hrule

\clearpage
\section*{Notation} \label{sec:notation}

If $v \in \C^n$ is a vector, then we write $v[m]$ for the image of $v$ under the canonical projection map $\C^n \to \C^m$ onto the first $m$ standard basis vectors and we write $(v)_k$ for its k-th coordinate.

We write $O(n)$ for the orthogonal group of dimension $n$, i.e.~the group of invertible operators on $\R^n$ which preserve the standard real inner product. We write $U(n)$ for the unitary group of dimension $n$ which preserves the standard complex inner product. For any vector space $V$, real or complex, we write $GL(V)$ for the group of invertible transformations. We always write $1$ for the identity operator in every space.

We write $\mathbb{U} = U(1)$ for the unit circle in $\C$, i.e. those complex numbers with modulus~$1$.

Calligraphic characters denote $\sigma$-algebras, i.e.~$\mathcal{A}$, $\mathcal{B}$, $\mathcal{C}$, etc. If $\mathcal{A}$ and $\mathcal{B}$ are $\sigma$-algebras on a common set, then $\mathcal{A} \vee \mathcal{B}$ denotes the smallest $\sigma$-algebra containing both $\mathcal{A}$ and $\mathcal{B}$.

We also write $a \vee b$, for $a, b \geq 0$, to mean $\max(a,b)$ and $a \wedge b$ to mean $\min(a,b)$.

We employ asymptotic notation for inequalities where precise constants are not important. In particular, we write $X = O(Y)$ to mean that there exists a constant $C > 0$ such that $\abs{X} \leq C Y$. All constants are assumed to be absolute unless indicated otherwise by an appropriate subscript; e.g.~we write $f_n(x) = O_n(g_n(x))$ to mean that there are constants $C_n$, one for each value of $n$, such that $\abs{f_n(x)} \leq C_n g_n(x)$ for all $x$. We also use (modified) Vinogradov notation, where $X \lesssim Y$ means $X = O(Y)$, for convenience.

If $t$ is a real number, we write $\lfloor t \rfloor$ its integer part.

If $H$ is a Hilbert space with scalar product $\langle ., . \rangle$, and if $F\subset H$, then $F^\perp=\{x\in H;\;\; \langle x, y \rangle=0\;\forall y\in F \}$. If $H$ is a complex Hilbert space, then we will always use the scalar product which is linear in the first variable and conjugate linear in the second, i.e.~$\langle ax, by \rangle = a \overline{b} \langle x, y \rangle$.

\section{Introduction}

It has been observed that for many models of random matrices, the eigenvalues have a limiting short-scale behavior when the dimension goes to infinity which depends on the global symmetries of the model, but not on its detailed features. For example, the Gaussian Orthogonal Ensemble (GOE), for which the matrices are real symmetric with independent gaussian entries on and above the diagonal, corresponds to a limiting short-scale behavior for the eigenvalues that is also obtained for several other models of random real symmetric matrices. Similarly, the limiting spectral behavior of a large class of random hermitian and unitary ensembles, including the Gaussian Unitary Ensemble (GUE, with independent, complex gaussians above the diagonal), and the Circular Unitary Ensemble (CUE, corresponding to the Haar measure on the unitary group of a given dimension), involves a remarkable random point process, called the {\it determinantal sine-kernel process}. It is a point process for which the $k$-point correlation function is given by 
\[
  \rho_k (x_1, \ldots, x_k) = \det \left( \frac{\sin( \pi(x_p - x_q))}{\pi(x_p - x_q)} \right)_{1 \leq p, q \leq k}.
\]
From an observation of Montgomery, it has been conjectured that the limiting short-scale behavior of the imaginary parts of the zeros of the Riemann zeta function is also described by a determinantal sine-kernel process. This similar behavior supports the conjecture of Hilbert and P\'olya, who suggested that the non-trivial zeros of the Riemann zeta functions should be interpreted as the spectrum of an operator $\frac12 + iH$ with $H$ an unbounded Hermitian operator.
  
In order to understand the kind of randomness which would be involved in such an operator, it is natural to try to construct a random version $H_0$ of it, for which the spectrum has the conjectured limiting behavior of the zeros of the Riemann zeta function, i.e.~is a determinantal sine-kernel point process. Since the spectrum of $H_0$ should also correspond to the limiting behavior of many ensembles of hermitian and unitary matrices, it is natural to expect that these ensembles can, in one way and another, be related to $H_0$. 

Instead of looking directly for $H_0$, one can also directly seek the flow of linear operators $(U_0^{\alpha})_{\alpha \in \mathbb{R}} := (e^{i \alpha H_0})_{\alpha \in \R}$ generated by exponentiation. 
 This point of view is both consistent with what would be a possible interpretation in quantum mechanics ($U_0^{\alpha}$ playing the role of the operator of evolution at time $\alpha$, whereas $H_0$ corresponds to the Hamiltonian), and with the number theoretic point of view: if the Hilbert-P\'olya operator $H$ would exist, the Chebyshev function $\psi_0$, which is defined as 
\[
\psi_0(x) = \sum_{p^m < x} \log p + \frac12 \begin{cases} \log p, & x = p^m \text{ for some prime power } p^m \\ 0, & \text{otherwise} \end{cases}
\]
where the summation is through all powers of primes bounded by $x$, would formally satisfy
\[
\psi_0(e^x) = \int_{-\infty}^x \left( e^y - e^{y/2} \tr(e^{iHy}) \right) dy + O(1),
\]
which suggests an important role played by the conjectural flow $(e^{iHy})_{y \in \mathbb{R}}$ of unitary operators. To see this, recall the von Mangoldt formula (see e.g.~\cite{titchmarsh}):
\[
\psi_0(x)=x-\sum_{\rho}\frac{x^\rho}{\rho}+O(1),
\]
where the summation is over all zeros $\rho$ of the Riemann zeta function.  Now taking $x=e^y$ in the above,  writing $e^y-\sum_{\rho}\dfrac{e^{y\rho}}{\rho}$ as the integral of its derivative and then writing the last sum as the trace of the operator yields the formal identity above.
On the other hand, as we will see below, considering the flow 
$(e^{i \alpha H_0})_{\alpha \in \mathbb{R}}$, instead of taking directly $H_0$, is also consistent with the main construction of the present article, since, in a sense which will be made precise, this flow is an approximation, for large $n$, of the successive powers of 
a random matrix following the Haar measure on $U(n)$. 

It should be mentioned that the problem of the existence of the operator $H_0$ is also hinted at in the work by Katz and Sarnak \cite{katzsarnak}. For instance, the zeta function of algebraic curves are related to the unitary group or some other compact groups (e.g. the symplectic group) and there the question of coupling all different dimensions of the unitary group together in a consistent way in order to prove strong limit theorems (i.e. almost sure convergence) and having an infinite dimension space and operator sitting above is raised. 

The main goal of the present paper is the construction of a flow of random operators $(V^{\alpha})_{\alpha \in \mathbb{R}}$, whose spectrum, in a sense which can be made precise, is a determinantal sine-kernel process, and which is directly related to the Circular Unitary Ensemble. A similar, but simpler, construction has been made in \cite{NN13}, in which we consider permutation matrices instead of unitary matrices. 
The construction which is made in the present paper uses the recent construction of virtual isometries given in \cite{BNN12}  and  needs several further  steps.

First, the space on which $(V^{\alpha})_{\alpha \in \mathbb{R}}$ acts must be infinite-dimensional. In order to relate this space with the Circular Unitary Ensemble, we will construct a coupling between the matrix models of each dimension. That is, we define a sequence $(u_n)_{n \geq 1}$ of random unitary matrices, in such a way that for all $n \geq 1$, each matrix $u_n \in U(n)$ is Haar distributed on the unitary group. Of course, there are many ways to couple the random variables $u_n$ to each other: for example, by taking all the matrices to be independent. However, in order to have sufficient consistency to construct limiting objects, we need to be more subtle.

In fact, we will couple $(u_n)_{n \geq 1}$ in such a way that almost surely, this sequence of random matrices is a \emph{virtual isometry}. The notion of virtual isometry has been introduced in one of our former articles \cite{BNN12}, generalizing both the notion of {\it virtual permutation} studied by Kerov, Olshanski, Vershik \cite{Kerov},
and the previous notion of virtual unitary group introduced by Neretin \cite{Ner}.
By definition, a virtual isometry is a sequence $(u_n)_{n \geq 1}$ of unitary matrices $u_n$ of dimension $n$, such that for all $n$, $u_n$ is the matrix $u$ such that, for $u_{n+1}$ fixed, the rank of the difference
\[
  \begin{pmatrix} u & 0 \\ 0 & 1 \end{pmatrix} - u_{n+1}
\]
is minimal (which, as we will see, is always $0$ or $1$). From this definition one can deduce that for all $n \geq 1$, $u_n$ completely determines the sequence $u_1, u_2, \dots, u_{n-1}$, and from these matrices, one directly obtains a decomposition of $u_n$ as a product of complex reflections. Moreover, the Haar measures in different dimensions are compatible with respect to the notion of virtual isometries, i.e.~it is possible to construct a probability distribution on the space of  virtual isometries in such a way that for all $n \geq 1$, the marginal distribution of the $n$-dimensional component coincides with the Haar measure on $U(n)$. Therefore, the Circular Unitary Ensemble can be coupled in all dimensions by considering a random virtual isometry following a suitable probability distribution.

Note that in the notion of virtual isometry defined here, the vectors of the canonical basis of $\mathbb{C}^n$ play a particular role. One could attempt to generalize the notion of virtual isometries by considering sequences of unitary operators on $E_n$, $n \geq 1$, where $(E_n)_{n \geq 1}$ is a sequence of complex inner product spaces, $E_n$ being of dimension $n$. However, this reduces to the particular case $E_n = \mathbb{C}^n$ by a change of basis and so we have chosen to use the standard basis for simplicity.


In \cite{BNN12}, it is shown that if $(u_n)_{n \geq 1}$ follows this distribution, then for all $k$, the $k$th positive (resp.~negative) eigenangle of $u_n$, multiplied by $n/2 \pi$ (i.e. the inverse of the average spacing between eigenangles for any matrix in $U(n)$), converges almost surely to a random variable $y_k$ (resp.~$y_{1-k}$). The random set $(y_k)_{k \in \mathbb{Z}}$ is a determinantal sine-kernel process, and for each $k$, the convergence holds with a rate dominated by some negative power of $n$. In the present paper, we improve our estimate of this rate, and more importantly, we prove that almost sure convergence  not only holds for the eigenangles of $u_n$, but also for the components of the corresponding eigenvectors. More precisely, we show that, for all $k, \ell \geq 1$, the $\ell$th component of the eigenvector of $u_n$ associated to the $k$th positive (resp.~negative) eigenangle converges almost surely to a non-zero limit $t_{k, \ell}$ (resp.~$t_{1-k, \ell}$) when $n$ goes to infinity, if the norm of the eigenvector is taken equal to $\sqrt{n}$ and if the phases are suitably chosen. Moreover, the variables $(t_{k, \ell})_{k \in \mathbb{Z}, \ell \geq 1}$ are iid complex gaussians. Note that taking
a norm equal to $\sqrt{n}$ is natural in this setting: with this normalization, the expectation of the squared modulus of each coordinate of a given eigenvector of $u_n$ is equal to $1$, so we can expect a convergence to a non-trivial limit. If the norm of the eigenvectors is taken equal to $1$ instead of $\sqrt{n}$, then the coordinates  converge to zero when $n$ goes to infinity. 

Knowing the joint convergence of the renormalized eigenvalues and the corresponding eigenvectors, it is natural to expect that, in a sense which has to be made precise, the limiting behavior of $(u_n)_{n \geq 1}$ when $n$ goes to infinity can be described by  an operator whose eigenvectors are the sequences $(t_{k, \ell})_{\ell \geq 1}$, $k \in \mathbb{Z}$ and for which the corresponding eigenvalues are $(y_k)_{k \in \mathbb{Z}}$. 

The precise statement of our main result can in fact be described as follows:
\begin{theorem} \label{thm:main}
Almost surely there exists a random vector subspace $\mathcal{F}$ of $\mathbb{C}^{\infty}$ and a flow of linear maps $(V^{\alpha})_{\alpha \in \mathbb{R}}$ on $\mathcal{F}$, such that $V^{\alpha + \beta}= V^{\alpha} V^{\beta}$, and satisfying the following properties, for any sequence $w = (w_{\ell})_{\ell \geq 1}$ in $\mathcal{F}$:
\begin{enumerate}
\item For any fixed $\ell \geq 1$, the $\ell$th component of the $n$-dimensional vector of $u_n^{\lfloor \alpha n \rfloor} (w_1,\dots,w_n)$ tends to the $\ell$th component of $V^{\alpha} (w)$ when $n$ goes to infinity. 
\item If $w \neq 0$, the $L^2$ distance between $u_n^{\lfloor \alpha n \rfloor} (w_1,\dots,w_n)$ and 
$((V^{\alpha} w)_1,\dots, (V^{\alpha} w)_n)$ is negligible with respect to the norm of $(w_1,\dots,w_n)$.
\end{enumerate}
Moreover, the eigenvectors of the flow, i.e.~the sequences $w \in \mathcal{F}$ such that there exists $\lambda \in \mathbb{R}$ for which $V^{\alpha} w = e^{2 i \pi \lambda \alpha} w$ for all $\alpha \in \mathbb{R}$, are exactly the sequences proportional to $(t_{k, \ell})_{\ell \geq 1}$ for some $k \in \mathbb{Z}$. The corresponding value of $\lambda$ is equal to $y_k$. 
\end{theorem}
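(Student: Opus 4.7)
The plan is to construct $\mathcal{F}$ and the flow $V^\alpha$ via a spectral decomposition in the limiting eigenbasis. Using the almost sure convergence of the rescaled eigenangles of $u_n$ to the sine-kernel points $(y_k)_{k \in \Z}$ and of the eigenvectors of $u_n$ (normalized to have norm $\sqrt{n}$, with phases chosen consistently) to the Gaussian sequences $t_k = (t_{k,\ell})_{\ell \geq 1}$, I would define
\[
\mathcal{F} := \Big\{ \sum_{k \in \Z} c_k t_k : (c_k)_{k \in \Z} \in \ell^2(\Z) \Big\},
\]
where the sum is read componentwise; since the $t_{k,\ell}$ are iid complex Gaussians, the series $\sum_k c_k t_{k,\ell}$ converges almost surely for each fixed $\ell$. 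For $w = \sum_k c_k t_k \in \mathcal{F}$, set
\[
V^\alpha w := \sum_{k \in \Z} c_k\, e^{2\pi i \alpha y_k}\, t_k,
\]
which again belongs to $\mathcal{F}$; the flow property $V^{\alpha + \beta} = V^\alpha V^\beta$ is then immediate.

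To establish item~(1), I would expand the truncation $w[n]$ in the orthonormal eigenbasis of $u_n$: let $e_{k,n}$ denote the eigenvector associated with the $k$-th eigenangle $\theta_{k,n}$, rescaled so that $\norm{e_{k,n}} = \sqrt{n}$ and with phases chosen to match the limit $t_k$. Writing $c_{k,n} := n^{-1} \langle w[n], e_{k,n} \rangle$, we have
\[
u_n^{\lfloor \alpha n \rfloor} w[n] = \sum_k c_{k,n}\, e^{i \lfloor \alpha n \rfloor \theta_{k,n}}\, e_{k,n}.
\]
For each fixed $k$, the previously established convergences yield $c_{k,n} \to c_k$, $(e_{k,n})_\ell \to t_{k,\ell}$, and $\lfloor \alpha n \rfloor \theta_{k,n} = \lfloor \alpha n \rfloor \cdot 2\pi y_{k,n}/n \to 2\pi \alpha y_k$. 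Truncating the sum at $|k| \leq K$, passing to the limit $n \to \infty$, and then sending $K \to \infty$ (with the tail controlled by the $\ell^2$ decay of $(c_k)$) shows that the $\ell$-th component of $u_n^{\lfloor \alpha n \rfloor} w[n]$ converges to $(V^\alpha w)_\ell$.

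The main obstacle is item~(2), which demands control of the $L^2$ error uniformly in $n$. My plan is to split the eigenbasis expansion at a cutoff $K_n \to \infty$. For the low part $|k| \leq K_n$, the quantitative rates established earlier in the paper for $|y_{k,n} - y_k|$ and $|(e_{k,n})_\ell - t_{k,\ell}|$ would give a uniform bound on
\[
\sum_{|k| \leq K_n} c_{k,n}\bigl( e^{i \lfloor \alpha n \rfloor \theta_{k,n}} e_{k,n} - e^{2\pi i \alpha y_k} (t_k)[n] \bigr);
\]
the admissible growth of $K_n$ will be dictated by the rate for $y_{k,n} - y_k$, since $\lfloor \alpha n \rfloor$ amplifies any eigenangle error by a factor of order $n$. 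For the high tail $|k| > K_n$, unitarity of $u_n$ together with Parseval gives $\sum_{|k| > K_n} \abs{c_{k,n}}^2 \cdot n \leq \norm{w[n]}^2$, and combining this with the $\ell^2$ tail decay of $(c_k)$ forces the remainder to be $o(\norm{w[n]}^2)$.

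Once items~(1) and~(2) are in hand, the eigenvector characterization is short: if $V^\alpha w = e^{2\pi i \lambda \alpha} w$ for all $\alpha$, expanding $w = \sum_k c_k t_k$ yields $c_k (e^{2\pi i (y_k - \lambda) \alpha} - 1) = 0$ for all $k$ and all $\alpha$, and since the $(y_k)$ are almost surely pairwise distinct (sine-kernel processes being simple), $w$ must be proportional to a single $t_k$, with $\lambda = y_k$.
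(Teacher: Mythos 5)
Your construction of $\mathcal{F}$ as the $\ell^2$ span of the limiting eigenvectors has a genuine well-definedness problem, which the paper itself points out. For a \emph{fixed} deterministic $(c_k) \in \ell^2$, the series $\sum_k c_k t_{k,\ell}$ converges almost surely; but the null set depends on $(c_k)$, and you need the statement ``almost surely, for all $(c_k) \in \ell^2$, the series converges,'' which is false. The paper gives a concrete counterexample: choosing $|\lambda_k| = 1/(1+|k|)$ with the phases of $\lambda_k$ adapted so that $\lambda_k t_{k,1} \geq 0$ gives $\sum_k \lambda_k t_{k,1} = \sum_k |t_{k,1}|/(1+|k|) = \infty$ a.s. This is exactly why the paper, when it wants to pass beyond finite linear combinations, restricts to the weighted space $\mathcal{E}_\delta$ with $\sum_k (1+|k|^{1+\delta})|\lambda_k|^2 < \infty$, rather than all of $\ell^2$.

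The proof of item~(2) also has a gap that you gesture at but do not close. After the cutoff at $K_n$, your bound on the high-frequency part $|k| > K_n$ invokes ``the $\ell^2$ tail decay of $(c_k)$,'' but $c_{k,n} := n^{-1}\langle w[n], e_{k,n}\rangle$ for $|k| > K_n$ has no established quantitative relationship to any limiting coefficient: the eigenvector convergence (and the rates in Proposition~\ref{prop:distanceeigenvectors}) is only proven for each \emph{fixed} $k$, and the implied constants are $k$-dependent. Parseval alone gives $n\sum_{|k|>K_n}|c_{k,n}|^2 \leq \|w[n]\|^2$, which is the wrong direction. The paper never tries to prove (2) for infinite spectral combinations; it proves it for finite combinations (Propositions~\ref{prop:distanceeigenvectors} and~\ref{prop:distancecomponentseigenvectors}), and then \emph{defines} $\mathcal{F}$ intrinsically (Section~\ref{sec:intrinsic}) as exactly the set of sequences for which the required uniform estimates hold — so (1) and (2) become definitional, and the real work shifts to proving closure under the flow and the semigroup law.

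Finally, your eigenvector characterization is trivial only because you built the spectral decomposition into $\mathcal{F}$ by hand. In the paper's intrinsic construction, $\mathcal{F}$ is a priori larger than the span of the $t_k$'s, and ruling out extra eigenvectors is a genuine theorem (Theorem~\ref{thm:onlyeigenvectors}). Its proof uses a Fej\'er-type averaging operator $M_n(\lambda)$, a ``no small vectors'' result (Proposition~\ref{prop:nosmallvectors}) showing $\mathcal{F} \cap \ell^2 = \{0\}$, and moment bounds for $\tr(u_n^j)$ from Diaconis--Shahshahani. Your argument simply doesn't engage with the possibility — which the intrinsic definition must address — that $\mathcal{F}$ contains sequences that are not spectral combinations of the $t_k$'s, which is precisely what makes the claim that the eigenvalues form \emph{exactly} the sine-kernel process non-trivial.
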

Let us emphasize that the last part of this theorem gives the complete family of eigenvectors of the flow $(V^{\alpha})_{\alpha \in \mathbb{R}}$. In particular, the eigenvalues form exactly 
the determinantal sine-kernel process $(y_k)_{k \in \Z}$, with no extra eigenvalue. 
Note that the notion of eigenvalue and eigenvector is not exactly the same as in the usual situation where a single operator is considered. 

Intuitively, the operator $V^{\alpha}$ can be viewed as a limit, for sequences in the space $\mathcal{F}$, of the iteration $u_n^{\lfloor \alpha n\rfloor}$, when $n$ goes to infinity. The flow $(V^{\alpha})_{\alpha \in \mathbb{R}}$ can be compared with the flow constructed in \cite{NN13} for permutation matrices. The space $\mathcal{F}$ is random and explicitly defined in terms of the virtual isometry $(u_n)_{n \geq 1}$. 

Some natural questions which can then be asked are the following: 
\begin{enumerate}
\item Is it possible to replace the space $\mathcal{F}$ by another space with a simpler description? 
\item Is it possible to define a version of the flow $(V^{\alpha})_{\alpha \in \mathbb{R}}$ of operators, which can be naturally related to ensembles of unitary or hermitian matrices, which are different from the CUE?
\end{enumerate}
An answer to this last question would give a more generic version of $(V^{\alpha})_{\alpha \in \mathbb{R}}$, which may enlighten in a new, more geometric way, the properties of universality enjoyed by the sine-kernel process in random matrix theory.

Such a generalization seems to be very difficult to construct. In particular, we do not know how one could couple the GUE in such a way that the renormalized eigenvalues converge almost surely. Note that the successive minors of an infinite GUE matrix cannot converge almost surely to a non trivial (i.e. non constant) limiting distribution. Indeed, if we note $(A_n)_{n\geq1}$ the successive minors, and $F_n: \mathcal M_n(\mathbb C)\to \mathbb R$, which depends only on the eigenvalues and such that $F_n(A_n)$ converges a.s. to $X$, then $F_n(A_n)$ and $F_{2n}(A_{2n})$ both converge a.s. to $X$. In particular $(F_{n}(A_{n}),F_{2n}(A_{2n}))$ converges in law to $(X,X)$. Since we work with GUE matrices, it also follows that $(F_{n}(B_{n}),F_{2n}(A_{2n}))$ also converges to $(X,X)$, where $B_n$ is the $n$ by $n$ bloc matrix obtained by taking the lower right bloc in $A_{2n}$. Thus it follows that $F_{n}(A_{n})-F_{2n}(A_{2n})$ converges in law (and also in probability) to $X-X=0$. Similarly $F_{n}(B_{n})-F_{2n}(A_{2n})$ also converges in probability to $0$. Summing up we obtain that $F_{n}(B_{n})-F_{n}(A_{n})$ converges in probability to $0$. On the other hand $F_{n}(B_{n})-F_{n}(A_{n})$ also converges to $X-Y$ where $X$ and $Y$ are independent with the same law. This implies that $|\mathbb E[e^{itX}]|^2=1$ and hence $X$ is equal to a constant a.s.

It would also be interesting to relate the operators we construct to the Brownian carousel construction, which was introduced by Valk\'o and Vir\'ag in \cite{VV} (or alternatively to the work by Killip and Stoiciu \cite{KilSto}), in order to generalize the sine-kernel process to the setting of $\beta$-ensembles for any $\beta \in (0,\infty)$ (instead of just $\beta = 2$).

The paper is devoted to providing the details of the construction of the flow of operators given in Theorem \ref{thm:main}. Our construction is both geometric and  probabilistic and this is reflected in  our arguments which involve for instance  coupling techniques or martingale convergence theorems. For the convenience of the reader, we will first briefly outline in the next section, without proof, the construction of the flow of operators $(V^\alpha)_{\alpha \in \mathbb{R}}$ in Theorem~\ref{thm:main}. 

\section{Overview of the construction}

In Section~\ref{sec:isometries} we define the set of \emph{complex reflections} of a finite dimensional complex vector space. These reflections are unitary maps which are linear combinations of the identity and a map of rank one. It turns out that a unitary matrix $u \in U(n)$ can be written as a product
\begin{equation}\label{NTM}
 u = r_n \cdots r_1
\end{equation}
where each $r_k$, $1 \leq k \leq n$ is a complex reflection. In this decomposition, each reflection $r_k$ fixes every standard basis vector $e_j$ with $k < j \leq n$, so that the partial products $r_k \cdots r_1$ are naturally unitary maps on $U(k)$. This decomposition has already been used in \cite{bhny} to provide a new approach to the study of the characteristic polynomial of random unitary matrices (in particular in relation with the moments conjecture of Keating and Snaith \cite{keating-snaith}) and it was also exploited in \cite{BNR} in the study of the circular Jacobi ensemble.

Decomposition (\ref{NTM}) was also used in \cite{BNN12} to define virtual isometries $(u_n)_{n \geq 1}$ as families of unitary matrices, $u_n \in U(n)$, so that $u_n = r_n u_{n-1}$ for every $n \geq 1$ for some family of reflections $(r_n)_{n \geq 1}$. The push-forward of Haar measure on $U(n)$ decomposes in a simple way: the distribution of each $r_k$ is independent, and each $r_k(e_k)$ takes values uniformly on the unit sphere in $\C^k$.  The Haar measure on $U(n)$ for $n$ finite can then be naturally extended to a uniform measure on the set of virtual isometries.

The next step is to find an explicit relation between the eigenvalues and eigenvectors of $u_{n+1}$ and $u_n$ for each $n \geq 1$. Let us write $\lambda_1^{(n)} = e^{i \theta_1^{(n)}}, \ldots, \lambda_n^{(n)} = e^{i \theta_n^{(n)}}$ for the eigenvalues of $u_n$, almost surely distinct and different from $1$, with the ordering $0 < \theta_1^{(n)} < \cdots < \theta_n^{(n)} < 2 \pi$. It will be convenient 
to extend the notation $\lambda_k^{(n)}$ and 
$\theta_k^{(n)}$ to all $k \in \Z$, in
such a way that $\theta_{k+n}^{(n)} 
= \theta_k^{(n)} + 2\pi$ and
$\lambda_{k+n}^{(n)} = \lambda_{k}^{(n)}$, i.e. the sequence  $(\lambda_{k}^{(n)})_{k \in 
\Z}$ is $n$-periodic. Note that 
with this convention, $(\theta_k^{(n)})_{k \in 
\Z}$ is the increasing sequence of eigenangles of $u_n$, taken in the whole real line. 
 For $1 \leq k \leq n$, let $f_k^{(n)} \in \C^n$ denote a unit length representative of the eigenspace for $\lambda_k^{(n)}$. Then if we expand $r_{n+1}(e_{n+1})$ in the basis of eigenvectors
\[
  r_{n+1}(e_{n+1}) = \sum_{j=1}^n \mu_j^{(n)} f_j^{(n)} + \nu_n e_{n+1}
\]
then the eigenvalues of $u_{n+1}$ are precisely the zeros of the rational equation
\[
  \sum_{j=1}^n \abs{\mu_j^{(n)}}^2 \frac{\lambda_j^{(n)}}{\lambda_j^{(n)} - z} + \frac{\abs{1 - \nu_n}^2}{1 - z} = 1 - \overline{\nu}_n
\]
and the eigenvectors of $u_{n+1}$ are given by the $n+1$ equations
\[
  C_k f_k^{(n+1)} = \sum_{j=1}^n \frac{\mu_j^{(n)}}{\lambda_j^{(n)} - \lambda_k^{(n+1)}} f_j^{(n)} + \frac{\nu_n - 1}{1 - \lambda_k^{(n+1)}} e_{n+1}
\]
for $1 \leq k \leq n+1$; here $C_k \in \R^+$ is a constant so that $f_k^{(n+1)}$ has unit length. As above, we will also extend, when needed,  the notation $f_k^{(n)}$, $\mu_j^{(n)}$, in such a way that the sequences $(f_k^{(n)})_{k \in \Z}$ and 
$(\mu_j^{(n)})_{j \in \Z}$ are $n$-periodic.

In Section~\ref{sec:filtration} we make the following key observation: the sequence of eigenvalues $\lambda_k^{(n)}$, with $1 \leq k \leq n$ and $n \geq 1$, is independent of the argument of the coefficients $\mu_j^{(n)}$, with $1 \leq j \leq n$ and $n \geq 1$. Therefore, we can consider the sequence of eigenvalues of the virtual isometry and prove that it converges almost surely, and then, \emph{conditioning on the eigenvalues of every matrix in the virtual isometry}, consider the sequence of eigenvectors and show that they also converge in a suitable sense.

The first part of this plan is carried out in Section~\ref{sec:eigenvalues}, where it is shown that the eigenangles converge to a limiting process $(y_k)_{k \in \Z}$ in the sense that
\[
  \frac{n}{2 \pi} \theta_k^{(n)} = y_k + O_\eps((1 + k^2) n^{-\frac13 + \eps})
\]
almost surely for every $\eps > 0$.

Next, in Section~\ref{sec:eigenvectors} we condition on the eigenangles of the entire sequence of matrices. We show that for each fixed $k$ there is a renormalization factor $D_k^{(n)}$ so that for each $\ell \geq 1$ the sequence $\langle D_k^{(n)} f_k^{(n)}, e_\ell \rangle$ is a martingale which converges in $L^2$ and almost surely to a limiting value $g_{k,\ell}$. For each $k \geq 1$, we call the sequence $(g_{k,\ell})_{\ell \geq 1} \in \C^\infty$ an eigenvector of the isometry.

We require stronger information on the convergence of the eigenvectors in order to make a suitable definition. In Section~\ref{sec:flowoperators} we consider just those sequences in $\C^\infty$ which are spanned by the eigenvectors of the isometry. Then we show that we can define a ``flow'' $(U^\alpha)_{\alpha \in \R}$ on this vector space by considering sequences $u_n^{\floor{\alpha n}} t[n]$, where $(t_\ell)_{\ell \geq 1}$ is a linear combination of eigenvectors. In fact, for each eigenvector $g_k$ and $\ell \geq 1$ we show that $\langle u_n^{\floor{\alpha n}} g_k[n], e_\ell \rangle$ converges almost surely for every $\alpha$ to $e^{2 \pi i \alpha y_k} g_{k,\ell}$. Furthermore, we show for all $\delta > 0$ that uniformly for $\alpha$ in compact sets,
\[
  \norm{ u_n^{\floor{\alpha n}} g_k[n] - e^{2 \pi i \alpha y_k} g_k[n] } = O_\delta(n^{\frac12 - \delta}) = o_\delta(\norm{g_k[n]})
\]
so that the flow converges strongly (in this sense) in the limit.

We are now ready to define our random vector space and the associated flow. We define our random space $\mathcal{F}$ to be those sequences $(w_\ell)_{\ell \geq 1}$, along with sequences $((V^\alpha w)_\ell)_{\ell \geq 1}$, so that $u_n^{\floor{\alpha n}} w$ converges weakly to $V^\alpha w$ and so that
\[
  \norm{u_n^{\floor{\alpha n}} w[n] - (V^\alpha w) [n]} = O(n^{\frac12 - \delta}).
\]
for some $\delta > 0$ and choice of the implied constant. This latter inequality ensures that we do not introduce additional ``exotic'' eigenvalues in the limit. We then define the flow $V^\alpha : w \mapsto V^\alpha w$ from the definition of $\mathcal{F}$.

\section{Reflections and the definition of virtual isometries} \label{sec:isometries}
In this section, we recall the definition and elementary properties of reflections and the way they are used to construct virtual isometries. The reader can refer to \cite{BNN12} for more details but we wanted to give an independent presentation here to make the paper self-contained and also to fix notation and conventions. The idea behind is to provide a way to generate the Haar measure with the help of ``simple'' and independent unitary transformations. The idea is not new and  for instance a general algorithm is given by \cite{DS87}.
\subsection{Reflections over $\R$}

We will begin by briefly recalling the definition of reflections over $\R$ since this is the one which is used if one wants to carry our construction to the orthogonal group. We also wish to state them here in order  to understand why real reflections or Householder transformations would not be suitable if the ground field is the field of complex numbers.

\begin{definition}
    Let $H$ denote a real vector space of dimension $n$. A \textbf{reflection} of $H$ (sometimes called a \textbf{Householder transformation} or an \textbf{elementary reflector}) is defined as an orthogonal transformation of $H$ which fixes each element of a hyperplane (i.e.~linear subspace of codimension $1$).
\end{definition}

Let $r$ be a reflection. Let us assume that $r \neq 1$ so that $\ker (1 - r) = K$ is a hyperplane and $\dim \im(1 - r) = 1$.

Since $r \in O(\R)$ we must have $\det r \in \{\pm 1\}$. Clearly $1$ is an eigenvalue with multiplicity $n - 1$. Let $\lambda \neq 1$ denote the other eigenvalue. As $\det r$ is the product of the eigenvalues we conclude $\lambda = -1$ and $\det r = -1$. Consequently there exists a vector $a \in H$ such that $r(a) = -a$ and note that $r$ is a map of order $2$ (namely $r^2 = 1$). We will write $r_a$ for the reflection which maps $a \mapsto -a$.

It is easy to find a formula for $r_a$: First note that $\R a \oplus (\R a)^\perp = H$. There exists $\phi \in H^*$ (where $H^*$ is the space of linear forms on $H$) such that $x - r_a(x) = \phi(x) a$ for every $x \in H$ and $\ker \phi = \ker (1 - r_a) = K$. Since $\ker(\langle \cdot, a \rangle) = K$, there exists a non-zero $\lambda \in \C^*$ such that $\phi = \lambda \langle \cdot, a \rangle$. Therefore $x - r_a(x) = \lambda \langle x, a \rangle a$. Now $r_a(a) = -a$ so we have $\lambda = \frac{2}{\langle a, a \rangle}$, hence
\begin{equation}\label{desperatehousewives}
  r_a(x) = x - 2 \frac{\langle x, a \rangle}{\langle a, a \rangle} a.
\end{equation}
This is the equation of a real orthogonal reflection that sends $a$ to $-a$ and hence a householder transformation is parametrized by a vector $a$.

Finally, we note that if $m, e \in H$ are two distinct vectors with $\norm{m} = \norm{e} = 1$, there exists a unique reflection which maps $m$ on $e$, namely  $r_{m - e}$. But it is easy to see that given two vectors of norm $1$ in a complex Hilbert space, then there does not necessarily exist a Householder transformation (\ref{desperatehousewives}) which maps one on the other. Hence we need to introduce another type of reflection.

\subsection{Reflections over $\C$}

Over the complex numbers the theory is different since the reflections we consider are not necessarily of finite order and they are parametrized by one vector and a phase (that is, a complex number of modulus $1$). 

We now assume we  are given a Hilbert space $H$ with $\dim H = n$. For $u, v \in H$, we use the standard inner product
\[
  \langle u, v \rangle := \sum_{k=1}^n u_k \overline{v}_k.
\]
If $F \subseteq H$, $F$ a subset of $H$, we write $F^\perp := \{ x \in H \mid \langle u, x \rangle = 0 \text{ for all } u \in F \}$.

If $F$ and $G$ are subspaces of $H$, we write $H = F \oplus^\perp G$ to indicate $H = F + G = \{x + y \mid x \in F, y \in G\}$ and $\langle x, y \rangle = 0$ for all $x \in F$ and $y \in G$. It is then easy to check that $F = G^\perp$ and $(G^\perp)^\perp = G$ and $\dim G + \dim G^\perp = \dim H = n$.

Let $U(H)$ denote the set of unitary operators, i.e.~linear bijections $u : H \to H$ which preserve the inner product: $\langle ux, uy \rangle = \langle x,y \rangle$ for every $x, y \in H$. Let $1$ denote the identity map. We first recall the elementary and well known result: 

\begin{lemma} \label{lem:fredholm}
    If $u \in U(H)$ then $\im(1 - u) = \ker(1 - u)^\perp$.
\end{lemma}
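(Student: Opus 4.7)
The plan is to establish the equivalent identity $\ker(1-u) = \im(1-u)^{\perp}$ and then conclude by taking orthogonal complements, using that $H$ is finite dimensional so $(F^\perp)^\perp = F$ for any subspace $F$.

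First I would show $\ker(1-u) \subseteq \im(1-u)^\perp$. The key point is that since $u$ is unitary, $\langle ux, y\rangle = \langle x, u^{-1} y\rangle$ for all $x,y\in H$, and $u x = x$ implies $u^{-1} x = x$. So if $x\in\ker(1-u)$ and $y\in H$ is arbitrary, then
\[
\langle x, y - uy\rangle = \langle x, y\rangle - \langle x, uy\rangle = \langle x, y\rangle - \langle u^{-1} x, y\rangle = \langle x, y\rangle - \langle x, y\rangle = 0,
\]
which places $x$ in $\im(1-u)^\perp$. Here I am using the convention stated in the notation section that the inner product is linear in the first slot and conjugate linear in the second, so that $\langle x, uy\rangle = \langle u^{-1} x, y\rangle$ (equivalently, the adjoint of $u$ is $u^{-1}$).

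Next I would show the reverse inclusion $\im(1-u)^\perp \subseteq \ker(1-u)$. If $x$ is orthogonal to every vector of the form $y - uy$, then $\langle x, y\rangle = \langle x, uy\rangle = \langle u^{-1}x, y\rangle$ for all $y\in H$, which forces $u^{-1} x = x$ and hence $(1-u)x = 0$. Combining the two inclusions gives $\ker(1-u) = \im(1-u)^\perp$. Taking the orthogonal complement of both sides and invoking $(F^\perp)^\perp = F$ in finite dimensions then yields $\ker(1-u)^\perp = \im(1-u)$, which is the claim.

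I do not expect any real obstacle here; the only thing to watch is the placement of complex conjugates, i.e.\ making sure the adjoint manipulation respects the conjugate-linear-in-the-second-variable convention. Finite dimensionality is used only at the last step to avoid having to argue that $\im(1-u)$ is closed.
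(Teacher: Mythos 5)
Your proof is correct and complete. The paper states this lemma as ``elementary and well known'' and does not supply a proof, so there is nothing to compare against; the argument you give (show $\ker(1-u) = \im(1-u)^\perp$ via the identity $\langle x, uy\rangle = \langle u^{-1}x, y\rangle$ for unitary $u$, then take orthogonal complements using finite-dimensionality) is the standard one and is carried out carefully, including correct handling of the conjugate-linear-in-the-second-slot convention.
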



\begin{definition}
Let $u$ denote a linear transform of a complex vector space $H$ of finite dimension. We call $u$ a \textbf{complex reflection} if it is the identity or if it is unitary and $\rank (1 - u) = 1$. We will just write \textbf{reflection} if the base field is clear.
\end{definition}

\begin{remark}
    Usually in the literature, a linear transformation $g \in GL(H)$ is a reflection if the order of $g$ is finite and $\rank(1 - u) = 1$. For our applications we do not require that $g$ have finite order.
\end{remark}


As in the real case, we can compute a formula for a general complex reflection.

\begin{proposition}
Suppose that $r$ is a reflection of the space $H$ and that the vector $a \in H$ spans $\im(1 - r)$. Then there exists $\alpha \in \mathbb{U}$ such that for every $x \in H$,
\[
  r(x) = x - (1 - \alpha) \frac{\langle x, a \rangle}{\langle a, a \rangle} a.
\]
\end{proposition}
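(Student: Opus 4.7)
The plan is to reduce everything to understanding how $r$ acts on the one-dimensional complement of its fixed space. First, I would invoke Lemma~\ref{lem:fredholm}: since $r$ is unitary, $\im(1-r) = \ker(1-r)^\perp$. Because $\rank(1-r) = 1$ and $a$ spans $\im(1-r)$, this gives the orthogonal decomposition $H = \C a \oplus^\perp a^\perp$ with $a^\perp = \ker(1-r)$, i.e.\ $r$ fixes every vector orthogonal to $a$.

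Next I would determine $r(a)$. Since $r(a) - a \in \im(1-r) = \C a$, we have $r(a) = \alpha a$ for some scalar $\alpha \in \C$. Unitarity forces $\lvert \alpha \rvert \cdot \norm{a} = \norm{r(a)} = \norm{a}$, so $\alpha \in \mathbb{U}$. This $\alpha$ is the one that will appear in the formula.

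Finally, for arbitrary $x \in H$, I would use the orthogonal decomposition to write
\[
x = \frac{\langle x, a \rangle}{\langle a, a \rangle}\, a + x_\perp, \qquad x_\perp \in a^\perp.
\]
Applying $r$ (which fixes $x_\perp$ and sends $a$ to $\alpha a$) and rearranging yields
\[
r(x) = \frac{\langle x, a \rangle}{\langle a, a \rangle}\, \alpha a + x_\perp
= x - (1 - \alpha)\, \frac{\langle x, a \rangle}{\langle a, a \rangle}\, a,
\]
which is the desired formula. There is no real obstacle here; the only subtlety is that one must invoke Lemma~\ref{lem:fredholm} to identify $\ker(1-r)$ as $a^\perp$ (rather than merely as some hyperplane complementary to $\C a$), as this is what makes the orthogonal projection formula on the right-hand side produce the correct coefficient.
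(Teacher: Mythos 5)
Your proof is correct and rests on exactly the same two ingredients as the paper's: Lemma~\ref{lem:fredholm} to identify $\ker(1-r) = (\C a)^\perp$, and unitarity to pin down $r(a) = \alpha a$ with $\alpha \in \mathbb{U}$. The only cosmetic difference is that you finish by decomposing $x$ orthogonally and applying $r$ termwise, whereas the paper finishes by writing $r(x) = x - \phi(x)a$ for a linear form $\phi$ and identifying $\phi$ as a multiple of $\langle\cdot,a\rangle$; the two derivations are equivalent.
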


\begin{proof}
    We have by construction and Lemma~\ref{lem:fredholm} the equivalences $\im(1 - u) = \C a$ and $\ker (1 - u) = (\C a)^\perp$. Let $\phi \in H^*$ denote the linear form defined by $u(x) = x - \phi(x) a$. Clearly $\phi(x) = 0$ if and only if $(1 - u) x = 0$, so $\ker \phi = \ker (1 - u)$. But the linear form $\langle \cdot, a \rangle$ also vanishes on $\ker (1 - u) = (\C a)^\perp$, so we must have some $\lambda \in \C^*$ such that $\phi = \lambda \langle \cdot, a \rangle$. Hence $u(x) = x - \lambda \langle x, a \rangle a$.

    To determine $\lambda$, we note that $u(a) = \alpha a$ for some $\alpha \in \C$, $\abs{\alpha} = 1$, so we must have $\lambda = \frac{1 - \alpha}{\langle a, a \rangle}$ as required.
\end{proof}

\begin{definition}
For non-zero $a \in H$ and $\alpha \in \C$, $\abs{\alpha} = 1$, we define the reflection $r_{a, \alpha}(x)$ by
\[
  r_{a,\alpha}(x) = x - (1 - \alpha) \frac{\langle x, a \rangle}{\langle a, a \rangle} a.
\]
\end{definition}

Note that $r_{a, \alpha}$ has eigenvalue $1$ with multiplicity $n-1$ and eigenvalue $\alpha$ with multiplicity $1$. The following facts are easy to check and we omit the proofs.
\begin{proposition} \label{prop:propertiesofreflections}
    For any non-zero $a \in H$ and $\alpha, \beta \in \mathbb{U}$ we have the following.
    \begin{enumerate}
        \item $r_{a, \alpha} r_{a, \beta} = r_{a, \alpha \beta}$.
        \item For every $g \in U(H)$, $g r_{a, \alpha} g^{*} = r_{ga, \alpha}$.
        \item For every non-zero $\lambda \in \C$, $r_{\lambda a, \alpha} = r_{a, \alpha}$.
        \item $r_{a,\alpha}^{-1} = r_{a,\overline{\alpha}}$.
    \end{enumerate}
\end{proposition}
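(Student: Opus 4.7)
The plan is to verify each of the four identities by direct computation from the explicit formula for $r_{a,\alpha}$ proved just above, using the key observation that $r_{a,\alpha}(a) = \alpha a$ (which is immediate from setting $x = a$ in the formula, since $\langle a, a\rangle / \langle a, a\rangle = 1$). The only bookkeeping to respect throughout is the sesquilinearity convention fixed in the notation section, under which the inner product is linear in the first slot and conjugate linear in the second; once that is enforced, all four statements are routine and present no genuine obstacle.

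For (1), I would apply $r_{a,\alpha}$ to $r_{a,\beta}(x)$, using linearity and $r_{a,\alpha}(a) = \alpha a$ to collapse the two correction terms into a single multiple of $a$. The scalar coefficient simplifies as $(1-\alpha) + \alpha(1-\beta) = 1-\alpha\beta$, yielding $r_{a,\alpha\beta}(x)$. For (2), expanding $g\, r_{a,\alpha}\, g^{*}(x)$ directly and invoking the two unitarity identities $\langle g^{*}(x), a\rangle = \langle x, g(a)\rangle$ and $\langle g(a), g(a)\rangle = \langle a, a\rangle$ turns the formula into precisely the one defining $r_{g(a),\alpha}(x)$.

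For (3), scaling $a$ by $\lambda \in \C^{*}$ introduces a factor $\overline{\lambda}$ in the numerator via $\langle x, \lambda a\rangle = \overline{\lambda}\langle x, a\rangle$, a factor $|\lambda|^{2}$ in the denominator via $\langle \lambda a, \lambda a\rangle = |\lambda|^{2}\langle a, a\rangle$, and an extra $\lambda$ from the outer occurrence of $\lambda a$; these three cancel, showing that $r_{\lambda a,\alpha}$ depends only on the line $\C a$. Finally, (4) is an immediate corollary of (1) applied with $\beta = \overline{\alpha}$: the product $r_{a,\alpha}\, r_{a,\overline{\alpha}} = r_{a,\alpha\overline{\alpha}} = r_{a,1} = 1$ identifies $r_{a,\overline{\alpha}}$ as the two-sided inverse of $r_{a,\alpha}$.
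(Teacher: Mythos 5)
The paper states these identities with the remark ``The following facts are easy to check and we omit the proofs,'' so there is no argument in the text to compare against. Your direct verification from the defining formula $r_{a,\alpha}(x) = x - (1-\alpha)\frac{\langle x, a\rangle}{\langle a, a\rangle}a$ is correct in every detail: the coefficient collapse $(1-\alpha)+\alpha(1-\beta) = 1-\alpha\beta$ for (1), the adjoint identity $\langle g^{*}x, a\rangle = \langle x, ga\rangle$ together with $\|ga\| = \|a\|$ for (2), the three-way cancellation $\overline{\lambda}\cdot\lambda / |\lambda|^{2} = 1$ for (3), and the deduction of (4) from (1) via $r_{a,1} = 1$ all hold, and you have correctly respected the convention that the inner product is conjugate linear in the second slot.
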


\begin{proposition}
    Let $a, b \in H$ be non-zero vectors and $\alpha, \beta \in \mathbb{U}$. Then the reflections $r_{a,\alpha}$ and $r_{b,\beta}$ commute if and only if $\C a = \C b$ or $\langle a, b \rangle = 0$.
\end{proposition}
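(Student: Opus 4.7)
The plan is to prove both directions using the explicit formula for reflections together with Proposition~\ref{prop:propertiesofreflections}. I assume throughout that the reflections are non-trivial, i.e. $\alpha \neq 1$ and $\beta \neq 1$, since otherwise one of the reflections is the identity and the statement becomes vacuous (the implicit convention here is that the ``axis'' $a$ is only well-defined for a non-trivial reflection).

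For the ``if'' direction, I split into the two cases. If $\mathbb{C}a = \mathbb{C}b$, property (3) of Proposition~\ref{prop:propertiesofreflections} gives $r_{b,\beta} = r_{a,\beta}$ (choose $\lambda$ so that $b = \lambda a$), and then property (1) yields $r_{a,\alpha} r_{a,\beta} = r_{a,\alpha\beta} = r_{a,\beta} r_{a,\alpha}$. If instead $\langle a, b \rangle = 0$, then $r_{a,\alpha}(b) = b$ and $r_{b,\beta}(a) = a$, so computing both composites directly from the defining formula gives
\[
r_{a,\alpha} r_{b,\beta}(x) = x - (1-\alpha)\frac{\langle x,a\rangle}{\langle a,a\rangle} a - (1-\beta)\frac{\langle x,b\rangle}{\langle b,b\rangle} b = r_{b,\beta} r_{a,\alpha}(x),
\]
where the cross terms vanish precisely because $\langle a, b \rangle = 0$.

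For the ``only if'' direction, I would use property (2): conjugating $r_{b,\beta}$ by $r_{a,\alpha}$ gives
\[
r_{a,\alpha}\, r_{b,\beta}\, r_{a,\alpha}^{-1} = r_{r_{a,\alpha}(b),\,\beta}.
\]
If $r_{a,\alpha}$ and $r_{b,\beta}$ commute, the left side equals $r_{b,\beta}$, so $r_{r_{a,\alpha}(b),\beta} = r_{b,\beta}$. Since $\beta \neq 1$, the axis of a non-trivial reflection is determined up to a non-zero scalar (it spans $\operatorname{im}(1-r)$), so property (3) forces $r_{a,\alpha}(b) \in \mathbb{C}^{*} b$, i.e.\ $r_{a,\alpha}(b) = \lambda b$ for some $\lambda \in \mathbb{C}$. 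Substituting the explicit formula,
\[
b - (1-\alpha)\,\frac{\langle b,a\rangle}{\langle a,a\rangle}\, a \;=\; \lambda b,
\]
so
\[
(1-\lambda)\, b \;=\; (1-\alpha)\,\frac{\langle b,a\rangle}{\langle a,a\rangle}\, a.
\]
Since $\alpha \neq 1$, either $\langle b,a\rangle = 0$ (yielding the orthogonality case), or else the right-hand side is a non-zero multiple of $a$, which forces $b$ to be a non-zero scalar multiple of $a$, giving $\mathbb{C}a = \mathbb{C}b$.

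The main subtlety, and the only place the argument is not mechanical, is the uniqueness step used to pass from $r_{r_{a,\alpha}(b),\beta} = r_{b,\beta}$ to the proportionality of axes: this relies on the observation that for a non-trivial reflection $r$, the line $\operatorname{im}(1-r)$ is intrinsic to $r$, which is itself a consequence of $\operatorname{rank}(1-r) = 1$ and Lemma~\ref{lem:fredholm}. Once this is in place, the rest is a one-line algebraic dichotomy on the equation above.
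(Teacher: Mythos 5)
Your proof is correct but takes a genuinely different route from the paper. The paper's argument expands $r_{a,\alpha} r_{b,\beta}(x)$ and the reverse product directly, observes that the commutator is proportional to $(1-\alpha)(1-\beta)$ times $\langle b, a \rangle \langle x, b \rangle\, a - \langle a, b \rangle \langle x, a \rangle\, b$, and reads off the dichotomy from that identity. Your ``only if'' direction instead invokes the conjugation rule $g\, r_{b,\beta}\, g^{*} = r_{gb,\beta}$ from Proposition~\ref{prop:propertiesofreflections} to reduce commutativity to the statement that $r_{a,\alpha}$ stabilizes the line $\C b$; this is more structural and makes the geometry visible (two non-trivial reflections commute precisely when each preserves the other's axis). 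You also treat the degenerate case more carefully: you note explicitly that the claim implicitly needs $\alpha, \beta \neq 1$, whereas the paper's proof silently drops the factor $(1-\alpha)(1-\beta)$, which makes the reduced identity vacuous when one of the phases equals $1$. Both proofs are short; the paper's is one mechanical expansion, while yours trades that for the conjugation property together with the uniqueness (up to scalar) of the axis of a non-identity reflection, which you correctly ground in $\rank(1-r)=1$.
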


\begin{proof}
This follows immediately by writing
\[
   r_{a,\alpha} r_{b,\beta}(x) = x - (1 - \alpha) \frac{\langle x, a \rangle}{\langle a, a \rangle} a - (1 - \beta) \frac{\langle x, b \rangle}{\langle b, b \rangle} + (1 - \alpha) (1 - \beta) \frac{\langle b, a \rangle \langle x, b \rangle}{\langle a,a \rangle \langle b,b \rangle} a
\]
which shows that the reflections commute if and only if
\[
  \langle b, a \rangle \langle x,b \rangle a = \langle a, b \rangle \langle x,a \rangle b. \qedhere
\]
\end{proof}

Now we note that given two distinct vectors $e, m \in H$ of unit length, there exists a unique complex reflection $r$ such that $r(e) = m$, which is $r_{m-e,\alpha}$ where $\alpha =- \frac{1 - \langle m, e \rangle}{1 - \overline{\langle m, e \rangle}}.$ Such $r$ is given by the equation
\[
  r(x) = x - \frac{\langle x, m - e \rangle}{1 - \overline{\langle m, e \rangle}}  (m - e).
\]

\subsection{Virtual isometries and projections of unitary matrices}

We now explain how to construct an infinite dimensional structure which contains in a natural way each finite dimensional unitary group. It is crucial to our construction that the unitary matrices between different dimensions be closely linked. This will allow us to give a meaningful definition to almost sure convergence of random matrices as the dimension grows to infinity.

 \begin{proposition} \label{prop:projectionofunitary}
     Let $H$ be a complex Hilbert space, $E$ a finite dimensional subspace of $H$ and $F$ a subspace of $E$. Then for any unitary operator $u$ acting on $H$ which fixes every vector in $E^\perp$, there exists a unique unitary operator $\pi_{E, F}(u)$ on $H$ which satisfies the following two conditions.
     \begin{enumerate}
         \item $\pi_{E,F}(u)$ fixes every vector in $F^\perp \supseteq E^\perp$.
         \item The image of $H$ under $u - \pi_{E,F}(u)$ is contained in the image of $F^\perp$ under $u - 1$.
     \end{enumerate}
     Moreover if $G$ is a subspace of $F$, then $\pi_{F, G} \circ \pi_{E, F}(u)$ is a well-defined unitary operator on $H$ and is equal to $\pi_{E, G}(u)$.
 \end{proposition}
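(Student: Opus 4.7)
The plan is to handle uniqueness first, because it reduces the proposition to constructing a single unitary satisfying (1) and (2) and makes the compositional identity essentially automatic. Suppose $v_1$ and $v_2$ both satisfy (1) and (2), and set $\delta := v_1 - v_2$. Each $v_i$ is unitary and fixes $F^\perp$, hence preserves $F = (F^\perp)^\perp$; consequently $\delta$ vanishes on $F^\perp$ and carries $F$ into $F$. Subtracting condition (2) applied to $v_1$ and $v_2$ yields $\delta(H) \subseteq (u-1)(F^\perp)$, and since $u$ fixes $E^\perp$ this image equals $(u-1)(W)$ where $W := E \cap F^\perp$. It therefore suffices to show $F \cap (u-1)(W) = \{0\}$: if $f = u(w) - w \in F$ with $w \in W$, then orthogonality of $f$ to every element of $W$ forces the $W$-component of $u(w)$ in the decomposition $E = F \oplus W$ to equal $w$, and comparing norms then forces the $F$-component of $u(w)$ to vanish, so $u(w) = w$ and $f = 0$.

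\textbf{Existence.} I would proceed by induction on $k := \dim E - \dim F$. For $k = 1$, let $w_0$ be a unit vector spanning $W$: if $u(w_0) = w_0$, set $v := u$; otherwise, let $r$ be the unique complex reflection sending $u(w_0)$ to $w_0$ constructed at the end of the previous subsection. Its axis $u(w_0) - w_0$ lies in $E$, so $r$ fixes $E^\perp$, and $v := ru$ is therefore unitary, fixes $E^\perp$ together with $w_0$ (hence all of $F^\perp$), and satisfies $(u-v)(H) = (1-r)(H) = \C(u(w_0) - w_0) = (u-1)(F^\perp)$. For $k \geq 2$, fix any chain $E = E_0 \supset E_1 \supset \cdots \supset E_k = F$ of codimension-one subspaces and iteratively define $v_0 := u$ and $v_j := \pi_{E_{j-1}, E_j}(v_{j-1})$ via the rank-one case. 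A short induction on $j$, combining $E_j^\perp \subseteq F^\perp$ with the rewriting $v_{j-1} - 1 = (v_{j-1} - u) + (u - 1)$, then gives $(u - v_j)(H) \subseteq (u-1)(F^\perp)$, so $v := v_k$ satisfies both conditions for $\pi_{E,F}(u)$.

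\textbf{Compositional identity and main obstacle.} Set $w := \pi_{E,F}(u)$. By uniqueness applied to $\pi_{E,G}(u)$, it suffices to verify that $v' := \pi_{F,G}(w)$ fixes $G^\perp$ (immediate) and that $(u - v')(H) \subseteq (u-1)(G^\perp)$. Splitting $u - v' = (u - w) + (w - v')$, the first piece lies in $(u-1)(F^\perp) \subseteq (u-1)(G^\perp)$; the second lies in $(w-1)(G^\perp)$, and writing $w - 1 = (w - u) + (u - 1)$ on $G^\perp$ together with $(u - w)(G^\perp) \subseteq (u-1)(F^\perp) \subseteq (u-1)(G^\perp)$ shows $(w-1)(G^\perp) \subseteq (u-1)(G^\perp)$. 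The most delicate point is carrying the image condition with respect to the \emph{original} operator $u$ through the iterative rank-one reduction; the same telescoping rewriting trick handles both this step and the compositional identity, so once uniqueness and the rank-one existence are in place the rest is essentially mechanical.
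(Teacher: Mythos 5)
Your proposal is correct and follows essentially the same path as the paper's proof: uniqueness via the Pythagorean/unitarity argument that $F \cap (u-1)(F^\perp) = \{0\}$, existence via the reflection construction in the codimension-one case followed by iteration along a chain, and the tower property via the same telescoping rewriting $(w-1) = (w-u) + (u-1)$. The only cosmetic difference is that you prove existence self-containedly before the tower property, whereas the paper establishes the tower property first (assuming existence) and then invokes it implicitly to reduce the existence proof to the codimension-one case.
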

 \begin{proof}
 First we prove uniqueness. Let $x \in F \cap (u - 1)(F^\perp)$. There exists $y \in F^\perp$ such that $x = u(y) - y$. Since $x \in F$ and $y \in F^\perp$ we have
 \[
   \norm{u(y)}^2 = \norm{y}^2 + \norm{x}^2
 \]
 by the Pythagorean theorem. Since $u$ is unitary $\norm{u(y)} = \norm{y}$ so we would require $\norm{x} = 0$, hence $F \cap (u - 1)(F^\perp) = \{0\}$.

 Now if $v_1$ and $v_2$ are two unitary operators satisfying the properties of $\pi_{E,F}(u)$, then:
 \begin{enumerate}
     \item $v_1$ and $v_2$ fix globally $F$ since they fix $F^\perp$.
     \item $v_1 - v_2$ vanishes on $F^\perp$ by construction.
     \item The range of $v_1 - v_2$ is included in $(u - 1)(F^\perp)$ since the range of each of $v_1 - u$ and $u - v_2$ are.
 \end{enumerate}
  We conclude from (2) and (3) that the image of $v_1 - v_2$ is contained in $F \cap (u-1)(F^\perp) = \{0\}$ and so the operators must agree.

  Next we prove the tower property of $\pi_{E,F}(u)$, assuming that the operator exists.

  Let $G \subset F \subset E \subset H$. We write $v = \pi_{E,F}(u)$ and $w = \pi_{F,G}(v)$. These operators are well-defined and satisfy
  \begin{enumerate}
     \item $v$ fixes every vector in $F^\perp$
     \item $(u-v)(H) \subseteq (u - 1)(F^\perp)$
     \item $w$ fixes every vector in $G^\perp$
     \item $(v - w)(H) \subseteq (v - 1)(G^\perp)$.
  \end{enumerate}
  This yields the elementary calculation
  \begin{align*}
    (u - w)(H) & \subseteq \spn\{(u - v)(H), (v - w)(H)\} \\
    & \subseteq \spn \{(u - 1)(F^\perp), (v - 1)(G^\perp)\} \\
    & \subseteq \spn \{(u - 1)(F^\perp), (u - 1)(G^\perp), (u - v)(G^\perp)\} \\
    & \subseteq \spn \{(u - 1)(G^\perp), (u - v)(H)\} \\
    & \subseteq (u - 1)(G^\perp).
  \end{align*}
  Since $w$ fixes each vector of $G^\perp$, it is equal to $\pi_{E,G}(u)$.

  Now we prove the existence of the operator by induction. It is sufficient to prove the existence of $\pi_{E,F}$ in the particular case where $E = \spn\{F,e\}$, where $e$ is a unit vector orthogonal to $F$. In this case, if $u$ is a unitary operator fixing each vector of $E^\perp$, then the operator $v = \pi_{E,F}(u)$ can be constructed explicitly as follows.

  If $u(e) = e$ then on taking $v = u$ which fixes $\spn \{E^\perp, e\}$, hence it fixes $F^\perp$ and $(u - v)(H) = (u - 1)(F^\perp)$.

  If $u(e) \neq e$ then for all $x \in H$ we define
  \[
    v(x) := u(x) - \frac{\langle u(x), e - u(e) \rangle}{1 - \langle u(e), e \rangle} (e - u(e)) = \widetilde{r} \circ u
  \]
 where $\widetilde{r}$, as indicated, is the unique reflection mapping $u(e) \mapsto e$. Hence $v$ is a unitary transformation.

 Now let $x \in E^\perp$ so that $u(x) = x$ and $e - u(e) \in E$ since $E$ is globally fixed by $u$. Here $\langle u(x), e - u(e) \rangle = \langle x, e - u(e) \rangle = 0$ so $v(x) = x$. Moreover by construction $v(e) = e$, so consequently $v$ fixes each vector in $F^\perp$.

 Finally, for all $x \in H$, we have $u(x) - v(x) = \gamma (e - u(e))$ for some $\gamma \in \C$, so $(u-v)(H) \subseteq (u - 1)(F^\perp)$, hence $v$ satisfies the conditions of $\pi_{E,F}(u)$.
 \end{proof}

 \begin{remark}
     It follows from Proposition~\ref{prop:propertiesofreflections} that if $r := (\widetilde{r})^{-1}$ is the reflection mapping $e$ onto $u(e)$ then
     \[
       u = r \pi_{E,F}(u).
     \]
     Similarly one can easily prove that $u = \pi_{E,F} \circ r'$ where $r'$ is the unique reflection such that $r'(u^{-1}(e)) = e$.
 \end{remark}

 The existence of the projection map described above suggests how to define ``virtual isometries'', the infinite dimensional objects alluded to above. Indeed, let $H = \ell^2(\C)$ and $(e_\ell)_{\ell \geq 1}$ be the canonical Hilbert basis of $H$. Then for all $n \geq 1$ the space of unitary operators fixing each element of $V_n := \spn(e_1, \ldots, e_n)^\perp$ can be canonically identified with the unitary group $U(n)$.

 By identification, for $n \geq m \geq 1$ the projection $\pi_{V_n, V_m}$ gives a map from $U(n)$ to $U(m)$, more simply noted $\pi_{n,m}$, so that, for $n \geq m \geq p \geq 1$,
 \[
   \pi_{n,p} = \pi_{m,p} \circ \pi_{n,m}.
 \]

 \begin{definition}
     A \textbf{virtual isometry} is a sequence $(u_n)_{n \geq 1}$ of unitary matrices such that for all $n \geq 1$, $u_n \in U(n)$ and $\pi_{n+1,n}(u_{n+1}) = u_n$. The space of virtual isometries will be denoted $U^\infty$.
 \end{definition}

 \begin{remark}
 $U^\infty$ does not appear to have a natural group structure.
 \end{remark}

 \begin{remark}
 In this definition we made a particular choice of vector spaces lying in $\ell^2(\C)$. Although it appears to be necessary to make a choice for the probabilistic arguments later, the ideas in this construction apply also to other Hilbert spaces with a sequence of basis vectors.
 \end{remark}

 \begin{proposition} \label{prop:vectorstoisometry}
     Let $(x_n)_{n \geq 1}$ be a sequence of vectors with $x_n \in \C^n$ lying on the unit sphere (i.e.~$\norm{x_n}_{\ell^2(\C^n)} = 1$). Then there exists a unique virtual isometry $(u_n)_{n \geq 1}$ such that $u_n(e_n) = x_n$ for every $n \geq 1$. In particular, $u_n$ is given by
     \[
       u_n = r_n r_{n-1} \cdots r_1
     \]
     where for each $1 \leq j \leq n$, $r_j = 1$ if $x_j = e_j$ and otherwise is the unique reflection mapping $e_j \mapsto x_j$.
 \end{proposition}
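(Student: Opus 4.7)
My plan is to split the proof into existence and uniqueness, both of which reduce to straightforward applications of Proposition~\ref{prop:projectionofunitary} (and the remark that follows it).

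For \textbf{existence}, I would simply define $u_n := r_n r_{n-1} \cdots r_1$ with $r_j$ as in the statement, and verify four points in order. First, each $r_j$ is unitary (either the identity or the reflection formula from the previous subsection), so $u_n$ is unitary. Second, each $r_j$ fixes $V_j = \spn(e_1, \ldots, e_j)^\perp$: indeed, the one-dimensional image of $1 - r_j$ is spanned by $x_j - e_j \in \spn(e_1, \ldots, e_j)$, so $r_j$ acts as the identity on the orthogonal complement of this span, in particular on $V_j$. Since $V_n \subseteq V_j$ for all $j \leq n$, $u_n$ fixes $V_n$, i.e. $u_n \in U(n)$ under the identification. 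Third, for $j < n$ the vector $e_n$ lies in $V_j$ and is fixed by $r_j$, so $u_n(e_n) = r_n(e_n) = x_n$.

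The fourth and only nontrivial point is the compatibility $\pi_{n+1,n}(u_{n+1}) = u_n$. Here I appeal to the uniqueness characterization in Proposition~\ref{prop:projectionofunitary}: I need to check that $u_n$ fixes $V_n$ (already done) and that $(u_{n+1} - u_n)(H) \subseteq (u_{n+1} - 1)(V_n)$. Writing $u_{n+1} = r_{n+1} u_n$, we get
\[
  (u_{n+1} - u_n)(H) = (r_{n+1} - 1)\bigl(u_n(H)\bigr) \subseteq \im(r_{n+1} - 1) = \C(x_{n+1} - e_{n+1}).
\]
Since $e_{n+1} \in V_n$ and $u_{n+1}(e_{n+1}) = x_{n+1}$ (by point three applied at level $n+1$), we have $x_{n+1} - e_{n+1} = (u_{n+1} - 1)(e_{n+1}) \in (u_{n+1} - 1)(V_n)$, which closes the argument.

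For \textbf{uniqueness}, I would use the remark following Proposition~\ref{prop:projectionofunitary}: for any virtual isometry $(u_n)_{n \geq 1}$, we have $u_{n+1} = \tilde r \, \pi_{n+1,n}(u_{n+1}) = \tilde r \, u_n$, where $\tilde r$ is the unique reflection sending $e_{n+1}$ to $u_{n+1}(e_{n+1}) = x_{n+1}$. Hence $\tilde r = r_{n+1}$ necessarily. The base case $u_1 = r_1$ is forced because $u_1 \in U(1)$ is determined by its value on $e_1$. Induction on $n$ then gives the stated formula.

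I do not expect a real obstacle: the geometric content — that each reflection direction $x_j - e_j$ lies in $V_j^\perp$ and hence acts trivially on all later coordinates — does all the work, and the identification $\pi_{n+1,n}(u_{n+1}) = u_n$ is essentially handed to us by the remark after Proposition~\ref{prop:projectionofunitary}. The only place where care is needed is in correctly invoking the uniqueness clause of that proposition to avoid re-deriving its content.
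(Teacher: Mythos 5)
Your proof is correct and follows essentially the same route the paper intends, namely deducing everything from Proposition~\ref{prop:projectionofunitary} and the remark following it; the paper simply leaves the verification implicit. The one minor stylistic remark is that for the compatibility step you re-verify the two characterizing conditions of $\pi_{n+1,n}$ directly, whereas the remark after Proposition~\ref{prop:projectionofunitary} already gives $u_{n+1} = r_{n+1}\,\pi_{n+1,n}(u_{n+1})$ with $r_{n+1}$ the reflection sending $e_{n+1}$ to $u_{n+1}(e_{n+1}) = x_{n+1}$, from which $\pi_{n+1,n}(u_{n+1}) = r_{n+1}^{-1}u_{n+1} = u_n$ falls out in one line — but either way is fine.
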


 \begin{proof}
     This follows directly from the proof of Proposition~\ref{prop:projectionofunitary} and the remarks which follow it.
 \end{proof}

 \begin{remark}
 In the particular case where every $x_n = e_{t(n)}$ for some $t(n) \in \{1, \ldots, n\}$, then $(u_n)_{n \geq 1}$ is the sequence of matrices associated to a virtual permutation \cite{NN13}. The sequence of permutations $(\sigma_n)_{n \geq 1}$ is constructed by the so-called Chinese restaurant process \cite{Pit06}: for all $n \geq 1$,
     \[
       \sigma_n = \tau_{n,t(n)} \tau_{n-1, t(n-1)} \cdots \tau_{1,1}
     \]
 where $\tau_{k,j} = 1$ if $j = k$ and otherwise is the transposition $(j,k)$.
 \end{remark}

 \begin{remark}
     The above proposition shows in particular the fact that any unitary matrix of $U(n)$ can be expanded into a product of reflections.
 \end{remark}

 Proposition~\ref{prop:vectorstoisometry} shows in particular that $U^\infty$ is non-empty. Moreover, our construction will allow us to construct measures on $U^\infty$. Indeed, it is natural to look for the analogue of Haar measure on $U^\infty$. First, we recall the following correspondence from \cite{BNN12} (this can also be obtained as a consequence of the results above and the subgroup algorithm of Diaconis and Shahshahani \cite{DS87}). 

 \begin{proposition}[\cite{BNN12}] \label{prop:haartovectors}
 Let $(x_n)_{n \geq 1}$ be a random sequence of vectors with $x_n \in \C^n$ and let $(u_n)_{n \geq 1}$ be the unique virtual isometry such that $u_n(e_n) = x_n$ for all $n \geq 1$. Then for each $n \geq 1$ the matrix $u_n$ is distributed according to Haar measure if and only if $x_1, \ldots, x_n$ are independent and for each $1 \leq j \leq n$, $x_j$ is distributed according to the uniform measure on the unit sphere in $\C^j$.
 \end{proposition}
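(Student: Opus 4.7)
The plan is to prove the equivalence by induction on $n$, via the disintegration of Haar measure underlying the subgroup algorithm of Diaconis--Shahshahani \cite{DS87}. The base case $n=1$ is immediate: $U(1) = \mathbb{U}$ is itself the unit sphere in $\mathbb{C}$, Haar measure on $U(1)$ coincides with the uniform measure on $\mathbb{U}$, and $u_1 = x_1$ by Proposition~\ref{prop:vectorstoisometry}.

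For the inductive step, Proposition~\ref{prop:vectorstoisometry} supplies the key factorization $u_n = r_n \, \widetilde{u}_{n-1}$ inside $U(n)$, where $r_n$ is the reflection sending $e_n$ to $x_n$ (or the identity if $x_n = e_n$) and $\widetilde{u}_{n-1}$ is $u_{n-1}$ extended to $\mathbb{C}^n$ by fixing $e_n$; thus $\widetilde{u}_{n-1}$ lies in the stabilizer of $e_n$, which I identify with $U(n-1)$. The central ingredient I would establish is the following disintegration. Let $S \subset \mathbb{C}^n$ denote the unit sphere and, for $x \in S$, let $r_x \in U(n)$ denote the reflection sending $e_n$ to $x$ (with $r_{e_n} := 1$); this defines a Borel section of the orbit map $u \mapsto u(e_n)$, continuous away from the singleton $\{e_n\}$ since the reflection formula has a non-vanishing denominator there. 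The disintegration lemma states that $u$ is Haar on $U(n)$ if and only if, writing $u = r_{u(e_n)} \, h$ with $h := r_{u(e_n)}^{-1} u \in U(n-1)$, the variables $u(e_n)$ and $h$ are independent, $u(e_n)$ is uniform on $S$, and $h$ is Haar on $U(n-1)$. One direction uses that $u \mapsto u(e_n)$ is $U(n)$-equivariant, hence its pushforward under Haar is the unique $U(n)$-invariant probability on $S$ (the uniform one), and that left-invariance of Haar under $U(n-1)$ combined with Fubini forces the conditional law of $h$ given $u(e_n)$ to be the unique left-$U(n-1)$-invariant probability, i.e.~Haar on $U(n-1)$; the converse is symmetric.

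Granting this lemma and identifying $r_n = r_{x_n}$, the inductive step is immediate: $u_n$ is Haar on $U(n)$ if and only if $x_n$ is uniform on $S$ and independent of $u_{n-1}$, and $u_{n-1}$ is Haar on $U(n-1)$. By the inductive hypothesis the latter is equivalent to $x_1, \ldots, x_{n-1}$ being independent uniforms on their respective spheres; since the iterative construction makes $u_{n-1}$ a deterministic Borel function of $(x_1, \ldots, x_{n-1})$ and conversely $x_j = \pi_{n-1, j}(u_{n-1})(e_j)$ recovers the vectors from $u_{n-1}$, independence of $u_{n-1}$ and $x_n$ is the same as independence of $(x_1, \ldots, x_{n-1})$ and $x_n$, yielding the claim. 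The main technical subtlety I expect is verifying the measurability of the section $x \mapsto r_x$ at $x = e_n$, where the explicit reflection formula degenerates, but this is a single point of uniform measure zero on $S$, so it poses no genuine difficulty for the probabilistic assertions.
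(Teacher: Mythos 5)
The paper does not prove this proposition itself: it cites \cite{BNN12} and remarks in the surrounding text that the result ``can also be obtained as a consequence of the results above and the subgroup algorithm of Diaconis and Shahshahani \cite{DS87}.'' Your argument is a correct reconstruction of exactly that subgroup-algorithm route: induct on $n$, use $u_n = r_{x_n}\tilde u_{n-1}$ from Proposition~\ref{prop:vectorstoisometry}, and disintegrate Haar measure on $U(n)$ along the orbit map $u\mapsto u(e_n)$ with the Borel section $x\mapsto r_x$. The identification $\sigma(u_{n-1})=\sigma(x_1,\dots,x_{n-1})$ (each determines the other measurably) correctly converts the independence of $u_{n-1}$ and $x_n$ into the independence of $(x_1,\dots,x_{n-1})$ and $x_n$, completing the induction. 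One small wording slip: in the disintegration lemma you should invoke \emph{right}-invariance of Haar under the stabilizer $U(n-1)$, not left-invariance. The right action $u\mapsto ug$ with $g\in U(n-1)$ fixes $u(e_n)$ and sends $h=r_{u(e_n)}^{-1}u$ to $hg$, so the conditional law of $h$ given $u(e_n)$ is right-$U(n-1)$-invariant, hence Haar. The left action $u\mapsto gu$ moves $u(e_n)$ to $g\,u(e_n)$ and so does not fix the conditioning variable; it gives equivariance of the conditional kernel but not directly invariance of a single conditional. Since $U(n)$ is compact its Haar measure is bi-invariant, so this is a presentational correction rather than a genuine gap.
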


 As a consequence, we deduce the compatibility between Haar measure on $U(n)$, $n \geq 1$, and the projections $\pi_{n,m}$ for $n \geq m \geq 1$.

 \begin{proposition} \label{prop:projecthaar}
 For all $n \geq m \geq 1$, the push-forward of the Haar measure on $U(n)$ under $\pi_{n,m}$ is the Haar measure on $U(m)$.
 \end{proposition}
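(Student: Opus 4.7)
The plan is to reduce the claim to the Haar characterization in Proposition~\ref{prop:haartovectors}, using the bijection between virtual isometries and sequences of unit vectors from Proposition~\ref{prop:vectorstoisometry}. I start with $u_n$ sampled from the Haar measure on $U(n)$, iterate the single-step projections via the tower property of Proposition~\ref{prop:projectionofunitary} to define $u_k := \pi_{n,k}(u_n) \in U(k)$ for $1 \leq k \leq n$, and extend the sequence for $k > n$ in any consistent way (e.g.~by the natural embedding of $u_n$ into $U(k)$). This produces a random virtual isometry $(u_k)_{k \geq 1}$ which, by Proposition~\ref{prop:vectorstoisometry}, is characterized by the random sequence of unit vectors $x_k := u_k(e_k) \in \C^k$.

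Now I apply Proposition~\ref{prop:haartovectors} at level $n$: since $u_n$ is Haar-distributed, the vectors $x_1, \ldots, x_n$ are independent and each $x_k$ is uniformly distributed on the unit sphere of $\C^k$. In particular, the first $m$ of them satisfy the same hypothesis. Applying Proposition~\ref{prop:haartovectors} in the reverse direction at level $m$ to the same (extended) random sequence, I conclude that $u_m$ is Haar-distributed on $U(m)$. Since $u_m = \pi_{n,m}(u_n)$ by construction, this is exactly the conclusion we want.

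The one piece of bookkeeping to confirm is that the matrix $u_m$ associated with the first $m$ vectors of the virtual isometry through the product representation $u_m = r_m \cdots r_1$ of Proposition~\ref{prop:vectorstoisometry} genuinely coincides with $\pi_{n,m}(u_n)$. By the tower property, this reduces to the one-step identity $\pi_{n,n-1}(r_n r_{n-1} \cdots r_1) = r_{n-1} \cdots r_1$, which one checks directly against the two defining conditions of $\pi_{n,n-1}$ and then invokes the uniqueness clause in Proposition~\ref{prop:projectionofunitary}. Beyond this routine compatibility check, there is no real obstacle in the argument; essentially all of the content is already packaged inside Proposition~\ref{prop:haartovectors}, and this proposition is used as a black box.
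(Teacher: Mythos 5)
Your proposal is correct and follows essentially the same route as the paper, which states the proposition as an immediate consequence of Proposition~\ref{prop:haartovectors} without further argument. You have correctly identified the one piece of bookkeeping that makes the deduction go through — namely that the $u_m$ produced by the reflection product $r_m\cdots r_1$ coincides with $\pi_{n,m}(u_n)$, which is exactly the content of the remark following Proposition~\ref{prop:projectionofunitary} (that $u = r\,\pi_{E,F}(u)$ with $r$ the reflection carrying $e$ to $u(e)$, combined with uniqueness).
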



 \begin{remark}
     One has to check that $\pi_{n,m}$ is measurable as is done in \cite{BNN12} \end{remark}

 The compatibility property of Proposition~\ref{prop:projecthaar} allows us to define the Haar measure on $U^\infty$.

 \begin{proposition} \label{prop:measuresforvirtualisometries}
     Let $\mathcal{U}$ denote the $\sigma$-algebra on $U^\infty$ generated by the sets
     \[
       W(k, B) := \{ (u_n)_{n \geq 1} \mid u_k \in B \}
     \]
     where $k \geq 1$ and $B \subseteq U(k)$ is a Borel set. Let $(\mu_n)_{n \geq 1}$ be a family of probability measures, $\mu_n$ defined on $(U(n), \mathcal B(U(n)))$, where $\mathcal B(U(n))$ is the Borel set on $U(n)$, and such that the push-forward of $\mu_{n+1}$ by $\pi_{n+1,n}$ is equal to $\mu_n$. Then there exists a unique probability measure $\mu$ on $(U^\infty, \mathcal{U})$ such that push-forward of $\mu$ by the $n$th coordinate map is equal to $\mu_n$ for all $n \geq 1$.
 \end{proposition}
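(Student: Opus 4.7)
The plan is to recognize this as an instance of the Kolmogorov extension theorem for a projective system of Radon probability measures on compact Polish spaces. Concretely, $U^\infty$ is the projective limit of the system $(U(n), \pi_{n+1,n})_{n\geq 1}$, the $\sigma$-algebra $\mathcal{U}$ is the natural projective-limit $\sigma$-algebra generated by the cylinder sets $W(k,B)$, and the hypothesis is exactly the consistency relation $\pi_{n+1,n\,*}\mu_{n+1}=\mu_n$.

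First I would prove uniqueness. The collection $\Pi:=\{W(k,B):k\geq 1,\;B\in\mathcal{B}(U(k))\}$ is a $\pi$-system, since $W(k,B)\cap W(k',B')$ with $k\leq k'$ equals $W(k',B'\cap \pi_{k',k}^{-1}(B))$, which is again a cylinder set (using measurability of $\pi_{k',k}$ as recorded in the remark after Proposition~\ref{prop:projecthaar}). By definition $\Pi$ generates $\mathcal{U}$. Any measure $\mu$ whose $n$th push-forward is $\mu_n$ satisfies $\mu(W(k,B))=\mu_k(B)$, so any two such measures agree on $\Pi$, and Dynkin's $\pi$–$\lambda$ theorem upgrades this to agreement on $\mathcal{U}$.

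Next I would establish existence. Let $\mathcal{A}\subset\mathcal{U}$ be the algebra of cylinder sets. Define a set function $\mu:\mathcal{A}\to[0,1]$ by $\mu(W(k,B)):=\mu_k(B)$. Well-definedness follows from the consistency hypothesis: if $W(k,B)=W(k',B')$ with $k\leq k'$, then necessarily $B'=\pi_{k',k}^{-1}(B)$ (up to a $\mu_{k'}$-null discrepancy handled by iteratively applying $\pi_{\cdot,\cdot *}\mu_{\cdot}=\mu_\cdot$), giving $\mu_{k'}(B')=\mu_{k'}(\pi_{k',k}^{-1}(B))=\mu_k(B)$. Finite additivity on $\mathcal{A}$ is then straightforward since disjoint cylinder sets can be represented at a common level $k$ where they become disjoint Borel sets in $U(k)$. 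By Carath\'eodory's extension theorem it remains to verify countable additivity on $\mathcal{A}$, equivalently continuity at $\emptyset$: for any decreasing sequence $(A_m)_{m\geq 1}\subset\mathcal{A}$ with $\bigcap_m A_m=\emptyset$, one must show $\mu(A_m)\to 0$.

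The main obstacle is precisely this continuity at $\emptyset$, which I would handle by compactness. Write $A_m=W(k_m,B_m)$ with $k_m$ non-decreasing. For each $m$, inner regularity of the Radon measure $\mu_{k_m}$ on the compact Polish space $U(k_m)$ lets me choose a compact $K_m\subset B_m$ with $\mu_{k_m}(B_m\setminus K_m)<\varepsilon 2^{-m}$. Setting $\widetilde{A}_m:=\bigcap_{j\leq m}W(k_j,K_j)$, the $\widetilde{A}_m$ form a decreasing sequence of compact cylinder subsets (compact in the product topology on $\prod_n U(n)$, since each factor is compact) with $\mu(A_m\setminus\widetilde{A}_m)<\varepsilon$. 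If $\mu(A_m)\not\to 0$, then the $\widetilde{A}_m$ are nonempty for all large $m$, so by Tychonoff compactness $\bigcap_m\widetilde{A}_m\neq\emptyset$. Any point in that intersection lies in $\bigcap_m A_m$, contradicting $\bigcap_m A_m=\emptyset$; so one concludes $\mu(A_m)\to 0$ and Carath\'eodory produces the desired extension $\mu$ on $(U^\infty,\mathcal{U})$, whose $n$th coordinate push-forward equals $\mu_n$ by construction.
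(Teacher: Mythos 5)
Your overall route---recognizing this as the Kolmogorov extension theorem for the projective system $(U(n),\pi_{n+1,n})_{n\geq 1}$---is the same one the paper takes (it simply cites the theorem and \cite{BNN12}), and the uniqueness argument via the $\pi$--$\lambda$ theorem is fine.

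However, the existence half has a gap in the compactness step. You assert that $\widetilde{A}_m=\bigcap_{j\leq m}W(k_j,K_j)$ is compact in the product topology on $\prod_n U(n)$ because each $U(n)$ is compact, but $W(k_j,K_j)$ is by definition a subset of $U^\infty$; so $\widetilde{A}_m$ is $U^\infty$ intersected with the closed cylinder $\bigl\{u\in\prod_n U(n): u_{k_j}\in K_j\text{ for }j\leq m\bigr\}$, and for this intersection to be closed (hence compact) you need $U^\infty$ itself to be closed in $\prod_n U(n)$. That would require the bonding maps $\pi_{n+1,n}$ to be continuous, and they are not: the explicit formula in the proof of Proposition~\ref{prop:projectionofunitary} exhibits a jump as $u(e_{n+1})$ tends to $e_{n+1}$ transversally (the reflection $\widetilde{r}$ then converges to a nontrivial Householder reflection rather than to the identity), and the paper itself only records that $\pi_{n+1,n}$ is measurable. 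Consequently a Tychonoff limit point of the $\widetilde{A}_m$ need not satisfy the compatibility relations $\pi_{n+1,n}(u_{n+1})=u_n$, so the finite-intersection argument does not close. The standard repair is to run the existence step on the ambient product $\prod_n U(n)$, where cylinders over compact factors really are compact: push $\mu_n$ forward to $U(1)\times\cdots\times U(n)$ by $u\mapsto(\pi_{n,1}(u),\dots,\pi_{n,n-1}(u),u)$ to obtain a consistent family of finite-dimensional laws, invoke Kolmogorov's theorem there to get $\widetilde{\mu}$ on $\prod_n U(n)$, observe that for each $n$ the Borel event $\{\pi_{n+1,n}(u_{n+1})=u_n\}$ has $\widetilde{\mu}$-measure one so $\widetilde{\mu}(U^\infty)=1$, and finally restrict $\widetilde{\mu}$ to $U^\infty$.
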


 \begin{proof}
     This is the Kolmogorov extension theorem applied to the family $(\mu_n)_{n \geq 1}$. See \cite{BNN12}.
 \end{proof}

 Now combining Proposition~\ref{prop:haartovectors} and Proposition~\ref{prop:measuresforvirtualisometries} we obtain the following.

 \begin{proposition} [\cite{BNN12}]\label{prop:haaronvirtualisometries}
 There exists a unique probability measure $\mu^{(\infty)}$ on the space $(U^\infty, \mathcal{U})$ such that its push-forward by the coordinate maps is equal to the Haar measure on the corresponding unitary group. In particular, let $(x_n)_{n \geq 1}$ be a random sequence of vectors with each $x_n \in \C^n$ almost surely on the unit sphere, and let $(u_n)_{n \geq 1}$ be the unique virtual isometry such that $u_n(e_n) = x_n$. Then the distribution of $(u_n)_{n \geq 1}$ is equal to $\mu^{(\infty)}$ if and only if the $x_n$ are independent random variables and for all $n \geq 1$, $x_n$ is uniformly distributed on the unit sphere.
 \end{proposition}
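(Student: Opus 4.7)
The plan is to combine the three ingredients that have already been set up. First, I would establish the existence and uniqueness of $\mu^{(\infty)}$ by invoking Proposition~\ref{prop:measuresforvirtualisometries} with $\mu_n$ equal to the Haar measure on $U(n)$ for every $n\geq 1$. The only hypothesis to check is that $(\pi_{n+1,n})_{*}\mu_{n+1}=\mu_n$, which is precisely the content of Proposition~\ref{prop:projecthaar}. Uniqueness follows directly from the uniqueness clause in Proposition~\ref{prop:measuresforvirtualisometries}, since the cylinder sets $W(k,B)$ generate $\mathcal{U}$.

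Next, to prove the ``in particular'' part, I would use Proposition~\ref{prop:vectorstoisometry}: there is a measurable bijection between random virtual isometries $(u_n)_{n\geq 1}$ and random sequences $(x_n)_{n\geq 1}$ of unit vectors with $x_n\in\C^n$, given by $x_n=u_n(e_n)$ (and conversely $u_n=r_n\cdots r_1$ with $r_j$ the reflection sending $e_j$ to $x_j$). Under this correspondence, the law of $(u_n)_{n\geq 1}$ is $\mu^{(\infty)}$ if and only if the law of $u_n$ is Haar on $U(n)$ for every $n\geq 1$, by the first part of the proposition.

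Now I would invoke Proposition~\ref{prop:haartovectors}: for each fixed $n$, the law of $u_n$ is Haar on $U(n)$ if and only if $x_1,\dots,x_n$ are independent and each $x_j$ is uniform on the unit sphere of $\C^j$. Since this must hold for every $n$, the equivalence upgrades to: the $x_n$ are a sequence of independent random variables with $x_n$ uniform on the unit sphere of $\C^n$ for every $n\geq 1$. This establishes the ``if and only if'' in the statement.

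There is no real obstacle; the statement is assembled from already-proved pieces. The only subtlety to be careful about is measurability: one needs the map $(u_n)_{n\geq 1}\mapsto (x_n)_{n\geq 1}$ and its inverse to be measurable for the $\sigma$-algebra $\mathcal{U}$ and for the product $\sigma$-algebra on the sequence space of unit spheres. This is routine since $x_n=u_n(e_n)$ is continuous in $u_n$ and each $u_n$ is, by Proposition~\ref{prop:vectorstoisometry}, a continuous function of $x_1,\dots,x_n$ (reflections depend continuously on their defining data away from the identity, and the degenerate case $x_j=e_j$ is a null set under the uniform measure, so can be handled separately).
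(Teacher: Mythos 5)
Your proof is correct and follows essentially the same route the paper intends: the paper states the result as a corollary obtained by "combining Proposition~\ref{prop:haartovectors} and Proposition~\ref{prop:measuresforvirtualisometries}," which is exactly what you have done (with Propositions~\ref{prop:projecthaar} and~\ref{prop:vectorstoisometry} supplying the implicit consistency and correspondence ingredients, just as you identify). Your closing remark on measurability matches the concern the paper itself flags after Proposition~\ref{prop:projecthaar}.
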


\section{Spectral analysis of the virtual isometries} \label{sec:spectral}

It is classical that if $u_n$ is a random unitary matrix following the Haar measure on $U(n)$, then the distribution of the eigenangles of $u_n$, multiplied by $n/2 \pi$, converges in law to a determinantal sine-kernel process. In fact this result can be found in the literature under the form of the convergence of the correlation functions against a suitably chosen family of test functions. However we were not able to find a statement with its proof on the fact that the convergence takes place in the sense of weak convergence of point processes, using Laplace functionals. So for completeness, we give such a statement below and postpone  its proof until the appendix.

\begin{proposition}\label{Laplacefunctionals}
Let $E_n$ denote the set of eigenvalues taken in $(-\pi,\pi]$ and multiplied by $n/2\pi$ of a random unitary matrix of size $n$ following the Haar measure. Let us also define for $y\neq y^{'}$
$$K^{(\infty)}(y,y^{'})=\dfrac{\sin[\pi(y^{'}-y)]}{\pi (y^{'}-y)}$$and $$K^{(\infty)}(y,y)=1.$$ Then there exists a point process $E_{\infty}$ such that for  all $r \in \{1, \dots, n\}$, and for all 
Borel measurable and bounded functions $F$  with compact support from $\mathbb{R}^r$ to $\mathbb{R}$, we have, as $n\to\infty$,

$$\mathbb{E} \left( \sum_{x_1 \neq \dots
\neq x_r \in E_n} F(x_1, \dots, x_r) \right) \to \int_{\mathbb{R}^r} 
F(y_1, \dots, y_r) \rho^{(\infty)}_r(y_1, \dots,
y_r) dy_1 \dots dy_r,$$
where
$$\rho_r^{(\infty)}(y_1, \dots, y_r) 
= \operatorname{det} ( (K^{(\infty)} (y_j, y_k))_{1 \leq j, k \leq r} ).$$
Moreover the point process $E_n$ converges to $E_{\infty}$ in the following sense: 
for all Borel measurable bounded functions $f$ with compact support from $\mathbb{R}$ to
$\mathbb{R}$, 
$$\sum_{x \in E_n} f(x) 
\underset{n \rightarrow \infty}{\longrightarrow}
\sum_{x \in E_{\infty}} f(x),$$
where the convergence above holds in law. 

\end{proposition}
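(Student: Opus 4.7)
The plan is to recognize that the eigenangles of a Haar-distributed matrix in $U(n)$ form a determinantal point process, show that the rescaled kernel converges to the sine kernel, and deduce both parts of the proposition from this convergence. By the Weyl integration formula, the joint density of eigenangles on $(-\pi,\pi]^n$ is $\frac{1}{n!(2\pi)^n}\prod_{j<k}\abs{e^{i\theta_j}-e^{i\theta_k}}^2$; writing the product as the modulus squared of a Vandermonde determinant and applying Andr\'eief's identity identifies the $r$-point correlation functions as $\rho_r^{(n)}(\theta_1,\dots,\theta_r)=\det(K_n(\theta_j,\theta_k))_{1\leq j,k\leq r}$ with the Dirichlet kernel
\[
K_n(\theta,\phi)=\frac{1}{2\pi}\sum_{k=0}^{n-1}e^{ik(\theta-\phi)}=\frac{1}{2\pi}\cdot\frac{\sin(n(\theta-\phi)/2)}{\sin((\theta-\phi)/2)}\, e^{i(n-1)(\theta-\phi)/2}.
\]

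After the change of variables $y=n\theta/(2\pi)$ on each coordinate, the kernel (with its Jacobian factor $2\pi/n$ absorbed) becomes
\[
\widetilde K_n(y,y')=\frac{\sin(\pi(y-y'))}{n\sin(\pi(y-y')/n)}\, e^{i\pi(n-1)(y-y')/n}.
\]
The phase $e^{i\pi(n-1)(y-y')/n}$ factors as a row contribution times a column contribution, so the row/column gauge $a_j=e^{-i\pi(n-1)y_j/n}$, $b_k=e^{i\pi(n-1)y_k/n}$ removes it from every determinant $\det(\widetilde K_n(y_j,y_k))$ without changing its value, since $\prod_j a_j \cdot \prod_k b_k = 1$. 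The remaining real symmetric kernel is bounded by $1$ and converges, uniformly on compact subsets of $\R^2$, to $K^{(\infty)}(y,y')=\sin(\pi(y-y'))/(\pi(y-y'))$. For any bounded compactly supported $F$ on $\R^r$, the integrands $F\cdot\rho_r^{(n)}$ live in a fixed compact set and converge uniformly, so dominated convergence gives the stated correlation-function convergence.

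For the second, weak-convergence assertion, I would expand the characteristic function via the inclusion-exclusion identity available for any simple point process with explicit correlation functions,
\[
\mathbb{E}\Big[\exp\Big(it\sum_{x\in E_n} f(x)\Big)\Big]=\sum_{r\geq0}\frac{1}{r!}\int_{(\operatorname{supp} f)^r}\prod_{j=1}^r\bigl(e^{itf(y_j)}-1\bigr)\,\rho_r^{(n)}(y_1,\dots,y_r)\,dy.
\]
Hadamard's inequality applied to the $r\times r$ kernel matrix, whose entries are bounded by $1$, gives $\abs{\rho_r^{(n)}}\leq r^{r/2}$ uniformly in $n$, so the $r$-th summand is dominated by $(2\abs{\operatorname{supp} f}\,\norm{f}_\infty)^r r^{r/2}/r!$, which is summable by Stirling. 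Term-by-term convergence (from the first half of the argument) combined with dominated convergence over $r$ yields convergence of the characteristic function at every $t\in\R$, hence convergence in law of $\sum_{x\in E_n}f(x)$ to the analogous sum for $E_\infty$.

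The main obstacle is controlling the infinite series over $r$ uniformly in $n$. The key ingredients are the bound $\abs{\widetilde K_n}\leq 1$ on all of $\R^2$, which is needed for a uniform Hadamard estimate, and the compact support of $f$, which confines all integrals to a fixed compact set so that the kernel convergence is uniform. Existence of the limiting process $E_\infty$ itself follows a posteriori from the fact that the $\rho_r^{(\infty)}$ form a consistent family of non-negative correlation functions arising as a determinantal kernel, via the standard existence theorem for determinantal point processes.
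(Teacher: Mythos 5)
Your proof is correct, but the route differs meaningfully from the one in the paper. You reduce everything to the generating-functional identity $\mathbb{E}\bigl[\exp(it\sum_{x\in E_n}f(x))\bigr]=\sum_{r\geq0}\frac{1}{r!}\int\prod_{j}(e^{itf(y_j)}-1)\rho_r^{(n)}\,dy$, control the series uniformly in $n$ via a Hadamard bound on the determinantal correlations, pass to the limit term by term using the uniform-on-compacts convergence of the (gauge-normalized) Dirichlet kernel to the sine kernel, and then invoke the standard existence theorem for determinantal point processes with a self-adjoint kernel $0\leq K\leq 1$ (Soshnikov, Macchi) to supply $E_\infty$. The paper instead proceeds via moments: it expands $\mathbb{E}[(\sum_{x\in E_n}f(x))^p]$ combinatorially, dominates the Taylor series of the characteristic function by that of a unit-intensity Poisson process (using the sharp bound $\rho_r^{(n)}\leq 1$ rather than Hadamard's $r^{r/2}$), obtains convergence of joint characteristic functions, and then constructs $E_\infty$ concretely via Skorokhod's representation applied to the counting functions $x\mapsto \#(E_n\cap(0,x])$, which requires an extra argument (via Fatou and moment-matching) that the limiting correlation functions are the stated ones and that the limiting process is almost surely simple. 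Your version is shorter and avoids the Skorokhod construction and the simplicity check entirely, at the cost of citing more of the established theory of determinantal processes as a black box; the paper's version is heavier but self-contained. One small point: the constant $\|f\|_\infty$ in your $r$-th term bound is not quite right since $|e^{itf}-1|\leq 2$ already, but this only affects bookkeeping, not convergence.
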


In \cite{BNN12} it was proven that the eigenangles of a virtual isometry, taken according to Haar measure and renormalizing the eigenangles by the dimension $n$, converge almost surely to a point process with this determinantal distribution. Precisely, we have the following.
\begin{proposition} \label{convergence-eigenangles}
Let $(u_n)_{n \geq 1}$ be a random virtual isometry following the Haar measure. 
The eigenvalues of $u_n$ are almost surely distinct and different from $1$, 
and then, as explained before, it is possible to order the eigenangles as an increasing sequence indexed by $\Z$: 
 $$\dots < \theta_{-2}^{(n)}  < \theta_{-1}^{(n)}
 < \theta_{0}^{(n)} < 0 < \theta_{1}^{(n)} < \theta_{2}^{(n)} <\dots.$$
Moreover, almost surely, for all
 $m \in \mathbb{Z}$, there exists $y_m$ such that 
$$\frac{n}{2 \pi} \theta_m^{(n)} = y_m + O(n^{-\epsilon}),$$
when $n$ goes to infinity, $\epsilon > 0$ being some universal constant, and 
the process $(y_m)_{m \in \mathbb{Z}}$ is a determinantal sine-kernel process. 
\end{proposition}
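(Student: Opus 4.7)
The plan is to reduce the proposition to the one-step perturbation structure coming from the virtual isometry. The a.s.\ distinctness of the eigenvalues of each $u_n$ and the a.s.\ absence of $1$ as an eigenvalue follow at once from Haar-measure null-set arguments on $U(n)$ (both events are real-analytic subvarieties of positive codimension), intersected over countably many $n$. Once this is granted, the finite list $0<\theta_1^{(n)}<\dots<\theta_n^{(n)}<2\pi$ extends uniquely to the bi-infinite increasing sequence $(\theta_m^{(n)})_{m\in\Z}$ by the $n$-periodicity convention $\theta^{(n)}_{k+n}=\theta^{(n)}_k+2\pi$ already fixed in the overview.

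To obtain the quantitative convergence of $\tilde\theta_m^{(n)}:=\tfrac{n}{2\pi}\theta_m^{(n)}$, I would exploit the relation $u_{n+1}=r_{n+1}\cdot\diag(u_n,1)$ together with the secular equation stated in the overview,
\[
  \sum_{j=1}^{n} |\mu_j^{(n)}|^2\,\frac{\lambda_j^{(n)}}{\lambda_j^{(n)}-z}+\frac{|1-\nu_n|^2}{1-z}=1-\overline{\nu}_n.
\]
Since the left-hand side is a Cauchy/Herglotz function with poles at the eigenvalues of $\diag(u_n,1)$ and moves monotonically on $\mathbb U$, the $n+1$ solutions interlace strictly with those poles on the unit circle, giving the crude bound $|\theta_m^{(n+1)}-\theta_m^{(n)}|=O(1/n)$ per step. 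A linearization around $z=\lambda_m^{(n)}$, writing $z=\lambda_m^{(n)}\exp(2\pi i\delta/n)$ and Taylor expanding, then produces an explicit first-order formula for the normalized increment $\Delta_m^{(n)}:=\tilde\theta_m^{(n+1)}-\tilde\theta_m^{(n)}$ as a rational function of the reflection data $\mu_j^{(n)},\nu_n$ and the renormalized spacings $\tilde\theta_j^{(n)}-\tilde\theta_m^{(n)}$.

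The heart of the argument is a martingale analysis of $\sum_n \Delta_m^{(n)}$. By Proposition~\ref{prop:haaronvirtualisometries}, conditional on the $\sigma$-algebra generated by $u_n$ the coefficient vector $(\mu_j^{(n)},\nu_n)$ is distributed as a uniform point on the unit sphere of $\C^{n+1}$; conditioning further on the moduli $(|\mu_j^{(n)}|)$, the phases $(\arg \mu_j^{(n)})_{j\ne m}$ are independent uniform on $\mathbb U$, and this extra symmetry makes the leading part of $\Delta_m^{(n)}$ a martingale difference $M_m^{(n)}$ plus a small predictable drift $P_m^{(n)}$. I would then bound $\mathbb{E}[(M_m^{(n)})^2\mid\mathcal F_n]=O_m(n^{-2+o(1)})$ and $|P_m^{(n)}|=O_m(n^{-1-\epsilon_0})$ for some fixed $\epsilon_0>0$, using CUE level-repulsion to handle the sums $\sum_{j\ne m}|\mu_j^{(n)}|^2/(\tilde\theta_j^{(n)}-\tilde\theta_m^{(n)})^2$ arising from the linearization, and conclude via Doob's maximal inequality and Borel--Cantelli that $\sum_n\Delta_m^{(n)}$ converges absolutely a.s.\ with tail of size $O_\epsilon(n^{-\epsilon})$. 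The limit $y_m$ then exists with the claimed rate. The law of $(y_m)_{m\in\Z}$ is identified as the determinantal sine-kernel process by combining this almost sure convergence with the weak convergence of the point processes $E_n$ already recorded in Proposition~\ref{Laplacefunctionals}: the two limits must coincide in distribution.

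The main obstacle is the uniform-in-$m$ control of the conditional second moment of $M_m^{(n)}$: the linearization produces sums involving $1/(\tilde\theta_j^{(n)}-\tilde\theta_m^{(n)})^2$, and these are controlled only after quantitative small-gap estimates for the CUE are proven and then propagated along the virtual isometry. Ensuring that the polynomial rate $n^{-\epsilon}$ survives after summing many small martingale differences, uniformly in $m$, is the technical core of the argument developed in \cite{BNN12}.
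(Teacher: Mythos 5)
Your overall plan — interlacing from the secular equation, linearization near $z=\lambda_m^{(n)}$, a martingale decomposition of the cumulative increment, Doob's maximal inequality and Borel--Cantelli to get an almost-sure polynomial rate, and identification of the limit law via the weak convergence of Proposition~\ref{Laplacefunctionals} — is in the right spirit and matches the architecture of the argument in \cite{BNN12} and Section~\ref{sec:eigenvalues} of the paper. However, there is one genuine error at the heart of the martingale step, and it is not cosmetic.

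You claim that, conditional on the moduli $(|\mu_j^{(n)}|)$, the uniformity and independence of the phases $(\arg\mu_j^{(n)})_{j\neq m}$ is what makes the leading part of $\Delta_m^{(n)}$ a martingale difference. This cannot work: the secular equation that determines the eigenvalues of $u_{n+1}$ from those of $u_n$,
\[
  \sum_{j=1}^{n} |\mu_j^{(n)}|^2\,\frac{\lambda_j^{(n)}}{\lambda_j^{(n)}-z}+\frac{|1-\nu_n|^2}{1-z}=1-\overline{\nu}_n,
\]
depends only on the squared moduli $|\mu_j^{(n)}|^2$ (and on $\nu_n$), and not at all on the phases of the $\mu_j^{(n)}$. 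This is precisely the content of Lemma~\ref{lem:sigmaA}: the $\sigma$-algebra $\mathcal{A}$ generated by all the eigenvalues is generated by $u_1$, the $|\mu_j^{(m)}|$, and the $\nu_m$. Consequently, once you condition on the moduli, the increment $\Delta_m^{(n)}$ is a deterministic function of quantities you have already fixed; the ``martingale difference'' you extract from the phase symmetry is identically zero. The phase randomness is exactly what drives the \emph{eigenvector} martingale in Section~\ref{sec:eigenvectors} (Theorem~\ref{thm:vector}), and you appear to have imported that mechanism into the eigenvalue analysis, where it does not apply.

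The martingale that actually works comes from the moduli themselves. Conditional on the past, $|\mu_k^{(n)}|^2$ is a $\mathrm{Beta}(1,n)$ variable with mean $1/n$, and the $|\mu_k^{(j)}|^2$ are \emph{independent} across $j$ (Proposition~\ref{prop:haaronvirtualisometries}). The correct centered sum is
\[
M_k^{(n)} := \sum_{j=1}^{n-1}\Bigl(|\mu_k^{(j)}|^2 - \tfrac{1}{j}\Bigr),
\]
a sum of independent mean-zero variables with $O(j^{-2})$ variances. To link this to the eigenangles, one does not control $\tilde\theta_m^{(n+1)}-\tilde\theta_m^{(n)}$ directly; instead one first proves (Lemma~\ref{lem:angleestimate}) the multiplicative one-step relation
\[
\frac{\theta_k^{(n+1)}\,|\mu_k^{(n)}|^2}{\theta_k^{(n)}-\theta_k^{(n+1)}} = 1 + O\bigl(k\, n^{-1/3+\eps}\bigr),
\]
which, after taking logarithms, yields $\log(\theta_k^{(n)}/\theta_k^{(n+1)}) = |\mu_k^{(n)}|^2 + O(k\,n^{-4/3+3\eps})$. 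Telescoping and subtracting the harmonic series $\sum 1/j \sim \log n$ then produces both the renormalization factor $n$ and the martingale $M_k^{(n)}$ automatically. This multiplicative reformulation is cleaner than controlling the additive increment of $\tilde\theta_m^{(n)}$, because it avoids having to track the drift term against the $\log n$ divergence by hand. You should replace your phase-based martingale by this modulus-based one and use the logarithmic recursion; the level-repulsion input ($E_3$ in the appendix), the Doob--Borel--Cantelli endgame, and the identification of the limit law via Proposition~\ref{Laplacefunctionals} all then go through as you sketched.
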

\begin{remark}
In this proposition, the periodic extension of 
the sequence $(\theta_k^{(n)})_{1 \leq k \leq n}$ is needed to define $y_m$ for non-positive
values of $m$. We also note that $y_m$ depends only on the behavior of $\theta_m^{(n)}$ for a fixed value of $m$. Informally, the limiting sine-kernel process $(y_m)_{m \in 
\mathbb{Z}}$ depends only on the behavior
of the eigenvalues of $u_n$ which are close to $1$.  

\end{remark}
To construct our limiting flow of operators we need to understand the behavior of the eigenvectors of $u_n$ as $n$ goes to infinity. Our method will give a formula for the spectrum and eigenvectors of $u_{n+1}$ in terms of the spectrum and eigenvectors of $u_n$.

We assume throughout that for each $n \geq 1$, the $n$ eigenvalues of $u_n$ are distinct; this holds almost surely for virtual isometries constructed according to the Haar measure in Proposition~\ref{prop:haaronvirtualisometries}.

We recall that the eigenvalues of $u_n$, $\lambda_1^{(n)}, \lambda_2^{(n)}, \ldots, \lambda_n^{(n)}$, are ordered in such a way that $\lambda_k^{(n)} = e^{i \theta_k^{(n)}}$, and 
\[
  0 < \theta_1^{(n)} < \cdots < \theta_n^{(n)} < 2 \pi.
\]
 Moreover, the eigenangles enjoy a property of periodicity: for all $k \in \Z$, $\theta_{k + n}^{(n)} = \theta_k^{(n)} + 2 \pi$.

As all the eigenvalues are distinct, each eigenvalue corresponds to a one-dimensional eigenspace. We can therefore write $f_1^{(n)}$, ..., $f_n^{(n)}$ for the family 
of unit length eigenvectors of $u_n$, which are well-defined up to a complex phase: 
the notation $f_k^{(n)}$ is then extended $n$-periodically to all $k \in \Z$. 

Let $x_n = u_n(e_n)$ and let $r_n$ denote the unique reflection on $\C^n$ mapping $e_n$ to $x_n$. Therefore, we have $u_{n+1} = r_{n+1} \circ (u_n \oplus 1)$. It is natural to decompose $x_{n+1}$ into the basis given by $\iota(f_1^{(n)}), ..., \iota(f_n^{(n)}), e_{n+1}$, where $\iota : \C^n \to \C^{n+1}$ is the inclusion which maps $(x_1, \ldots, x_n)$ to $(x_1, \ldots, x_n, 0)$. Identifying $f_k^{(n)}$ and $\iota(f_k^{(n)})$, we then have
\[
  x_{n+1} = \sum_{k=1}^n \mu_k^{(n)} f_k^{(n)} + \nu_n e_{n+1}
\]
for some $\mu_k^{(n)}$ ($1 \leq k \leq n$) and $\nu_n$ such that $\abs{\mu_1^{(n)}}^2 + \cdots 
+ \abs{\mu_n^{(n)}}^2 + \abs{\nu_n}^2 = 1$.
Again, is can be convenient to consider 
$\mu_k^{(n)}$ for all $k \in \Z$, by a $n$-periodic extension of the sequence. 
The following result gives the spectral decomposition of $u_{n+1}$ in function of the decomposition of $u_n$ and 
$x_{n+1}$: 
\begin{theorem}[Spectral decomposition] \label{thm:spectrum}
On the event that the coefficients $\mu_1^{(n)}, \dots, \mu_n^{(n)}$ are all different from zero and 
that the $n$ eigenvalues of $u_n$ are all distinct (which holds almost surely under 
the uniform measure on $U^\infty$), the eigenvalues of $u_{n+1}$ are the unique roots of the rational equation
\[
  \sum_{j=1}^n \abs{\mu_j^{(n)}}^2 \frac{\lambda_j^{(n)}}{\lambda_j^{(n)} - z} + \frac{\abs{1 - \nu_n}^2}{1 - z} = 1 - \overline{\nu}_n
\]
on the unit circle. Furthermore, they interlace between $1$ and the eigenvalues of $u_n$
\[
  0 < \theta_1^{(n+1)} < \theta_1^{(n)} < \theta_2^{(n+1)} < \cdots < \theta_n^{(n)} < \theta_{n+1}^{(n+1)} < 2 \pi.
\]
and if the phases of the eigenvectors are suitably chosen, they satisfy the relation
\[
  (h_k^{(n+1)})^{\frac12} f_k^{(n+1)} = \sum_{j=1}^n \frac{\mu_j^{(n)}}{\lambda_j^{(n)} - \lambda_k^{(n+1)}} f_j^{(n)} + \frac{\nu_n - 1}{1 - \lambda_k^{(n+1)}} e_{n+1}
\]
where
\[
  h_k^{(n+1)} = \sum_{j=1}^n \frac{\abs{\mu_j^{(n)}}^2}{\abs{\lambda_j^{(n)} - \lambda_k^{(n+1)}}^2} + \frac{\abs{\nu_n - 1}^2}{\abs{1 - \lambda_k^{(n+1)}}^2}
\]
is the unique strictly positive real number which makes $f_k^{(n+1)}$ a unit vector.
\end{theorem}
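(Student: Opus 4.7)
The plan is to exploit the factorization $u_{n+1} = r_{n+1}\circ(u_n\oplus 1)$ and to search for eigenvectors of $u_{n+1}$ in the basis $f_1^{(n)},\ldots,f_n^{(n)},e_{n+1}$, in which the action of $u_n\oplus 1$ is diagonal. I will write a candidate eigenvector $v=\sum_j a_j f_j^{(n)} + b\,e_{n+1}$, apply the explicit formula for the reflection $r_{n+1}$ (mapping $e_{n+1}\mapsto x_{n+1}$) from Section~\ref{sec:isometries} to the vector $(u_n\oplus 1)v$, and impose $u_{n+1}v=\lambda v$. Introducing the auxiliary scalar $c=\langle (u_n\oplus 1)v,\,x_{n+1}-e_{n+1}\rangle/(1-\overline{\nu_n})$, matching coefficients yields the component relations
\[
a_j(\lambda_j^{(n)}-\lambda) = c\,\mu_j^{(n)}, \qquad b(1-\lambda) = c(\nu_n - 1).
\]
Substituting these back into the defining equation for $c$ produces the advertised rational scalar identity. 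The hypothesis $\mu_j^{(n)}\neq 0$ is exactly what forces these component equations to determine $(a_j,b)$ up to the scalar $c$, so that the rational equation is both necessary and sufficient for $\lambda$ to be an eigenvalue (and the eigenspace is one-dimensional).

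For the roots and the interlacing, the key idea is to reduce the complex rational equation, restricted to $|z|=1$, to a real strictly monotone one. Setting $z=e^{i\theta}$ and combining the identities $\frac{\lambda_j^{(n)}}{\lambda_j^{(n)}-z}=\tfrac12\bigl(1+\frac{\lambda_j^{(n)}+z}{\lambda_j^{(n)}-z}\bigr)$ and $\frac{\lambda_j^{(n)}+z}{\lambda_j^{(n)}-z}=i\cot\!\bigl(\tfrac{\theta-\theta_j^{(n)}}{2}\bigr)$, and collecting the constant contribution using $\sum_j|\mu_j^{(n)}|^2+|\nu_n|^2=1$, the equation collapses to
\[
2\,\mathrm{Im}(\nu_n) = \sum_{j=1}^n |\mu_j^{(n)}|^2\cot\!\Bigl(\tfrac{\theta-\theta_j^{(n)}}{2}\Bigr) + |1-\nu_n|^2\cot\!\Bigl(\tfrac{\theta}{2}\Bigr).
\]
Note that $\nu_n\neq 1$ under the hypothesis $\mu_j^{(n)}\neq 0$ (since then $\sum_j|\mu_j^{(n)}|^2>0$, forcing $|\nu_n|<1$), so every coefficient on the right is strictly positive. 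Each cotangent factor is strictly decreasing from $+\infty$ to $-\infty$ on its open intervals of continuity, so the right-hand side is a strictly decreasing continuous function of $\theta$ from $+\infty$ to $-\infty$ on each of the $n+1$ open intervals $(0,\theta_1^{(n)}), (\theta_1^{(n)},\theta_2^{(n)}),\ldots,(\theta_n^{(n)},2\pi)$ determined by its poles. It therefore attains the value $2\,\mathrm{Im}(\nu_n)$ exactly once in each, which simultaneously yields the count of $n+1$ roots on the unit circle and the stated interlacing.

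For the eigenvector formula, given any root $\lambda=\lambda_k^{(n+1)}$ the component relations exhibit the one-dimensional eigenspace as spanned by the explicit vector
\[
v_k := \sum_{j=1}^n \frac{\mu_j^{(n)}}{\lambda_j^{(n)}-\lambda_k^{(n+1)}}\,f_j^{(n)} + \frac{\nu_n-1}{1-\lambda_k^{(n+1)}}\,e_{n+1}
\]
(corresponding to the normalization $c=1$). A direct computation of $\|v_k\|^2$ using orthonormality of the basis gives exactly $h_k^{(n+1)}$, so choosing the free phase of $f_k^{(n+1)}$ such that $f_k^{(n+1)} = v_k/\sqrt{h_k^{(n+1)}}$ recovers the stated identity.

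The main obstacle I anticipate is the trigonometric reduction in the second step: the rational equation is a priori complex-valued on the unit circle, and it is not obvious in advance that the right half-angle identities will split it into a real scalar $2\,\mathrm{Im}(\nu_n)$ plus a manifestly imaginary sum with \emph{positive}-coefficient cotangents. Once this clean form is available, monotonicity and the count of roots are immediate, and the rest of the argument is organized bookkeeping starting from the explicit reflection formula of Section~\ref{sec:isometries}.
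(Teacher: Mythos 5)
Your proposal is correct and follows essentially the same route as the paper. The derivation of the rational equation is the paper's argument in disguise: you write the candidate eigenvector $v=\sum_j a_j f_j^{(n)}+b\,e_{n+1}$, apply the explicit reflection formula, and observe that the eigenvector must be a scalar multiple of the explicit vector whose components are $\mu_j^{(n)}/(\lambda_j^{(n)}-\lambda)$ and $(\nu_n-1)/(1-\lambda)$. The paper packages this via the rank-one update $Q=I+w v^t$ and the constraint $v^t w=-1$; your auxiliary scalar $c$ encodes exactly the same information. The eigenvector normalization is identical.

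For the interlacing, both you and the paper first rewrite the equation using $\lambda_j^{(n)}/(\lambda_j^{(n)}-z)=\tfrac12\bigl(1+\tfrac{\lambda_j^{(n)}+z}{\lambda_j^{(n)}-z}\bigr)$ together with the normalization $\sum_j\lvert\mu_j^{(n)}\rvert^2+\lvert\nu_n\rvert^2=1$. The paper then notes that the resulting function $\Phi$ is $i\mathbb{R}$-valued on the circle (since $(z+z')/(z-z')$ is purely imaginary for $z,z'\in S^1$), applies the intermediate value theorem to get at least one root per gap, and then invokes the fact that $\Phi$ has at most $n+1$ zeros (a degree count after clearing denominators) to conclude uniqueness. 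You instead parametrize the circle, turn the imaginary parts into explicit cotangents, and use \emph{strict monotonicity} of the right-hand side on each gap; this buys you uniqueness directly and avoids the degree count, which is arguably a cleaner way to finish this step. Both are valid.

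One small point worth spelling out in a careful write-up: your remark that the component relations together with the rational equation are ``necessary and sufficient for $\lambda$ to be an eigenvalue'' is literally true only for $\lambda\notin\{\lambda_1^{(n)},\dots,\lambda_n^{(n)},1\}$; for those exceptional values the relations degenerate (e.g.\ at $\lambda=\lambda_{j_0}^{(n)}$ the $j_0$-th relation forces $c=0$, which in turn kills $v$ because $\mu_{j_0}^{(n)}\neq0$). As you implicitly use, the interlacing argument shows the $n+1$ roots of the rational equation are distinct and avoid $\{\lambda_j^{(n)},1\}$, and since $u_{n+1}$ has exactly $n+1$ eigenvalues with multiplicity, these must be all of them, so the exceptional values are excluded a posteriori. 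The paper makes this explicit; you should too.
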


\begin{proof}
Let $f$ be an eigenvector of $u_{n+1}$ with corresponding eigenvalue $z$. Then we have
\[
  f = \sum_{j=1}^n a_j f_j^{(n)} + b e_{n+1}
\]
where $a_1, ..., a_n, b$ are (as yet unknown) complex numbers, not all zero. Our goal is to write
these coefficients in terms of $x_{n+1}$ and the eigenvalues of $u_n$.

We have
\begin{align*}
    zf &= u_{n+1} f \\
    &= u_{n+1} \left( \sum_{j=1}^n a_j f_j^{(n)} + b e_{n+1} \right) \\
    &= \sum_{j=1}^n a_j u_{n+1} f_j^{(n)} + b u_{n+1} e_{n+1} \\
    &= \sum_{j=1}^n a_j \lambda_j^{(n)} r_{n+1} f_j^{(n)} + b x_{n+1}.
\end{align*}

We recall that for all $t \in \mathbb{C}^{n+1}$, $r_{n+1}(t)$ is given by
\[
  r_{n+1}(t) = t + \frac{\langle t, x_{n+1} - e_{n+1} \rangle}{\langle e_{n+1}, x_{n+1} - e_{n+1} \rangle} (x_{n+1} - e_{n+1})
\]
so that
\[
  zf = \sum_{j=1}^n a_j \lambda_j^{(n)} \left(f_j^{(n)} + \frac{\langle f_j^{(n)}, x_{n+1} - e_{n+1} \rangle}{\langle e_{n+1}, x_{n+1} - e_{n+1} \rangle} (x_{n+1} - e_{n+1}) \right) + b x_{n+1}.
\]
Now we decompose
\[
  x_{n+1} = \sum_{k=1}^n \mu_k^{(n)} f_k^{(n)} + \nu_n e_{n+1}
\]
and
\begin{align*}
  zf &= \sum_{j=1}^n a_j \lambda_j^{(n)} \left(f_j^{(n)} + \frac{\overline{\mu_j^{(n)}}}{\overline{\nu_n} - 1} (x_{n+1} - e_{n+1}) \right) + b x_{n+1} \\
  &= \sum_{j=1}^n a_j \lambda_j^{(n)} f_j^{(n)} + \left(\sum_{\ell=1}^n a_\ell \lambda_\ell^{(n)} \frac{\overline{\mu_\ell^{(n)}}}{\overline{\nu_n} - 1} \right) (x_{n+1} - e_{n+1}) + b x_{n+1}.
\end{align*}
Because $f_1^{(n)}, ..., f_n^{(n)}, e_{n+1}$ is a basis for $\mathbb{C}^{n+1}$, we deduce the system of $n+1$ equations
\[
  z a_j = a_j \lambda_j^{(n)} + \mu_j^{(n)} \sum_{\ell=1}^n a_\ell \lambda_\ell^{(n)} \frac{\overline{\mu_\ell^{(n)}}}{\overline{\nu_n} - 1} + b \mu_j^{(n)}
\]
for $j = 1, \ldots, n$ and
\[
  z b = b + (\nu_n - 1) \sum_{\ell=1}^n a_\ell \lambda_\ell^{(n)} \frac{\overline{\mu_\ell^{(n)}}}{\overline{\nu_n} - 1} + b (\nu_n - 1).
\]
For $z \notin \{\lambda_1^{(n)}, \dots, \lambda_n^{(n)}, 1\}$, let us consider
the linear transform $Q : \mathbb{C}^{n+1} \to \mathbb{C}^{n+1}$ whose matrix 
representation in the basis $f_1^{(n)}, \ldots, f_n^{(n)}, e_{n+1}$ is
\[
  Q = I + w v^t,
\]
where
\[
  w = \begin{pmatrix} \frac{\mu_1^{(n)}}{\lambda_1^{(n)} - z} \\ \vdots \\ \frac{\mu_n^{(n)}}{\lambda_n^{(n)} - z} \\ \frac{\nu_n - 1}{1 - z} \end{pmatrix}
\]
and
\[
  v^t = \left( \lambda_1^{(n)} \frac{\overline{\mu_1^{(n)}}}{\overline{\nu_n} - 1}, \cdots, \lambda_n^{(n)} \frac{\overline{\mu_n^{(n)}}}{\overline{\nu_n} - 1},  1 \right). 
\]
Then, the above system can be written
\[
  Q f = 0.
\]
Clearly, $\rank Q \in \{n, n+1\}$. If it has full rank then $f = 0$, but we assume a priori that $z$
 is an eigenvalue for $u_{n+1}$ and so has a non-trivial eigenspace. Thus we must have $\rank Q = n$ and
\[
  0 = Q f = f + w (v^t f).
\]
The right hand side can only vanish if $f$ is proportional to $w$, so $f = \alpha w$ for some 
complex constant $\alpha \in \C \setminus \{0\}$ and $v^t w = -1$. In particular,
\[
  \sum_{j=1}^n \frac{\lambda_j^{(n)} \abs{\mu_j^{(n)}}^2}{\lambda_j^{(n)} - z} +
 \frac{\abs{\nu_n - 1}^2}{1 - z} = 1 - \overline{\nu_n},
\]
as required.

Conversely, if $z \notin \{\lambda_1^{(n)}, \dots, \lambda_n^{(n)}, 1\}$
satisfies this equation, then
\[
Q w = w + w (v^t w) = w + w(-1) = 0,
\]
which implies that $w$ is an eigenvector of $u_{n+1}$ for the eigenvalue $z$. 

Let us now show that the eigenvalues $z \notin \{\lambda_1^{(n)}, \dots, \lambda_n^{(n)}, 1\}$ of
$u_{n+1}$ strictly interlace between $1$ and the eigenvalues of $u_n$: since $u_{n+1}$ has at most 
$n+1$ eigenvalues, this implies that $\lambda_1^{(n)}, \dots, \lambda_n^{(n)}, 1$ are not eigenvalues 
of $u_{n+1}$. 

Define the rational function $\Phi : S^1 \to \C \cup \{\infty\}$ by
\[
  \Phi(z) = \sum_{j=1}^n \frac{\lambda_j^{(n)} \abs{\mu_j^{(n)}}^2}{\lambda_j^{(n)} - z} + \frac{\abs{\nu_n - 1}^2}{1 - z} - (1 - \overline{\nu_n})
\]
Note that $\Phi$ vanishes precisely on the eigenvalues of $u_{n+1}$ which are different from 
$\lambda_1^{(n)}, \dots, \lambda_n^{(n)}, 1$. Recalling that $\abs{\mu_1^{(n)}}^2 + 
\cdots + \abs{\mu_n^{(n)}}^2 + \abs{\nu_n}^2 = 1$, we can rearrange the expression of $\Phi$ 
to the equivalent form
\[
  \Phi(z) = \frac{1}{2} \left( \sum_{j=1}^n \abs{\mu_j^{(n)}}^2 \frac{\lambda_j^{(n)} + z}
{\lambda_j^{(n)} - z} + \abs{1 - \nu_n}^2 \frac{1 + z}{1 - z} - \nu_n + \overline{\nu_n} \right). 
\]
Hence, $\Phi$ takes values only in $i \R \cup \{\infty\}$, since for 
all $z \neq z' \in S^1$, $(z+z')/(z-z')$ is purely imaginary (the triangle $(-z',z,z')$ has a right angle at $z$).
Note that for $z \in \{ \lambda_1^{(n)}, \dots, \lambda_n^{(n)}, 1\}$, a unique term of the sum defining 
$\Phi$ is infinite, since by assumption, $\mu_1^{(n)}, \dots, \mu_n^{(n)}, 1 - \nu_n$ are nonzero and 
  $\lambda_1^{(n)}, \dots, \lambda_n^{(n)}, 1$ are distinct: $\Phi(z) = \infty$. 

Next, we consider $t \mapsto \Phi(e^{it})$ in a short interval $(\theta_j^{(n)} - \delta, 
\theta_j^{(n)} + \delta)$. Then, for $t = \theta_j^{(n)} + u$ in this interval,
\[
  \frac{\lambda^{(n)}_j + \lambda^{(n)}_j e^{i u}}{\lambda^{(n)}_j - \lambda^{(n)}_j e^{i u}} = 
\frac{1 + e^{iu}}{1 - e^{iu}} = 2 i u^{-1} + O(1)
\]
while the other terms in $\Phi(e^{it})$ are uniformly bounded as $\delta \to 0$; likewise for the 
interval $(-\delta, \delta)$. In particular, $\Phi \to i \infty$ as $u \to 0$ from the right and
 $\Phi \to -i \infty$ as $u \to 0$ from the left. We therefore conclude, as $\Phi$ is continuous, that 
on each interval of the partition
\[
  (0, \theta_1^{(n)}) \cup (\theta_1^{(n)}, \theta_2^{(n)}) \cup \cdots \cup (\theta_n^{(n)}, 2 \pi)
\]
of the unit circle into $n+1$ parts, $t \mapsto \Phi(e^{it})$ must assume every value on the line $i\R$, and 
in particular must have at least one root. But we know that $\Phi$  has only $n+1$ roots on the
 circle so there must be exactly one root in each part of the partition, which proves the interlacing property. 

It remains to check the expression of the eigenvectors $(f_k^{(n+1)})_{1 \leq k \leq n+1}$ given in the theorem, 
but this expression is immediately deduced from the expression of the 
vector $w$ involved in the operator $Q$ defined above, and the fact that $\|f_k^{(n+1)}\| = 1$. 

\end{proof}

\section{A filtration adapted to the virtual isometry} \label{sec:filtration}
In our proof of convergence of eigenvalues and eigenvectors, we will use some martingale 
arguments. That is why in this section, we introduce a filtration related to our random virtual 
isometry. 
For $n \geq 1$, we define the $\sigma$-algebra $\mathcal{A}_n = \sigma\{\lambda_j^{(m)} \mid 1 
\leq m \leq n, 1 \leq j \leq m\}$
 and its limit $\mathcal{A} = \vee_{n=1}^\infty \mathcal{A}_n$. 
\begin{lemma} \label{lem:sigmaA}
    For all $n \geq 1$, the $\sigma$-algebra $\mathcal{A}_n$ is equal, up to completion, to
 the $\sigma$-algebra generated 
by $u_1$ the variables $\abs{\mu_j^{(m)}}$ and $\nu_m$ for $1 \leq m \leq n - 1$ and $1 \leq j \leq m$.
\end{lemma}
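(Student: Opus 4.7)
The plan is to establish the two set-theoretic inclusions $\mathcal{A}_n \subseteq \mathcal{B}_n$ and $\mathcal{B}_n \subseteq \mathcal{A}_n$ (up to null sets), where
\[
  \mathcal{B}_n := \sigma\big(u_1,\ |\mu_j^{(m)}|,\ \nu_m : 1 \leq m \leq n-1,\ 1 \leq j \leq m\big).
\]
All the work takes place on the almost sure event, provided by Theorem~\ref{thm:spectrum} together with Proposition~\ref{prop:haaronvirtualisometries}, on which for every $m \leq n$ the eigenvalues of $u_m$ are distinct and different from $1$ and every $\mu_j^{(m)}$ and $1-\nu_m$ is nonzero; this is exactly what ``up to completion'' allows.

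For the inclusion $\mathcal{A}_n \subseteq \mathcal{B}_n$, I would proceed by induction on $m$. The base case is immediate since $u_1 = \lambda_1^{(1)}$. For the inductive step, assuming the $\lambda_j^{(m)}$ are $\mathcal{B}_n$-measurable, Theorem~\ref{thm:spectrum} characterises the eigenvalues of $u_{m+1}$ as the $m+1$ strictly interlaced roots on the unit circle of a rational equation whose coefficients depend only on $\lambda^{(m)}$, $|\mu^{(m)}|^2$ and $\nu_m$. Ordering these roots by eigenangle in $(0, 2\pi)$ (which is possible thanks to the strict interlacing) realises each $\lambda_k^{(m+1)}$ as a measurable function of the generators of $\mathcal{B}_n$.

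For the reverse inclusion $\mathcal{B}_n \subseteq \mathcal{A}_n$, the strategy is to extract $\nu_m$ and $|\mu_j^{(m)}|^2$ explicitly from the eigenvalues of $u_m$ and $u_{m+1}$ via the rational function
\[
  \psi(z) := \sum_{j=1}^{m} |\mu_j^{(m)}|^2\, \frac{\lambda_j^{(m)}}{\lambda_j^{(m)} - z} + \frac{|1 - \nu_m|^2}{1 - z} - (1 - \overline{\nu_m}).
\]
By Theorem~\ref{thm:spectrum} this function has simple poles exactly at $\lambda_1^{(m)}, \dots, \lambda_m^{(m)}, 1$, simple zeros exactly at $\lambda_1^{(m+1)}, \dots, \lambda_{m+1}^{(m+1)}$, and limit $-(1-\overline{\nu_m})$ at infinity; a degree count ($m+1$ in both numerator and denominator) then forces the factorisation
\[
  \psi(z) = -(1 - \overline{\nu_m})\, \frac{\prod_{k=1}^{m+1}(z - \lambda_k^{(m+1)})}{(z - 1)\prod_{j=1}^{m}(z - \lambda_j^{(m)})}.
\]
Equating the residues at $z = 1$ of both expressions and dividing by $1 - \overline{\nu_m} \neq 0$ yields the explicit identity
\[
  1 - \nu_m = \frac{\prod_{k=1}^{m+1}(1 - \lambda_k^{(m+1)})}{\prod_{j=1}^{m}(1 - \lambda_j^{(m)})},
\]
while the analogous computation at $z = \lambda_j^{(m)}$ expresses each $|\mu_j^{(m)}|^2$ as an explicit rational function of the same eigenvalues. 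Since $m \leq n-1$, all eigenvalues involved lie in $\mathcal{A}_n$, so both $\nu_m$ and $|\mu_j^{(m)}|$ are $\mathcal{A}_n$-measurable, and together with $u_1 = \lambda_1^{(1)} \in \mathcal{A}_1$ this gives the inclusion.

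The main technical point is the partial-fraction identification of $\psi$: one must verify that the listed poles and zeros are all the poles and zeros (no cancellations), which is where the non-degeneracy hypotheses $\mu_j^{(m)} \neq 0$, $\nu_m \neq 1$ and the distinctness of the eigenvalues ultimately enter. Once this is in place, everything else is routine algebraic bookkeeping and a measurable-selection argument justified by the strict interlacing.
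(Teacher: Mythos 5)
Your proof is correct, and for the reverse inclusion it takes a genuinely different route from the paper's. For $\mathcal{A}_n\subseteq\mathcal{B}_n$ both you and the paper induct via the characterisation of $\lambda^{(m+1)}$ as the interlaced roots of the rational equation from Theorem~\ref{thm:spectrum}; that part is essentially identical. For the reverse inclusion the paper instead specialises the rational equation at $z=\lambda_1^{(m+1)},\dots,\lambda_{m+1}^{(m+1)}$ to obtain a linear system $Rv=w$ in the unknowns $\bigl(|\mu_j^{(m)}|^2/(1-\overline{\nu_m}),\,1-\nu_m\bigr)$, and then proves $R$ is invertible by a change of diagonal factor: the rescaled rows of $R$ are precisely the coordinates of the (orthogonal) eigenvectors $f_k^{(m+1)}$ in the basis $f_1^{(m)},\dots,f_m^{(m)},e_{m+1}$. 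You instead observe that $\psi$ is a rational function of degree $m+1$ whose poles and zeros are forced by the theorem, identify the leading coefficient via the limit at infinity, and read off $1-\nu_m$ and $|\mu_j^{(m)}|^2$ from residues. Your residue computation is in fact more informative, since it produces closed-form expressions such as
\[
1-\nu_m \;=\; \frac{\prod_{k=1}^{m+1}\bigl(1-\lambda_k^{(m+1)}\bigr)}{\prod_{j=1}^{m}\bigl(1-\lambda_j^{(m)}\bigr)},
\]
whereas the paper's argument is existence-only (an invertibility statement). One small caveat you correctly flag: for the factorisation you need to know that the zeros and poles of $\psi$ are disjoint, which is exactly what the strict interlacing in Theorem~\ref{thm:spectrum} and the almost-sure non-vanishing of $\mu_j^{(m)}$ and $1-\nu_m$ guarantee; you should also note that the numerator of $\psi$, written over the common denominator $(z-1)\prod_j(z-\lambda_j^{(m)})$, has degree exactly $m+1$ with leading coefficient $-(1-\overline{\nu_m})$ coming from the constant term, which is what pins down the overall constant in the factorisation.
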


\begin{proof}
By Theorem~\ref{thm:spectrum},  
the eigenvalues of $u_{n+1}$ are almost surely the roots of the equation
\[
\sum_{j=1}^n \abs{\mu_j^{(n)}}^2 \frac{\lambda_j^{(n)}}{\lambda_j^{(n)} - z} + \frac{\abs{1 - \nu_n}^2}{1 - z} = 1 - \overline{\nu}_n.
\]
This equation depends only on $\abs{\mu_j^{(n)}}$, $\lambda_j^{(n)}$ for $j = 1, \dotsc, n$, and $\nu_n$. 
By induction, we deduce that $\lambda_j^{(n+1)}$ is a measurable function of $u_1$,  $\{|\mu_j^{(m)}|\}_{1 \leq j
 \leq m,1 \leq m \leq n}$ and $\{\nu_m\}_{1 \leq m \leq n}$.

Conversely, the above equation with $z$ specialized to $\lambda_1^{(n+1)}, \dotsc, \lambda_{n+1}^{(n+1)}$ can be written in the form
\[
  Rv = w.
\]
Here $w$ is a column vector of $1$s, $v$ is the column vector with entries
\[
  v^t = \begin{pmatrix} \frac{\abs{\mu_1^{(n)}}^2}{1 - \overline{\nu_n}}, & \cdots, & \frac{\abs{\mu_n^{(n)}}^2}{1 - \overline{\nu_n}}, & 1 - \nu_n \end{pmatrix}
\]
and $R$ is an $(n +1) \times (n + 1)$ matrix with entries
\[
  R_{k,j} = \frac{\lambda_j^{(n)}}{\lambda_j^{(n)} - \lambda_k^{(n+1)}}
\]
for $1 \leq j \leq n$ and
\[
  R_{k,n+1} = \frac{1}{1 - \lambda_k^{(n+1)}}.
\]
If we write $\widetilde{R} = R S$, where $S$ is the diagonal matrix with entries
 $S_{jj} = (\overline{\mu_j^{(n)}} \lambda_j^{(n)})^{-1}$ ($1 \leq j \leq n$) and 
$S_{n+1,n+1} = \nu_n - 1$, then we see that the rows of $\widetilde{R}$ are the representations of 
the eigenvectors $f_1^{(n+1)}, \dotsc, f_{n+1}^{(n+1)}$ in the basis $f_1^{(n)}, \dotsc, f_n^{(n)}, e_{n+1}$ up
 to constants, and therefore orthogonal. We conclude that $\widetilde{R}$, and thus $R$, are invertible, so
 $Rv = w$ has a unique solution, which can be written in terms of the eigenvalues $\lambda_j^{(m)}$ for $1 \leq j \leq m$ and $m \leq {n+1}$. Thus we conclude that $\abs{\mu_j^{(n)}}$ and $\nu_n$ are measurable functions of $\lambda_j^{(m)}$ for $1 \leq j \leq m$ and $m \leq {n+1}$ as was to be shown.
\end{proof}

For $1 \leq j \leq n$, we define the phase $\phi_j^{(n)}$ by 
 $\mu_j^{(n)} = \phi_j^{(n)} \abs{\mu_j^{(n)}}$, and 
the $\sigma$-algebras $\mathcal{B}_n = \mathcal{A} \vee \sigma\{\phi_j^{(m)}
 \mid 1 \leq m \leq n-1, 1 \leq j \leq m\}$ and $\mathcal{B} = \vee_{n=1}^\infty \mathcal{B}_n$.
\begin{lemma} \label{lem:sigmaB}
    The $\sigma$-algebra $\mathcal{B}_n$ is equal, up to completion, to the $\sigma$-algebra 
generated by $\mathcal{A}$ and the eigenvectors $f_j^{(m)}$ for $1 \leq j \leq m$ and $1 \leq m \leq n$.
\end{lemma}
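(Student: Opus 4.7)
The plan is to prove the identity by induction on $n \geq 1$, using the spectral recurrence of Theorem~\ref{thm:spectrum} as a two-way dictionary between the phase variables $\phi_j^{(n)}$ and the next-level eigenvectors $f_k^{(n+1)}$. All equalities of $\sigma$-algebras are to be read up to completion, and I would work on the full-probability event where the hypotheses of Theorem~\ref{thm:spectrum} hold (all $\mu_j^{(m)}$ nonzero, all eigenvalues distinct, etc.).

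For the base case $n = 1$, I would first fix a deterministic phase convention for the single unit vector $f_1^{(1)} \in \C$ (as $u_1$ is $1\times 1$), so that $\sigma\{f_1^{(1)}\}$ is trivial and both sides reduce to $\mathcal{A}$. For the induction step, assume $\mathcal{B}_n = \mathcal{A} \vee \sigma\{f_j^{(m)} : 1 \leq j \leq m,\ 1 \leq m \leq n\}$. It then suffices to show that, relative to this common $\sigma$-algebra, adjoining the phases $\{\phi_j^{(n)} : 1 \leq j \leq n\}$ (which yields $\mathcal{B}_{n+1}$) produces the same enlargement as adjoining the eigenvectors $\{f_k^{(n+1)} : 1 \leq k \leq n+1\}$.

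Both directions exploit the formula
\[
(h_k^{(n+1)})^{1/2} f_k^{(n+1)} = \sum_{j=1}^n \frac{\mu_j^{(n)}}{\lambda_j^{(n)} - \lambda_k^{(n+1)}} f_j^{(n)} + \frac{\nu_n - 1}{1 - \lambda_k^{(n+1)}} e_{n+1}
\]
from Theorem~\ref{thm:spectrum}, together with the observation (via Lemma~\ref{lem:sigmaA}) that all eigenvalues, the moduli $|\mu_j^{(n)}|$, the coefficient $\nu_n$, and the positive real normalizer $h_k^{(n+1)}$ are $\mathcal{A}$-measurable. In the forward direction, once the phases $\phi_j^{(n)}$ are known, so are the complex coefficients $\mu_j^{(n)} = \phi_j^{(n)} |\mu_j^{(n)}|$, the right-hand side of the display is $\mathcal{B}_{n+1}$-measurable, and division by $(h_k^{(n+1)})^{1/2} > 0$ yields each $f_k^{(n+1)}$ as a $\mathcal{B}_{n+1}$-measurable vector. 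In the reverse direction, I would take the inner product of both sides with the embedded $f_j^{(n)} \in \C^{n+1}$ and invoke the orthonormality of $\{f_1^{(n)}, \dots, f_n^{(n)}, e_{n+1}\}$ to obtain, for any fixed $k$,
\[
\mu_j^{(n)} = (\lambda_j^{(n)} - \lambda_k^{(n+1)}) (h_k^{(n+1)})^{1/2} \langle f_k^{(n+1)}, f_j^{(n)} \rangle,
\]
and then extract $\phi_j^{(n)} = \mu_j^{(n)} / |\mu_j^{(n)}|$, which is legitimate almost surely since $|\mu_j^{(n)}| > 0$ on the event considered.

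The main obstacle I anticipate is not analytic but bookkeeping-theoretic: ensuring that the phase convention implicit in the recurrence of Theorem~\ref{thm:spectrum} is globally consistent, so that each $f_j^{(m)}$ appearing in the statement of the lemma truly is the one produced by the inductive construction starting from the deterministic $f_1^{(1)}$. Once this is checked, all the divisions above (by $\lambda_j^{(n)} - \lambda_k^{(n+1)}$, by $(h_k^{(n+1)})^{1/2}$, and by $|\mu_j^{(n)}|$) are valid almost surely, and the exceptional null sets are absorbed into the completion.
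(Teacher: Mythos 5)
Your proposal is correct and follows essentially the same route as the paper: use Theorem~\ref{thm:spectrum} in the forward direction (all quantities in the eigenvector recurrence except the phases $\phi_j^{(n)}$ are $\mathcal{A}$-measurable by Lemma~\ref{lem:sigmaA}, so knowing the phases determines $f_k^{(n+1)}$), and in the reverse direction pair with $f_j^{(n)}$ and use orthonormality to solve for $\phi_j^{(n)} = \mu_j^{(n)}/|\mu_j^{(n)}|$. The only extra detail you add is making the induction scaffold and the base-case phase convention explicit (the paper fixes $f_1^{(1)} = -e_1$ a bit later, in Section~\ref{sec:eigenvectors}), which is a sound housekeeping step but not a departure from the paper's argument.
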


\begin{proof}
Again by Theorem~\ref{thm:spectrum}, we can write the 
eigenvectors of $u_{n+1}$ as functions of $\mu_j^{(n)} = 
\phi_j^{(n)} \abs{\mu_j^{(n)}}$ ($1 \leq j \leq n$), $\nu_n$, $\lambda_j^{(m)}$ ($1 \leq j \leq m$,
 $m \in \{n,n+1\}$), and the eigenvectors of $u_n$. Clearly,
 $\abs{\mu_j^{(n)}}$, $\nu_n$, and $\lambda_j^{(m)}$ for $m \in \{n,n+1\}$ are $\mathcal{A}$-measurable
 and  $\phi_j^{(n)}$ is $\mathcal{B}_{n+1}$-measurable.

Conversely, we write
\[
  \langle (h_k^{(n+1)})^{1/2} f_k^{(n+1)}, f_j^{(n)} \rangle = 
\frac{\phi_j^{(n)} \abs{\mu_j^{(n)}}}{\lambda_j^{(n)} - \lambda_k^{(n+1)}}
\]
to find each $\phi_j^{(n)}$ as a function of the other variables.
\end{proof}


\section{Convergence of the eigenangles} \label{sec:eigenvalues}
In order to prove the convergence of the normalized eigenangles of $u_n$ when $n$ goes to infinity, we need the following lemma. 
\begin{lemma} \label{lem:angleestimate}
    Let $\epsilon > 0$. Then, almost surely under
the Haar measure on $U^\infty$, for $n \geq 1$ and $0 < k  \leq n^{1/4}$, we have
    \[
      \frac{\theta_k^{(n+1)} \abs{\mu_k^{(n)}}^2}{\theta_k^{(n)} - \theta_k^{(n+1)}} =
 1 + O(k n^{-\frac13 + \eps})
    \]
and for $n \geq 1$ and $-n^{1/4} \leq  k \leq 0$,
\[
\frac{\theta_k^{(n+1)} \abs{\mu_{k}^{(n)}}^2}{\theta_{k}^{(n)} - \theta_k^{(n+1)}}
= \frac{\theta_k^{(n+1)} \abs{\mu_{k+n}^{(n)}}^2}{\theta_{k}^{(n)} - \theta_k^{(n+1)}}
 = 1 + O((1+|k|)
n^{-\frac13 + \eps}),
\] 
\end{lemma}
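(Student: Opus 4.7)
The plan is to start from the spectral characterization of Theorem~\ref{thm:spectrum} at $z = \lambda_k^{(n+1)}$, written in its symmetric form which is purely imaginary on the unit circle. Using the identity $(\lambda + z)/(\lambda - z) = -i \cot((\arg\lambda - \arg z)/2)$ for $\lambda, z$ on the circle, the equation $\Phi(\lambda_k^{(n+1)}) = 0$ rewrites as the real identity
$$\sum_{j=1}^n |\mu_j^{(n)}|^2 \cot\!\left(\frac{\theta_j^{(n)} - \theta_k^{(n+1)}}{2}\right) = |1-\nu_n|^2 \cot\!\left(\frac{\theta_k^{(n+1)}}{2}\right) - 2\operatorname{Im}(\nu_n).$$
By the interlacing of Theorem~\ref{thm:spectrum}, the $j = k$ term on the left is the singular one, of order $1/(\theta_k^{(n)} - \theta_k^{(n+1)})$; by Proposition~\ref{convergence-eigenangles}, for $1 \leq k \leq n^{1/4}$ we also have $\theta_k^{(n+1)} \asymp k/n$, so the leading term on the right is similarly singular, of order $n/k$.

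Isolating the $j = k$ term and using $\cot(x/2) = 2/x + O(x)$ for small $x$ leads to the main balance
$$\frac{2 |\mu_k^{(n)}|^2}{\theta_k^{(n)} - \theta_k^{(n+1)}} = \frac{2 |1 - \nu_n|^2}{\theta_k^{(n+1)}} + R_n,$$
where the residual $R_n$ collects three pieces: the off-diagonal sum $-\sum_{j \neq k} |\mu_j^{(n)}|^2 \cot((\theta_j^{(n)} - \theta_k^{(n+1)})/2)$, the term $-2\operatorname{Im}(\nu_n)$, and Taylor remainders of order $O(\theta_k^{(n+1)})$ from the cotangent expansions. Multiplying by $\theta_k^{(n+1)}/2$, the statement reduces to the two estimates $|1 - \nu_n|^2 = 1 + O(k n^{-1/3 + \epsilon})$ and $\theta_k^{(n+1)} |R_n| = O(k n^{-1/3 + \epsilon})$.

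These estimates come from Haar concentration. By Proposition~\ref{prop:haaronvirtualisometries}, the vector $x_{n+1}$ is uniform on the unit sphere of $\mathbb{C}^{n+1}$, so standard concentration together with Borel--Cantelli yield $|\nu_n| = O(n^{-1/2 + \epsilon})$ and $\sup_j |\mu_j^{(n)}| = O(n^{-1/2 + \epsilon})$ almost surely. The first of these immediately controls $|1 - \nu_n|^2 - 1$ and $\operatorname{Im}(\nu_n)$. The hard step will be the off-diagonal cotangent sum: the uniform bound on $|\mu_j^{(n)}|$ has to be combined with a quantitative version of the interlacing of Theorem~\ref{thm:spectrum}, obtained from Proposition~\ref{convergence-eigenangles}, which gives $|\theta_j^{(n)} - \theta_k^{(n+1)}| \geq c|j-k|/n$ on the relevant range. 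The exponent $-1/3$ and the restriction $k \leq n^{1/4}$ are precisely what this trade-off between the $n^{-1/2}$-scale Haar concentration of the weights and the $|j-k|/n$-scale repulsion of the eigenangles allows. The case $-n^{1/4} \leq k \leq 0$ is handled identically via the $n$-periodic extensions of $(\theta_k^{(n)})_{k \in \mathbb{Z}}$ and $(\mu_k^{(n)})_{k \in \mathbb{Z}}$.
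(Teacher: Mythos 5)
Your overall architecture is sound and parallels the paper's: start from the secular equation $\Phi(\lambda_k^{(n+1)}) = 0$ of Theorem~\ref{thm:spectrum}, isolate the nearly-singular terms, and bound the remainder using Haar concentration of the coordinates of $x_{n+1}$ on the unit sphere together with eigenangle repulsion. The cotangent rewriting via the symmetric form of $\Phi$ is a legitimate repackaging of the paper's step (the paper instead multiplies by $(1-\lambda_k^{(n+1)})$ and Taylor-expands $1-\lambda$ and $\lambda_j - \lambda_k$). The concentration bounds $|\nu_n| = O(n^{-1/2+\epsilon})$ and $\sup_j|\mu_j^{(n)}| = O(n^{-1/2+\epsilon})$ are exactly the paper's events $E_1$, $E_2$. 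But there are two genuine gaps.

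First, the claimed repulsion $|\theta_j^{(n)} - \theta_k^{(n+1)}| \geq c|j-k|/n$ ``on the relevant range'' is false, and Proposition~\ref{convergence-eigenangles} does not provide it. That proposition is a statement for \emph{fixed} index $m$; it says nothing uniform in $j$ over $\{1,\dots,n\}$, which is what you need to control $\sum_{j\neq k}$. More seriously, a $c/n$ lower bound on adjacent gaps is simply wrong: the minimal gap between eigenangles of a Haar-distributed $u_n$ is of order $n^{-4/3}$, not $n^{-1}$. The paper's event $E_3$ therefore only asserts $\theta_{k+1}^{(n)} - \theta_k^{(n)} \geq n^{-5/3-\epsilon}$ (proved via a two-point correlation estimate, Lemma~\ref{lem:nosmallgaps}), which iterates to $|\theta_j^{(n)} - \theta_k^{(n+1)}| \gtrsim |j-k|\,n^{-5/3-\epsilon}$. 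Your exponent $-1/3$ in the target is in fact the signature of this weaker $n^{-5/3-\epsilon}$ lower bound; if the $|j-k|/n$ repulsion you invoke were true, the off-diagonal sum would be $O(k\,n^{-1+\epsilon'})$ after multiplying by $\theta_k^{(n+1)}$, much better than the claimed $O(k\,n^{-1/3+\epsilon})$. So your heuristic is internally inconsistent with the exponent you say you obtain.

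Second, you isolate only the $j=k$ term as singular, but by the interlacing $\theta_{k-1}^{(n)} < \theta_k^{(n+1)} < \theta_k^{(n)}$ the term $j = k-1$ is \emph{a priori} equally dangerous, since $\lambda_k^{(n+1)}$ could be close to $\lambda_{k-1}^{(n)}$. The paper spends a nontrivial paragraph showing this does not happen: using the sign structure of the $j=k$ and $j=k-1$ terms it deduces a lower bound on the first, hence $\theta_k^{(n)} - \theta_k^{(n+1)} \lesssim k\,n^{-2+3\epsilon}$, and then $\theta_k^{(n+1)} - \theta_{k-1}^{(n)} \gtrsim n^{-5/3-\epsilon}$ via $E_3$, so the $j=k-1$ contribution is only $O(k\,n^{-1/3+\epsilon})$. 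Without this step the "main balance" you write down is unjustified, since the residual $R_n$ could itself be as large as the term you keep.
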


\begin{remark} The implied constant in the $O(\cdot)$ notation depends on $(u_m)_{m \geq 1}$ and 
$\eps$: in particular, it is a random variable. However, for given $(u_m)_{m \geq 1}$ and 
$\eps$, it does not depend on $k$ and $n$. 
\end{remark}
\begin{proof}
By symmetry of the situation, we can assume $k > 0$. Moreover, let us fix $\eps \in (0, 0.01)$. We will suppose that the event 
$E := E_0 \, \cap \, E_1 \, \cap \, E_2, \cap 
\, E_3$ holds, where
\begin{align*}
    E_0 &= \{\theta_0^{(1)} \neq 0 \} \cap \{ \forall n \geq 1, \nu_n \neq 0 \} \cap \{\forall n \geq 1, 1 \leq k \leq n, \mu_k^{(n)} \neq 0\} \\
    E_1 &= \{\exists n_0 \geq 1, \forall n \geq n_0, \abs{\nu_n} \leq n^{-\frac12+\eps}\} \\
    E_2 &= \{\exists n_0 \geq 1, \forall n \geq n_0, 1 \leq k \leq n, \abs{\mu_k^{(n)}} \leq n^{-\frac12+\eps}\} \\
    E_3 &= \{\exists n_0 \geq 1, \forall n \geq n_0, k \geq 1, n^{-\frac53 - \eps} \leq \theta_{k+1}^{(n)} - \theta_k^{(n)} \leq n^{-1 + \eps}\}.
\end{align*}
It is possible to do this assumption, since  by the result proven in the appendix of the present paper, the event $E$ occurs almost surely. As we will see now, this \emph{a priori} information on the distribution of the eigenvalues of the random virtual isometry implies strong quantitative bounds on the change in eigenvalues of successive unitary matrices.

Recall from Theorem~\ref{thm:spectrum} that
\[
  \sum_{j=1}^n \frac{\lambda_j^{(n)} \abs{\mu_j^{(n)}}^2}{\lambda_j^{(n)} - \lambda_k^{(n+1)}} + 
\frac{\abs{1 - \nu_n}^2}{1 - \lambda_k^{(n+1)}} = 1 - \overline{\nu}_n.
\]
By using the $n$-periodictiy of $\lambda_j^{(n)}, \mu_j^{(n)}$, $f_j^{(n)}$ with
respect to $j$, 
we can write 
\begin{equation}
  \sum_{j \in J} \frac{\lambda_j^{(n)} \abs{\mu_j^{(n)}}^2}{\lambda_j^{(n)} - \lambda_k^{(n+1)}} + 
\frac{\abs{1 - \nu_n}^2}{1 - \lambda_k^{(n+1)}} = 1 - \overline{\nu}_n, \label{qwertyuiop}
\end{equation}
where $J$ is the random set of $n$ consecutive integers, such that $\theta_k^{(n+1)} - \pi
< \theta_j^{(n)} \leq \theta_k^{(n+1)} + \pi$.
 Iterating the lower bound on the distance between adjacent eigenvalues, given by the definition of 
the event $E_3$, we get, 
for $j \in J \backslash \{k-1,k\}$, 
\[
  \abs{\theta_j^{(n)} - \theta_k^{(n+1)}} \gtrsim \abs{k - j} n^{-\frac53 - \eps},
\]
and then 
\[
  \abs{\lambda_j^{(n)} - \lambda_k^{(n+1)}} \gtrsim \abs{k - j} n^{-\frac53 - \eps}, 
\]
since $\abs{\theta_j^{(n)} - \theta_k^{(n+1)}} \leq \pi$. 

Likewise, we have by $E_3$, $1 - \lambda_k^{(n+1)} = O(k n^{-1 + \eps})$, and by $E_2$, 
$\abs{\mu_j^{(n)}}^2 = O(n^{-1+2 \eps})$, which gives, for 
$j \in J \backslash \{ k-1, k \}$,
\[
  \frac{\lambda_j^{(n)} (1 - \lambda_k^{(n+1)})\abs{\mu_j^{(n)}}^2}{\lambda_j^{(n)} - \lambda_k^{(n+1)}}
 \lesssim \frac{k}{\abs{k - j}} n^{-\frac13+ 4\eps}.
\]
 Summing for $j$ in $J \backslash \{ k-1, k \}$, which is included in 
the interval $[k-1-n, k+n]$, gives 
\[
  \sum_{j \in J \backslash \{ k-1, k \}} \frac{\lambda_j^{(n)} (1 - \lambda_k^{(n+1)})\abs{\mu_j^{(n)}}^2}{\lambda_j^{(n)} - \lambda_k^{(n+1)}}
 = O (k n^{-\frac13+ 4\eps} \log n) = O(k n^{-\frac13+ 5\eps}).
\]
Now, subtracting this equation from the product of \eqref{qwertyuiop} by $1 - \lambda_k^{(n+1)}$, 
and bounding $\nu_n = O(n^{-\frac12 + \eps})$ (by the
property $E_1$) gives us the resulting equation
\[
\frac{\lambda_k^{(n)} (1 - \lambda_k^{(n+1)}) \abs{\mu_k^{(n)}}^2}{\lambda_k^{(n)} - \lambda_k^{(n+1)}} {\bf 1}_{k \in J} + \frac{\lambda_{k-1}^{(n)} (1 - \lambda_k^{(n+1)}) \abs{\mu_{k-1}^{(n)}}^2}{\lambda_{k-1}^{(n)} - \lambda_k^{(n+1)}} {\bf 1}_{k-1 \in J} = - 1 + O(kn^{-\frac13+ 5 \eps}).
\]

Next we estimate the first two terms in terms of the eigenangles. We find
\[
  1 - \lambda_k^{(n+1)} = - i \theta_k^{(n+1)} + O((\theta_k^{(n+1)})^2)
\]
and
\[
  \lambda_j^{(n)} - \lambda_k^{(n+1)} = i(\theta_j^{(n)} - \theta_k^{(n+1)}) \lambda_j^{(n)} + O((\theta_j^{(n)} - \theta_k^{(n+1)})^2)
\]
for $j = k-1$, $k$. Collecting terms and using the trivial bounds gives

\begin{multline}
  \frac{\theta_k^{(n+1)} \abs{\mu_k^{(n)}}^2}{\theta_k^{(n)} - \theta_k^{(n+1)}} \left(
1 + O(kn^{-1 + \eps}) \right) {\bf 1}_{k \in J} \\
+ \frac{\theta_k^{(n+1)} \abs{\mu_{k-1}^{(n)}}^2}{\theta_{k-1}^{(n)} - \theta_k^{(n+1)}}
\left(
1 + O(kn^{-1 + \eps}) \right) {\bf 1}_{k-1 \in J}  
= 1 + O(kn^{-\frac13 + 5 \eps}). \label{qwertyuiop2}
\end{multline}
From Theorem~\ref{thm:spectrum}, the eigenvalues of $u_n$ and $u_{n+1}$ interlace, so for $n$ sufficiently large the real part of the first term is positive and the real part of the second term is negative. The real part of the right hand side tends to $1$ as $n$ grows with $k$ fixed, so the first term has real part bounded below for $n$ sufficiently large. In particular,
\[
\frac{\theta_k^{(n+1)} \abs{\mu_k^{(n)}}^2}{\theta_k^{(n)} - \theta_k^{(n+1)}} \gtrsim 1.
\]
Using the a priori bounds for $\theta_k^{(n+1)}$ and $\abs{\mu_k^{(n)}}^2$, we find
\[
  \theta_k^{(n)} - \theta_k^{(n+1)} \lesssim k n^{-2+3\eps}.
\]
Hence,
\[
  \theta_k^{(n+1)} - \theta_{k-1}^{(n)} = (\theta_{k}^{(n)} - \theta_{k-1}^{(n)}) -
(\theta_k^{(n)} - \theta_k^{(n+1)}) \gtrsim n^{-\frac53 - \eps} - O(kn^{-2+3\eps}) \gtrsim n^{-\frac53 - \eps},
\]
since $ kn^{-2+3\eps}/ n^{-\frac53 - \eps} = O(n^{1/4 - 2 + 0.03 + 5/3 + 0.01}) = o(1)$.
We deduce that the second term of \eqref{qwertyuiop2} is dominated by $k n^{-1/3 + 4\eps}$, and then

\[
  \frac{\theta_k^{(n+1)} \abs{\mu_k^{(n)}}^2}{\theta_k^{(n)} - \theta_k^{(n+1)}} = 1 + O(k n^{-\frac13 + 5 \eps}).
\]
Changing the value of $\eps$ appropriately gives the desired result. 
\end{proof}


This lemma is enough for us to estimate the change in $\theta_k^{(n)}$ as $n$ grows, and in particular to find a limit for the renormalized angle.
\begin{theorem} \label{thm:eigenvalues}
There is a sine-kernel point process $(y_k)_{k \in \Z}$ such that almost surely,
\[
  \frac{n}{2 \pi} \theta_k^{(n)} = y_k + O((1+  k^2) n^{-\frac13 + \eps}),
\]
for all $n \geq 1$, $|k| \leq n^{1/4}$ and $\eps > 0$, where the implied constant may depend on $(u_m)_{m \geq 1}$
and $\eps$, but not on $n$ and $k$.  
\end{theorem}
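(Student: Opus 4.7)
The plan is to extract from Lemma~\ref{lem:angleestimate} a martingale-plus-small-error decomposition of the increments of $a_n^{(k)} := \tfrac{n}{2\pi}\theta_k^{(n)}$ and then to telescope. Writing
\[
a_{n+1}^{(k)} - a_n^{(k)} = \tfrac{1}{2\pi}\theta_k^{(n+1)} - \tfrac{n}{2\pi}\bigl(\theta_k^{(n)} - \theta_k^{(n+1)}\bigr),
\]
substituting $\theta_k^{(n)} - \theta_k^{(n+1)} = \theta_k^{(n+1)}\abs{\mu_k^{(n)}}^2\bigl(1 + O((1+\abs{k})n^{-1/3+\eps})\bigr)$ from the lemma, and using the a priori bounds $\abs{\mu_k^{(n)}}^2 \lesssim n^{-1+2\eps}$ and $\theta_k^{(n)} \lesssim (1+\abs{k})n^{-1+\eps}$ from the events $E_1, E_2, E_3$ introduced in the proof of the lemma to replace $\theta_k^{(n+1)}$ by $\theta_k^{(n)}$ in the leading term, one should arrive at
\[
a_{n+1}^{(k)} - a_n^{(k)} = X_n^{(k)} + R_n^{(k)}, \qquad X_n^{(k)} := \tfrac{\theta_k^{(n)}}{2\pi}\bigl(1 - (n+1)\abs{\mu_k^{(n)}}^2\bigr),
\]
with a deterministic remainder $\abs{R_n^{(k)}} \lesssim (1+k^2)\, n^{-4/3+\eps}$ (the dominant contribution coming from $n \cdot \theta_k^{(n)}\abs{\mu_k^{(n)}}^2 \cdot \abs{k}n^{-1/3+\eps}$).

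The key structural point is that $X_n^{(k)}$ is a martingale difference with respect to the filtration $\mathcal{F}_n := \sigma(u_n)$. Indeed $\theta_k^{(n)}$ is $\mathcal{F}_n$-measurable, and by Proposition~\ref{prop:haaronvirtualisometries} the vector $x_{n+1}$ is independent of $\mathcal{F}_n$ and uniformly distributed on the unit sphere of $\C^{n+1}$; expanding $x_{n+1}$ in the orthonormal basis $(f_1^{(n)},\ldots,f_n^{(n)},e_{n+1})$ yields $\mathbb{E}\bigl[(n+1)\abs{\mu_k^{(n)}}^2 \bigm| \mathcal{F}_n\bigr] = 1$. Hence $M_n^{(k)} := \sum_{m<n} X_m^{(k)}$ is a martingale adapted to $(\mathcal{F}_n)$, while $S_n^{(k)} := \sum_{m<n} R_m^{(k)}$ converges absolutely with tail $\bigl|\sum_{m \geq n} R_m^{(k)}\bigr| \lesssim (1+k^2)\, n^{-1/3+\eps}$. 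Once the martingale is shown to converge almost surely, telescoping provides a random limit $y_k := \lim_n a_n^{(k)}$.

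The core quantitative input is then an almost-sure tail bound on $\abs{M_\infty^{(k)} - M_n^{(k)}}$. On the full-probability event $E_2$ one has $\abs{X_m^{(k)}} \lesssim (1+\abs{k})\, m^{-1+2\eps}$, so $\sum_{m \geq n} (X_m^{(k)})^2 \lesssim (1+k^2)\, n^{-1+4\eps}$. Azuma's inequality (or equivalently a Burkholder--Davis--Gundy bound on $L^{2p}$ norms) then gives super-polynomial concentration, and combining this with Borel--Cantelli and a union bound over the polynomially many values $\abs{k} \leq n^{1/4}$ yields, almost surely,
\[
\abs{M_\infty^{(k)} - M_n^{(k)}} \lesssim (1+\abs{k})\, n^{-1/2+\eps}
\]
uniformly in $n \geq 1$ and $\abs{k} \leq n^{1/4}$. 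Collecting both contributions,
\[
\bigl|a_n^{(k)} - y_k\bigr| \leq \abs{M_\infty^{(k)} - M_n^{(k)}} + \Bigl|\sum_{m \geq n} R_m^{(k)}\Bigr| \lesssim (1+k^2)\, n^{-1/3+\eps},
\]
after absorbing constants into $\eps$.

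Finally, the identification of the joint law of $(y_k)_{k \in \Z}$ as the determinantal sine-kernel point process follows from Proposition~\ref{convergence-eigenangles}, which already asserts almost sure convergence (albeit with a weaker rate), or independently by combining the present almost-sure convergence with the weak convergence of Proposition~\ref{Laplacefunctionals}. The main obstacle will be the uniform-in-$k$ almost-sure martingale tail bound: $L^2$-convergence of $M_n^{(k)}$ alone yields the desired rate only in probability, and upgrading to the required almost-sure control uniformly across the polynomial range $\abs{k} \leq n^{1/4}$ is precisely what forces one to exploit the deterministic a priori bound on $\abs{\mu_k^{(n)}}$ via sharp concentration inequalities rather than via second moments.
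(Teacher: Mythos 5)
Your additive telescope is a genuinely different route from the paper's, which works multiplicatively: the paper rearranges Lemma~\ref{lem:angleestimate} into the logarithmic identity $\log(\theta_k^{(n)}/\theta_k^{(n+1)}) = \abs{\mu_k^{(n)}}^2 + O(kn^{-4/3+3\eps})$, defines $L_k^{(n)} = \log\theta_k^{(n)} + \sum_{j<n}\abs{\mu_k^{(j)}}^2$ and $M_k^{(n)} = \sum_{j<n}(\abs{\mu_k^{(j)}}^2 - 1/j)$, and exponentiates. The crucial advantage of that parametrization is that $M_k^{(n)}$ is a sum of \emph{independent} centered random variables (Beta-distributed, coming from the independent reflection vectors $x_{j+1}$), with $\emph{unconditional}$ second moments $\mathbb{E}(\abs{\mu_k^{(j)}}^2 - 1/j)^2 = O(j^{-2})$. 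Your $X_n^{(k)} = \tfrac{\theta_k^{(n)}}{2\pi}(1 - (n+1)\abs{\mu_k^{(n)}}^2)$ is a genuine martingale difference — the conditional mean argument via Proposition~\ref{prop:haaronvirtualisometries} is correct — but its conditional variance carries the random factor $(\theta_k^{(n)})^2$. To run the paper's $L^2$ machinery on your martingale you would need to control $\mathbb{E}[(\theta_k^{(n)})^2]$ uniformly in $k$, which is extra work the paper never needs to do.

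The claim in your final paragraph is, however, incorrect: $L^2$ methods do suffice for the uniform-in-$k$ almost-sure tail, and the paper uses exactly that. Specifically, it combines $\mathbb{E}[(M_k^{(\infty)}-M_k^{(n)})^2] = O(n^{-1})$ with Doob's maximal inequality on dyadic blocks, a union bound over the polynomial range $k \leq n^{1/4}$, Markov's inequality at threshold $2^{-q/3}$, and Borel--Cantelli. The price paid is a loss from the $L^2$ rate $n^{-1/2}$ down to the almost-sure rate $n^{-1/3}$, but since your deterministic remainder $\sum_{m\geq n} R_m^{(k)}$ already costs $O((1+k^2)n^{-1/3+\eps})$, this loss is entirely harmless — the martingale part is never the dominant contribution. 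Moreover, the Azuma route you propose has a technical obstacle you do not address: Azuma requires deterministic bounds on the increments, whereas the bound $\abs{X_m^{(k)}} \lesssim (1+\abs{k})m^{-1+3\eps}$ holds only on the a~priori events $E_2\cap E_3$ with a random implied constant, and those events are tail events, not $\mathcal{F}_m$-measurable. You would need to introduce stopping times (e.g.\ the first $m$ at which $\abs{\mu_k^{(m)}}^2 > m^{-1+2\eps}$) to make the truncation adapted before any concentration inequality could be invoked — a complication the paper's second-moment argument bypasses entirely.
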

\begin{proof}
The proof proceeds exactly as in \cite{BNN12}. It is sufficient to 
prove the result for $\eps$ equal to the inverse of an integer: hence, it is enough to show the estimate for 
fixed $\eps$. By symmetry, one can take $k > 0$. 
We rearrange the equation in Lemma~\ref{lem:angleestimate} to find
\[
  \abs{\mu_k^{(n)}}^2 = \left( \frac{\theta_k^{(n)}}{\theta_k^{(n+1)}} - 1\right) (1 + O(kn^{-\frac13 + \eps}))
\]
Because almost surely, $\abs{\mu_k^{(n)}}^2 = O(n^{-1+2 \eps})$, we get
\[
\abs{\mu_k^{(n)}}^2 = \frac{\theta_k^{(n)}}{\theta_k^{(n+1)}} - 1 + O(k n^{-\frac43 + 3 \eps}).
\]
Using the asymptotic $\log(1 - \delta) = -\delta + O(\delta^2)$ for $\delta = o(1)$, we conclude, if 
$\eps$ is small enough,
\[
  \log \frac{\theta_k^{(n)}}{\theta_k^{(n+1)}} = \abs{\mu_k^{(n)}}^2 + O(k n^{-\frac43 + 3\eps}).
\]
Define the random variable $L_k^{(n)} = \log \theta_k^{(n)} + \sum_{j=1}^{n-1} \abs{\mu_k^{(j)}}^2$; we have just shown $L_k^{(n+1)} - L_k^{(n)} = O(k n^{-\frac43 + 3 \eps})$ so 
for $k$ fixed, $L_k^{(n)}$
 converges to a limit $L_k^{(\infty)}$ almost surely as $n \to \infty$, with $\abs{L_k^{(n)} - L_k^{(\infty)}} = O(k n^{-\frac13 + 3 \eps})$. Now,
\begin{align*}
  \exp L_k^{(n)} &= \theta_k^{(n)} \exp \sum_{j=k}^{n-1} \abs{\mu_k^{(j)}}^2 \\
  &= n \theta_k^{(n)} \exp\left(- \log n + \sum_{j=1}^{n-1} \frac1j + \sum_{j=1}^{n-1} (\abs{\mu_k^{(j)}}^2 - \frac1j) \right)
\end{align*}
Recall $-\log n + \sum_{j=1}^{n-1} \frac1j = \gamma + O(n^{-1})$ where $\gamma$ is the Euler-Mascheroni 
constant. Next we define
\[
  M_k^{(n)} := \sum_{j=1}^{n-1} \left( \abs{\mu_k^{(j)}}^2 - \frac1j \right)
\]
and observe that each term of the sum is an independent mean-zero random variable. Therefore, 
for $k$ fixed, $(M_k^{(n)})_{n \geq k}$ is a martingale. We claim that $M_k^{(n)}$ is bounded in $L^2$; in fact,
\[
  \mathbb{E} (\abs{\mu_k^{(n)}}^2 - \frac1n)^2 = O(n^{-2}).
\]
so that
\[
  \mathbb{E} ((M_k^{(\infty)} - M_k^{(n)})^2) = \sum_{j \geq n} \mathbb{E} (\abs{\mu_k^{(n)}}^2 - \frac1n)^2 = O(n^{-1}),
\]
where $M_{k}^{(\infty)}$ is the claimed limit of $M_k^{(n)}$ (this limit exists since $M_k^{(n)}$ is a sum of centered and independent random variables with summable variances). To see this, we write
\[
  \abs{\mu_k^{(n)}}^2 = \frac{e_1}{e_1 + \cdots + e_n}
\]
where the variables $e_r$ are independent standard exponential random variables. Then we compute
\[
  \mathbb{E} (\abs{\mu_k^{(n)}}^2 - \frac1n)^2 = \mathbb{E} \left( \frac{(n-1) e_1 - e_2 - \cdots - e_n}{n(e_1 + \cdots + e_n)} \right)^2.
\]
As shown before in this paper, 
$\mathbb{P}(e_1 + \cdots + e_n \leq \frac{n}{2}) = O(n^{-C})$ for all $C \geq 2$ so that
\begin{align*}
  \mathbb{E} (\abs{\mu_k^{(n)}}^2 - \frac1n)^2 &\leq O(n^{-C}) + \frac4{n^4} \mathbb{E} (((n-1) e_1 - e_2 - \cdots - e_n)^2) \\
  &\leq O(n^{-2})
\end{align*}
Now, by the triangle inequality and Doob's maximal inequality, for $q$ positive integer, $k \leq 2^q$,
\begin{align*}
  \mathbb{E} (\sup_{n \geq 2^q} (M_k^{(\infty)} - M_k^{(n)})^2) &\lesssim \mathbb{E} 
((M_k^{(\infty)} - M_k^{(2^q)})^2) + \mathbb{E} (\sup_{n \geq 2^q} (M_k^{(n)} - M_k^{(2^q)})^2) \\
  &\lesssim \mathbb{E} (M_k^{(\infty)} - M_k^{(2^q)})^2 \\
  &= O(2^{-q}).
\end{align*}
Hence, 
\begin{align*}
 \mathbb{E} \left[ \sup_{2^q \leq n \leq 2^{q+1}} \sup_{k \leq n^{1/4}} 
(M_k^{(\infty)} - M_k^{(n)})^2 \right]
& \leq  \mathbb{E} \left[ \sup_{k \leq 2^{(q+1)/4}} \sup_{2^q \leq n \leq 2^{q+1}} 
(M_k^{(\infty)} - M_k^{(n)})^2 \right]
\\ & \leq \sum_{k \leq 2^{(q+1)/4}}  \mathbb{E}  \left[ \sup_{2^q \leq n \leq 2^{q+1}} 
(M_k^{(\infty)} - M_k^{(n)})^2 \right]
\\ & \lesssim  2^{(q+1)/4} 2^{-q} = O( 2^{-3q/4})
\end{align*}
and 
\begin{align*}
 \mathbb{E} \left[ \sup_{ n \geq 2^{q}} \sup_{k \leq n^{1/4}} 
(M_k^{(\infty)} - M_k^{(n)})^2 \right]
& \leq \sum_{r \geq q} \mathbb{E} \left[ \sup_{2^r \leq n \leq 2^{r+1}} \sup_{k \leq n^{1/4}} 
(M_k^{(\infty)} - M_k^{(n)})^2 \right]
\\ & \lesssim \sum_{r \geq q} 2^{-3r/4} = O (2^{-3q/4}). 
\end{align*}
By Markov's inequality, we get 
\[
\mathbb{P} (\sup_{n \geq 2^q}  \sup_{k \leq n^{1/4}}  \abs{M_k^{(\infty)} - M_k^{(n)}} \geq 2^{-q/3}) \leq 2^{2q/3} \mathbb{E} (\sup_{n \geq 2^q} \sup_{k \leq n^{1/4}} (M_k^{(\infty)} - M_k^{(n)})^2) = O(2^{-q/12}),
\]
which, by Borel-Cantelli lemma, shows that almost surely for some $q_0 \geq 1$,
 all $q \geq q_0$, $n \geq 2^q$ and $k \leq n^{1/4}$ satisfy
 $|M_k^{(\infty)} - M_k^{(n)}| \leq 2^{-q/3}$. Hence, 
\[
  \abs{M_k^{(\infty)} - M_k^{(n)}} = O(n^{-\frac13})
\]
almost surely. Collecting these estimates and applying them to the equation
\[
  \exp L_k^{(n)} = n \theta_k^{(n)} \exp(\gamma + O(n^{-1}) + M_k^{(n)})
\]
gives us
\[
  \exp \left( L_k^{(\infty)} + O(k n^{-\frac13+3 \eps}) \right) = n \theta_k^{(n)}
 \exp(\gamma + M_k^{(\infty)} + O(n^{-\frac13}))
\]
Rearranging,
\[
  n \theta_k^{(n)} = \exp( L_k^{(\infty)} - M_k^{(\infty)} - \gamma) (1 + O(kn^{-\frac13 + 3 \eps}))
 =: 2 \pi y_k (1  + O(k n^{-\frac13 + 
3 \eps}).
\]
Now, by \cite{BNN12}, $(y_k)_{k \in \mathbb{Z}}$ is a determinantal sine-kernel process, so we have 
almost surely the estimate $y_k = O(1 + |k|)$, which proves Theorem \ref{thm:eigenvalues}. 
\end{proof}

\section{Weak convergence and renormalization of the eigenvectors} \label{sec:eigenvectors}

We are now ready to show that the eigenfunctions $f_k^{(n)}$ of $u_n$ converge in a suitable sense. We 
 assume that for a given value of $\eps$, 
 the event $E$ from Section~\ref{sec:eigenvalues} holds, which happens almost surely. Recall 
from Theorem~\ref{thm:spectrum} that we can choose
 representatives $f_k^{(n)}$ for the eigenvectors of each $u_n$ in such a way that
\[
  (h_k^{(n+1)})^{\frac12} f_k^{(n+1)} = \sum_{j=1}^n \frac{\mu_j^{(n)}}{\lambda_j^{(n)} - \lambda_k^{(n+1)}} f_j^{(n)} + \frac{\nu_n - 1}{1 - \lambda_k^{(n+1)}} e_{n+1}
\]
where
\[
  h_k^{(n+1)} = \sum_{j=1}^n \frac{\abs{\mu_j^{(n)}}^2}{\abs{\lambda_j^{(n)} - \lambda_k^{(n+1)}}^2} + \frac{\abs{\nu_n - 1}^2}{\abs{1 - \lambda_k^{(n+1)}}^2}.
\]
For $n = 1$, we adopt the convention $f_1^{(1)} = - e_1$. We deduce, for $n \geq \ell$,
\[
  \langle f_k^{(n+1)}, e_{\ell} \rangle = (h_k^{(n+1)})^{-\frac12} \sum_{j=1}^n \frac{\mu_j^{(n)}}{\lambda_j^{(n)} - \lambda_k^{(n+1)}} \langle f_j^{(n)}, e_{\ell} \rangle.
\]
The invariance by conjugation of the Haar measures implies that each eigenvector $f_k^{(n+1)}$, multiplied by 
an independent random phase of modulus $1$, is a uniform vector on the complex sphere $S^{n+1}$. Hence, 
the scalar product $\langle f_k^{(n+1)}, e_\ell \rangle$ converges
 to zero in probability. In order to get a limit which is different from zero, we need to 
consider a suitable normalization. We introduce the following eigenvectors, for $n \geq k$: 
\[
  g_k^{(n)} := D_k^{(n)} f_k^{(n)},
\]
where $D_k^{(n)} \in \C$ is the random variable
\[
  D_k^{(n)} = \prod_{s=k}^{n-1} (h_k^{(s+1)})^{\frac12} \frac{\lambda_k^{(s)} - \lambda_k^{(s+1)}}{\mu_k^{(s)}}.
\]
We claim that for each renormalized eigenvector $g_k^{(n)}$, the scalar product
 $\langle g_k^{(n)}, e_\ell \rangle$ converges.
\begin{theorem} \label{thm:vector}
For each $k \geq 1$ and $\ell \geq 1$, the sequence $\{\langle g_k^{(n)}, e_\ell \rangle\}_{n \geq k \vee \ell}$
 is a martingale with respect to the filtration $(\mathcal{B}_n)_{n \geq  k \vee \ell}$,
and the conditional expectation of $|\langle g_k^{(n)}, e_\ell \rangle|^2$, given $\mathcal{A}$, is almost surely
 bounded when $n$ varies. 
\end{theorem}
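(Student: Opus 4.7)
The plan is to write the one-step increment
\[
\Delta_n := \langle g_k^{(n+1)}, e_\ell\rangle - \langle g_k^{(n)}, e_\ell\rangle
\]
as a sum of terms involving random phases and exploit their conditional law. Substituting the formula for $f_k^{(n+1)}$ from Theorem~\ref{thm:spectrum} into $g_k^{(n+1)} = D_k^{(n+1)} f_k^{(n+1)}$ and using $D_k^{(n+1)} = D_k^{(n)}(h_k^{(n+1)})^{1/2}(\lambda_k^{(n)}-\lambda_k^{(n+1)})/\mu_k^{(n)}$, the $j=k$ contribution telescopes precisely to $\langle g_k^{(n)}, e_\ell\rangle$; for $\ell \leq n$ this leaves
\[
\Delta_n = D_k^{(n)}(\lambda_k^{(n)}-\lambda_k^{(n+1)}) \sum_{j \neq k} \frac{\mu_j^{(n)}}{\mu_k^{(n)}(\lambda_j^{(n)} - \lambda_k^{(n+1)})} \langle f_j^{(n)}, e_\ell\rangle,
\]
with an analogous single-term formula for $\ell = n+1$ involving $(\nu_n-1)/(1-\lambda_k^{(n+1)})$. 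Writing $\mu_j^{(n)} = \phi_j^{(n)}|\mu_j^{(n)}|$, every factor in this expression is $\mathcal{B}_n$-measurable except the ratios $\phi_j^{(n)}/\phi_k^{(n)}$.

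The key distributional input is that, conditional on $\mathcal{B}_n$, the phases $\phi_1^{(n)},\dots,\phi_n^{(n)}$ are i.i.d. uniform on $\mathbb{U}$. By Proposition~\ref{prop:haaronvirtualisometries}, $x_{n+1}$ is uniform on the unit sphere of $\mathbb{C}^{n+1}$ and independent of $(u_m)_{m \leq n}$; in the orthonormal basis $(f_j^{(n)})_{j=1}^n \cup \{e_{n+1}\}$ its coordinates are $(\mu_j^{(n)})_j$ and $\nu_n$, and a uniform vector on the complex sphere has a polar decomposition with Dirichlet moduli and independent uniform phases. Since $\mathcal{A}$ determines the moduli $|\mu_j^{(n)}|$ and the complex value $\nu_n$ (Lemma~\ref{lem:sigmaA}), and $\mathcal{B}_n$ merely adjoins phases $\phi_j^{(m)}$ with $m < n$ which are independent of $x_{n+1}$, the claimed conditional law follows. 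Consequently $\mathbb{E}[\phi_j^{(n)}/\phi_k^{(n)} \mid \mathcal{B}_n] = 0$ for $j \neq k$, giving the martingale property $\mathbb{E}[\Delta_n \mid \mathcal{B}_n] = 0$; the orthogonality $\mathbb{E}[\phi_j^{(n)}\overline{\phi_{j'}^{(n)}} \mid \mathcal{B}_n] = \delta_{jj'}$ diagonalizes the conditional second moment to
\[
\mathbb{E}[|\Delta_n|^2 \mid \mathcal{B}_n] = |D_k^{(n)}|^2\,|\lambda_k^{(n)} - \lambda_k^{(n+1)}|^2 \sum_{j \neq k} \frac{|\mu_j^{(n)}|^2\,|\langle f_j^{(n)}, e_\ell\rangle|^2}{|\mu_k^{(n)}|^2\,|\lambda_j^{(n)} - \lambda_k^{(n+1)}|^2}.
\]

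The main obstacle is the almost sure boundedness of $\widetilde b_k^{(n)} := \mathbb{E}[|\langle g_k^{(n)}, e_\ell\rangle|^2 \mid \mathcal{A}]$: the tower property and telescoping give $\widetilde b_k^{(n+1)} - \widetilde b_k^{(n)} = \mathbb{E}[|\Delta_n|^2 \mid \mathcal{A}]$, which must be summable in $n$. The crude bound $|\langle f_j^{(n)}, e_\ell\rangle|^2 \leq 1$ combined with $\sum_{j \neq k} |\mu_j^{(n)}|^2 / |\lambda_j^{(n)}-\lambda_k^{(n+1)}|^2 \leq h_k^{(n+1)}$ yields merely $\widetilde b_k^{(n+1)} - \widetilde b_k^{(n)} \leq |D_k^{(n+1)}|^2 - |D_k^{(n)}|^2$, which is useless because the estimates of Section~\ref{sec:eigenvalues} together with Lemma~\ref{lem:angleestimate} force $|D_k^{(n)}|^2$ to grow polynomially in $n$. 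I would instead exploit the coupled recursion $b_k^{(n+1)} = (h_k^{(n+1)})^{-1} \sum_j |\mu_j^{(n)}|^2 b_j^{(n)} / |\lambda_j^{(n)} - \lambda_k^{(n+1)}|^2$ for $b_j^{(n)} := \mathbb{E}[|\langle f_j^{(n)},e_\ell\rangle|^2 \mid \mathcal{A}]$ (obtained by squaring the eigenvector identity in Theorem~\ref{thm:spectrum} and taking $\mathbb{E}[\cdot \mid \mathcal{A}]$ via phase orthogonality), together with the Parseval identity $\sum_j b_j^{(n)} = 1$ for $\ell \leq n$ and the sharp asymptotics $|\mu_j^{(n)}|^2 \sim 1/n$ and $\theta_k^{(n)} - \theta_k^{(n+1)} \sim 2\pi y_k/n^2$ from Lemma~\ref{lem:angleestimate} and Theorem~\ref{thm:eigenvalues}. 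These are consistent with the heuristic $|D_k^{(n)}|^2 \sim c_k n$ and $b_k^{(n)} \sim c'_k/n$, in which case $\widetilde b_k^{(n)}$ stabilizes to a finite limit that is measurable with respect to $\mathcal{A}$.
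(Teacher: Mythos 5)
Your derivation of the martingale property and of the conditional second-moment formula
\[
\mathbb{E}\bigl[\,|\langle g_k^{(n+1)} - g_k^{(n)}, e_\ell\rangle|^2 \mid \mathcal{A}\bigr]
= |D_k^{(n)}|^2\,\frac{|\lambda_k^{(n)} - \lambda_k^{(n+1)}|^2}{|\mu_k^{(n)}|^2}
\sum_{j\neq k}\frac{|\mu_j^{(n)}|^2}{|\lambda_j^{(n)} - \lambda_k^{(n+1)}|^2}\,
\mathbb{E}\bigl[\,|\langle f_j^{(n)}, e_\ell\rangle|^2 \mid \mathcal{A}\bigr]
\]
is correct and matches the paper's: you identify the same increment, use the same conditional independence of the phases $(\phi_j^{(n)})_j$ from $\mathcal{B}_n$, and the same diagonalization via $\mathbb{E}[\phi_j^{(n)}\overline{\phi_{j'}^{(n)}}\mid\mathcal{B}_n]=\delta_{jj'}$. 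You also correctly diagnose why the crude bound $|\langle f_j^{(n)},e_\ell\rangle|^2\le 1$ fails.

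Where your proposal does not close is the boundedness step itself. You gesture at a coupled recursion for $b_j^{(n)} := \mathbb{E}[\,|\langle f_j^{(n)},e_\ell\rangle|^2\mid\mathcal{A}\,]$ with the Parseval constraint $\sum_j b_j^{(n)}=1$ and "sharp asymptotics" $|\mu_j^{(n)}|^2\sim 1/n$, $\theta_k^{(n)}-\theta_k^{(n+1)}\sim 2\pi y_k/n^2$, and conclude that this is "consistent with" $\widetilde b_k^{(n)}$ stabilizing. That is a heuristic, not a proof: the weighted sum $S = \sum_{j\neq k}|\mu_j^{(n)}|^2 b_j^{(n)}/|\lambda_j^{(n)}-\lambda_k^{(n+1)}|^2$ has near-singular weights concentrated around $j=k$, and Parseval alone gives no control of $b_j^{(n)}$ for $j$ near $k$. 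Moreover the pointwise asymptotics from Lemma~\ref{lem:angleestimate} and Theorem~\ref{thm:eigenvalues} hold only for each \emph{fixed} $k$ (with random implied constants), not uniformly over $j\in\{1,\dots,n\}$, so the recursion cannot be analyzed index-by-index in the way you suggest.

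The paper takes a different route. First it introduces the $\mathcal{A}$-measurable event $F_k$ (from the a posteriori sine-kernel rigidity $y_k-y_{k-1}\gtrsim 1$) to get the separation $|\lambda_j^{(n)}-\lambda_k^{(n+1)}|\gtrsim n^{-1-\eps}$ for all $j\neq k$, together with the $E_3$-based bound $|\lambda_j^{(n)}-\lambda_k^{(n+1)}|\gtrsim |j-k|n^{-5/3-\eps}$ for $|j-k|$ large. After establishing $|D_k^{(n)}|^2 = D_k n(1+O(n^{-1/3+\eps}))$ and splitting the sum over $j$ into the regimes $|j-k|<n^{2/3}$ and $|j-k|\ge n^{2/3}$, all random prefactors are collected into a single a.s.~finite $\mathcal{A}$-measurable constant. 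The decisive step is then to observe that the remaining series, being nonnegative, is a.s.~finite as soon as its \emph{unconditional} expectation is finite; the tower property eliminates the $\mathcal{A}$-conditioning, the deterministic bound $\mathbb{E}[\,|\langle f_j^{(n)},e_\ell\rangle|^2]=1/n$ (exact, from sphere uniformity of $f_j^{(n)}$ up to phase) is substituted, and the resulting numerical double sum converges. This unconditional-expectation trick is what sidesteps the need to control each $b_j^{(n)}$ individually — precisely the control your recursion approach would require but does not supply.
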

\begin{remark}
More precisely, $\{\langle g_k^{(n)}, e_\ell \rangle\}_{n \geq k \vee \ell}$ is a martingale in the following 
sense: for $n \geq  k \vee \ell$, the conditional expectation of $\langle g_k^{(n+1)}, e_\ell \rangle$ 
given $\mathcal{B}_n$ is almost surely well-defined and equal to $\langle g_k^{(n)}, e_\ell \rangle$. 
However, we do not claim the integrability of $\langle g_k^{(n)}, e_\ell \rangle$ without conditioning. 
\end{remark}

\begin{proof}
From the equation above,
\begin{align}
  \langle g_k^{(n+1)}, e_{\ell} \rangle &= D_k^{(n+1)} (h_k^{(n+1)})^{-\frac12} \sum_{j=1}^n \frac{\mu_j^{(n)}}{\lambda_j^{(n)} - \lambda_k^{(n+1)}} \langle f_j^{(n)}, e_{\ell} \rangle \nonumber \\
  &= D_k^{(n)}  \frac{\lambda_k^{(n)} - \lambda_k^{(n+1)}}{\mu_k^{(n)}}
\sum_{j=1}^n \frac{\mu_j^{(n)}}{\lambda_j^{(n)} - \lambda_k^{(n+1)}} \langle f_j^{(n)}, e_{\ell} \rangle \nonumber \\
  &= \langle g_k^{(n)}, e_{\ell} \rangle + 
D_k^{(n)}  \frac{\lambda_k^{(n)} - \lambda_k^{(n+1)}}{\mu_k^{(n)}}
\sum_{\substack{1 \leq j \leq n \\ j \neq k}} \frac{\mu_j^{(n)}}{\lambda_j^{(n)} - \lambda_k^{(n+1)}} \langle f_j^{(n)}, e_{\ell} \rangle \label{recurrenceg}
\end{align}

Now, recall from Lemma~\ref{lem:sigmaB} that the sequence of eigenvectors $(f_k^{(n)})_{n \geq k}$ is adapted 
to the filtration of $\sigma$-algebras $(\mathcal{B}_n)_{n \geq k}$. If we decompose
\[
  \mu_j^{(n)} = \phi_j^{(n)} \abs{\mu_j^{(n)}}
\]
then  for $n$ fixed, $\{\phi_j^{(n)}\}_{1 \leq j \leq n}$ is a family of iid random phases uniformly distributed on the unit circle, independent of the $\sigma$-algebra $\mathcal{B}_n$. Indeed, $\mathcal{B}_n$ is the 
$\sigma$-algebra generated by $\mathcal{A}$ and $\{\phi_j^{(m)}\}_{1 \leq m \leq n-1, 1 \leq j \leq m}$, and then, by Lemma 
 \ref{lem:sigmaA}, it is the $\sigma$-algebra generated by $u_1$, $(\nu_m)_{m \geq 1}$, $(|\mu_j^{(m)}|)_{m \geq 1, 1 \leq j \leq m}$, and $(\phi_j^{(m)})_{1 \leq m \leq n-1, 1 \leq j \leq m}$. All these variables are 
 independent of  $(\phi_j^{(n)})_{1 \leq j \leq n}$, which are iid uniform on the unit circle, since the vectors $((\mu_j^{(m)})_{1 \leq j \leq m}, \nu_m)$ are independent,  uniform on the unit complex sphere of $\mathbb{C}^{m+1}$.

Now, since $h_k^{(n+1)}$ is real, we have
\[
  \Phi_k^{(n)} := \frac{D_k^{(n)}}{\abs{D_k^{(n)}}} = \prod_{s=k}^{n-1} (\phi_k^{(s)})^{-1} \frac{\lambda_k^{(s)} - \lambda_k^{(s+1)}}{\abs{\lambda_k^{(s)} - \lambda_k^{(s+1)}}}
\]
for $1 \leq k \leq n$, and we can write
\[
  \langle g_k^{(n+1)} - g_k^{(n)}, e_\ell \rangle = \Phi_k^{(n)} \abs{D_k^{(n)}} \frac{\lambda_k^{(n)} - \lambda_k^{(n+1)}}{\abs{\mu_k^{(n)}}} \sum_{\substack{1 \leq j \leq n \\ j \neq k}} \frac{\phi_j^{(n)}}{\phi_k^{(n)}} \frac{\abs{\mu_j^{(n)}}}{\lambda_j^{(n)} - \lambda_k^{(n+1)}} \langle f_j^{(n)}, e_{\ell} \rangle.
\]
We would like to compute the conditional expectation of the difference $\langle g_k^{(n+1)} - g_k^{(n)}, 
e_\ell \rangle$, given the $\sigma$-algebra $\mathcal{B}_n$. We first verify the measurability of each quantity on the right; in particular,
\begin{enumerate}
  \item $\abs{D_k^{(n)}}$, $\frac{\lambda_k^{(n)} - \lambda_k^{(n+1)}}{\abs{\mu_k^{(n)}}}$, $\frac{\abs{\mu_j^{(n)}}}{\lambda_j^{(n)} - \lambda_k^{(n+1)}}$ are $\mathcal{A}$-measurable.
  \item $\Phi_k^{(n)}$ and $\langle f_j^{(n)}, e_{\ell} \rangle$ are $\mathcal{B}_n$-measurable.
  \item $\{\phi_j^{(n)} (\phi_k^{(n)})^{-1}\}_{\substack{1 \leq j \leq n \\ j \neq k}}$ are iid and independent of $\mathcal{B}_n$.
\end{enumerate}
We also have:
\[
  \abs{\langle g_k^{(n+1)} - g_k^{(n)}, e_\ell \rangle} \leq \abs{D_k^{(n)}}  \frac{\abs{\lambda_k^{(n)} - \lambda_k^{(n+1)}}}{\abs{\mu_k^{(n)}}} \sum_{\substack{1 \leq j \leq n \\ j \neq k}} \frac{\abs{\mu_j^{(n)}}}{\abs{\lambda_j^{(n)} - \lambda_k^{(n+1)}}}
\]
which gives an $\mathcal{A}$-measurable bound for the scalar product $\langle g_k^{(n+1)} - g_k^{(n)}, e_\ell \rangle$. Since this bound is almost surely finite, the conditional expectation is almost surely well-defined. By Fubini's theorem and measurability we get
\begin{multline*}
  \mathbb{E} [\langle g_k^{(n+1)} - g_k^{(n)}, e_\ell \rangle \mid \mathcal{B}_n] = \Phi_k^{(n)} \abs{D_k^{(n)}} \frac{\lambda_k^{(n)} - \lambda_k^{(n+1)}}{\abs{\mu_k^{(n)}}} \\ \times \sum_{\substack{1 \leq j \leq n \\ j \neq k}} \mathbb{E} \left[\frac{\phi_j^{(n)}}{\phi_k^{(n)}} \mid \mathcal{B}_n \right] \frac{\abs{\mu_j^{(n)}}}{\lambda_j^{(n)} - \lambda_k^{(n+1)}} \langle f_j^{(n)}, e_{\ell} \rangle.
\end{multline*}
However, by independence,
\[
  \mathbb{E} \left[\frac{\phi_j^{(n)}}{\phi_k^{(n)}} \mid \mathcal{B}_n \right] = \mathbb{E}
 \left[ \frac{\phi_j^{(n)}}{\phi_k^{(n)}} \right]= 0,
\]
so $(\langle g_k^{(n)}, e_\ell \rangle)_{n \geq k \vee \ell}$ is a $(\mathcal{B}_n)_{n \geq k \vee \ell}$-martingale.

To check its conditional boundedness in $L^2$, given $\mathcal{A}$, we need to show
\[
   \mathbb{E} [ \abs{\langle g_k^{(k \vee \ell)}, e_{\ell}  \rangle}^2 \, | \mathcal{A} ] + \sum_{n \geq k \vee \ell} \mathbb{E} [ \abs{\langle g_k^{(n+1)} - g_k^{(n)}, e_\ell \rangle}^2 \, | \mathcal{A} ] < \infty
\]
almost surely. The first term is smaller than or equal to $\|g_k^{(k \vee \ell)}\|^2 
= |D_k^{(k \vee \ell)}|^2$, which is $\mathcal{A}$-measurable and almost surely finite. Hence, it is 
sufficient to bound the sum. Expanding it gives: 
\begin{equation}
  \mathbb{E} [ \abs{\langle g_k^{(n+1)} - g_k^{(n)}, e_\ell \rangle}^2 \, 
| \mathcal{A} ] = \abs{D_k^{(n)}}^2 \frac{\abs{\lambda_k^{(n)} - \lambda_k^{(n+1)}}^2}{\abs{\mu_k^{(n)}}^2} S,
\label{Mn+1}
\end{equation}
where
\[
  S = \sum_{\substack{1 \leq i, j \leq n \\ i,j \neq k}} \frac{\abs{\mu_i^{(n)}}}{\overline{\lambda_i^{(n)} - \lambda_k^{(n+1)}}} \frac{\abs{\mu_j^{(n)}}}{\lambda_{j}^{(n)} - \lambda_k^{(n+1)}}
  \mathbb{E} [ \overline{\phi_i^{(n)}}{\phi_j^{(n)}} \overline{\langle f_i^{(n)}, e_{\ell} \rangle} \langle f_j^{(n)}, e_{\ell} \rangle \mid \mathcal{A} ].
\]
Now
\begin{align*}
    \mathbb{E} [ \overline{\phi_i^{(n)}}{\phi_j^{(n)}} \overline{\langle f_i^{(n)}, e_{\ell} \rangle} \langle f_j^{(n)}, e_{\ell} \rangle \mid \mathcal{A} ] &= \mathbb{E} [ \mathbb{E} [\overline{\phi_i^{(n)}}{\phi_j^{(n)}} \mid \mathcal{B}_n ] \overline{\langle f_i^{(n)}, e_{\ell} \rangle} \langle f_j^{(n)}, e_{\ell} \rangle \mid \mathcal{A} ] \\
    &= \delta_{i,j} \mathbb{E}[ \overline{\langle f_i^{(n)}, e_{\ell} \rangle} \langle f_j^{(n)}, e_{\ell} \rangle \mid \mathcal{A}]
\end{align*}
where $\delta_{i,j}$ is the Kronecker delta. Thus
\begin{equation}
  S = \sum_{\substack{1 \leq j \leq n \\ j \neq k}} \frac{\abs{\mu_j^{(n)}}^2}{\abs{\lambda_{j}^{(n)} - \lambda_k^{(n+1)}}^2} \mathbb{E} [\abs{\langle f_j^{(n)}, e_\ell \rangle}^2 \mid \mathcal{A}].
\label{formulaS}
\end{equation}
In order to effectively bound this sum we need a posteriori information from the convergence of the eigenvalues in Section~\ref{sec:eigenvalues}. 
We define the event $F_k$ to be
\[
  F_k := \{ \exists n_0 \geq 1, \forall n \geq n_0, (\theta_{k+1}^{(n)} - \theta_k^{(n)}) \wedge (\theta_k^{(n)} - \theta_{k-1}^{(n)}) \geq n^{-1-\eps} \}
\]
\begin{lemma}
  $F_k$ is $\mathcal{A}$-measurable and holds with probability one.
\end{lemma}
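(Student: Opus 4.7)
The plan is to reduce the claim to the convergence of the renormalized eigenangles already established in Theorem~\ref{thm:eigenvalues}, combined with the fact that the limiting sine-kernel process has almost surely distinct points.

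First, for measurability, each eigenangle $\theta_j^{(n)}$ is by construction $\mathcal{A}_n$-measurable (the $\sigma$-algebra $\mathcal{A}_n$ is generated exactly by the eigenvalues of $u_1,\dots,u_n$, and $\mathcal{A}_n \subseteq \mathcal{A}$). The event $F_k$ is a countable union (over $n_0$) of countable intersections (over $n \geq n_0$) of events of the form $\{(\theta_{k+1}^{(n)} - \theta_k^{(n)}) \wedge (\theta_k^{(n)} - \theta_{k-1}^{(n)}) \geq n^{-1-\eps}\}$, each $\mathcal{A}_n$-measurable; hence $F_k \in \mathcal{A}$.

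For the probability one claim, apply Theorem~\ref{thm:eigenvalues} to the three indices $k-1$, $k$, $k+1$ (all bounded, so for $n$ large they certainly satisfy $|k|\leq n^{1/4}$). Almost surely there exist $y_{k-1}, y_k, y_{k+1}$ and a (random) constant $C$ such that, for all $n$ large enough,
\[
  \theta_j^{(n)} = \frac{2\pi y_j}{n} + O\bigl(n^{-4/3+\eps}\bigr), \qquad j\in\{k-1,k,k+1\}.
\]
Since $(y_m)_{m\in\Z}$ is a determinantal sine-kernel process, the points $y_m$ are almost surely pairwise distinct; in particular the gap
\[
  \delta := (y_{k+1}-y_k)\wedge(y_k-y_{k-1})
\]
is a strictly positive random variable almost surely. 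Subtracting the above asymptotics gives
\[
  (\theta_{k+1}^{(n)}-\theta_k^{(n)}) \wedge (\theta_k^{(n)}-\theta_{k-1}^{(n)}) = \frac{2\pi \delta}{n} + O\bigl(n^{-4/3+\eps}\bigr) \geq \frac{\pi \delta}{n}
\]
for all $n$ large enough. Since $\pi\delta/n \geq n^{-1-\eps}$ as soon as $n^{\eps}\geq 1/(\pi\delta)$, the event $F_k$ is realized on a set of full measure, which completes the proof.
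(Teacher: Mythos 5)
Your proof is correct and follows essentially the same route as the paper: measurability comes from the eigenangles being $\mathcal{A}_n$-measurable, and the positivity of the gap follows by applying Theorem~\ref{thm:eigenvalues} to indices $k-1,k,k+1$ and invoking the almost sure distinctness of points in the sine-kernel process. The only cosmetic difference is that you carry the sharper error term $O(n^{-4/3+\eps})$ from Theorem~\ref{thm:eigenvalues}, while the paper simply quotes the weaker $O(n^{-5/4})$; both suffice.
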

\begin{proof}
$F_k$ depends only on the eigenangles (hence eigenvalues) and is therefore clearly $\mathcal{A}$-measurable. By Theorem~\ref{thm:eigenvalues} applied to $k-1, k, k+1$, we see that each of the associated eigenangles satisfies
\[
  n \theta_j^{(n)} = 2 \pi y_j + O(n^{-\frac14}).
\]
 In particular,
\[
  \theta_k^{(n)} - \theta_{k-1}^{(n)} = 2 \pi n^{-1} (y_k - y_{k-1}) + O(n^{-\frac54}).
\]
Since $\{y_j\}_{j \in \Z}$ is a sine-kernel point process $y_k - y_{k-1} \gtrsim 1$ almost surely; similar for $k+1$ and $k$.
\end{proof}

Now we condition on $F_k$ and estimate the quantities 
involved in $\mathbb{E} [ \abs{\langle g_k^{(n+1)} - g_k^{(n)}, e_\ell \rangle}^2 \, | \mathcal{A} ]$. We will
 begin by estimating $D_k^{(n)}$. By Lemma~\ref{lem:angleestimate}, almost surely,
\[
  \abs{\theta_k^{(n+1)} - \theta_k^{(n)}} = \theta_k^{(n+1)} \abs{\mu_k^{(n)}}^2 (1 + O(n^{-\frac13 + \eps})),
\]
hence
\begin{equation}
  \abs{\lambda_k^{(n+1)} - \lambda_k^{(n)}} = \theta_k^{(n+1)} \abs{\mu_k^{(n)}}^2 (1 + O(n^{-\frac13 + \eps})).
\label{differencelambda}
\end{equation}
Similarly,
\[
  \frac{\abs{\nu_n - 1}^2}{\abs{1 - \lambda_k^{(n+1)}}^2} = (\theta_k^{(n+1)})^{-2} (1 + O(n^{-\frac12+\eps})).
\]
Now, as in the proof of convergence of the eigenangles, let us consider the set $J$ of indices 
$j$  such that $\theta_k^{(n+1)} - \pi
< \theta_j^{(n)} \leq \theta_k^{(n+1)} + \pi$.
From the event $F_k$, we see that for $j \in J \backslash \{k\}$, and $n$ large enough, 
\begin{align*}
  \abs{\theta_j^{(n)} - \theta_k^{(n+1)}} &  \geq \abs{\theta_k^{(n)} 
- \theta_j^{(n)}} - \abs{\theta_k^{(n)} - \theta_k^{(n+1)}}  \\ & \geq n^{-1-\eps} - 
  O \left( \theta_k^{(n+1)} \abs{\mu_k^{(n)}}^2 (1 + O(n^{-\frac13 + \eps})) \right) 
\\ & \geq n^{-1-\eps} - O(n^{-2 + \eps}) \gtrsim n^{-1-\eps},
\end{align*} 
and then 
\begin{equation}
\abs{\lambda_j^{(n)} - \lambda_k^{(n+1)}} \gtrsim n^{-1-\eps}, \label{differencelambda2}
\end{equation}
if $\eps$ is taken small enough. 
We can get a stronger estimate for $\abs{j - k}$ sufficiently large. In fact, since $E_3$ holds, we have
\begin{equation}
  \abs{\lambda_j^{(n)} - \lambda_k^{(n+1)}} \gtrsim \abs{j - k} n^{-\frac53-\eps}. \label{differencelambda3}
\end{equation}
These two lower bounds let us estimate
\begin{align*}
  \sum_{\substack{1 \leq j \leq n \\ j \neq k}}
 \frac{\abs{\mu_j^{(n)}}^2}{\abs{\lambda_{j}^{(n)} - \lambda_k^{(n+1)}}^2} &\lesssim n^{-1+\eps}
\sum_{\substack{j \in J \\ j \neq k}} \abs{\lambda_{j}^{(n)} - \lambda_k^{(n+1)}}^{-2} \\
  &\lesssim n^{-1+\eps} (n^{2 + \eps} n^{\frac23} + n^{\frac{10}3 + \eps} n^{-\frac23}) \\
  &\lesssim n^{\frac53 + \eps}.
\end{align*}
where we split the sum into the interval with $\abs{j - k} \leq n^{\frac23}$ and its complement.

Now we can estimate $h_k^{(s+1)}$. In fact,
\begin{align*}
 &  h_k^{(s+1)} \frac{\abs{\lambda_k^{(s)} - \lambda_k^{(s+1)}}^2}{\abs{\mu_k^{(s)}}^2} \\ &= 1
 + \frac{\abs{\lambda_k^{(s)} - \lambda_k^{(s+1)}}^2}{\abs{\mu_k^{(s)}}^2} \left( \sum_{\substack{1
 \leq j \leq n \\ j \neq k}} \frac{\abs{\mu_j^{(s)}}^2}{\abs{\lambda_j^{(s)} - 
\lambda_k^{(s+1)}}^2} + \frac{\abs{\nu_s - 1}^2}{\abs{1 - \lambda_k^{(s+1)}}^2} \right) \\
&= 1 + \frac{\abs{\lambda_k^{(s)} - \lambda_k^{(s+1)}}^2}{\abs{\mu_k^{(s)}}^2} 
 \left(O(s^{\frac53+\eps}) 
+ (\theta_k^{(s+1)})^{-2} (1 + O(s^{-\frac12+\eps}))\right) 
\end{align*}
Now, since almost surely, $\theta_k^{(s+1)} = O(1/s)$, $s^{\frac53+\eps} = O((\theta_k^{(s+1)})^{-2} 
s^{-\frac13 + \eps})$, and then
\[
h_k^{(s+1)} \frac{\abs{\lambda_k^{(s)} - \lambda_k^{(s+1)}}^2}{\abs{\mu_k^{(s)}}^2}
 =  1 + \frac{(\theta_k^{(s+1)})^{-2}\abs{\lambda_k^{(s)} - \lambda_k^{(s+1)}}^2}{\abs{\mu_k^{(s)}}^2} 
\left( 1 + O(s^{-\frac13+\eps})\right).
\]
Now, \begin{align*}
\abs{\lambda_k^{(s)} - \lambda_k^{(s+1)}}^2 &  = 
\abs{\theta_k^{(s)} - \theta_k^{(s+1)}}^2 \left(  1 + O(|\theta_k^{(s)} - \theta_k^{(s+1)}|) \right)
\\ & = \abs{\theta_k^{(s)} - \theta_k^{(s+1)}}^2 \left(  1 + O(s^{-1}) \right),
\end{align*}
Using Lemma~\ref{lem:angleestimate}, one obtains:
\begin{align*}
 h_k^{(s+1)} \frac{\abs{\lambda_k^{(s)} - \lambda_k^{(s+1)}}^2}{\abs{\mu_k^{(s)}}^2}
& = 1 +  \frac{(\theta_k^{(s+1)})^{-2}\abs{\theta_k^{(s)} - \theta_k^{(s+1)}}^2}{\abs{\mu_k^{(s)}}^2} 
\left( 1 + O(s^{-\frac13+\eps})\right).
\\ & = 1 + \abs{\mu_k^{(s)}}^2 \left( 1 + O(s^{-\frac13+\eps})\right).
\end{align*}
Applying the bound on $\abs{\mu_k^{(s)}}^2$ given by $E_2$ and changing the value of $\eps$ gives 
\[
h_k^{(s+1)} \frac{\abs{\lambda_k^{(s)} - \lambda_k^{(s+1)}}^2}{\abs{\mu_k^{(s)}}^2} = 
1 + \abs{\mu_k^{(s)}}^2  + O(s^{-\frac43+\eps}).
\]
Thus from the expression
\[
  \abs{D_k^{(n)}}^2 = \prod_{s=k}^{n-1} h_k^{(s+1)} \frac{\abs{\lambda_k^{(s)} - \lambda_k^{(s+1)}}^2}{\abs{\mu_k^{(s)}}^2},
\]
we deduce
\begin{align*}
  \abs{D_k^{(n)}}^2 &= \prod_{s=k}^{n-1} (1 + \abs{\mu_k^{(s)}}^2 + O(s^{-\frac43 + \eps})) \\
  &= \exp \left( \sum_{s=k}^{n-1} \frac1s \right) \exp \left( \sum_{s=k}^{n-1} \abs{\mu_k^{(s)}}^2 - \frac1s \right) \exp \sum_{s=k}^{n-1} O(s^{-\frac43+\eps}).
\end{align*}
As before,
\[
  \exp  \sum_{s=k}^{n-1} \frac1s = k^{-1} n \exp [\gamma (1 + O(n^{-1}))]
\]
where $\gamma$ is the Euler-Mascheroni constant,
\[
  \exp \sum_{s=k}^{n-1} \left( \abs{\mu_k^{(s)}}^2 - \frac1s \right)
\]
is equal to $\exp(M_k^{(n-1)} - M^{(k)}_k)$ from Section~\ref{sec:eigenvalues}, and the last
 term converges to a limit $N_\infty$ with error $O(n^{-\frac13 + \eps})$.

Thus, we have
\begin{align}
  \abs{D_k^{(n)}}^2 &= k^{-1} n \exp (\gamma + M_k^{(\infty)} - M_k^{(k)} + N_\infty) (1 + O(n^{-\frac13 + \eps})) 
\nonumber \\
  &=: D_k n (1 + O(n^{-\frac13 + \eps})). \label{normalizationD}
\end{align}
where $D_k$ is a non-zero random variable that depends only on $k$.

We are now ready to estimate $\mathbb{E} [ \abs{\langle g_k^{(n+1)} - g_k^{(n)}, e_\ell \rangle}^2 \, | \mathcal{A} ]$. In fact we have
\begin{align*}
\mathbb{E} [ \abs{\langle g_k^{(n+1)} - g_k^{(n)}, e_\ell \rangle}^2 \, | \mathcal{A} ]
&= \abs{D_k^{(n)}}^2 \frac{\abs{\lambda_k^{(n)} - \lambda_k^{(n+1)}}^2}{\abs{\mu_k^{(n)}}^2} S \\
&\lesssim n \abs{\mu_k^{(n)}}^2 (\theta_k^{(n+1)})^2 S \\
&\lesssim n^{-2+\eps} S
\end{align*}
and
\begin{align*}
    S &= \sum_{\substack{1 \leq j \leq n \\ j \neq k}} \frac{\abs{\mu_j^{(n)}}^2}{\abs{\lambda_{j}^{(n)} - \lambda_k^{(n+1)}}^2} \mathbb{E} [\abs{\langle f_j^{(n)}, e_\ell \rangle}^2 \mid \mathcal{A}] \\
    &\lesssim n^{-1+\eps} \sum_{\substack{1 \leq j \leq n \\ j \neq k}} \abs{\lambda_{j}^{(n)} - \lambda_k^{(n+1)}}^{-2} \mathbb{E} [\abs{\langle f_j^{(n)}, e_\ell \rangle}^2 \mid \mathcal{A}] \\
    &\lesssim n^{-1+\eps} \sum_{j \in J, 0 < \abs{j - k} < n^{\frac23}} n^{2+\eps} \mathbb{E} [\abs{\langle f_j^{(n)}, e_\ell \rangle}^2 \mid \mathcal{A}] \\
    &\qquad + n^{-1+\eps} \sum_{j \in J, \abs{j - k} \geq n^{\frac23}} \abs{j-k}^{-2} n^{\frac{10}{3}+\eps} \mathbb{E} [\abs{\langle f_j^{(n)}, e_\ell \rangle}^2 \mid \mathcal{A}]
\end{align*}
Therefore, it is now sufficient to prove that for some $\eps > 0$, and almost surely, 
\begin{align*}
 & \sum_{n \geq k}
n^{-3+\eps} \left( \sum_{j \in J, 0 < \abs{j - k} < n^{\frac23}} n^{2+\eps} \mathbb{E}
 [\abs{\langle f_j^{(n)}, e_\ell \rangle}^2 \mid \mathcal{A}] \right.  \\
    & \left. \qquad + \sum_{j \in J, \abs{j - k} \geq n^{\frac23}} \abs{j-k}^{-2} n^{\frac{10}{3}+\eps} 
\mathbb{E} [\abs{\langle f_j^{(n)}, e_\ell \rangle}^2 \mid \mathcal{A}] \right) < \infty.
\end{align*}
It is then sufficient to prove that the expectation of the left-hand side is finite. Since $J
\subset [k-n-1, k+n]$, one deduces that it is enough to have
\begin{align*}
 & \sum_{n \geq k}
n^{-3+\eps} \left( \sum_{ 0 < \abs{j - k} < n^{\frac23}} n^{2+\eps} \mathbb{E}
 [\abs{\langle f_j^{(n)}, e_\ell \rangle}^2] \right.  \\
    & \left. \qquad + \sum_{j \in J, n^{\frac23} \leq  \abs{j - k} \leq n+1} \abs{j-k}^{-2} n^{\frac{10}{3}+\eps} 
\mathbb{E} [\abs{\langle f_j^{(n)}, e_\ell \rangle}^2] \right) < \infty.
\end{align*}
Now, since $f_j^{(n)}$ is, up to a phase of modulus $1$, uniform on the sphere $S^{n}$, one has 
$$\mathbb{E} [\abs{\langle f_j^{(n)}, e_\ell \rangle}^2] = 1/n,$$
and then one needs only to check: 
$$\sum_{n \geq k}
n^{-3+\eps} \left( \sum_{ 0 < \abs{j - k} < n^{\frac23}} n^{1+\eps} 
   + \sum_{j \in J, n^{\frac23} \leq  \abs{j - k} \leq n+1} \abs{j-k}^{-2} n^{\frac{7}{3}+\eps} 
 \right) < \infty,$$
which is easy. 
\end{proof}
Because this martingale is bounded in $L^2$, we have the following immediate corollary.
\begin{corollary} \label{cor:gkl}
 Almost surely, for all $k \in \Z$ and $\ell \geq 1$, the scalar product $\langle g_k^{(n)}, e_\ell \rangle$ 
converges to a limit $g_{k, \ell}$ when $n$ goes to infinity. 
\end{corollary}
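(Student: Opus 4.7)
Fix $k \in \Z$ and $\ell \geq 1$, write $n_0 = k \vee \ell$, and set $M_n := \langle g_k^{(n)}, e_\ell \rangle$ for $n \geq n_0$. By Theorem \ref{thm:vector}, $(M_n)_{n \geq n_0}$ is a $(\mathcal{B}_n)$-martingale, and the $\mathcal{A}$-measurable random variable $C_{k,\ell} := \sup_{n \geq n_0} \mathbb{E}[|M_n|^2 \mid \mathcal{A}]$ is almost surely finite. The plan is to apply Doob's $L^2$ martingale convergence theorem conditionally on $\mathcal{A}$.

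The key observation is that since $\mathcal{A} \subset \mathcal{B}_n$ for every $n \geq n_0$, the increments $\Delta_n := M_{n+1} - M_n$ are conditionally orthogonal in $L^2(\cdot \mid \mathcal{A})$: for $n < m$, iterated conditioning through $\mathcal{B}_m$ gives
\[
\mathbb{E}\bigl[\overline{\Delta_n}\, \Delta_m \mid \mathcal{A}\bigr] = \mathbb{E}\Bigl[\overline{\Delta_n}\, \mathbb{E}[\Delta_m \mid \mathcal{B}_m] \,\Big|\, \mathcal{A}\Bigr] = 0.
\]
The conditional Pythagorean identity therefore yields $\sum_{n \geq n_0} \mathbb{E}[|\Delta_n|^2 \mid \mathcal{A}] \leq C_{k,\ell} < \infty$ almost surely, and by conditional monotone convergence $\sum_n |\Delta_n|^2 < \infty$ almost surely. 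Passing to a regular conditional distribution $\mathbb{P}(\cdot \mid \mathcal{A})(\omega)$, for almost every $\omega$ the sequence $(M_n)$ is an $L^2$-bounded $(\mathcal{B}_n)$-martingale under this law, so the classical Doob $L^2$ martingale convergence theorem produces a limit $g_{k,\ell} := \lim_n M_n$ which exists almost surely both conditionally and, by integration over $\omega$, unconditionally.

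The preceding argument produces, for each $(k, \ell) \in \Z \times \{1, 2, \dots\}$, a full-measure event $\Omega_{k,\ell}$ on which $M_n$ converges. Since the index set is countable, intersecting the $\Omega_{k,\ell}$ gives a single full-measure event on which the asserted simultaneous convergence holds, proving the corollary. I do not anticipate any substantive obstacle: the only point worth care is that the $L^2$ bound from Theorem \ref{thm:vector} is conditional on $\mathcal{A}$ rather than unconditional, and this is exactly what the conditional version of Doob's theorem is designed to handle.
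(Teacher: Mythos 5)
Your proposal is correct and takes essentially the paper's approach: the paper declares the corollary an immediate consequence of Theorem~\ref{thm:vector} by invoking Doob's $L^2$ convergence theorem conditionally on $\mathcal{A}$, and you supply precisely the missing details (the regular conditional distribution argument addresses the fact that the martingale and its $L^2$ bound are only asserted conditionally, and the final countable intersection handles the simultaneity in $(k,\ell)$). The intermediate deduction that $\sum_n|\Delta_n|^2<\infty$ almost surely is a harmless detour — pathwise square-summability of the increments is neither needed for, nor by itself sufficient to imply, convergence of the martingale; the conditional $L^2$ bound $C_{k,\ell}<\infty$ is all that the regular-conditional-distribution version of Doob's theorem requires.
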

For each $k  \in \Z$, the infinite sequence $g_k := (g_{k, \ell})_{\ell \geq 1} \in \mathbb{C}^{\infty}$
 can be considered as the weak limit of the eigenvector $g_k^{(n)}$ of $u_n$, when $n$ goes to infinity. 
At the end of the paper, we construct a random operator whose eigenvectors are given 
by the sequences $g_k$, $k \in \Z$. In order to ensure that these eigenvectors are nontrivial, we need
 to check that $g_k$ is almost surely non vanishing. This will be done in the next section. 

\section{The law of the coefficients of the eigenvectors of the flow} \label{sec:normalizationsqrtn}

In the previous section, we have proven the almost sure convergence of each coordinate of the 
eigenvectors of $u_n$, after normalization by a factor $D_k^{(n)}$. Using the estimate 
\eqref{normalizationD}, we see that almost surely, $|D_k^{(n)}|$ is equivalent to a nonzero constant 
times $\sqrt{n}$ when $n$ goes to infinity. It is then more elegant to formulate the 
result of convergence 
by taking eigenvectors of norm exactly $\sqrt{n}$. Moreover, such a normalization provides the 
distribution of the limiting coordinates of the eigenvectors. 
The full statement we prove is the following: 
\begin{theorem}
Let $(u_n)_{n \geq 1}$ be a virtual isometry, following the Haar measure. 
For $k \in \mathbb{Z}$ and $n \geq 1$, let $v_k^{(n)}$ be a unit eigenvector corresponding 
to the $k$th smallest nonnegative eigenangle of $u_n$ for $k \geq 1$, and the 
$(1-k)$th largest strictly negative eigenangle of $u_n$ for $k \leq 0$. 
Then for all $k \in \mathbb{Z}$, there almost surely exist some 
 complex numbers $(\psi_k^{(n)})_{n \geq 1}$ of modulus 1, and 
 a sequence $(t_{k,\ell})_{\ell \geq 1}$, such that for all $\ell \geq 1$, 
 $$ \sqrt{n} \, \langle \psi_k^{(n)} v_k^{(n)}, e_{\ell} \rangle \underset{n \rightarrow \infty}
 {\longrightarrow}
 t_{k,\ell}.$$
Almost surely, for all $k \in \mathbb{Z}$, the sequence $(t_{k,\ell})_{\ell \geq 1}$ depends, up
 to a multiplicative factor of modulus one, only on the virtual rotation $(u_n)_{n \geq 1}$. Moreover, 
 if  $(\psi_k)_{k \in \mathbb{Z}}$ is a sequence of iid,
uniform variables on $\mathbb{U}$, independent of $(t_{k,\ell})_{\ell \geq 1}$, then 
 $(\psi_k t_{k,\ell})_{k \in \mathbb{Z}, \ell \geq 1}$ is an iid family of 
 standard complex gaussian variables ($\mathbb{E} [|\psi_k t_{k,\ell}|^2] = 1$). 
\end{theorem}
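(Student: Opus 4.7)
The plan is to combine the almost sure convergence established in Section~\ref{sec:eigenvectors} with the classical asymptotic Gaussianity of the entries of a Haar-distributed unitary matrix. First, from Corollary~\ref{cor:gkl} and the estimate \eqref{normalizationD}, I have that $|D_k^{(n)}|^2 / n$ converges almost surely to a strictly positive random variable $D_k$. Setting $\psi_k^{(n)} := D_k^{(n)} / |D_k^{(n)}|$ (of modulus one) and identifying $v_k^{(n)}$ with $f_k^{(n)}$ through the $n$-periodic extension of the indexing, this gives
\[
\sqrt{n}\, \langle \psi_k^{(n)} v_k^{(n)}, e_\ell \rangle \;=\; \frac{\sqrt{n}}{|D_k^{(n)}|}\, \langle g_k^{(n)}, e_\ell \rangle \;\longrightarrow\; t_{k,\ell} := \frac{g_{k,\ell}}{\sqrt{D_k}}
\]
almost surely. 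Any alternative measurable choice of $f_k^{(n)}$ differs from this one by a phase, under which $D_k^{(n)}$ and $v_k^{(n)}$ transform inversely, leaving $\psi_k^{(n)} v_k^{(n)}$ invariant up to a single global modulus-one factor; this gives the asserted ambiguity on the sequence $(t_{k,\ell})_{\ell \geq 1}$.

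For the distributional statement, the key input is the classical fact that if $W_n \in U(n)$ is Haar-distributed, then for any fixed finite set of indices, the rescaled entries $(\sqrt{n}\, (W_n)_{\ell,k})$ converge jointly in law to a family of iid standard complex Gaussian random variables. I intend to apply this as follows: let $(\psi_k)_{k \in \Z}$ be iid uniform on $\mathbb{U}$, independent of the virtual isometry, and for each $n$ define $\tilde{V}_n \in U(n)$ as the matrix whose columns (ordered by eigenangle) are $(\psi_k v_k^{(n)})$. Since the eigenvalues of $u_n$ are almost surely distinct, the matrices diagonalizing $u_n$ in a fixed ordering of eigenvalues form a single coset of the diagonal torus $T^n$, and the multiplication by iid uniform phases $\psi_k$ exactly lifts any measurable selection of $v_k^{(n)}$ to a matrix uniformly distributed on this coset; combined with Haar invariance on the quotient $U(n)/T^n$ induced from $u_n$, this means $\tilde{V}_n$ is Haar-distributed on $U(n)$. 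Consequently, for any $K \geq 0$ and $L \geq 1$, the family $(\sqrt{n}\, \psi_k \langle v_k^{(n)}, e_\ell \rangle)_{|k| \leq K, 1 \leq \ell \leq L}$ converges in law to a family of iid standard complex Gaussians.

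To conclude, I decompose
\[
\sqrt{n}\, \psi_k \langle v_k^{(n)}, e_\ell \rangle \;=\; (\psi_k \overline{\psi_k^{(n)}}) \cdot \sqrt{n}\, \langle \psi_k^{(n)} v_k^{(n)}, e_\ell \rangle.
\]
The second factor tends to $t_{k,\ell}$ almost surely by the first paragraph, while the first factor $(\psi_k \overline{\psi_k^{(n)}})_k$ is, for each fixed $n$, iid uniform on $\mathbb{U}$ and jointly independent of $(t_{k,\ell})$ (because $\psi_k$ is independent of the virtual isometry, which generates both $\psi_k^{(n)}$ and the $t_{k,\ell}$). Hence, the distribution of $\bigl((\psi_k \overline{\psi_k^{(n)}})\, t_{k,\ell}\bigr)_{k,\ell}$ coincides, for every $n$, with that of $(\psi_k t_{k,\ell})_{k,\ell}$, while the difference
\[
\sqrt{n}\, \psi_k \langle v_k^{(n)}, e_\ell \rangle \;-\; (\psi_k \overline{\psi_k^{(n)}})\, t_{k,\ell} \;=\; (\psi_k \overline{\psi_k^{(n)}}) \bigl[\sqrt{n}\, \langle \psi_k^{(n)} v_k^{(n)}, e_\ell \rangle - t_{k,\ell}\bigr]
\]
tends to zero almost surely. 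A Slutsky-type argument then identifies the two limits and yields that $(\psi_k t_{k,\ell})_{|k| \leq K, 1 \leq \ell \leq L}$ is a family of iid standard complex Gaussian variables, with $\mathbb{E}|\psi_k t_{k,\ell}|^2 = 1$, for every $K$ and $L$.

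The main technical obstacle I anticipate is the clean justification that $\tilde{V}_n$ is Haar-distributed despite the specific (measurable but not uniform) measurable rule used to select $v_k^{(n)}$; everything else reduces to assembling the a.s. convergence of Section~\ref{sec:eigenvectors}, a standard asymptotic Gaussianity result for Haar matrices, and a coupling between the deterministic phases $\psi_k^{(n)}$ and the auxiliary iid phases $\psi_k$.
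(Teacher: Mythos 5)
Your proposal is correct and follows essentially the same route as the paper: existence and uniqueness of the rescaled limit via the $D_k^{(n)}$ normalization from Section~\ref{sec:eigenvectors}, and Gaussianity via the observation that conjugation-invariance of Haar measure makes the eigenvector matrix, once multiplied by iid uniform phases independent of $u_n$, Haar-distributed on $U(n)$, so that the convergence in law (to iid complex Gaussians) and the a.s.~convergence (to $\psi_k t_{k,\ell}$) identify the limiting law. The only cosmetic difference is that the paper fixes the phase from the start (taking $w_k^{(n)} = \psi_k^{(n)} v_k^{(n)}$ as the measurable eigenvector selection) so that both modes of convergence apply to the same sequence, sidestepping your explicit Slutsky step; your informal uniqueness remark about $D_k^{(n)}$ and $v_k^{(n)}$ transforming inversely is also looser than the paper's clean subsequence argument for the modulus-one ambiguity, but this is a minor point.
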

\begin{remark}
 The vectors $v_k^{(n)}$ are equal to $f_k^{(n)}$, up to a multiplicative factor of modulus $1$. 
The independent phases $\psi_k$ introduced in the last part of the theorem are needed in 
order to get iid complex gaussian variables. This is not the case, for example, if we normalize 
$(t_{k,\ell})_{\ell \geq 1}$ in such a way that $t_{k,1} \in \mathbb{R}_+$. 
\end{remark}

\begin{proof}
For fixed $k \in \mathbb{Z}$, let us first show the existence
 of the sequence $(t_{k,\ell})_{\ell \geq 1}$. By symmetry, one can assume $k \geq 1$: in
 this case, for all $n \geq k$, there exists $\tau_k^{(n)}$ of modulus $1$ such that 
$f_k^{(n)} = \tau_k^{(n)} v_k^{(n)}$. Hence, 
 $$ |D_k^{(n)}| \langle \Phi_k^{(n)} \tau_k^{(n)} v_k^{(n)}, e_{\ell} \rangle$$
 almost surely converges when $n$ goes to infinity. Now, by the estimate \eqref{normalizationD}, 
 $|D_k^{(n)}| / \sqrt{n}$ converges almost surely to a strictly positive constant. 
 One deduces the existence of $(t_{k,\ell})_{\ell \geq 1}$, by taking 
$$\psi_k^{(n)} :=  \Phi_k^{(n)} \tau_k^{(n)}.$$
Let us now check the uniqueness, by supposing that two sequences $(t_{k,\ell})_{\ell \geq 1}$ and
$(t'_{k,\ell})_{\ell \geq 1}$ can be constructed from the same virtual rotation $(u_n)_{n \geq 1}$.
In this case, there exist, for all $n \geq 1$, two unit eigenvectors $w_k^{(n)}$ and 
$w_k^{(n)'}$ corresponding to the same eigenvalue, and such that for all $\ell \geq 1$, 
\begin{equation}
 \sqrt{n} \, \langle w_k^{(n)}, e_{\ell} \rangle \underset{n \rightarrow \infty}
 {\longrightarrow}
 t_{k,\ell} \label{wkn1}
 \end{equation}
 and 
 $$ \sqrt{n} \, \langle w_k^{(n)'}, e_{\ell} \rangle \underset{n \rightarrow \infty}
 {\longrightarrow}
 t'_{k,\ell}.$$
Since the eigenvalues are almost surely simple, for all $n \geq 1$, there exists 
$\chi_k^{(n)} \in \mathbb{U}$ such that $w_k^{(n)'}=\chi_k^{(n)}w_k^{(n)}$, which implies
\begin{equation}
 \chi_k^{(n)} \sqrt{n} \, \langle w_k^{(n)}, e_{\ell} \rangle \underset{n \rightarrow \infty}
 {\longrightarrow}
 t'_{k,\ell}. \label{wkn2}
 \end{equation}
By comparing \eqref{wkn1} and \eqref{wkn2}, one deduces that $t'_{k, \ell} = \chi_k t_{k, \ell}$, 
where $\chi_k \in \mathbb{U}$ denotes the limit of any converging subsequence of $(\chi_k^{(n)})_{n \geq 1}$. 

Moreover, let us choose the random vectors $(w_k^{(n)})_{k \in \mathbb{Z},n \geq 1}$ and the
random variables $(t_{k,\ell})_{k \in \mathbb{Z}, \ell \geq 1}$ as measurable functions
of $(u_n)_{n \geq 1}$, in such a way that \eqref{wkn1} is satisfied almost surely. 
Let $(\psi_k)_{k \in \mathbb{Z}}$ be iid random variables, independent of 
$(u_n)_{n \geq 1}$. For all $n \geq 1$, the invariance by conjugation of the
 Haar measure on $U(n)$ implies that the family of eigenvectors 
$(\psi_k w_k^{(n)})_{1 \leq k \leq n}$ of $u_n$ forms a Haar-distributed unitary matrix 
in $U(n)$. 
One deduces that if $L$ is a finite set of strictly positive integers, and 
if $K$ is a finite set of integers, then 
$$\sqrt{n} ( \langle \psi_k w_k^{(n)}, e_{\ell} \rangle)_{k \in K, \ell \in L}$$
converges in law to a family of iid standard complex gaussian variables. 
Since this family of variables also converges almost surely, one deduces that the
limiting variables $(\psi_k t_{k,\ell})_{k \in K, \ell \in L}$ are iid standard complex and
gaussian. Since the finite sets $K$ and $L$ can be taken arbitrarily, we are done. 
\end{proof}
One deduces immediately the following: 
\begin{corollary}
 The limiting coordinates $g_{k, \ell}$ introduced in Corollary \ref{cor:gkl} are almost surely different from 
zero. 
\end{corollary}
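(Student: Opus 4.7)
My plan is to obtain the corollary as a direct consequence of the theorem just proven, by relating the limiting coordinate $g_{k,\ell}$ of Corollary~\ref{cor:gkl} to the variable $t_{k,\ell}$ of the preceding theorem.

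First, I would exploit the definition $g_k^{(n)} = D_k^{(n)} f_k^{(n)}$. Writing the polar decomposition $D_k^{(n)} = |D_k^{(n)}| \Phi_k^{(n)}$ and $f_k^{(n)} = \tau_k^{(n)} v_k^{(n)}$ with $|\tau_k^{(n)}| = 1$ (since $v_k^{(n)}$ is a unit eigenvector for the same simple eigenvalue), I obtain
\[
  \langle g_k^{(n)}, e_\ell \rangle \;=\; \frac{|D_k^{(n)}|}{\sqrt{n}} \cdot \Phi_k^{(n)} \tau_k^{(n)} \cdot \sqrt{n}\, \langle v_k^{(n)}, e_\ell \rangle \;=\; \frac{|D_k^{(n)}|}{\sqrt{n}} \cdot \sqrt{n}\, \langle \psi_k^{(n)} v_k^{(n)}, e_\ell \rangle,
\]
using the identification $\psi_k^{(n)} = \Phi_k^{(n)} \tau_k^{(n)}$ from the proof of the preceding theorem. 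By estimate \eqref{normalizationD}, $|D_k^{(n)}|/\sqrt{n}$ converges almost surely to $\sqrt{D_k}$, which is a strictly positive random variable. By the preceding theorem, the second factor tends almost surely to $t_{k,\ell}$. Passing to the limit on both sides and comparing with Corollary~\ref{cor:gkl} yields the identity $g_{k,\ell} = \sqrt{D_k}\, t_{k,\ell}$ almost surely.

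Next, I would argue that $t_{k,\ell} \neq 0$ almost surely for each fixed $(k,\ell)$. The preceding theorem states that if $(\psi_k)_{k \in \Z}$ is an independent family of iid uniform variables on $\mathbb{U}$, then $(\psi_k t_{k,\ell})$ is an iid family of standard complex gaussians. Since multiplying $t_{k,\ell}$ by the phase $\psi_k$ preserves its modulus, $|t_{k,\ell}|$ has the distribution of the modulus of a standard complex gaussian, which has no atom at zero. Hence $\mathbb{P}(t_{k,\ell} = 0) = 0$ for each fixed $(k,\ell)$, and a countable union over $(k,\ell) \in \Z \times \N$ gives $t_{k,\ell} \neq 0$ simultaneously for all $k, \ell$ almost surely.

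Combining the two steps and using that $\sqrt{D_k} > 0$ almost surely for every $k$ (again by \eqref{normalizationD}, with another countable union over $k$), we conclude that $g_{k,\ell} \neq 0$ almost surely for every $k \in \Z$ and $\ell \geq 1$. No step here should be a real obstacle: everything rests on the already-established convergence $|D_k^{(n)}|^2 \sim D_k n$ together with the gaussian distribution identified in the preceding theorem.
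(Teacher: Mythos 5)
Your proposal follows essentially the same route as the paper: you establish the identity $g_{k,\ell} = \sqrt{D_k}\, t_{k,\ell}$ by comparing the two limiting normalizations via \eqref{normalizationD}, and then use the fact that $t_{k,\ell}$ is a standard complex gaussian up to an independent uniform phase, hence almost surely nonzero. The argument is correct and matches the paper's proof, merely spelling out the polar decomposition step slightly more explicitly.
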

\begin{proof}
We know that for a suitable normalization of $t_{k, \ell}$, 
$$ g_{k, \ell} = \underset{n \rightarrow \infty}{\lim}
|D_k^{(n)}| \langle \Phi_k^{(n)} f_k^{(n)}, e_{\ell} \rangle,$$
  $$t_{k, \ell} = \underset{n \rightarrow \infty}{\lim}
\sqrt{n} \langle \Phi_k^{(n)} f_k^{(n)}, e_{\ell} \rangle,$$
and by \eqref{normalizationD}, 
$$\sqrt{D_k} =  \underset{n \rightarrow \infty}{\lim} |D_k^{(n)}|/\sqrt{n}.$$
Combining these limits gives
\begin{equation}
g_{k, \ell} = \sqrt{D_k} t_{k, \ell}, \label{gdkt}
\end{equation}
which is almost surely nonzero, since $t_{k, \ell}$ is a standard complex gaussian variable up to multiplication by an independent uniform random variable on the unit circle. 
\end{proof}

The eigenspaces of $u_n$, generated by the vectors $f_k^{(n)}$, 
can also be considered as elements of the projective space
$\mathbb{P}^{n-1}(\mathbb{C})$. Moreover, one can define the infinite-dimensional 
projective space $\mathbb{P}^{\infty}(\mathbb{C})$, as the space of nonzero infinite 
sequences of complex numbers, quotiented by scalar multiplication.
The convergence of renormalized eigenvectors proven above can be viewed as a convergence of 
the corresponding points on the projective spaces. 

There exists a uniform measure on all these projective spaces, obtained by taking 
the equivalence class of a sequence of iid standard complex gaussian variables. 
For $n \geq 1$ finite, the uniform measure on $\mathbb{P}^{n}(\mathbb{C})$ can also be obtained from 
a uniform point on the sphere in $\mathbb{C}^{n+1}$. 
For $m < n \in \mathbb{N} \cup \{\infty\}$, there exists a natural projection $\Pi_{n,m}$ from 
$\mathbb{P}^{n}(\mathbb{C})$ to $\mathbb{P}^{m}(\mathbb{C})$, obtained by taking only the $m+1$ first 
coordinates of the sequences, and this projection is well-defined when these coordinates 
are not all vanishing: in particular, almost surely under the uniform measure 
on $\mathbb{P}^{n}(\mathbb{C})$. Note that the image of this measure by $\Pi_{n,m}$ is the 
uniform measure on $\mathbb{P}^{m}(\mathbb{C})$. 
Moreover, we can define the notion of weak convergence on the projective spaces as follows.
Let $(x_n)_{n \geq 1}$ be a sequence such that $x_n \in \mathbb{P}^{n}(\mathbb{C})$ for all 
$n \geq 1$ and let $x_{\infty} \in \mathbb{P}^{\infty}(\mathbb{C})$. We say
 that  $(x_n)_{n \geq 1}$ weakly converges to $x_{\infty}$ if and only if the following holds:
 for all $m \geq 1$ such that the $m+1$ first coordinates of $x_{\infty}$ are not 
 all vanishing, the projection $\Pi_{n,m}(x_n) \in \mathbb{P}^{m} (\mathbb{C})$ 
 is well-defined for $n$ large enough and tends to $\Pi_{\infty,m}(x_{\infty})$ 
 when $n$ goes to infinity. 
 From the previous result, we can easily deduce the following: 
 \begin{theorem}
 Let $(u_n)_{n \geq 1}$ be a virtual rotation, following the Haar measure. 
For $k \in \mathbb{Z}$ and $n \geq 1$, let $x_k^{(n)} \in \mathbb{P}^{n-1}(\mathbb{C})$ be the
 eigenspace corresponding to the $k$th smallest nonnegative eigenangle of $u_n$ for $k \geq 1$, and the 
$(1-k)$th largest strictly negative eigenangle of $u_n$ for $k \leq 0$ (this eigenspace is 
almost surely one-dimensional). Then, 
there almost surely exists some random points $(x_k^{(\infty)})_{k \in \mathbb{Z}}$
in $\mathbb{P}^{\infty}(\mathbb{C})$ such that 
for all $k \in \mathbb{Z}$, $x_k^{(n)}$ weakly converges to $x_k^{(\infty)}$ when $n$ goes
 to infinity. The points $(x_k^{(\infty)})_{k \in \mathbb{Z}}$ are represented by 
 the sequences $(t_{k, \ell})_{\ell \geq 1}$, $k \in \mathbb{Z}$ given above: they are independent and 
 uniform on $\mathbb{P}^{\infty}(\mathbb{C})$.
 \end{theorem}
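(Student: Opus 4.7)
The plan is to reduce everything to the previous theorem and the explicit formula $g_{k,\ell} = \sqrt{D_k}\, t_{k,\ell}$. First I would fix a countable almost-sure event on which, for every $k \in \mathbb{Z}$ and every $\ell \geq 1$, the limit $\sqrt{n}\,\langle \psi_k^{(n)} v_k^{(n)}, e_\ell \rangle \to t_{k,\ell}$ holds, and on which every $t_{k,\ell}$ is nonzero (this last property holds a.s.\ coordinatewise because $\psi_k t_{k,\ell}$ is a standard complex gaussian and $\psi_k \in \mathbb{U}$, and a countable intersection of a.s.\ events is a.s.). I then define $x_k^{(\infty)} \in \mathbb{P}^\infty(\mathbb{C})$ to be the equivalence class of the sequence $(t_{k,\ell})_{\ell \geq 1}$, which is well-defined since at least one (in fact every) coordinate is nonzero.

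Next I would verify weak convergence $x_k^{(n)} \to x_k^{(\infty)}$. Since $v_k^{(n)}$ spans the eigenspace $x_k^{(n)}$, for any nonzero scalar $c_n$ the vector $c_n v_k^{(n)}$ also represents $x_k^{(n)}$ in $\mathbb{P}^{n-1}(\mathbb{C})$; in particular one may take $c_n = \sqrt{n}\,\psi_k^{(n)}$. Fix $m \geq 1$ for which $(t_{k,1}, \ldots, t_{k,m+1}) \neq 0$ (by the remark above this holds for every $m$ on our event). Then the vector of first $m+1$ coordinates of $c_n v_k^{(n)}$ converges coordinatewise to $(t_{k,1}, \ldots, t_{k,m+1})$, which is nonzero. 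For $n$ large enough the truncation is therefore nonzero as well, so $\Pi_{n,m}(x_k^{(n)})$ is well-defined, and projective convergence $\Pi_{n,m}(x_k^{(n)}) \to \Pi_{\infty,m}(x_k^{(\infty)})$ follows from the fact that the quotient map from nonzero vectors to $\mathbb{P}^m(\mathbb{C})$ is continuous at nonzero limits.

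Finally I would identify the joint law of $(x_k^{(\infty)})_{k \in \mathbb{Z}}$. Let $(\psi_k)_{k \in \mathbb{Z}}$ be the independent uniform phases from the previous theorem. Since $\psi_k \in \mathbb{U}$, the sequence $(\psi_k t_{k,\ell})_{\ell \geq 1}$ represents the same point of $\mathbb{P}^\infty(\mathbb{C})$ as $(t_{k,\ell})_{\ell \geq 1}$, namely $x_k^{(\infty)}$. By the previous theorem, the family $(\psi_k t_{k,\ell})_{k \in \mathbb{Z},\, \ell \geq 1}$ consists of iid standard complex gaussians. Thus for each $k$, $x_k^{(\infty)}$ is the projective class of an iid standard complex gaussian sequence, hence uniformly distributed on $\mathbb{P}^\infty(\mathbb{C})$ by the very definition of the uniform measure. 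Independence across $k$ is immediate from independence of the sequences $(\psi_k t_{k,\ell})_{\ell \geq 1}$ for distinct $k$, since each $x_k^{(\infty)}$ is a measurable function of its own row.

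There is no serious obstacle here: the whole statement is a packaging of Corollary~\ref{cor:gkl} and the previous theorem in projective language. The only subtle point is to make sure the exceptional null set is independent of $k$, $\ell$, and $m$, which is handled by the standard countable-intersection argument, and to notice that the projective class does not depend on the normalization $D_k^{(n)}$ or the phase $\psi_k^{(n)}$, so one never needs to track them through the convergence.
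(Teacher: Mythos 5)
Your proof is correct and fills in exactly the "easily deduced" argument the paper leaves implicit: passing from the coordinatewise limit $\sqrt{n}\,\langle\psi_k^{(n)}v_k^{(n)},e_\ell\rangle\to t_{k,\ell}$ to projective convergence via a nonvanishing representative, and identifying the law from the fact that $(\psi_k t_{k,\ell})_{k,\ell}$ are iid standard complex gaussians. The only cosmetic caveat is that $x_k^{(n)}$ lives in $\mathbb{P}^{n-1}(\mathbb{C})$ rather than $\mathbb{P}^{n}(\mathbb{C})$, so the projection in the definition of weak convergence should be written $\Pi_{n-1,m}$, but this index shift does not affect the argument.
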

\section{A flow of operators on a random space}  \label{sec:flowoperators}
For each $\alpha \in \mathbb{R}$, let $(\alpha_n)_{n \geq 1}$ be a sequence such that 
$\alpha_n$ is equivalent to $\alpha n$ when $n$ goes to infinity. For $n \geq 1$, $k \in \Z$, 
we have
$$u_n^{\alpha_n} f_k^{(n)} = e^{i \theta_k^{(n)} \alpha_n}  f_k^{(n)}.$$
Now, $e^{i \theta_k^{(n)} \alpha_n}$ tends to $e^{2 i \pi \alpha y_k}$ and 
after normalization, the coordinates of $f_k^{(n)}$ tend to the corresponding coordinates of the 
sequence $(t_{k,\ell})_{\ell \geq 1}$. It is then natural to expect that, in a sense which needs to be 
made precise, $u_n^{\alpha_n}$ tends to some operator $U$, acting on some infinite sequences, such that
$$U  ((t_{k, \ell})_{\ell \geq 1}) = e^{2 i \pi \alpha y_k} (t_{k, \ell})_{\ell \geq 1}.$$
This motivates the following definition: 
\begin{definition}
 The space $\mathcal{E}$ is the random vector subspace of $\mathbb{C}^{\infty}$, consisting of all finite linear combinations of the sequences $(t_{k, \ell})_{\ell \geq 1}$, or equivalently, $(g_{k, \ell})_{\ell \geq 1}$, for 
$k \in \Z$. 
\item For $\alpha \in \mathbb{R}$, the operator $U^{\alpha}$ is the unique linear application from 
$\mathcal{E}$ to $\mathcal{E}$ such that for all $k \in \Z$, 
$$U^{\alpha} ((t_{k, \ell})_{\ell \geq 1}) = e^{2 i \pi \alpha y_k} (t_{k, \ell})_{\ell \geq 1},$$
or equivalently, 
$$U^{\alpha} ((g_{k, \ell})_{\ell \geq 1}) = e^{2 i \pi \alpha y_k} (g_{k, \ell})_{\ell \geq 1}.$$
\end{definition}
\begin{remark}
 For each $k$, the sequence $(t_{k, \ell})_{\ell \geq 1}$ is almost surely well-defined up to a multiplicative
constant of modulus $1$. Moreover, the sequences $(t_{k, \ell})_{\ell \geq 1}$ for 
$k \in \Z$ are a.s. linearly independent, since for a suitable normalization,
$(t_{k, \ell})_{k \in \Z, \ell \geq 1}$ are iid standard complex gaussian. This ensures that the 
definition of $U^{\alpha}$ given above is meaningful. The notation $U^{\alpha}$ is motivated by 
the immediate fact that $(U^{\alpha})_{\alpha \in \mathbb{R}}$ is a flow of operators on 
$\mathcal{E}$, i.e. $U^0 = I_{\mathcal{E}}$ and $U^{\alpha + \beta} = U^{\alpha} U^{\beta}$ for
all $\alpha, \beta \in \mathbb{R}$.   
\end{remark}
As suggested before, we expect that $U^{\alpha}$ is a kind of limit for $u_n^{\alpha_n}$ when 
$n$ goes to infinity. Of course, these operators do not act on the same space, so we need to 
be more precise. 
\begin{theorem} \label{thm:flowoperators}
 Almost surely, for any sequence $(s_{\ell})_{\ell \geq 1}$ in $\mathcal{E}$ and for all integers
$m \geq 1$, 
$$\left[ u_n^{\alpha_n} ((s_{\ell})_{1 \leq \ell \leq n} )\right]_m 
\underset{n \rightarrow \infty}{\longrightarrow} \left[ U^{\alpha} ((s_{\ell})_{\ell \geq 1}) \right]_m,$$
where $[\, \cdot \,]_m$ denotes the $m$th coordinate of a vector or a sequence. 
\end{theorem}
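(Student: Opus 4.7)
\emph{Step 1: Linearity reduction.} Since $\mathcal{E}$ is by definition the finite-linear span of the sequences $g_k = (g_{k,\ell})_{\ell \geq 1}$ for $k \in \mathbb{Z}$, and both sides of the desired convergence are linear in $s$, it suffices to prove the theorem for $s = g_k$ with a single $k \in \mathbb{Z}$ fixed. Writing $g_k[n] := (g_{k,1}, \dotsc, g_{k,n})$, the target becomes
\[
[u_n^{\alpha_n} g_k[n]]_m \longrightarrow e^{2\pi i \alpha y_k}\, g_{k,m} \quad \text{almost surely as } n \to \infty.
\]

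\emph{Step 2: Exact eigenvector as pivot.} The natural comparison is with $g_k^{(n)} = D_k^{(n)} f_k^{(n)}$, which is an exact eigenvector of $u_n$ with eigenvalue $e^{i \theta_k^{(n)}}$. For this vector, $u_n^{\alpha_n} g_k^{(n)} = e^{i \alpha_n \theta_k^{(n)}} g_k^{(n)}$, so
\[
[u_n^{\alpha_n} g_k^{(n)}]_m = e^{i \alpha_n \theta_k^{(n)}} \langle g_k^{(n)}, e_m\rangle \longrightarrow e^{2\pi i\alpha y_k} g_{k,m}
\]
by Theorem~\ref{thm:eigenvalues} (for the phase, using $\alpha_n/n \to \alpha$) combined with Corollary~\ref{cor:gkl} (for the coordinate). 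What remains is to prove $[u_n^{\alpha_n}(g_k[n] - g_k^{(n)})]_m \to 0$ almost surely.

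\emph{Step 3: Error analysis.} I would first establish an a.s.\ bound on $\|g_k[n] - g_k^{(n)}\|$. Writing the $\ell$-th coordinate of the error as the tail of the $L^2$-bounded $(\mathcal{B}_n)$-martingale $\langle g_k^{(n)}, e_\ell\rangle \to g_{k,\ell}$ of Theorem~\ref{thm:vector}, and using the martingale identity $\mathbb{E}[g_{k,\ell} \overline{\langle g_k^{(n)}, e_\ell\rangle} \mid \mathcal{A}] = \mathbb{E}[|\langle g_k^{(n)}, e_\ell\rangle|^2 \mid \mathcal{A}]$, one obtains
\[
\mathbb{E}\bigl[\|g_k[n] - g_k^{(n)}\|^2 \,\big|\, \mathcal{A}\bigr] = \sum_{\ell=1}^n \mathbb{E}[|g_{k,\ell}|^2 \mid \mathcal{A}] - |D_k^{(n)}|^2.
\]
Now by \eqref{gdkt} and the Gaussian law of $t_{k,\ell}$ (preserved under conditioning on $\mathcal{A}$ since eigenvalues and eigenvectors are independent under Haar measure), $\mathbb{E}[|g_{k,\ell}|^2 \mid \mathcal{A}] = D_k$, while $|D_k^{(n)}|^2 = D_k n (1 + O(n^{-1/3+\eps}))$ by \eqref{normalizationD}. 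This gives $\mathbb{E}[\|g_k[n] - g_k^{(n)}\|^2 \mid \mathcal{A}] = O(n^{2/3+\eps})$, which Markov's inequality and a Borel--Cantelli argument along a dyadic subsequence upgrade to $\|g_k[n] - g_k^{(n)}\| = o(n^{1/2})$ almost surely. Expanding $g_k[n] - g_k^{(n)}$ in the eigenbasis $\{f_j^{(n)}\}_{j=1}^n$, the $f_k^{(n)}$-component $\langle g_k[n], f_k^{(n)}\rangle - D_k^{(n)}$ is of size $o(n^{1/2})$ by Cauchy--Schwarz, and after multiplication by $\langle f_k^{(n)}, e_m\rangle = O(n^{-1/2})$ contributes $o(1)$ to the $m$-th coordinate.

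\emph{Main obstacle.} The hard part is bounding the remaining $j \neq k$ contribution
\[
\sum_{j \neq k} \langle g_k[n], f_j^{(n)}\rangle\, e^{i\alpha_n \theta_j^{(n)}}\, \langle f_j^{(n)}, e_m\rangle,
\]
because a naive Cauchy--Schwarz, using $\sum_{j \neq k} |\langle g_k[n], f_j^{(n)}\rangle|^2 \leq \|g_k[n] - g_k^{(n)}\|^2$ together with $\sum_j |\langle f_j^{(n)}, e_m\rangle|^2 = 1$, only yields $o(n^{1/2})$, which is insufficient for pointwise convergence. To reach $o(1)$ one must exploit the oscillation of the phases $e^{i\alpha_n \theta_j^{(n)}}$. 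I plan to condition on $\mathcal{A}$, under which (by Lemma~\ref{lem:sigmaA} and the argument in the proof of Theorem~\ref{thm:vector}) the phases $\phi_j^{(n)}$ driving the directions of the $f_j^{(n)}$ are i.i.d.\ uniforms on $\mathbb{U}$ independent of $\mathcal{A}$, expand $\langle g_k[n], f_j^{(n)}\rangle$ via the recurrence \eqref{recurrenceg}, and compute the conditional second moment of the oscillatory sum: the off-diagonal cross-terms vanish by phase independence, and the surviving diagonal sum is controlled by a weighted $\ell^2$-quantity whose weights $\mathbb{E}[|\langle f_j^{(n)}, e_m\rangle|^2 \mid \mathcal{A}]$ are of order $1/n$. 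Combining with the spacing bounds of Theorem~\ref{thm:eigenvalues}, this should yield an $o(1)$ bound along a dyadic subsequence via Borel--Cantelli, which is finally extended to all $n$ by a monotonicity/interpolation control on the variation $\alpha_{n+1} - \alpha_n$.
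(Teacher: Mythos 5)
Your Steps 1--2 match the paper's reduction exactly: linearity brings you to $s = g_k$, and you pivot on the exact eigenvector $g_k^{(n)}$, whose action under $u_n^{\alpha_n}$ is handled by Theorem~\ref{thm:eigenvalues} and Corollary~\ref{cor:gkl}. The issues are in Step 3 and what you correctly flag as the main obstacle. For the norm bound, the identity $\mathbb{E}[\|g_k[n]-g_k^{(n)}\|^2\mid\mathcal{A}] = nD_k - |D_k^{(n)}|^2$ requires $\mathbb{E}[|g_{k,\ell}|^2\mid\mathcal{A}] = D_k$ \emph{exactly}; this is not established (Lemma~\ref{lem:uniformbeta} only gives $\mathbb{E}[|\langle f_k^{(n)},e_\ell\rangle|^2\mid\mathcal{A}]=O(n^{-1+\eps})$, not the exact $1/n$ you need) and is actually inconsistent: $\log(|D_k^{(n)}|^2/n)$ is essentially the fluctuating martingale $M_k^{(n)}$, so $|D_k^{(n)}|^2/n > D_k$ along a subsequence, making your right-hand side negative while the left side is a conditional expectation of a squared norm. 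The paper instead obtains $O(n^{2/3+\eps})$ from the explicit increment bound of Lemma~\ref{L2bounds}, which requires summing over all $j\neq k$ and using the eigenvalue-spacing estimates.

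The ``main obstacle'' is where the paper does something your plan does not capture. You want to exploit the oscillation of $e^{i\alpha_n\theta_j^{(n)}}$ via a conditional second moment with off-diagonal cancellation, but the weights $\langle g_k[n],f_j^{(n)}\rangle$ depend on the phases $\phi_j^{(s)}$ at \emph{all} stages (through $f_j^{(n)}$ for $s<n$ and through the limit $g_k[n]$ for $s\geq n$), so the cross-terms do not have the clean independence you invoke; this would be hard to close. The paper bypasses the oscillation entirely: Lemma~\ref{lem:delocalization} exploits the invariance of the Haar measure on virtual isometries under conjugation by a fixed Haar unitary in the top-left $m\times m$ block to show that the ratio $|\langle g_k^{(n)}-g_k[n], u_n^{\gamma_n}e_\ell\rangle|^2/\|g_k^{(n)}-g_k[n]\|^2$ is a $\beta(1,n-1)$ random variable, whose exponentially small tail gives the pointwise $o(1)$ (indeed $O(n^{-\delta})$ uniformly in $\alpha_n,\gamma_n$) directly by Borel--Cantelli. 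You would need either this conjugation trick or a substantial amount of additional work to make your route rigorous.
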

By linearity, it is sufficient to show the theorem for $(s_{\ell})_{\ell \geq 1} = (g_{k, \ell})_{\ell \geq 1}$.
 Hence, Theorem \ref{thm:flowoperators} 
can be deduced from the following proposition: 
\begin{proposition} \label{prop:eigenvectorconvergence}
For all $k \in \mathbb{Z}$, $\ell \geq 1$, one has almost surely: 
\[
\langle u_n^{\alpha_n} (g_k[n]), e_\ell \rangle \to e^{2 \pi i \alpha y_k}  g_{k, \ell}
\]
as $n \to \infty$, for $g_k[n] := (g_{k, \ell})_{1 \leq \ell \leq n}$. 
\end{proposition}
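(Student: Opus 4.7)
The plan is to decompose $g_k[n] = g_k^{(n)} + \eta_n$, where $g_k^{(n)} := D_k^{(n)} f_k^{(n)}$ is the renormalized eigenvector from Section~\ref{sec:eigenvectors} and $\eta_n := g_k[n] - g_k^{(n)}$. Since $u_n g_k^{(n)} = \lambda_k^{(n)} g_k^{(n)}$, one has
\[
  \langle u_n^{\alpha_n} g_k[n], e_\ell \rangle = e^{i \theta_k^{(n)} \alpha_n} g_{k,\ell}^{(n)} + \langle u_n^{\alpha_n} \eta_n, e_\ell \rangle.
\]
The first term converges almost surely to $e^{2 \pi i \alpha y_k} g_{k,\ell}$: Theorem~\ref{thm:eigenvalues} combined with $\alpha_n / n \to \alpha$ yields $\theta_k^{(n)} \alpha_n \to 2 \pi \alpha y_k$, and Corollary~\ref{cor:gkl} gives $g_{k,\ell}^{(n)} \to g_{k,\ell}$. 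The remaining task is to show $\langle u_n^{\alpha_n} \eta_n, e_\ell \rangle \to 0$ almost surely.

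Expanding $\eta_n$ in the eigenbasis of $u_n$, and using that $g_k^{(n)}$ is parallel to $f_k^{(n)}$ (so $\langle \eta_n, f_j^{(n)} \rangle = \langle g_k[n], f_j^{(n)} \rangle$ for $j \ne k$), I would write
\[
  \langle u_n^{\alpha_n} \eta_n, e_\ell \rangle = e^{i \theta_k^{(n)} \alpha_n} \langle \eta_n, f_k^{(n)} \rangle \langle f_k^{(n)}, e_\ell \rangle + \sum_{j \ne k} e^{i \theta_j^{(n)} \alpha_n} \langle \eta_n, f_j^{(n)} \rangle \langle f_j^{(n)}, e_\ell \rangle.
\]
The $j = k$ piece is bounded by $\norm{\eta_n}/\sqrt{n}$ via Cauchy-Schwarz on the first inner product together with the estimate $|\langle f_k^{(n)}, e_\ell \rangle| = O(1/\sqrt{n})$, which follows from \eqref{normalizationD} and Corollary~\ref{cor:gkl}. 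The martingale computation of Theorem~\ref{thm:vector}, combined with the uniform bound $\mathbb{E}[|\langle f_j^{(n)}, e_\ell \rangle|^2] = 1/n$ and summation over the tail of the increments, gives $\mathbb{E}[\norm{\eta_n}^2] = O(n^{2/3 + \eps})$, so this piece is $O(n^{-1/6 + \eps})$ in $L^2$, hence almost surely negligible after a subsequence and Borel-Cantelli argument.

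The main obstacle is the sum over $j \ne k$: the naive Cauchy-Schwarz bound gives only $O(\norm{\eta_n}) = O(n^{1/3 + \eps})$, which does not vanish, so genuine cancellation among the phases $e^{i \theta_j^{(n)} \alpha_n}$ is needed. I would subtract a common phase $e^{2 \pi i \alpha y_k}$ and use Parseval to reduce the problem to controlling the weighted sum $\sum_{j \ne k}(e^{i \theta_j^{(n)} \alpha_n} - e^{2 \pi i \alpha y_k}) \langle \eta_n, f_j^{(n)} \rangle \langle f_j^{(n)}, e_\ell \rangle$, then split $j$ dyadically according to $|j - k| \le n^{2/3}$ versus $|j - k| > n^{2/3}$, mirroring the partition used in the proof of Theorem~\ref{thm:vector}. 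On the small-$|j-k|$ side, Theorem~\ref{thm:eigenvalues} quantitatively controls the phase differences for fixed $j$; on the large-$|j-k|$ side, one would use the refined weighted bounds on $|\langle \eta_n, f_j^{(n)} \rangle|$ arising from the conditional variance estimates of Section~\ref{sec:eigenvectors}, together with conditioning on $\mathcal{A}$ and exploiting the independence of the uniform phases $\phi_j^{(n)}$ inside the $\mathcal{B}_n$-filtration to average out the remaining oscillation. Aligning the cutoff so that both regimes contribute $o(1)$ almost surely---not just in $L^2$---is the delicate technical step, since both the eigenangle estimate and the martingale $L^2$ bound degrade near the transition.
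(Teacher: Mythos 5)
Your initial decomposition $g_k[n] = g_k^{(n)} + \eta_n$ and the treatment of the $g_k^{(n)}$-term match the paper's proof, which deduces the present proposition from Proposition~\ref{prop:distancecomponentseigenvectors} (take $\gamma_n = 0$ and $\delta = 0$ there) and begins with exactly the same splitting. Your $j=k$ term and the estimate $\norm{\eta_n} = O(n^{1/3+\eps})$ also align with Proposition~\ref{prop:distanceeigenvectors}, item (2), whose proof occupies the bulk of the section.

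The gap is in the $j\ne k$ sum, precisely where you sense the difficulty. The proposed phase subtraction does not help: for $|j-k|\gtrsim 1$ the factor $e^{i\theta_j^{(n)}\alpha_n}-e^{2\pi i\alpha y_k}$ has modulus of order one, so Cauchy--Schwarz still returns $O(\norm{\eta_n}) = O(n^{1/3+\eps})$. More fundamentally, the cancellation you need is not spectral. You must control $\langle\eta_n,\, u_n^{-\alpha_n}e_\ell\rangle$ for a unit vector $u_n^{-\alpha_n}e_\ell$ that varies with $n$, and the pointwise convergence $\langle\eta_n, e_\ell\rangle\to 0$ (a single fixed direction, already built into the definition of $g_{k,\ell}$) does not extend to such a moving target. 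The paper's key device, absent from your sketch, is Lemma~\ref{lem:delocalization}: conjugating by an auxiliary Haar unitary $\sigma$ on the first $m$ coordinates, \emph{independent} of $(u_n)_{n\geq 1}$, and invoking invariance of the law of virtual isometries under conjugation, one shows that $\abs{\langle g_k^{(n)} - g_k[n], u_n^{\gamma_n}e_\ell\rangle}^2/\norm{g_k^{(n)} - g_k[n]}^2$ has exactly the $\beta(1,n-1)$ law. Combined with $\norm{\eta_n}=O(n^{1/3+\eps})$, this gives $\abs{\langle\eta_n, u_n^{-\alpha_n}e_\ell\rangle}\lesssim\norm{\eta_n}\,n^{-1/2+\delta'}=o(1)$ with exponentially small exceptional probability, and Borel--Cantelli closes the argument. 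Your dyadic splitting and phase averaging do not supply this delocalization: the phases $\phi_j^{(n)}$ have already been spent in the martingale analysis of Section~\ref{sec:eigenvectors} to produce the bound on $\norm{\eta_n}$, and $u_n^{-\alpha_n}e_\ell$ is not independent of $\eta_n$ — the fresh independent randomness $\sigma$ is what makes the delocalization estimate go through.
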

In the next section, we will give a more intrinsic way to define a flow of operators similar to 
$(U^{\alpha})_{\alpha \in \mathbb{R}}$. In order to make this construction, we need 
a more precise and stronger result than Proposition \ref{prop:eigenvectorconvergence}, which is given by the 
 following two propositions: 
\begin{proposition} \label{prop:distanceeigenvectors}
Let $\eps > 0$. Almost surely, for all $k \in \mathbb{Z}$, we have the following.
\begin{enumerate}
 \item The euclidian norm $\|g_k[n]\|$ is equivalent to a strictly positive random variable times $\sqrt{n}$, when 
$n$ goes to infinity. 
\item $\|g_k[n] - g_k^{(n)} \| = O_\eps(n^{\frac{1}{3} + \eps})$.
\item For any $T > 0$ and $\delta \in (0, 1/6)$, 

$$\underset{\alpha \in [-T,T]}{\sup} \; \underset{\alpha_n \in [n(\alpha - n^{-\delta}), 
n(\alpha + n^{-\delta})]}{\sup} 
\|u_n^{\alpha_n} g_k[n] - e^{2 \pi i \alpha y_k} g_k[n]\| =  O(n^{\frac{1}{2} - \delta}).$$
\end{enumerate}
\end{proposition}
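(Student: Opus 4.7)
My plan is to prove \emph{(2)} first, and then derive \emph{(1)} and \emph{(3)} from it.

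\textbf{Parts (1) and (3).} Part \emph{(1)} is immediate from \eqref{normalizationD}, which gives $\|g_k^{(n)}\|^2=|D_k^{(n)}|^2=D_k n(1+o(1))$ with $D_k>0$ almost surely, combined with the triangle inequality and the bound in \emph{(2)} which is $o(\sqrt{n})$. For \emph{(3)}, I would exploit that $g_k^{(n)}$ is an exact eigenvector of $u_n^{\alpha_n}$ with eigenvalue $e^{i\alpha_n\theta_k^{(n)}}$ by writing
\[
u_n^{\alpha_n}g_k[n]-e^{2\pi i\alpha y_k}g_k[n] = u_n^{\alpha_n}(g_k[n]-g_k^{(n)}) + e^{i\alpha_n\theta_k^{(n)}}(g_k^{(n)}-g_k[n]) + \bigl(e^{i\alpha_n\theta_k^{(n)}}-e^{2\pi i\alpha y_k}\bigr)g_k[n].
\]
Unitarity of $u_n^{\alpha_n}$ and part \emph{(2)} control the first two terms by $O(n^{1/3+\eps})$; Theorem~\ref{thm:eigenvalues} together with $\alpha_n=\alpha n+O(n^{1-\delta})$ yields $\abs{\alpha_n\theta_k^{(n)}-2\pi\alpha y_k}=O(n^{-\delta}+n^{-1/3+\eps})$ uniformly in $\alpha\in[-T,T]$ and $\alpha_n$ in the prescribed window, which combined with $\|g_k[n]\|=O(\sqrt{n})$ bounds the third term by $O(n^{1/2-\delta})$. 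For $\delta<1/6$ and $\eps$ small this third term dominates, giving the claim.

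\textbf{Part (2).} Using the martingale decomposition from Theorem~\ref{thm:vector}, set $\Delta_m^{(\ell)}:=\langle g_k^{(m+1)}-g_k^{(m)},e_\ell\rangle$, so that $g_{k,\ell}-\langle g_k^{(n)},e_\ell\rangle=\sum_{m\geq n}\Delta_m^{(\ell)}$. Martingale orthogonality in $m$ together with phase orthogonality across $j\neq k$ in the recursion \eqref{recurrenceg} (both conditional on $\mathcal{A}$) yields
\[
\mathbb{E}\bigl[\|g_k[n]-g_k^{(n)}\|^2\,\big|\,\mathcal{A}\bigr]=\sum_{m\geq n}|D_k^{(m)}|^2\,\frac{|\lambda_k^{(m)}-\lambda_k^{(m+1)}|^2}{|\mu_k^{(m)}|^2}\sum_{j\neq k}\frac{|\mu_j^{(m)}|^2}{|\lambda_j^{(m)}-\lambda_k^{(m+1)}|^2}\sum_{\ell=1}^n\mathbb{E}\bigl[|\langle f_j^{(m)},e_\ell\rangle|^2\,\big|\,\mathcal{A}\bigr].
\]
Substituting $|D_k^{(m)}|^2\sim D_k m$ from \eqref{normalizationD}, the bound $|\lambda_k^{(m)}-\lambda_k^{(m+1)}|^2/|\mu_k^{(m)}|^2\lesssim m^{-3+\eps}$ from Lemma~\ref{lem:angleestimate} together with the \emph{a priori} bound on $|\mu_k^{(m)}|$, and $\sum_{j\neq k}|\mu_j^{(m)}|^2/|\lambda_j^{(m)}-\lambda_k^{(m+1)}|^2\lesssim m^{5/3+\eps}$ (as in the proof of Theorem~\ref{thm:vector}), the prefactor in the outer sum is $\lesssim m^{-1/3+\eps}$. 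Granted the partial-sum bound below, the total conditional second moment is then $\lesssim n\sum_{m\geq n}m^{-4/3+\eps}\lesssim n^{2/3+\eps}$, which Markov's inequality and Borel--Cantelli along a dyadic subsequence (with monotonicity to fill in intermediate $n$) upgrade to the almost-sure estimate $\|g_k[n]-g_k^{(n)}\|\lesssim n^{1/3+\eps}$.

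\textbf{Main obstacle.} The refinement driving the proof is the partial-sum estimate
\[
\sum_{\ell=1}^n\mathbb{E}\bigl[|\langle f_j^{(m)},e_\ell\rangle|^2\,\big|\,\mathcal{A}\bigr]\lesssim n/m\qquad(n\leq m),
\]
since the trivial $\sum_\ell|\langle f_j^{(m)},e_\ell\rangle|^2\leq1$ produces the non-summable series $\sum_{m\geq n}m^{-1/3+\eps}$. My plan is to obtain it by applying Theorem~\ref{thm:vector} to $g_j^{(m)}$: the conditional $L^2$-boundedness of $(\langle g_j^{(m)},e_\ell\rangle)_m$ gives $\mathbb{E}[|\langle g_j^{(m)},e_\ell\rangle|^2\mid\mathcal{A}]\leq \mathbb{E}[|g_{j,\ell}|^2\mid\mathcal{A}]$, and dividing by $|D_j^{(m)}|^2\sim D_j m$ reduces the problem to showing that $\mathbb{E}[|g_{j,\ell}|^2\mid\mathcal{A}]$ is $\mathcal{O}(D_j)$ uniformly in $\ell$. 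The identity $g_{j,\ell}=\sqrt{D_j}\,t_{j,\ell}$ from \eqref{gdkt}, combined with the construction of Section~\ref{sec:normalizationsqrtn} in which the uniform phase $\psi_j$ is added independently and $\psi_j t_{j,\ell}$ is standard complex Gaussian, should yield $\mathbb{E}[|t_{j,\ell}|^2\mid\mathcal{A}]=1$ (since $|t_{j,\ell}|=|\psi_j t_{j,\ell}|$ is independent of $\mathcal{A}$ in the extended probability space). The one point requiring genuine care is verifying this $\mathcal{A}$-independence of $|t_{j,\ell}|$ rigorously; once established, the pointwise $1/m$ bound on $\mathbb{E}[|\langle f_j^{(m)},e_\ell\rangle|^2\mid\mathcal{A}]$ sums trivially to $n/m$ and completes the proof.
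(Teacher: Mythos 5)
Your overall architecture is the same as the paper's: parts (1) and (3) are deduced from part (2) via the triangle inequality, \eqref{normalizationD}, and Theorem~\ref{thm:eigenvalues}, and part (2) is attacked by the orthogonality of the martingale increments from \eqref{recurrenceg}, giving a conditional second-moment expansion that matches \eqref{Mn+1}--\eqref{formulaS}. The point where your proposal has a real gap is precisely the step you flagged as the main obstacle. The paper proves the uniform bound $\sup_{1\le j,\ell\le m}\mathbb{E}[|\langle f_j^{(m)},e_\ell\rangle|^2\mid\mathcal{A}]=O(m^{-1+\eps})$ directly in Lemma~\ref{lem:uniformbeta}, by computing an unconditional moment $\mathbb{E}[|\langle f_j^{(m)},e_\ell\rangle|^{8/\eps}]$, applying Markov, Borel--Cantelli, and the conditional H\"older inequality -- nowhere is a claim about the $\mathcal{A}$-conditional law of the limit $t_{j,\ell}$ invoked. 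Your route through $\mathbb{E}[|g_{j,\ell}|^2\mid\mathcal{A}]$ and division by $|D_j^{(m)}|^2$ has two problems that are not easily patched. First, even granting $\mathbb{E}[|t_{j,\ell}|^2\mid\mathcal{A}]=O(1)$, you need the resulting bound to be uniform over $j$ ranging over the full window $J_m$ (essentially $1\le j\le m$), and the asymptotic $|D_j^{(m)}|^2\sim D_j\,m$ from \eqref{normalizationD}, as well as the crucial ingredient $\theta_j^{(m)}=O(1/m)$ feeding into it, are statements about $m\to\infty$ with \emph{fixed} $j$; for $j$ comparable to $m$, $\theta_j^{(m)}$ is of order $1$, the martingale $(\langle g_j^{(m)},e_\ell\rangle)_m$ is only just getting started, and the quantity $|D_j^{(m)}|^{-2}D_j$ gives no useful control. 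Second, the identity $\mathbb{E}[|t_{j,\ell}|^2\mid\mathcal{A}]=1$ is genuinely not established: the theorem of Section~\ref{sec:normalizationsqrtn} only gives the unconditional law of the $\psi_j t_{j,\ell}$, and one only gets $\mathbb{E}[\mathbb{E}[|t_{j,\ell}|^2\mid\mathcal{A}]]=1$ from the tower property, which does not control the conditional expectation pointwise in $\omega$. This is exactly the kind of gap that the Markov/Borel--Cantelli trick in Lemma~\ref{lem:uniformbeta} is designed to avoid.

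A secondary gap: your closing sentence invokes ``Borel--Cantelli along a dyadic subsequence (with monotonicity to fill in intermediate $n$),'' but $\|g_k[n]-g_k^{(n)}\|$ is not monotone in $n$ -- $g_k[n]$ gains a coordinate while $g_k^{(n)}$ is re-scaled and rotated by the recursion, so neither term nor their difference is monotone in any useful sense. The paper handles the interpolation by proving a \emph{second} conditional $L^2$ estimate in Lemma~\ref{L2bounds}, namely $\mathbb{E}[\|g_k^{(n)}-g_{k,n}^{(N)}\|^2\mid\mathcal{A}]\lesssim (N-n)\,n^{-1/3+\eps}$ where $g_{k,n}^{(N)}$ is the truncation of $g_k^{(N)}$ to the first $n$ coordinates, and then iterating a subsequence-densification argument (Lemmas~\ref{subsequence1} and \ref{subsequence2}) from a sparse sequence $n_r=k+r^\nu$ down to the full sequence of integers. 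That technical step needs to be spelled out; it is not a routine consequence of the dyadic bound.
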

\begin{proposition} \label{prop:distancecomponentseigenvectors}
 Almost surely, for all $k \in \mathbb{Z}$, $\ell \geq 1$, $\alpha, \gamma \in \mathbb{R}$, and 
for all sequences $(\alpha_n)_{n \geq 1}$ and $(\gamma_n)_{n \geq 1}$ such that 
$\alpha_n/n = \alpha + o(n^{-\delta})$ and $\gamma_n/n = \gamma  + o(n^{-\delta})$ for some $\delta 
\in [0,1/6)$, 
$$ \langle u_n^{\alpha_n} (g_k[n]) -  e^{2 \pi i \alpha y_k} g_k[n], u_n^{\gamma_n} (e_\ell) \rangle 
= o (n^{-\delta}),$$
when $n$ goes to infinity. Moreover, for $\delta \in (0,1/6)$, we get the uniform estimate: 
$$ \sup_{\substack{\alpha_n \in [n(\alpha - n^{-\delta}), 
n(\alpha + n^{-\delta})] \\ \gamma_n \in [n(\gamma - n^{-\delta}), 
n(\gamma + n^{-\delta})]}}
 \langle u_n^{\alpha_n} (g_k[n]) -  e^{2 \pi i \alpha y_k} g_k[n], u_n^{\gamma_n} (e_\ell) \rangle 
= O (n^{-\delta}).$$
\end{proposition}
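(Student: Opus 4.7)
The plan is to first reduce the problem to a one-parameter estimate via the unitarity of $u_n^{\gamma_n}$. Namely,
\[
\langle u_n^{\alpha_n} g_k[n] - e^{2\pi i \alpha y_k} g_k[n], u_n^{\gamma_n} e_\ell \rangle = \langle u_n^{\alpha_n - \gamma_n} g_k[n], e_\ell \rangle - e^{2\pi i \alpha y_k} \langle u_n^{-\gamma_n} g_k[n], e_\ell \rangle,
\]
so it suffices to show that, for any $\tau \in \R$ and any sequence $(\tau_n)$ with $\tau_n/n = \tau + o(n^{-\delta})$,
\[
\langle u_n^{\tau_n} g_k[n], e_\ell \rangle = e^{2\pi i \tau y_k} g_{k,\ell} + o(n^{-\delta}),
\]
together with a uniform $O(n^{-\delta})$ variant over $\tau_n$ in a window of size $n^{-\delta}$. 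Applying the claim with $\tau_n = \alpha_n - \gamma_n$ and $\tau_n = -\gamma_n$, the principal $g_{k,\ell}$-terms cancel because $e^{2\pi i \alpha y_k} e^{-2\pi i \gamma y_k} = e^{2\pi i (\alpha - \gamma) y_k}$.

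To prove the one-parameter claim, I decompose $g_k[n] = g_k^{(n)} + h$ with $g_k^{(n)} = D_k^{(n)} f_k^{(n)}$, where $\|h\| = O_\eps(n^{\frac13 + \eps})$ by Proposition~\ref{prop:distanceeigenvectors}(2). The eigenvector part gives
\[
\langle u_n^{\tau_n} g_k^{(n)}, e_\ell \rangle = e^{i \tau_n \theta_k^{(n)}} \langle g_k^{(n)}, e_\ell \rangle.
\]
Theorem~\ref{thm:eigenvalues} combined with the hypothesis yields $\tau_n \theta_k^{(n)} = 2\pi \tau y_k + o(n^{-\delta})$ (using $\delta < 1/6 < 1/3$), so the exponential factor is $e^{2\pi i \tau y_k} + o(n^{-\delta})$. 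For the scalar product, unpacking the bounds in the proof of Theorem~\ref{thm:vector} yields $\mathbb{E}[|\langle g_k^{(n+1)} - g_k^{(n)}, e_\ell\rangle|^2] = O(n^{-\frac43 + \eps})$; the summable tail combined with Doob's maximal inequality and Borel-Cantelli along dyadic scales produces the almost sure rate $|\langle g_k^{(n)}, e_\ell \rangle - g_{k,\ell}| = O(n^{-\frac16 + \eps})$, which is $o(n^{-\delta})$ precisely because $\delta < 1/6$. Multiplication produces the main term $e^{2\pi i \tau y_k} g_{k,\ell} + o(n^{-\delta})$.

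The main obstacle is bounding the residual $\langle u_n^{\tau_n} h, e_\ell \rangle$: the naive Cauchy-Schwarz bound $O(\|h\|) = O(n^{\frac13 + \eps})$ falls far short of the target. Expanding in the eigenbasis of $u_n$,
\[
\langle u_n^{\tau_n} h, e_\ell \rangle = \sum_{j=1}^n e^{i \tau_n \theta_j^{(n)}} \langle h, f_j^{(n)} \rangle \overline{\langle e_\ell, f_j^{(n)} \rangle},
\]
the $j = k$ contribution is controlled by $O(\|h\| \cdot n^{-\frac12}) = O(n^{-\frac16 + \eps})$, using the identity $|\langle e_\ell, f_k^{(n)}\rangle| = |\langle g_k^{(n)}, e_\ell\rangle|/|D_k^{(n)}| = O(n^{-1/2})$ (from Proposition~\ref{prop:distanceeigenvectors}(1)). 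For $j \neq k$, the plan is to compute the conditional second moment given $\mathcal{A}$ and exploit the conditional independence of the phases $\phi_j^{(m)}$ (Lemmas~\ref{lem:sigmaA}--\ref{lem:sigmaB}), collapsing the double sum to a diagonal weighted sum. Tracking the denominators $|\lambda_j^{(n)} - \lambda_k^{(n)}|^{-1}$ via the split $|j - k| < n^{2/3}$ vs.\ $|j - k| \geq n^{2/3}$ used in Section~\ref{sec:eigenvectors}, together with $\mathbb{E}[|\langle f_j^{(n)}, e_\ell\rangle|^2] = 1/n$, should yield a conditional $L^2$-bound of order $n^{-\frac13 + \eps}$, which translates by Borel-Cantelli along dyadic scales to the almost-sure estimate $|\langle u_n^{\tau_n} h, e_\ell\rangle| = O(n^{-\frac16 + \eps}) = o(n^{-\delta})$. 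The uniform version over the window $[n(\tau - n^{-\delta}), n(\tau + n^{-\delta})]$ is obtained via a polynomial-size net in $\tau_n$ combined with the pointwise estimate and a union bound, possibly after raising the tail control to a higher moment to close the sum. The crux is the second-moment computation for the $j \neq k$ sum, which must jointly track the correlations of $h$ and $e_\ell$ against the eigenvectors $f_j^{(n)}$, analogous to but more delicate than the martingale estimates in the proof of Theorem~\ref{thm:vector}.
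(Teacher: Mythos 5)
Your reduction to the one-parameter estimate via unitarity, and the treatment of the eigenvector part $g_k^{(n)}$ (phase factor $e^{i\tau_n\theta_k^{(n)}}$ plus the rate on $\langle g_k^{(n)},e_\ell\rangle - g_{k,\ell}$), both match the paper in spirit. The difficulty sits exactly where you say it does, but the plan you sketch for the residual $h := g_k[n] - g_k^{(n)}$ has a genuine gap.

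You propose to expand $\langle u_n^{\tau_n} h, e_\ell\rangle = \sum_j e^{i\tau_n\theta_j^{(n)}}\langle h, f_j^{(n)}\rangle\overline{\langle e_\ell, f_j^{(n)}\rangle}$, take the conditional second moment given $\mathcal{A}$, and collapse the double sum to a diagonal ``exploiting the conditional independence of the phases.'' This collapse does not happen. The overlaps $\langle h, f_j^{(n)}\rangle$ for different $j$ are \emph{not} conditionally uncorrelated: $h$ is a sum of martingale increments $\Delta_N = g_{k,n}^{(N+1)} - g_{k,n}^{(N)}$ for $N \geq n$, and a single increment $\Delta_N$, driven by the fresh phases $\phi_m^{(N)}/\phi_k^{(N)}$, simultaneously mixes every eigendirection $f_j^{(n)}$ through the overlaps $\langle f_m^{(N)}, f_j^{(n)}\rangle$. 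Concretely, $\mathbb{E}[\langle \Delta_N, f_i^{(n)}\rangle \overline{\langle \Delta_N, f_j^{(n)}\rangle} \mid \mathcal{B}_N] = C_N \sum_{m\neq k}\frac{|\mu_m^{(N)}|^2}{|\lambda_m^{(N)} - \lambda_k^{(N+1)}|^2}\langle f_m^{(N)}, f_i^{(n)}\rangle\overline{\langle f_m^{(N)}, f_j^{(n)}\rangle}$, which is not zero for $i \neq j$; no Kronecker delta appears. So the off-diagonal terms survive and the $n^{-1/3+\eps}$ conditional $L^2$ bound you want does not follow from the martingale structure alone.

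The paper closes this gap by a completely different device (Lemma~\ref{lem:delocalization}): it proves that the normalized overlap $|\langle g_k^{(n)} - g_k[n], u_n^{\gamma_n} e_\ell\rangle|^2 / \|g_k^{(n)} - g_k[n]\|^2$ is \emph{exactly} a $\beta(1,n-1)$ random variable, by conjugating the virtual isometry by an independent Haar unitary $\sigma \in U(m)$ and using the invariance in law of the whole construction. This sidesteps the correlation structure of $h$ entirely---whatever the law of $h$, its direction is ``generic'' relative to $u_n^{\gamma_n}e_\ell$---and it gives the exponential tail $\mathbb{P}[\beta(1,n-1) \geq c] = (1-c)^{n-1}$. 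That exponential tail is not a luxury: the uniform estimate has to beat a union bound over $O(n^{1-\delta})$ choices of $\gamma_n$, which a second moment (or any fixed finite moment) cannot do, and which you flag as ``possibly raising to higher moments'' without a mechanism. Without the rotational-invariance argument producing the beta law, the residual term stays out of reach, for both the pointwise and the uniform assertions.
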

\begin{proof} We now prove Propositions \ref{prop:distanceeigenvectors}
  and \ref{prop:distancecomponentseigenvectors}: it is clear that this last proposition 
implies Proposition \ref{prop:eigenvectorconvergence} (by taking $\delta = \gamma_n = 0$). 

Using \eqref{gdkt}, we get the following: 
$$\|g_{k}[n]\|^2 = |D_k| \sum_{\ell = 1}^n |t_{k, \ell}|^2$$
where $(t_{k, \ell})_{\ell \geq 1}$ are up to independent and uniform random phases iid standard complex gaussian variables. By the law of 
large numbers, $\|g_k[n]\|^2$ is a.s.~equivalent to $n |D_k|$ when $n$ goes to infinity, which shows 
the first item of Proposition \ref{prop:distanceeigenvectors}. 

Now, the third item can be quickly  deduced from the second one. Indeed, if we have the estimate 
$\|g_k[n] - g_k^{(n)} \| = O (n^{\frac{1}{3} + \eps})$ for all $\eps > 0$, then, since $\delta < 1/6$, we
also have $\|g_k[n] - g_k^{(n)} \| = O(n^{\frac{1}{2} - \delta})$, which implies, for 
all $\alpha \in [-T,T]$ and $\alpha_n \in [n(\alpha - n^{-\delta}), 
n(\alpha + n^{-\delta})]$, 
\begin{align*}
& \|u_n^{\alpha_n} g_k[n] - e^{2 \pi i \alpha y_k} g_k[n]\|
\\ & \leq   \|u_n^{\alpha_n} (g_k[n] - g_k^{(n)}) \| + \|u_n^{\alpha_n} g_k^{(n)} - e^{2 \pi i \alpha y_k} g_k^{(n)}\|
+\| e^{2 \pi i \alpha y_k} (g_k[n] - g_k^{(n)}) \|
\\ & = 2 \|g_k[n] - g_k^{(n)}\| + |e^{i \alpha_n \theta_{k}^{(n)} } -e^{2 \pi i \alpha y_k}  | \, \| g_k^{(n)}\|.
\end{align*}
Now, $$\|g_k[n] - g_k^{(n)}\| = O(n^{\frac{1}{2} - \delta}),$$
 $$\|g_k^{(n)} \| \leq \|g_k[n]\| +  \|g_k[n] - g_k^{(n)}\|
= O(\sqrt{n}) + O(n^{\frac{1}{2} - \delta}) = O(\sqrt{n}),$$
and 
\begin{align*}
|e^{i \alpha_n \theta_{k}^{(n)} } -e^{2 \pi i \alpha y_k}  |
& \leq |\alpha_n \theta_k^{(n)} - 2 \pi \alpha y_k|
\\ & \leq  |\theta_k^{(n)}| |\alpha_n - \alpha n| + 
|\alpha|  |n \theta_k^{(n)} - 2 \pi y_k| 
\\ & = O(1/n) O(n^{1 - \delta}) + O(n^{-1/4}) = O(n^{-\delta}),
\end{align*} 
where the implied constant does not depend on $\alpha$ and $\alpha_n$ (recall that $|\alpha|$ is assumed to be 
uniformly bounded by $T$). Note that 
we used Theorem \ref{thm:eigenvalues} (for $k$ fixed and $\eps = 1/12$) at the last step of the computation.
This gives: 
$$ \|u_n^{\alpha_n} g_k[n] - e^{2 \pi i \alpha y_k} g_k[n]\| = 
O(n^{\frac{1}{2} - \delta}),$$
uniformly with respect to $\alpha$ and $\alpha_n$, i.e. the third item of Proposition 
\ref{prop:distanceeigenvectors}. In order to complete the proof of this proposition, it then 
remains to show the second item, which needs several intermediate steps.
 
 \begin{lemma} \label{L2bounds}
For fixed $k \geq 1$, $\eps > 0$, there exists a $\mathcal{A}$-measurable random variable $M > 0$, 
such that, almost surely, for all $N \geq n \geq k$, 
$$\mathbb{E} [\| g_k^{(n)} - g_k[n] \|^2 \, | \mathcal{A} ] \leq M \,  n^{\frac{2}{3} + \eps},$$
and
$$\mathbb{E} [\|g_k^{(n)} - g_{k,n}^{(N)}\|^2 \, | \mathcal{A} ] \leq M \, (N-n) \, n^{-\frac13 + \eps},$$
where $g_{k,n}^{(N)}$ denotes the vector obtained by taking the $n$ first coordinates of 
 $g_k^{(N)}$.
 \end{lemma}
 \begin{proof}
 One has, for $n \geq 1$, and $N \in \{n,n+1,n+2,... \} \cup \{\infty\}$,
 $$ \mathbb{E} [\| g_k^{(n)} - g_{k,n}^{(N)}\|^2 \, | \mathcal{A} ]
 = \sum_{\ell = 1}^n \mathbb{E} [| \langle g_k^{(n)}, e_{\ell} \rangle - \langle g_k^{(N)}, e_{\ell} \rangle |^2
  \, |\mathcal{A}],$$
for $g_{k,n}^{(\infty)} := g_k [n]$. 
  By the martingale property satisfied by the scalar products $\langle g_k^{(m)}, e_{\ell} \rangle$ for
    $m \geq k$,
   $$ \mathbb{E} [| \langle g_k^{(n)}, e_{\ell} \rangle - \langle g_k^{(N)}, e_{\ell} \rangle |^2
  \, |\mathcal{A}] = \sum_{n \leq m < N}
   \Delta_{\ell}^{(m)} $$
   where
     $$  \Delta_{\ell}^{(m)} = \mathbb{E} [| \langle g_k^{(m+1)}, e_{\ell} \rangle - \langle g_k^{(m)}, e_{\ell} \rangle |^2
  \, |\mathcal{A}], $$
   and then by \eqref{Mn+1} and \eqref{formulaS},
 \begin{align}
  \Delta_{\ell}^{(m)} & =
|D_k^{(m)}|^2  \frac{|\lambda_k^{(m)} - \lambda_k^{(m+1)}|^2}{|\mu_k^{(m)}|^2}
\sum_{1 \leq j \leq m, \, j \neq k} \frac{|\mu_{j}^{(m)}|^2}{|\lambda_{j}^{(m)} 
- \lambda_k^{(m+1)}|^2} \mathbb{E} \left[ | \langle f_{j}^{(m)}, e_{\ell} \rangle |^2 \,
 | \mathcal{A} \right] \nonumber \\ 
& \leq  M_1 \, m^{-3 + \eps} \sum_{j \in J_m \backslash \{k\}}
|\lambda_{j}^{(m)} - \lambda_k^{(m+1)}|^{-2}\mathbb{E} \left[ | \langle f_{j}^{(m)}, e_{\ell} 
\rangle |^2 \, | \mathcal{A} \right] \label{estimateDelta}
\end{align}
where $J_m$ is the random set of $m$ consecutive integers, such that $\theta_k^{(m+1)} - \pi
< \theta_j^{(m)} \leq \theta_k^{(m+1)} + \pi$, and where
$$M_1 := \sup_{m \geq k, 1 \leq j \leq m, j \neq k} 
m^{3 - \eps} \, |D_k^{(m)}|^2  \frac{|\lambda_k^{(m)} - \lambda_k^{(m+1)}|^2 \, |\mu_{j}^{(m)}|^2}{|\mu_k^{(m)}|^2}.$$
From the estimates \eqref{differencelambda}, \eqref{normalizationD}, the fact that 
$|\mu_{j}^{(m)}|^2$ and $|\mu_{k}^{(m)}|^2$ are almost surely dominated by $m^{-1+ \frac{\eps}{2}}$ and the 
bound $ \theta_{k}^{(m+1)} = O(1/m)$, one deduces that the $\mathcal{A}$-measurable 
quantity $M_1$ is almost surely finite. 
Similarly, one has, for all $j \in J_m \backslash \{k\}$,
\begin{equation}|\lambda_{j}^{(m)} - \lambda_k^{(m+1)}|^{-2} \leq M_2 \, \left(m^{2+ \eps} \wedge \frac{m^{\frac{10}{3}
+ \eps}}{|k-j|^2} \right)
\leq M_2 \, \frac{m^{\frac83 + \eps}}{|k-j|}, \label{estimateDelta2}
\end{equation}
where 
$$M_2 := \sup_{m \geq k, j \in J_m \backslash \{k\}} 
|\lambda_{j}^{(m)} - \lambda_k^{(m+1)}|^{-2} \left(m^{2 + \eps} \wedge \frac{m^{\frac{10}{3} + \eps
}}{|k-j|^2} \right)^{-1}.$$
Now, $M_2$ is $\mathcal{A}$-measurable and, by the estimates \eqref{differencelambda2} and 
\eqref{differencelambda3}, it is almost surely finite. From \eqref{estimateDelta} and
\eqref{estimateDelta2}, we deduce (with the change of variable $p = j-k$):
$$\Delta_{\ell}^{(m)} \leq \, M_1 \, M_2 \, m^{-\frac 1 3 + 2\eps}
 \sum_{p \in \{-m-1,-m+1, \dots, -1,1, \dots m+1 \}} 
 \frac{1}{|p|} \mathbb{E} \left[ | \langle f_{k+p}^{(m)}, e_{\ell} \rangle |^2 \, | \mathcal{A} \right].$$
Therefore, 
 \begin{align*}
& \mathbb{E} [| \langle g_k^{(n)}, e_{\ell} \rangle - \langle g_k^{(N)}, e_{\ell} \rangle |^2
  \, |\mathcal{A}] \\ & \leq 
   M_1M_2 \, \sum_{n \leq m < N} m^{-\frac 1 3 + 2 \eps} \sum_{p \in \{-m,-m+1, \dots, -1,1, \dots m \}} 
 \frac{1}{|p|} \mathbb{E} \left[ | \langle f_{k+p}^{(m)}, e_{\ell} \rangle |^2 \, | \mathcal{A} \right],
\end{align*}
 and then, summing for $\ell$ between $1$ and $n$, 
\begin{align*}
& \mathbb{E} [\| g_k^{(n)} - g_{k,n}^{(N)}\|^2 \, | \mathcal{A} ]
\\ & \leq   M_1 M_2 \, \sum_{n \leq m < N} m^{-\frac 1 3 + 2 \eps} \sum_{p \in \{-m,-m+1, \dots, -1,1, \dots m \}} 
 \frac{1}{|p|}  \mathbb{E} \left[ \sum_{\ell=1}^{n} | \langle f_{k+p}^{(m)}, e_{\ell} \rangle |^2 \, 
| \mathcal{A} \right].
\end{align*}
Let us first suppose that $N$ is finite. One has 
$$\sum_{\ell=1}^{n} | \langle f_{k+p}^{(m)}, e_{\ell} \rangle |^2 \leq 
\sum_{\ell=1}^{m} | \langle f_{k+p}^{(m)}, e_{\ell} \rangle |^2 = \|f_{k+p}^{(m)}\|^2 = 1,$$
which implies, 
\begin{align*}
\mathbb{E} [\|\langle g_k^{(n)} - g_{k,n}^{(N)}\|^2 \, | \mathcal{A} ]
& \leq   M_1 \, M_2 \, \sum_{n \leq m < N} m^{-\frac 1 3 + 2 \eps} \sum_{p \in \{-m,-m+1, \dots, -1,1, \dots m \}}
 \frac{1}{|p|} \\ &  \leq M_3 \, \sum_{n \leq m < N} m^{-\frac 1 3 + 3 \eps} \leq M_3 (N-m)m^{-\frac 1 3 + 3 \eps}
\end{align*}
 where 
 $$M_3 := M_1 M_2 \sup_{m \geq 1} \left( m^{-\eps} \sum_{p \in \{-m,-m+1, \dots, -1,1, \dots m \}}
 \frac{1}{|p|} \right) < \infty$$
 is $\mathcal{A}$-measurable. Hence, we have the desired bound, after changing $\eps$ and taking $M = M_3$. 
 In the case where $N$ is infinite, we use Lemma \ref{lem:uniformbeta}
to get the estimate 
$$\mathbb{E} \left[| \langle f_{k+p}^{(m)}, e_{\ell} \rangle |^2 \, 
| \mathcal{A} \right] \leq M_4 m^{-1 + \eps},$$
where $$M_4 := \sup_{m \geq 1, 1 \leq k, \ell \leq m} 
m^{1-\eps} \mathbb{E} \left[| \langle f_{k}^{(m)}, e_{\ell} \rangle |^2 \, 
| \mathcal{A} \right]$$
is $\mathcal{A}$-measurable and almost surely finite. 
Hence, 
\begin{align*}
& \mathbb{E} [\| g_k^{(n)} - g_{k}[n]\|^2 \, | \mathcal{A} ]
\\ & \leq   M_1 M_2 M_4\, \sum_{m \geq n} m^{-\frac 4 3 + 3 \eps}  \sum_{p \in \{-m,-m+1, \dots, -1,1, \dots m \}} 
 \frac{n}{|p|}, 
\end{align*}
which is easily dominated by a finite, $\mathcal{A}$-measurable quantity, multiplied by
 $n^{\frac 2 3 + 4 \eps}$. 
 \end{proof}
We have now an $L^2$ bound on $\|g_k^{(n)} - g_{k,n}^{(\infty)}\|$, conditionally on $\mathcal{A}$. 
The next goal is to deduce an almost sure bound, by using Borel-Cantelli lemma. This cannot be made directly, since
the corresponding probabilities do not decay sufficiently fast, but one can solve this problem by using 
subsequences.
\begin{lemma} \label{subsequence1}
For fixed $k \geq 1$, $\eps > 0$, let $\nu := 1 + \lfloor 3/\eps \rfloor$, and for
$r \geq 1$, let $n_r := k + r^{\nu}$. Then, almost surely, 
$$\|g_k^{(n_r)} - g_{k}[n_r]\|^2 = O(n_r^{\frac{2}{3} + \eps}). $$
\end{lemma} 
\begin{proof}
From Lemma \ref{L2bounds} (applied to $\eps/2$ instead of $\eps$), there exists  
 $M'$ almost surely finite and $\mathcal{A}$-measurable 
such that for all $r \geq 1$, 
\begin{align*}
\mathbb{P} [\|g_k^{(n_r)} - g_{k}[n_r]\|^2  \geq n_r^{\frac{2}{3} + \eps} | \mathcal{A} ]
&  \leq n_r^{-\frac{2}{3} - \eps} \mathbb{E} [\|g_k^{(n_r)} - g_{k}[n_r]\|^2  | \mathcal{A} ]
\\& \leq M' n_r^{-\frac{2}{3} - \eps} n_r^{\frac{2}{3} + \frac{\eps}{2}} \leq M' n_r^{-\eps/2} 
\\ & =
 M' (k+r^{\nu})^{-\eps/2} \leq M' (r^{3/\eps})^{-\eps/2} = M'r^{-3/2}.
\end{align*}
and then
$$ \frac{1}{M'+1} \, \mathbb{E} \left[ 
  \,  \sum_{r \geq 1} {\bf 1}_{\|g_k^{(n_r)} - g_{k}[n_r]\|^2  \geq n_r^{\frac{2}{3} + \eps
}} \; | \mathcal{A}\right] \leq \frac{M'}{M'+1} \sum_{r \geq 1} r^{-3/2}  \leq 3.$$
By taking the expectation and using the fact that $1/(M'+1)$ is $\mathcal{A}$-measurable, one deduces
$$ \mathbb{E} \left[  \frac{1}{M'+1} \,  \sum_{r \geq 1} {\bf 1}_{\|g_k^{(n_r)} - g_{k}[n_r]\|^2  \geq n_r^{\frac{2}{3} + 
\eps}} \right] \leq 3,$$
and then 
$$\frac{1}{M'+1} \,  \sum_{r \geq 1} {\bf 1}_{\|g_k^{(n_r)} -  g_{k}[n_r]\|^2  \geq n_r^{\frac{2}{3} + 
\eps}} 
< \infty$$ 
almost surely. Hence, almost surely, $\|g_k^{(n_r)} - g_{k}[n_r]\|^2  \leq n_r^{\frac{2}{3} + \eps}$ 
for all but finitely many $r \geq 1$.

\end{proof}
 The next lemma gives a way to go from a given subsequence to a less sparse subsequence.
 We will say that a value $\delta \in (0,1]$ is {\it good}
 if the conclusion of  Lemma \ref{subsequence1} remains valid after replacing the sequence  
 $(k+r^{\nu})_{r \geq 1}$ by $(k+\lfloor r^{1/\delta} \rfloor)_{r \geq 1}$, the
 brackets denoting the integer part: from Lemma \ref{subsequence1}, we know that $1/\nu$ is good.
 \begin{lemma} \label{subsequence2}
 If $\delta \in [1/\nu, (\nu- 1)/\nu]$ is good, then $\delta + 1/\nu$ is good. 
 \end{lemma}
 \begin{proof}
 For $r \geq 1$, let $n_r := k + \lfloor r^{1/(\delta + 1/\nu)} \rfloor$ and let 
 $N_r$ the smallest element of the sequence $(k + \lfloor s^{1/\delta } \rfloor)_{s \geq 1}$, such that 
 $N_r \geq n_r$. The sequence $(N_r)_{r \geq 1}$, as $(n_r)_{r \geq 1}$, tends 
 to infinity with $r$: moreover, it is a subsequence of $(k + \lfloor s^{1/\delta }
 \rfloor)_{s \geq 1}$ (but some terms of 
 this sequence may appear several times in $(N_r)_{r \geq 1}$). By assumption, one
 has almost surely: $$\|g_k^{(N_r)} - g_{k}[N_r]\|^2 = O(N_r^{\frac{2}{3} + \eps}).$$
 Now, it is easy to check that $N_r = O(n_r)$ (with a constant depending only on $\delta$). Hence, by 
  restricting the vectors to their $n_r$ first coordinates, one deduces, almost surely,
 \begin{equation}
 \|g_{k,n_r}^{(N_r)} - g_{k} [n_r]\|^2 \leq \|g_k^{(N_r)} - g_{k}[N_r]\|^2 = 
 O(n_r^{\frac{2}{3} + \eps}). \label{firstterm}
 \end{equation}
 On the other hand, the distance between two consecutive terms of the sequence
 $(k + \lfloor s^{1/\delta } \rfloor)_{s \geq 1}$
 satisfies the following:
 \begin{align*}
(k + \lfloor (s+1)^{1/\delta } \rfloor) - (k + \lfloor s^{1/\delta } \rfloor )
 & = (s+1)^{1/\delta} - s^{1/\delta} + O(1) \\ &  \lesssim  s^{1/\delta  - 1} \lesssim 
(k + \lfloor s^{1/\delta } \rfloor)^{1 - \delta},
\end{align*}
which implies 
$$N_r - n_r = O(n_r^{1-\delta}),$$
where the implied constant depends only on $k$ and $\delta$. 
 One deduces, by Lemma \ref{L2bounds}:
 $$
 \mathbb{E} [\|g_k^{(n_r)} - g_{k,n_r}^{(N_r)}\|^2 \, | \mathcal{A} ] \leq M' \, (N_r-n_r) \, (n_r)^{-\frac{1}{3}
+ \frac{\eps}{2}}
   \lesssim M' n_r^{\frac{2}{3} + \frac{\eps}{2} - \delta},$$
   and then
   $$ \mathbb{P} [\|g_k^{(n_r)} - g_{k,n_r}^{(N_r)}\|^2  \geq n_r^{\frac{2}{3} + \eps}\, | \mathcal{A} ] 
   \lesssim M' n_r^{-\frac{\eps}{2}  - \delta} \lesssim M' r^{\left(-\frac{\eps}{2} - \delta \right)/
\left(\delta + \frac{1}{\nu} \right)},$$
   where the impled constant depends only on $k$ and $\delta$. 
   Since the exponent of $r$ is strictly smaller than $-1$, one deduces, by using Borel-Cantelli
  lemma similarly as in the proof of Lemma \ref{subsequence1}, that almost surely, 
   \begin{equation}
   \|g_k^{(n_r)} - g_{k,n_r}^{(N_r)}\|^2 = O(n_r^{\frac{2}{3} + \eps}). \label{secondterm}
   \end{equation}
  Combining \eqref{firstterm} and \eqref{secondterm} gives the desired result. 
  
 \end{proof}
 \noindent
By applying Lemma \ref{subsequence1} and ($\nu - 1$ times) Lemma \ref{subsequence2}, one deduces that 
$1$ is good, which gives the second item of Proposition \ref{prop:distanceeigenvectors} for $k \geq 1$: 
the situation for $k \leq 0$ is similar. 

Let us now prove Proposition \ref{prop:distancecomponentseigenvectors}. By the triangle inequality, we
get:
\begin{align*}
 & | \langle u_n^{\alpha_n} (g_k[n]) - e^{2 \pi i \alpha y_k}  g_k[n], u_n^{\gamma_n} (e_{\ell}) \rangle | 
\\ & \leq | \langle u_n^{\alpha_n} (g_k[n] - g_k^{(n)}),   u_n^{\gamma_n} (e_{\ell}) \rangle | 
+ | \langle u_n^{\alpha_n}  g_k^{(n)} - e^{2 \pi i \alpha y_k}  g_k^{(n)}, 
  u_n^{\gamma_n} (e_{\ell}) \rangle | 
\\ & + | \langle e^{2 \pi i \alpha y_k} (g_k^{(n)}  - g_k[n]) ,   u_n^{\gamma_n} (e_{\ell}) \rangle | 
\leq |\langle g_k^{(n)}  - g_k[n],  u_n^{\gamma_n - \alpha_n} (e_{\ell}) \rangle|
\\ & + |e^{i \alpha_n \theta_k^{(n)}} - e^{2 \pi i \alpha y_k}| |\langle g_k^{(n)}, u_n^{\gamma_n} (e_{\ell})
 \rangle |  + | \langle g_k^{(n)}  - g_k[n],  u_n^{\gamma_n} (e_{\ell}) \rangle|.
\end{align*}
To prove the first part or the proposition, since the sequence $(\alpha_n - \gamma_n)_{n \geq 1}$ satisfies 
exactly the same assumptions as 
$(\gamma_n)_{n \geq 1}$ (replacing $\gamma$ by $\gamma - \alpha$), it is sufficient to prove the almost 
sure estimates: 
\begin{equation}
|e^{i \alpha_n \theta_k^{(n)}} - e^{2 \pi i \alpha y_k}| |\langle g_k^{(n)}, u_n^{\gamma_n} (e_{\ell})
 \rangle | = o(n^{-\delta}) \label{estimatealphagamma}
\end{equation}
and 
\begin{equation}
| \langle g_k^{(n)}  - g_k[n],  u_n^{\gamma_n} (e_{\ell}) \rangle| = o(n^{-\delta}). \label{estimategamma}
\end{equation} 
For the uniform part of the Proposition  \ref{prop:distancecomponentseigenvectors}, it is
 sufficient to prove the same estimates, with $o(n^{-\delta})$ replaced by $O(n^{-\delta})$, 
this bound being uniform with respect to $\alpha_n \in [n(\alpha - n^{-\delta}), n (\alpha + n^{-\delta})]$, 
and $\gamma_n \in [n(\gamma - 2 n^{-\delta}), n (\gamma + 2 n^{-\delta})]$ (the factor $2$ comes
from the term where $\gamma_n$ plays the role of $\gamma_n - \alpha_n$). 
Now, under the assumption of the first part of the proposition, 
\eqref{estimatealphagamma} is a consequence of the two following estimates:
\begin{align*}
|e^{i \alpha_n \theta_{k}^{(n)} } -e^{2 \pi i \alpha y_k}  |
& \leq |\alpha_n \theta_k^{(n)} - 2 \pi \alpha y_k|
\\ & \leq  |\theta_k^{(n)}| |\alpha_n - \alpha n| + 
|\alpha|  |n \theta_k^{(n)} - 2 \pi y_k| 
\\ & = O(1/n) o(n^{1 - \delta}) + O(n^{-1/4}) = o(n^{-\delta}),
\end{align*}
 and 
\begin{align*}
|\langle g_k^{(n)}, u_n^{\gamma_n} (e_{\ell}) \rangle| 
& = |\langle u_n^{-\gamma_n} g_k^{(n)},  e_{\ell}\rangle| 
= |\langle e^{-i \gamma_n \theta_{k}^{(n)} } g_k^{(n)},  e_{\ell}\rangle| 
\\ & = |\langle g_k^{(n)},  e_{\ell}\rangle| = O(1),
\end{align*}
the last estimate coming from the almost sure convergence of $\langle g_k^{(n)},  e_{\ell}\rangle$ 
towards $g_{k, \ell}$. This computation is also available (after replacing small $o$ by big $O$) 
for the second part of the proposition, since 
all the estimates are easily checked to be uniform with respect to
 $\alpha_n \in [n(\alpha - n^{-\delta}), n (\alpha + n^{-\delta})]$
and  $\gamma_n  \in [n(\gamma - 2 n^{-\delta}), n (\gamma + 2 n^{-\delta})]$.

It remains to prove \eqref{estimategamma}. From the second item of Proposition \ref{prop:distanceeigenvectors}
and the fact that $\delta < 1/6$ we have the estimate: 
$$\|g_k^{(n)}  - g_k[n]\| = o(n^{\frac{1}{2} - \delta'})$$
for some $\delta' > \delta$. Hence, in order to complete the proof of 
 Proposition  \ref{prop:distancecomponentseigenvectors},
 is sufficient to show  the following property of delocalization: 
\begin{equation}
\underset{\gamma_n \in [n(\gamma - 2 n^{-\delta}), n (\gamma + 2 n^{-\delta})]}{\sup}
\frac{| \langle g_k^{(n)}  - g_k[n],  u_n^{\gamma_n} (e_{\ell}) \rangle|}{\|g_k^{(n)}  - g_k[n]\|} 
= O(n^{-\frac{1}{2} + \delta' - \delta}). \label{estimatedelocalization}
\end{equation}
This will be a consequence of the following lemma: 

\begin{lemma} \label{lem:delocalization}
For all $n \geq 1$, the quotient
\[
\frac{\abs{\langle g_k^{(n)} - g_k[n], u_n^{\gamma_n} e_\ell \rangle}^2}{\norm{g_k^{(n)} - g_k[n]}^2}
\]
is a $\beta$ random variable of parameters $1$ and $n-1$, i.e. it has the same law as 
$|x_1|^2$, where $(x_1, \dots, x_n)$ is a uniform vector on the complex sphere $S^n$. 
\end{lemma}

\begin{proof}
  Let $m \geq 1$, let $\sigma$ be a random matrix in $U(m)$, independent of $(u_n)_{n \geq 1}$ and 
  following the Haar measure. Let us define $(u'_n)_{n \geq 1}$ as the 
  unique virtual isometry such that for all $n \geq m$, 
  $$u'_n = \left( \begin{array}{cc}
\sigma & 0 \\
0 & I_{n-m}  \end{array} \right) \; u_n \;\left( \begin{array}{cc}
\sigma & 0 \\
0 & I_{n-m}  \end{array} \right)^{-1}.$$
The invariance by conjugation of the Haar measure on the space of virtual isometries implies that 
$(u'_n)_{n \geq 1}$ has the same law as $(u_n)_{n \geq 1}$. Let $(g^{(n)'}_k)_{n \geq 1}$ be the sequence 
of eigenvectors constructed from $(u'_n)_{n \geq 1}$ in the same way as $(g^{(n)}_k)_{n \geq 1}$ is constructed 
from $(u_n)_{n \geq 1}$. Since $(u'_n)_{n \geq 1}$ has the same law as $(u_n)_{n \geq 1}$, one 
deduces that almost surely, each coordinate of $g^{(n)'}_k$ with a given index converges to 
a limit when $n$ goes to infinity: let $g'_k$ be the corresponding limiting sequence, which is 
the analog of $g_k$ when $(u_n)_{n \geq 1}$ is replaced by $(u'_n)_{n \geq 1}$. 
One knows that for $n \geq m$, 
$$\tilde{g}_k^{(n)} := \left( \begin{array}{cc}
\sigma & 0 \\
0 & I_{n-m}  \end{array} \right) \, g_k^{(n)}$$
is an eigenvector of $(u'_n)_{n \geq 1}$, corresponding to the eigenvalue $\lambda_k^{(n)}$ (recall that
for $n \geq m$, $u_n$ and $u'_n$ have the same eigenvalues). Hence, almost surely, there exists $\kappa_n 
\in \mathbb{C}^*$ such that $g^{(n)'}_k = \kappa_n  \tilde{g}_k^{(n)}$. Moreover, 
from \eqref{recurrenceg}, and from the fact that $g^{(n)}_k$ is orthogonal to $f^{(n)}_j$ for 
$j \neq k$, one obtains that 
$$\langle g^{(n+1)}_{k,n} - g^{(n)}_k, g^{(n)}_k \rangle = \langle g^{(n+1)'}_{k,n}-g^{(n)'}_k
, g^{(n)'}_k \rangle = 0,$$
and then 
$$ \langle \kappa_{n+1}  \tilde{g}^{(n+1)}_{k,n} - \kappa_{n} \tilde{g}^{(n)}_k, \kappa_{n} \tilde{g}^{(n)}_k
\rangle = 0.$$
Applying $\diag (\sigma^{-1}, I_{n-m})$ to the vectors in the scalar product and 
dividing by $|\kappa_n|^2$ gives:
$$\left\langle \frac{\kappa_{n+1} \, g^{(n+1)}_{k,n} }{\kappa_n}  - g^{(n)}_k, g^{(n)}_k \right\rangle = 0,$$
and then
\begin{align*}
& \left( \frac{\kappa_{n+1}}{\kappa_n}  - 1 \right) \, \langle  g^{(n+1)}_{k,n}, g^{(n)}_k \rangle
\\ & = \left\langle \frac{\kappa_{n+1} \, g^{(n+1)}_{k,n} }{\kappa_n}  - g^{(n)}_k, g^{(n)}_k \right\rangle - 
\langle   g^{(n+1)}_{k,n} - g^{(n)}_k, g^{(n)}_k \rangle = 0,
\end{align*}
which implies $\kappa_{n+1} = \kappa_n$, since $$\langle  g^{(n+1)}_{k,n}, g^{(n)}_k \rangle
= \langle  g^{(n+1)}_{k,n} - g^{(n)}_k , g^{(n)}_k \rangle + \| g^{(n)}_k\|^2  = \| g^{(n)}_k\|^2 > 0.$$
Hence, one has $g^{(n)'}_k = \kappa_m  \tilde{g}_k^{(n)}$ for all $n \geq m$: by taking the 
limit for $n \rightarrow \infty$, one deduces that $g'_k = \kappa_m \tilde{g}_k$, 
where $\tilde{g}_k$ is the infinite sequence obtained from $g_k$ by 
applying $\sigma$ to the vector formed by the $m$ first coordinates, and by letting 
the coordinates fixed for the indices strictly larger than $m$. 
One deduces the following (with obvious notation):
\begin{align*} \langle g_k^{(m)'} - g'_{k}[m], (u'_m)^{\gamma_m} (e_{\ell_0}) \rangle 
& = \kappa_m \, \langle \tilde{g}_k^{(m)} - \tilde{g}_{k}[m], (u'_m)^{\gamma_m} (e_{\ell_0}) \rangle
\\ & = \kappa_m \, \langle \sigma (g_k^{(m)}) - \sigma( g_{k}[m]), \sigma (u_m)^{\gamma_m}
\sigma^{-1} (e_{\ell_0}) \rangle
\\ & =  \kappa_m \, \langle g_k^{(m)} - g_{k}[m], (u_m)^{\gamma_m}
\sigma^{-1} (e_{\ell_0}) \rangle
\\ & =  \kappa_m \, \langle (u_m)^{-\gamma_m} (g_k^{(m)} - g_{k}[m]), \sigma^{-1} (e_{\ell_0}) \rangle.
\end{align*}
Similarly, 
\begin{align*}
\|g_k^{(m)'} - g'_{k}[m]\|^2 & = |\kappa_m|^2  \, \|\tilde{g}_k^{(m)} - \tilde{g}_{k}[m]\|^2
= |\kappa_m|^2 \,  \|\sigma (g_k^{(m)})  - \sigma (g_{k}[m])\|^2
\\ & =  |\kappa_m|^2 \,  \|g_k^{(m)}  - g_{k}[m]\|^2
=  |\kappa_m|^2 \,  \| (u_m)^{-\gamma_m} (g_k^{(m)}  - g_{k}[m])\|^2.
\end{align*}
Hence, 
\begin{equation}
\frac{ |\langle g_k^{(m)'} - g_{k}'[m], (u'_m)^{\gamma_m} (e_{\ell_0}) \rangle  |^2}{
\|g_k^{(m)'} - g'_{k}[m]\|^2} = \frac{ | \langle x, y \rangle |^2}{\|x\|^2}, \label{beta}
\end{equation}
where 
$ x = (u_m)^{-\gamma_m} (g_k^{(m)}  - g_{k}[m])$ and $y = \sigma^{-1} (e_{\ell_0})$ is 
independent of $x$ (since $\sigma$ is independent of $(u_n)_{n \geq 1}$) and uniform on the unit 
sphere of $\mathbb{C}^m$ (since $\sigma$ is uniform on $U(m)$). One deduces that the 
left-hand side of \eqref{beta} is a beta random variable 
with parameters $1$ and $m-1$. Since $(u'_n)_{n \geq 1}$ and $(u_n)_{n \geq 1}$ have the same distribution, 
one can remove the primes from this left-hand side, which gives the announced result. 
\end{proof}
Now, applying Lemma \ref{lem:delocalization}, we get:
\begin{multline*}
\mathbb{P} 
\left( \underset{\gamma_n \in [n(\gamma - 2 n^{-\delta}), n (\gamma + 2 n^{-\delta})]}{\sup}
\frac{| \langle g_k^{(n)}  - g_k[n],  u_n^{\gamma_n} (e_{\ell}) \rangle|}{\|g_k^{(n)}  - g_k[n]\|} 
 \geq n^{-\frac{1}{2} + \delta' - \delta} \right) \\
\leq (1+ 4 n^{-\delta}) \mathbb{P} [\beta(1,n-1) \geq n^{-1 + 2(\delta' - \delta)} ]
= O\left( (1+ 4 n^{-\delta}) \, e^{- n^{2(\delta' - \delta)}/6} \right),
\end{multline*}
which by the Borel-Cantelli lemma, gives the estimate
\eqref{estimatedelocalization}, and then completes the proof of Proposition \ref{prop:distancecomponentseigenvectors}. 
\end{proof}

\section{An intrinsic inner product on the domain of the operator}
%

We return now to the original space $\mathcal{E}$ consisting of the finite linear combinations of the eigenvectors of the flow.
Most of the infinite-dimensional operators 
which are considered in the literature are defined on Hilbert spaces. 
Here, the space $\mathcal{E}$ has not a Hilbert structure, since infinite linear combinations of the basis vectors are not permitted. However, it is possible to construct an inner product on $\mathcal{E}$. 
Since eigenspaces of Hermitian and unitary operators are pairwise orthogonal, it is natural to define our scalar product in such a way that the sequences $(t_{k, \ell})_{\ell \geq 1}$, $k \in \Z$ are orthogonal. If we also suppose that these sequences have norm $1$, we then define a scalar product 
on $\mathcal{E}$ as follows: 
$$\langle w, w' \rangle = \sum_{k \in \Z}
\lambda_k \overline{\lambda'_k }$$
for $$w_{\ell} = \sum_{k \in Z} \lambda_k
 t_{k, \ell}, w'_{\ell} = \sum_{k \in Z}\lambda'_k
 t_{k, \ell},$$
 where $(\lambda_k)_{k \in \Z}$ and 
 $(\lambda'_k)_{k \in \Z}$ are sequences containing finitely many non-zeros terms. 
This definition does not depend on the phases of the vectors
$(t_{k, \ell})_{\ell \geq 1}$, $k \in \Z$
 which are chosen: indeed, if 
$t_{k, \ell}$ is multiplied by $z_k \in \mathbb{U}$, for all $k \in \Z$, $\ell \geq 1$, then $\lambda_k$ and $\lambda'_k$ are both multiplied by $z_k^{-1}$, and 
$\lambda_k \overline{\lambda'_k }$ is not changed. 
Hence, one can choose the phases in such a way that the variables
$(t_{k, \ell})_{\ell \geq 1, k \in \mathbb{Z}}$ are 
iid, complex gaussian. In particular, the 
vectors 
$(t_{k, \ell})_{\ell \geq 1}$, $k \in \Z$ are 
linearly independent, and then the sequences 
$(\lambda_k)_{k \in \Z}$ and $(\lambda'_k)_{k \in \Z}$  are uniquely determined by 
$w, w' \in \mathcal{E}$. 

In fact, the scalar product $\langle w, w' \rangle$ we have defined 
 can almost surely be written as a function of the coordinates of $w$ and $w'$, without  referring to the sequences
$(t_{k, \ell})_{\ell \geq 1}$, $k \in \Z$: 
\begin{proposition} \label{LLNE}
Let $(w_{\ell})_{\ell \geq 1}$ and $(w'_{\ell})_{\ell \geq 1}$ be two vectors in $\mathcal{E}$. Then 
$$\langle w, w' \rangle = 
\underset{n \rightarrow \infty}{\lim} 
\frac{1}{n} \sum_{\ell=1}^n w_{\ell} \overline{w'_{\ell}}
= \underset{s \rightarrow 1, s < 1}{\lim}
(1-s) \sum_{\ell = 1}^{\infty} s^{\ell - 1} w_{\ell} \overline{w'_{\ell}}.$$
\end{proposition}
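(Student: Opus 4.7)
The plan is to reduce to the case of basis eigenvectors and then combine the strong law of large numbers with a classical Abelian theorem. All three expressions $\langle w, w'\rangle$, $\tfrac{1}{n}\sum_{\ell=1}^n w_\ell \overline{w'_\ell}$, and $(1-s)\sum_{\ell \geq 1} s^{\ell-1} w_\ell \overline{w'_\ell}$ are sesquilinear in $(w, w')$, and every element of $\mathcal{E}$ is a finite linear combination of the basis sequences $(t_k)_{k\in\Z}$. Since this basis is countable, it suffices to prove, on a single probability-one event and for every pair $(k, k') \in \Z^2$, that both limits equal $\delta_{k,k'}$.

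For the Cesàro limit, the main input is the structure theorem from Section \ref{sec:normalizationsqrtn}. Enlarging the probability space if necessary by an independent family $(\psi_k)_{k \in \Z}$ of iid uniform variables on $\mathbb{U}$, the variables $\gamma_{k,\ell} := \psi_k t_{k,\ell}$ are iid standard complex Gaussian. Because $|\psi_k| = 1$, we have the identity
$$\frac{1}{n}\sum_{\ell=1}^n t_{k,\ell}\, \overline{t_{k',\ell}} \;=\; \overline{\psi_k}\,\psi_{k'} \cdot \frac{1}{n}\sum_{\ell=1}^n \gamma_{k,\ell}\, \overline{\gamma_{k',\ell}},$$
and for each fixed $(k,k')$ the sequence $(\gamma_{k,\ell}\, \overline{\gamma_{k',\ell}})_{\ell \geq 1}$ is iid with mean $\delta_{k,k'}$. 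Kolmogorov's strong law gives convergence of the right-hand side to $\delta_{k,k'}$ almost surely; since $|\overline{\psi_k}\psi_{k'}| = 1$, the absolute value of the left-hand side also tends to $\delta_{k,k'}$, which is a statement about the law of $(t_{k,\ell})$ alone and therefore descends to the original probability space. A countable intersection over $(k,k') \in \Z^2$ then furnishes a single almost-sure event on which all Cesàro identities hold.

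The second equality follows from a purely deterministic Abelian theorem: whenever a complex sequence $(a_\ell)$ has Cesàro means $\sigma_n := n^{-1}\sum_{\ell=1}^n a_\ell \to L$, its Abel means satisfy $(1-s)\sum_{\ell \geq 1} s^{\ell-1} a_\ell \to L$ as $s \to 1^-$. Setting $S_n := \sum_{\ell=1}^n a_\ell$ and summing by parts twice gives
$$(1-s)\sum_{\ell=1}^\infty s^{\ell-1} a_\ell \;=\; (1-s)^2 \sum_{\ell=1}^\infty s^{\ell-1} S_\ell \;=\; \sum_{\ell=1}^\infty p_\ell(s)\, \sigma_\ell,$$
where $p_\ell(s) := (1-s)^2 \ell s^{\ell-1}$ is a probability mass function on $\{\ell \geq 1\}$ concentrating near $\ell \sim (1-s)^{-1}$, so the weighted average converges to $L$ by a standard $\eps$-argument; absolute convergence of the series for $s < 1$ follows from the trivial bound $a_\ell = O(\ell)$ implied by the Cesàro hypothesis. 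The only slightly delicate point in the whole argument is the transfer of almost-sure Cesàro convergence from the enlarged probability space back to the original one, but this is handled automatically by the unimodularity of the auxiliary phase factor $\overline{\psi_k}\psi_{k'}$.
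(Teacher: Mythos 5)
Your proof is correct and follows essentially the same route as the paper's: reduce by sesquilinearity to the pairs $(t_k, t_{k'})$, apply the strong law of large numbers to the Ces\`{a}ro average, and then pass to the Abel average by a summation-by-parts argument. The one place you are genuinely more careful is in handling the phase indeterminacy of $(t_{k,\ell})$. The paper simply asserts that one can ``choose the phases so that $(t_{k,\ell})_{k,\ell}$ are iid complex gaussian'' and then invokes the LLN for $X_{k,k',\ell} = t_{k,\ell}\overline{t_{k',\ell}} - \delta_{k,k'}$; read strictly, that phase choice requires augmenting the probability space with the independent $\psi_k$, which you do explicitly, and you also explain why the almost-sure Ces\`{a}ro convergence transfers back to the original space. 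This is a real subtlety and your treatment is cleaner. One small wrinkle: you argue that the \emph{absolute value} of the Ces\`{a}ro average converges to $\delta_{k,k'}$ and that this descends; what you actually want is convergence of the average itself, which holds because $\frac1n\sum_\ell t_{k,\ell}\overline{t_{k',\ell}}$ is a measurable function of the $t$'s alone and the marginal law of $(t_{k,\ell})_{k,\ell}$ is unchanged by the augmentation, so the probability-one event of convergence (to the same deterministic limit) pulls back directly. The unimodularity of $\overline{\psi_k}\psi_{k'}$ is only needed to see that multiplying by it preserves convergence to $0$ when $k\neq k'$ (and it is identically $1$ when $k=k'$), not to pass through absolute values. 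The Abelian part is the standard Ces\`{a}ro-implies-Abel argument written as a Toeplitz-kernel average with $p_\ell(s)=(1-s)^2\ell s^{\ell-1}$; the paper carries out the same summation by parts by hand with an $\epsilon$--$n_0$ argument, which amounts to the same thing.
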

\begin{proof}
By linearity, it is sufficient to show the convergence of the two limits and the equality almost surely for $w = (t_{k, \ell})_{\ell \geq 1}$ and $w' = (t_{k', \ell})_{\ell \geq 1}$ for every $k, k' \in \mathbb{Z}$. The first equality can then be written as follows: 
$$\frac{1}{n} \sum_{\ell = 1}^n  X_{k, k', \ell} \underset{n \rightarrow \infty}{\longrightarrow} 0,$$
where $X_{k, k', \ell} = t_{k, \ell} \overline{t_{k', \ell}}
- 1_{k = k'}$. This is now a consequence of 
the law of large numbers, since the variables 
$(X_{k, k', \ell})_{\ell \geq 1}$ are iid, integrable and centered. It remains to prove that 
$$(1-s) \sum_{\ell = 1}^{\infty} s^{\ell - 1} X_{k, k', \ell} \underset{s  \rightarrow 1, s < 1}{\longrightarrow} 0.$$
The sum written here is bounded in $L^1$, and then a.s. finite for all $s \in (0,1)$: moreover, it is equal to 
$$(1-s) \sum_{n = 1}^{\infty} 
(s^{n - 1} - s^{n}) \left( \sum_{\ell = 1}^n X_{k,k', \ell}
\right).$$
For any $\epsilon > 0$, there exists $n_0 \geq 1$ such that 
for any $n \geq  n_0$, 
$$\left| \sum_{\ell = 1}^n X_{k,k' \ell}  \right| 
\leq \epsilon n.$$
Hence, 
$$\left|(1-s) \, \sum_{\ell = 1}^{\infty} s^{\ell - 1} X_{k, k', \ell}\right| \leq  (1-s) \left(\sum_{n = 1}^{n_0} 
(s^{n - 1} - s^{n}) \left| \sum_{\ell = 1}^n X_{k,k', \ell}
\right| + \epsilon \sum_{n = n_0 + 1}^{\infty} n 
(s^{n - 1} - s^{n}) \right).$$
The right-hand side of this inequality tends to $\epsilon$, hence $$\underset{s \rightarrow 1, s <1}{\limsup} 
\left|(1-s) \, \sum_{\ell = 1}^{\infty} s^{\ell - 1} X_{k, k', \ell}\right| \leq \epsilon,$$
and we are done by sending $\eps \rightarrow 0$.
\end{proof}
The space $\mathcal{E}$ that we have constructed is not equipped with a complete metric topology. One could attempt to construct a completion $\overline{\mathcal{E}}$ (along with an implicit topology) as the family of formal series
$$\sum_{k \in \mathbb{Z}} \lambda_k \, (t_{k, \ell})_{\ell 
\geq 1},$$
where $(\lambda_k)_{k \in \mathbb{Z}}$ is in $\ell^2( \mathbb{Z})$. 
Unfortunately, it is not true that all such sequences in
$\overline{\mathcal{E}}$ would converge to 
sequences of complex numbers. For example, if 
$\lambda_k$ is chosen in such a way that 
$|\lambda_k| = 1/(1+|k|)$ and $\lambda_k t_{k, 1}$ is 
a nonnegative real number, then the first coordinate
would be
$$\sum_{k \in \mathbb{Z}} \lambda_k t_{k,1} 
= \sum_{k \in \mathbb{Z}} |\lambda_k t_{k,1}|
= \sum_{k \in \mathbb{Z}} \frac{|t_{k,1}|}{1 + |k|},$$
which is almost surely infinite, since by dominated 
convergence, 
\begin{align*}
\mathbb{E} \left[e^{-  \sum_{k \in \mathbb{Z}} \frac{|t_{k,1}|}{1 + |k|}} \right]
& = \underset{n \rightarrow \infty}{\lim}
\mathbb{E} \left[e^{- \sum_{|k| \leq n} \frac{|t_{k,1}|}{1 + |k|}} \right] \\
&= \prod_{k \in \mathbb{Z}} \mathbb{E} \left[e^{- \frac{|t_{k,1}|}{1 + |k|}} \right] 
\\ & \leq  \prod_{k \in \mathbb{Z}} \left[ 
\mathbb{P} (|t_{1,1}| \leq 1)
+ e^{-1/(1 + |k|)} \mathbb{P} (|t_{1,1}| > 1) \right] \\
&\leq \prod_{k \neq 0} [1 - c \abs{k}^{-1} + O(k^{-2})] \\
&= 0.
\end{align*}
We can avoid this problem by restricting, for $\delta > 0$, to the subspace $\mathcal{E}_\delta$ of $\overline{\mathcal{E}}$ given by combinations $(\lambda_k)$ such that
$$\sum_{k \in \mathbb{Z}} (1 + |k|^{1 + \delta})
|\lambda_k|^2 < \infty.$$
Indeed, under this assumption, for all $\ell \geq 1$, by Cauchy-Schwarz
\begin{equation}\sum_{k \in \mathbb{Z}} |\lambda_k t_{k,\ell} |
\leq \left(\sum_{k \in \mathbb{Z}} (1 + |k|^{1 + \delta})
|\lambda_k|^2 \right)^{1/2} \left(
\sum_{k \in \mathbb{Z}} \frac{|t_{k,\ell}|^2}{
1 + |k|^{1 + \delta}} \right)^{1/2}. \label{lambdatkl}
\end{equation}
The first factor is finite from the definition of 
$\mathcal{E}_{\delta}$ and the second factor is almost surely 
finite, since 
$$\mathbb{E} \left[ \sum_{k \in \mathbb{Z}} 
\frac{|t_{k,\ell}|^2}
{1 + |k|^{1 + \delta}}\right] = \sum_{k \in \mathbb{Z}}
\frac{1}{1 + |k|^{1 + \delta}} < \infty.
$$
One then has the following: 
\begin{proposition} \label{LLNEdelta}
Let $w$ and $w'$ be two sequences in $\mathcal{E}_{\delta}$, 
such that 
$$w_{\ell} = \sum_{k \in \mathbb{Z}} \lambda_k t_{k, \ell}, 
\; w'_{\ell} = \sum_{k \in \mathbb{Z}} \lambda'_k t_{k, \ell}, $$
where
\begin{equation}\sum_{k \in \mathbb{Z}}  (1+ |k|^{1+\delta})(|\lambda_k|^2
+ |\lambda'_k|^2) < \infty. \label{normEdelta}
\end{equation}
Then, for 
\begin{equation}\langle w, w' \rangle := \sum_{k \in \mathbb{Z}}
\lambda_k \overline{\lambda'_k},
\label{scalarproductEdelta}
\end{equation}
the conclusion of Proposition \ref{LLNE} is satisfied. 
\end{proposition}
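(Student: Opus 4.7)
The plan is to approximate $w$ and $w'$ by their finite truncations $w^{(K)}_\ell := \sum_{|k| \leq K} \lambda_k t_{k,\ell}$ and $(w')^{(K)}_\ell := \sum_{|k| \leq K} \lambda'_k t_{k,\ell}$, which lie in $\mathcal{E}$, then apply Proposition~\ref{LLNE} to the truncations and control the tails $r^{(K)}_\ell := w_\ell - w^{(K)}_\ell$ and $(r')^{(K)}_\ell := w'_\ell - (w')^{(K)}_\ell$ uniformly in the summation parameter. By polarization it is enough to treat the case $w = w'$ and to prove that both the Ces\`aro and Abel limits equal $\sum_k |\lambda_k|^2$.

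First I would write
\[
\frac{1}{n}\sum_{\ell=1}^n |w_\ell|^2 = \frac{1}{n}\sum_{\ell=1}^n |w^{(K)}_\ell|^2 + \frac{1}{n}\sum_{\ell=1}^n |r^{(K)}_\ell|^2 + 2\,\mathrm{Re}\,\frac{1}{n}\sum_{\ell=1}^n w^{(K)}_\ell \overline{r^{(K)}_\ell}.
\]
By Proposition~\ref{LLNE}, the first term converges as $n \to \infty$ to $\sum_{|k| \leq K}|\lambda_k|^2$, which tends to $\sum_k |\lambda_k|^2$ as $K \to \infty$. For the remaining two terms, the deterministic Cauchy--Schwarz bound already used in \eqref{lambdatkl} gives $|r^{(K)}_\ell|^2 \leq \varepsilon_K\, T_\ell$, where
\[
\varepsilon_K := \sum_{|k| > K}(1+|k|^{1+\delta})|\lambda_k|^2 \xrightarrow[K \to \infty]{} 0, \qquad T_\ell := \sum_{k \in \mathbb{Z}} \frac{|t_{k,\ell}|^2}{1+|k|^{1+\delta}}.
\]
After choosing the phases so that $(t_{k,\ell})_{k \in \mathbb{Z}, \ell \geq 1}$ are iid standard complex gaussian (a choice which does not affect $|r^{(K)}_\ell|$), the variables $T_\ell$ are iid with finite mean $C_\delta := \sum_k (1+|k|^{1+\delta})^{-1}$. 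The classical law of large numbers yields $\frac{1}{n}\sum_{\ell=1}^n T_\ell \to C_\delta$ almost surely, hence $\limsup_n \frac{1}{n}\sum_\ell |r^{(K)}_\ell|^2 \leq \varepsilon_K C_\delta$. A further Cauchy--Schwarz, combined with the bound $\limsup_n \frac{1}{n}\sum_\ell |w^{(K)}_\ell|^2 \leq \sum_k |\lambda_k|^2$ coming from the case already handled, dominates the cross term by $O(\varepsilon_K^{1/2})$. Letting first $n \to \infty$ and then $K \to \infty$ gives the Ces\`aro equality. The Abel case is identical once one observes that $(1-s)\sum_\ell s^{\ell-1} T_\ell \to C_\delta$ almost surely as $s \to 1^-$, which follows from the Ces\`aro convergence of the $T_\ell$ by the very summation-by-parts argument used at the end of the proof of Proposition~\ref{LLNE}.

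The main obstacle, and the reason for the weighted summability hypothesis \eqref{normEdelta}, is the uniform (in $n$, respectively $s$) control of the tail: as the counterexample preceding the statement shows, without the weight $(1+|k|^{1+\delta})$ the series defining $w_\ell$ need not converge even pointwise. The deterministic Cauchy--Schwarz inequality \eqref{lambdatkl} is what converts the stronger hypothesis into a pointwise tail bound of the clean form $\varepsilon_K T_\ell$, and the iid structure of the $T_\ell$ is what turns this pointwise bound into a uniform bound on the averages.
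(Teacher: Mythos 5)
Your proof is correct and follows essentially the same route as the paper: truncate the sums to $|k| \leq K$, apply Proposition~\ref{LLNE} to the truncated sequences in $\mathcal{E}$, and control the tail via the deterministic Cauchy--Schwarz bound \eqref{lambdatkl} together with the almost sure law of large numbers for the iid variables $T_\ell$ (the paper's $\mathcal{W}_\ell$). The only structural difference is your polarization reduction to $w=w'$, whereas the paper handles the cross terms $\frac{1}{n}\sum (w_\ell - w_{\ell,K})\overline{w'_{\ell,K}}$ and $\frac{1}{n}\sum w_\ell(\overline{w'_\ell}-\overline{w'_{\ell,K}})$ directly, but this is a cosmetic variation of the same argument.
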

\begin{remark}
For $w, w' \in \mathcal{E}_{\delta}$, it is a priori 
not obvious that the coordinates $(\lambda_k)_{k \in 
\mathbb{Z}}$ and $(\lambda'_k)_{k \in 
\mathbb{Z}}$ are uniquely determined, and then the definition given by the formula 
\eqref{scalarproductEdelta} is a priori ambiguous. However, 
the present and the previous propositions show that 
for all $w \in \mathcal{E}_{\delta}$, 
one has $$\lambda_k = \langle w, (t_{k, \ell})_{\ell \geq 1}
\rangle = \underset{n \rightarrow \infty}{\lim } \frac{1}{n} \sum_{\ell = 1}^n w_{\ell} \overline{t_{k, \ell}},$$
which implies that $(\lambda_k)_{k \in \Z}$ is uniquely determined by $w$.
Notice also that the convergence of the series $\sum_{k
\in \mathbb{Z}}
\lambda_k \overline{\lambda'_k}$ is an immediate consequence of the assumption \eqref{normEdelta}.

\end{remark}
\begin{proof}
Let us first show that almost surely, for $w_{\ell} = \sum_{k \in \mathbb{Z}} \lambda_k t_{k, \ell}$ and
$$\|\lambda\|_{\delta}^2 := \sum_{k \in \mathbb{Z}}
(1+ |k|^{1+\delta}) |\lambda_k|^2 < \infty,$$
one has
\begin{equation}
\underset{n  \rightarrow \infty}{\limsup} \, 
\frac{1}{n} \sum_{\ell = 1}^n |w_{\ell}|^2 \leq C_{\delta}
\|\lambda\|_{\delta}^2, \label{boundLLN}
\end{equation}
where $C_{\delta} > 0$ depends only on $\delta$.
Indeed, by \eqref{lambdatkl}, one has 
$$|w_{\ell}|^2 \leq \|\lambda\|_{\delta}^2 \mathcal{W}_{\ell},$$
where 
$$\mathcal{W}_{\ell} = \sum_{k \in \mathbb{Z}} \frac{|t_{k, \ell}|^2}
{1 + |k|^{1+ \delta} }.$$
Now, the variables $(\mathcal{W}_{\ell})_{\ell \geq 1}$
are iid, positive, with expectation: 
$$C_{\delta} = \sum_{k \in \mathbb{Z}} \frac{1}
{1 + |k|^{1 + \delta}}< \infty.$$
 By the law of large numbers, one deduces that almost surely, 
 \eqref{boundLLN} holds for all $(\lambda_k)_{k \in \mathbb{Z}}$ such that $\|\lambda\|_{\delta}^2 < \infty$. 
 
By Cauchy-Schwarz, for 
$w_{\ell} = \sum_{k \in \mathbb{Z}} \lambda_k t_{k, \ell}$ 
and $w'_{\ell} = \sum_{k \in \mathbb{Z}} \lambda'_k t_{k, \ell}$, one deduces
$$\underset{n \rightarrow \infty}{\limsup} \, 
\frac{1}{n} \left| \sum_{\ell = 1}^{n} 
w_{\ell} \overline{w'_{\ell}} \, \right| \leq C_{\delta}
\|\lambda\|_{\delta} \|\lambda'\|_{\delta}.$$
Now, for $K \geq 1$,  let use define:
$$w_{\ell, K} =\sum_{|k| \leq K} \lambda_k t_{k, \ell}
, \; 
w'_{\ell, K} =\sum_{|k| \leq K} \lambda'_k t_{k, \ell},$$
note that the corresponding sequences are in $\mathcal{E}$. 
One has: 
$$\frac{1}{n} \sum_{\ell= 1}^n w_{\ell} \overline{w'_{\ell}} = 
\frac{1}{n} \sum_{\ell= 1}^n w_{\ell, K} \overline{w'_{\ell, K}} + \frac{1}{n} \sum_{\ell= 1}^n (w_{\ell} - w_{\ell, K}) \overline{w'_{\ell, K}}
+  \frac{1}{n} \sum_{\ell= 1}^n w_{\ell}  (\overline{w'_{\ell}} - \overline{w'_{\ell, K} }).$$
By Proposition \ref{LLNE}, the first mean tends to
$$\langle (w_{\ell,K})_{\ell \geq 1}, (w'_{\ell,K})_{\ell \geq 1} \rangle  = \sum_{|k| \leq K} \lambda_k 
\overline{\lambda'_k}$$
when $n$ goes to infinity. 
The second  mean is bounded by 
$$C_{\delta} \left( \sum_{|k| > K} 
(1 + |k|^{1+ \delta}) |\lambda_k|^2 \right)^{1/2}
 \left( \sum_{|k| \leq K} 
(1 + |k|^{1+ \delta}) |\lambda'_k|^2 \right)^{1/2},$$
and the third mean is bounded by
$$C_{\delta} \|\lambda\|_{\delta} 
\left( \sum_{|k| > K} 
(1 + |k|^{1+ \delta}) |\lambda'_k|^2 \right)^{1/2}.$$
We deduce
\begin{align*}
\underset{n \rightarrow \infty}{\limsup}
\left|  \frac{1}{n} \sum_{\ell= 1}^n w_{\ell} \overline{w'_{\ell}} - \sum_{k \in \mathbb{Z}}
\lambda_k \overline{\lambda'_k} \right|
& \leq  \sum_{|k| > K} \lambda_k 
\overline{\lambda'_k} + 
C_{\delta} \|\lambda\|_{\delta} 
\left( \sum_{|k| > K} 
(1 + |k|^{1+ \delta}) |\lambda'_k|^2 \right)^{1/2}
\\ & + C_{\delta} \|\lambda'\|_{\delta} 
\left( \sum_{|k| > K} 
(1 + |k|^{1+ \delta}) |\lambda_k|^2 \right)^{1/2}.
\end{align*}
Letting $K \longrightarrow \infty$ gives the first equality in Proposition \ref{LLNE}. 
The second equality is proven if we show that for $s \in (0,1)$,
and $X_{\ell}  = w_{\ell} \overline{w'_{\ell} } -
\langle w, w' \rangle$,
\begin{equation}
(1-s) \sum_{\ell = 1}^{\infty} s^{\ell-1} X_{\ell} 
\label{sumXell}
\end{equation}
is convergent and tends to $0$ when $s$ goes to $1$. 
For $N \geq 1$, 
$$(1-s) \sum_{\ell = 1}^{N} s^{\ell-1} X_{\ell} 
=  (1-s) \sum_{n = 1}^{N} 
(s^{n - 1} - s^{n}) \left( \sum_{\ell = 1}^n X_{\ell}
\right)
 +  (1-s) s^{N} \sum_{\ell = 1}^N X_{\ell}. $$
 Since we know that 
 $$\sum_{\ell = 1}^n X_{\ell} = o(n),$$
 the series \eqref{sumXell} converges to the sum of the 
 series:
 $$(1-s) \sum_{n = 1}^{\infty} 
(s^{n - 1} - s^{n}) \left( \sum_{\ell = 1}^n X_{\ell}
\right),$$
which is absolutely convergent. We can then show that 
this sum tends to zero when $s$ goes to $1$, in the same way as in the proof of Proposition \ref{LLNE}. 
\end{proof}

The scalar product we have defined on $\mathcal{E}$, and then in $\mathcal{E}_{\delta}$, can be compared with the following 
situation. Let $(B^{(k)}_t)_{t \in [0,1]}$, $k \in \Z$,  be independent Brownian motions. 
If $(\alpha_k)_{k \in \Z}$ is a family of 
real numbers, such that $\alpha_k = 0$ for 
all but finitely many indices $k \in \Z$, then 
one can consider the stochastic 
process 
$$\left(B_t^{(\alpha_k)_{k \in \Z}} 
:= \sum_{k \in \Z} \alpha_k B^{(k)}_t \right)_{t \in [0,1]}.$$
For two sequences $(\alpha_k)_{k \in \Z}$ and $(\beta_k)_{k \in \Z}$ containing finitely non-zero terms, the quadratic covariation of $B^{(\alpha_k)_{k \in \Z}} $ and $B^{(\beta_k)_{k \in \Z}} $ is given by
$$ \langle B^{(\alpha_k)_{k \in \Z}},
B^{(\beta_k)_{k \in \Z}} \rangle = 
\sum_{k \in \Z} \alpha_k \beta_k.$$
Its defines a scalar product on the vector space 
of stochastic processes of the form $(B_t^{(\alpha_k)_{k \in \Z}})_{t \in [0,1]}$.

 Let us now go back to the vector space $\mathcal E_{\delta}$. 
This space contains some 
infinite sequences of complex numbers. It is natural to ask if it is possible to embed $\mathcal E_{\delta}$ into a space which has a richer structure. An example is obtained by considering the space of analytic functions on the open unit disc. Indeed, it is possible to identify the sequence $w = (w_{\ell})_{\ell \geq 1}$ with the function 
$$F(w)\; : \; z \mapsto \sum_{\ell \geq 1} w_{\ell} z^{\ell - 1}.$$ 
The series for $F$ converges absolutely on the unit disc, since we have
\[
  \abs{F(w)} = \abs{\sum_{\ell \geq 1} w_\ell z^{\ell - 1}} \leq \left( \sum_{\ell \geq 1} \abs{w_\ell}^2 \abs{z}^{\ell - 1} \right)^{1/2} \left( \sum_{\ell \geq 1} \abs{z}^{\ell - 1} \right)^{1/2} < \infty
\]
by Cauchy-Schwarz inequality and the proof of Proposition~\ref{LLNEdelta}.
It is then possible to express the scalar 
product $\langle w, w' \rangle$ on $\mathcal{E}_{\delta}$ in terms of 
integrals involving the holomorphic functions 
$F(w)$ and $F(w')$, as follows: 
\begin{proposition}
For all $w, w' \in \mathcal{E}_{\delta}$,
$$\langle w, w' \rangle = 2 \,\underset{s \rightarrow 1, s <1}{\lim} (1-s) \int_{0}^{2 \pi} F(w)(s e^{i \theta})
\overline{F(w')(s e^{i \theta})} \, \frac{d \theta}{2 \pi}.$$
\end{proposition}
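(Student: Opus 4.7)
The plan is to compute the circle integral explicitly as a power series in $s$, make the substitution $t=s^2$, and reduce to the Abel-type limit already established in Proposition~\ref{LLNEdelta}.

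First, I would fix $s \in (0,1)$ and expand the product $F(w)(se^{i\theta}) \overline{F(w')(se^{i\theta})}$ as a double series. The Cauchy-Schwarz estimate given just before the statement shows that $F(w)$ and $F(w')$ converge absolutely on compact subsets of the open unit disc, so for each $s<1$ the double series $\sum_{\ell,m \geq 1} w_\ell \overline{w'_m} s^{\ell+m-2} e^{i(\ell-m)\theta}$ converges absolutely and uniformly in $\theta$. Fubini (or termwise integration) and the orthogonality relation $\int_0^{2\pi} e^{i(\ell-m)\theta}\, d\theta/(2\pi) = \delta_{\ell,m}$ then give
\[
\int_{0}^{2\pi} F(w)(se^{i\theta})\,\overline{F(w')(se^{i\theta})}\, \frac{d\theta}{2\pi} = \sum_{\ell \geq 1} w_\ell \overline{w'_\ell}\, s^{2\ell-2}.
\]

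Next I would change variables, setting $t = s^2 \in (0,1)$. Then
\[
2(1-s)\sum_{\ell \geq 1} w_\ell \overline{w'_\ell}\, s^{2\ell-2} = \frac{2(1-s)}{1-t}\, (1-t)\sum_{\ell \geq 1} w_\ell \overline{w'_\ell}\, t^{\ell-1},
\]
and the prefactor $2(1-s)/(1-t) = 2/(1+s)$ tends to $1$ as $s \to 1^-$ (equivalently $t\to 1^-$). Proposition~\ref{LLNEdelta} states precisely that
\[
\lim_{t \to 1^-} (1-t)\sum_{\ell \geq 1} w_\ell \overline{w'_\ell}\, t^{\ell-1} = \langle w, w' \rangle,
\]
so combining the two factors yields the desired identity.

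The only delicate point is the interchange of summation and integration; this is genuinely routine here because $s<1$ keeps the double series absolutely convergent, so no further work is needed. Once that step is justified, the rest is purely algebraic: the factor of $2$ in the statement is exactly what compensates for the change from $(1-s)$ to $(1-t)=(1-s)(1+s)$ under the substitution $t=s^2$, and the existence of the limit is inherited directly from Proposition~\ref{LLNEdelta}.
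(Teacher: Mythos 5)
Your proof is correct and follows essentially the same route as the paper: expand the integrand as a power series, apply Fubini and orthogonality of $e^{i(\ell-m)\theta}$ to reduce the integral to $\sum_{\ell\ge 1} w_\ell\overline{w'_\ell}s^{2(\ell-1)}$, invoke Proposition~\ref{LLNEdelta} with $t=s^2$, and absorb the prefactor $2(1-s)/(1-s^2)=2/(1+s)\to 1$. The paper justifies the interchange via $\sum_\ell(|w_\ell|+|w'_\ell|)s^{\ell-1}<\infty$ rather than via uniform convergence of the double series, but these are just two equivalent ways of invoking absolute summability.
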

\begin{proof}
We expand the integral,
$$\int_{0}^{2 \pi} F(w)(s e^{i \theta})
\overline{F(w')(s e^{i \theta})} \, \frac{d \theta}{2 \pi}
= \int_{0}^{2 \pi} \left( \sum_{\ell= 1}^{\infty}
w_{\ell} s^{\ell-1} e^{i \theta (\ell-1)} \right)
\left( \sum_{\ell'= 1}^{\infty}
\overline{w_{\ell'}} s^{\ell'-1} e^{-i \theta (\ell'-1)} \right)
\, \frac{d \theta}{2 \pi}.
$$
Since 
$$\sum_{\ell = 1}^{\infty} (|w_{\ell}| + |w'_{\ell}|)
s^{\ell-1} < \infty,$$
we can apply Fubini's theorem, which gives
$$\int_{0}^{2 \pi} F(w)(s e^{i \theta})
\overline{F(w')(s e^{i \theta})} \, \frac{d \theta}{2 \pi}
= \sum_{\ell=1}^{\infty} s^{2(\ell-1)}w_{\ell} \overline{w'_{\ell}}.$$
By Proposition \ref{LLNEdelta}, one deduces: 
$$(1-s^2) \int_{0}^{2 \pi} F(w)(s e^{i \theta})
\overline{F(w')(s e^{i \theta})} \, \frac{d \theta}{2 \pi}
\, \underset{s \rightarrow 1, s< 1}{\longrightarrow}
\langle w, w' \rangle,$$
which proves the proposition, since 
$2(1-s)/ (1-s^2)= 2/(1+s)$ tends to $1$ when $s$ goes 
to $1$. 
\end{proof}

The flow $(U^{\alpha})_{\alpha \in \mathbb{R}}$ can
be naturally extended to the space $\mathcal{E}_{\delta}$, by setting:
$$U^{\alpha} \left( \sum_{k \in \mathbb{Z}} 
\lambda_k t_{k, \ell} \right) 
= \sum_{k \in \mathbb{Z}} e^{2i \pi \alpha y_k}
\lambda_k t_{k, \ell}.$$
Note that for all $\alpha \in \mathbb{R}$, the extension of $U^{\alpha}$ preserves both the 
norm $$w \mapsto \langle w, w \rangle^{1/2}
= \left(\sum_{k \in \mathbb{Z}} |\lambda_k|^2 \right)^{1/2},$$
and the norm
$$w \mapsto \left(\sum_{k \in \mathbb{Z}} 
(1+ |k|^{1 + \delta})|\lambda_k|^2 \right)^{1/2}$$
defining the space $\mathcal{E}_{\delta}$.

\section{A more intrinsic definition of the limiting operator} \label{sec:intrinsic}

The random space $\mathcal{E}$ and the flow $(U^{\alpha})_{\alpha \in \mathbb{R}}$ of random operators on 
$\mathcal{E}$ defined previously have the disadvantage of involving explicitly the limiting eigenvectors of the virtual isometry $(u_n)_{n \geq 1}$. This makes the definition artificial. In this section, we construct a random space of sequences which contains $\mathcal{E}$ and a flow of operators which restricts to $U^\alpha$ on $\mathcal{E}$. This random space is not constructed directly from the eigenvectors, but is rather the space of sequences on which the action of the finite matrices $(u_n)_{n=1}^\infty$ converges in a suitable way.

All probabilistic statements in this section are assumed to hold almost surely. For this section, let $\overline{B}(x,r) \subset \R$ denote the closed interval in $\R$ with center $x$ and radius $r$. For any sequence $v$, we write $v[n]$ for the vector $(v_\ell)_{1 \leq \ell \leq n} \in \C^n$ for all $n \geq 1$.

We define our random space as follows.

\begin{definition}
Let $\mathcal{F}$ denote the space of sequences $(w_{\ell})_{\ell \geq 1}$ such that for all 
$\alpha \in \mathbb{R}$, there exists a sequence $V^{\alpha} w$, satisfying the following properties.
 
\begin{enumerate}
\item For all $\ell \geq 1$, $\alpha, \gamma \in \mathbb{R}$ and $0 < \delta' < \delta < 1/6$, there is a constant $C > 0$ such that
 $$ \sup_{\substack{\alpha_n \in \overline{B}(\alpha n, n^{1-\delta}) \\ \gamma_n \in \overline{B}(\gamma n, n^{1 - \delta})}} |\langle u_n^{\alpha_n} (w[n]) - (V^{\alpha} w)[n], u_n^{\gamma_n} (e_{\ell}) \rangle |
\leq C n^{-\delta'}.$$
\item For all $T > 0$ and $0 < \delta' < \delta < 1/6$, there is a constant $C > 0$ such that
$$ \sup_{\alpha \in [-T,T]} \sup_{\alpha_n \in \overline{B}(\alpha n, n^{1-\delta})}
\norm{u_n^{\alpha_n} w [n] - (V^{\alpha} w)[n]} \leq  C n^{\frac{1}{2} -\delta'}.$$
\end{enumerate}
Here the constants may depend on the choices of $\ell, \alpha, \gamma, \delta', \delta,$ and $T$.
\end{definition}

Note that the first condition implies, upon taking $\alpha_n = \lfloor \alpha n \rfloor$ and
 $\gamma_n = 0$, that for
$w \in \mathcal{F}$, $(V^{\alpha} w)_{\ell}$ is the limit of 
$ \langle u_n^{\lfloor \alpha n \rfloor} (w[n]) , e_{\ell} \rangle$ when $n$ goes to infinity. Hence
$V^{\alpha} w$ is uniquely determined.

The above definition gives the random space $\mathcal{F}$ and the family of operators $V^\alpha$ together. It turns out that this definition gives a flow of operators on a vector space which restricts to $\mathcal{E}$ in the natural way.

\begin{theorem}
The set $\mathcal{F}$ is a vector space. There is a family $(V^\alpha)_{\alpha \in \R}$ of linear maps, $V^\alpha : \mathcal{F} \to \mathcal{F}$, given by the correspondence $w \mapsto V^\alpha w$. The family satisfies the semigroup properties $V^0 = \text{id}$ and $V^\alpha V^\beta = V^{\alpha + \beta}$ for all $\alpha, \beta \in \R$.  Moreover, almost surely for all $k \in \mathbb{Z}$ and $\alpha \in \mathbb{R}$, 
one has $(g_{k, \ell})_{\ell \geq 1} \in \mathcal{F}$ and $V^{\alpha} ((g_{k, \ell})_{\ell \geq 1}) = (e^{2 \pi i \alpha y_k} g_{k, \ell})_{\ell \geq 1}$, so that $\mathcal{E}$ is a subspace of $\mathcal{F}$ and that $U^{\alpha}$ is the restriction of $V^{\alpha}$ to $\mathcal{E}$. 
\end{theorem}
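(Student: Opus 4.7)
The plan is to establish the four assertions---vector space structure of $\mathcal{F}$, well-definedness of $V^{\alpha}$, semigroup properties, and the inclusion $\mathcal{E} \subset \mathcal{F}$---in order, exploiting that conditions (1) and (2) of the definition are linear in $w$ and behave well under the group structure of $(u_n^k)_{k \in \Z}$. First, setting $\alpha_n = \lfloor \alpha n \rfloor$ and $\gamma_n = 0$ in condition (1) forces $(V^{\alpha} w)_\ell = \lim_n \langle u_n^{\lfloor \alpha n \rfloor} w[n], e_\ell \rangle$, so $V^{\alpha} w$ is uniquely determined by $w$. Linearity of $\mathcal{F}$ (and of each $V^\alpha$ on its domain) follows from the triangle inequality: given $w, w' \in \mathcal{F}$ and $\lambda, \mu \in \C$, the sequence $\lambda V^\alpha w + \mu V^\alpha w'$ witnesses $\lambda w + \mu w' \in \mathcal{F}$ with image $\lambda V^\alpha w + \mu V^\alpha w'$ under $V^\alpha$.

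The identity $V^0 = \mathrm{id}$ is immediate: $w$ itself trivially satisfies conditions (1) and (2) as a candidate for $V^0 w$ (both inner products and norms vanish), so by uniqueness $V^0 w = w$. For the main semigroup identity $V^{\beta} \circ V^{\alpha} = V^{\alpha + \beta}$, the proposed witness for $V^\beta(V^\alpha w)$ is $V^{\alpha + \beta} w$; verifying that this witness satisfies conditions (1) and (2) relative to $V^\alpha w$ simultaneously establishes $V^\alpha w \in \mathcal{F}$ and the semigroup identity. Concretely, for condition (1) with parameters $\beta, \gamma, \ell$ and $0 < \delta_0' < \delta_0 < 1/6$, and for $\beta_n \in \overline{B}(\beta n, n^{1-\delta_0})$, $\gamma_n \in \overline{B}(\gamma n, n^{1-\delta_0})$, one splits
\begin{align*}
& \langle u_n^{\beta_n} (V^{\alpha} w)[n] - (V^{\alpha+\beta} w)[n], u_n^{\gamma_n} e_\ell \rangle \\
& \quad = \langle (V^\alpha w)[n] - u_n^{\lfloor \alpha n \rfloor} w[n], u_n^{\gamma_n - \beta_n} e_\ell \rangle \\
& \quad\quad + \langle u_n^{\beta_n + \lfloor \alpha n \rfloor} w[n] - (V^{\alpha+\beta} w)[n], u_n^{\gamma_n} e_\ell \rangle,
\end{align*}
using unitarity of $u_n$ on the first summand. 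Choosing any intermediate $\delta_w$ with $\delta_0' < \delta_w < \delta_0$, the shifts $\gamma_n - \beta_n$ and $\beta_n + \lfloor \alpha n \rfloor$ lie, for $n$ large, in balls of radius $n^{1-\delta_w}$ around $(\gamma - \beta) n$ and $(\alpha + \beta) n$ respectively, so applying condition (1) for $w$ at parameters $(\alpha, \gamma - \beta)$ and $(\alpha + \beta, \gamma)$ with working exponents $(\delta_w, \delta_0')$ bounds each term by $O(n^{-\delta_0'})$. Condition (2) is handled by an identical decomposition: the first norm reduces via unitarity to $\norm{(V^\alpha w)[n] - u_n^{\lfloor \alpha n \rfloor} w[n]}$ (condition (2) for $w$ at $\alpha$), while the second is controlled by condition (2) for $w$ on the enlarged compact interval $[-T - |\alpha|, T + |\alpha|]$.

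For the inclusion $\mathcal{E} \subset \mathcal{F}$, one checks that for each $k \in \Z$, the sequence $g_k = (g_{k,\ell})_{\ell \geq 1}$ satisfies condition (1) with $V^\alpha g_k := (e^{2\pi i \alpha y_k} g_{k,\ell})_{\ell \geq 1}$ by Proposition~\ref{prop:distancecomponentseigenvectors}, and condition (2) by the third item of Proposition~\ref{prop:distanceeigenvectors}. Since $\mathcal{E}$ is the finite linear span of the $g_k$ and both $\mathcal{F}$ and each $V^\alpha$ are linear, this gives $\mathcal{E} \subset \mathcal{F}$, and the restriction of $V^\alpha$ to $\mathcal{E}$ coincides with $U^\alpha$ by construction.

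The main obstacle is precisely the bookkeeping of $\delta$-parameters in the semigroup step: the definition of $\mathcal{F}$ is formulated with the strict inequalities $\delta' < \delta < 1/6$ rather than a single fixed rate, and the decomposition above enlarges the balls containing the shift parameters by a factor of two, requiring one to replace $\delta_0$ by a strictly smaller auxiliary $\delta_w$ while still recovering the target bound $O(n^{-\delta_0'})$. The strict inequality $\delta_0' < \delta_0$ in the definition is exactly what provides the room to carry this out.
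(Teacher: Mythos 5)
Your proof is correct and follows essentially the same route as the paper: vector space structure via the triangle inequality, uniqueness of $V^\alpha w$ by specializing $\alpha_n = \lfloor \alpha n \rfloor$, $\gamma_n = 0$, the semigroup law by splitting the error into one piece controlled by $w$'s condition at $\alpha$ (after a unitary shift) and one controlled by $w$'s condition at $\alpha + \beta$, and the inclusion $\mathcal{E} \subset \mathcal{F}$ from Propositions~\ref{prop:distanceeigenvectors} and~\ref{prop:distancecomponentseigenvectors}. The only cosmetic divergence is in handling the ball enlargements in the semigroup step: the paper chooses its auxiliary integer shift with a case distinction (either $\lfloor \beta n \rfloor$ or $\lfloor \beta n \rfloor + 1$) so that the combined shift stays in the original-radius ball, then uses an intermediate $\delta''$ only for the test-vector parameter; you instead use the fixed shift $\lfloor \alpha n \rfloor$ and absorb all the modest enlargements simultaneously with a single intermediate $\delta_w \in (\delta_0', \delta_0)$. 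Both devices exploit exactly the slack built into the strict inequality $\delta' < \delta$, and yield the same bounds.
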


\begin{proof}
We begin by showing that $\mathcal{F}$ is a vector space. Clearly it suffices, for $w_1, w_2 \in \mathcal{F}$ and $\lambda \in \mathbb{C}$, to show that $\lambda w_1 + w_2 \in \mathcal{F}$. Let $w = \lambda w_1 + w_2$ as sequences and define $w_\alpha = \lambda V^\alpha w_1 + V^\alpha w_2$. Now consider all $\ell \geq 1$, $\alpha, \gamma \in \R$ and $0 < \delta < 1/6$. For any $\alpha_n \in \overline{B}(\alpha n, n^{1-\delta})$ and $\gamma_n \in \overline{B}(\gamma n, n^{1 - \delta})$, by the triangle inequality
\begin{multline*}
\abs{\langle u_n^{\alpha_n} (w[n]) - w_{\alpha}[n], u_n^{\gamma_n} (e_{\ell}) \rangle}
 \leq \abs{\lambda} \abs{\langle u_n^{\alpha_n} (w_1[n]) - (V^{\alpha} w_1)[n], u_n^{\gamma_n} (e_{\ell}) \rangle}
\\ + \abs{\langle u_n^{\alpha_n} (w_2[n]) - (V^{\alpha} w_2)[n], u_n^{\gamma_n} (e_{\ell}) \rangle}.  
\end{multline*}
Since each $w_1, w_2 \in \mathcal{F}$, there are constants $C_1, C_2$, depending on $\ell, \alpha, \gamma, \delta'$, and $\delta$, such that
\[
  \abs{\langle u_n^{\alpha_n} (w[n]) - w_{\alpha}[n], u_n^{\gamma_n} (e_{\ell}) \rangle} \leq C_1 n^{-\delta'} + C_2 n^{-\delta'}
\]
so that the first condition is satisfied with constant $C_1 + C_2$. Similarly, we have
\begin{align*}
\norm{u_n^{\alpha_n} w [n] - w_{\alpha}[n]} &\leq |\lambda| \|u_n^{\alpha_n} w_1 [n] - (V^{\alpha} w_1)[n] \|
 + \|u_n^{\alpha_n} w_2 [n] - (V^{\alpha} w_2)[n] \| \\
 &\leq C_1' n^{\frac12 - \delta'} + C_2' n^{\frac12 - \delta'}
\end{align*}
for some constants $C_1'$ and $C_2'$, which implies the second condition with constant $C_1' + C_2'$. We deduce that $w \in \mathcal{F}$ with $V^\alpha w = w_\alpha$ and so $\mathcal{F}$ is a vector space as required.

Now we consider the correspondence $w \mapsto V^\beta w$ for $\beta \in \R$. The linearity of such a map follows from the above argument, so it suffices to show that it maps $\mathcal{F}$ to $\mathcal{F}$ and that it obeys the semigroup properties.

To see that $V^0 = \text{id}$, it suffices to note that $(V^0 w)_\ell$ is the limit of $\langle u_n^{\floor{0 n}} (w[n]), e_\ell \rangle = \langle w, e_\ell \rangle$ as $n$ goes to $\infty$.

Now, let $w \in \mathcal{F}$, $\alpha, \beta \in \mathbb{R}$, and let us show that $V^{\beta} w \in \mathcal{F}$
and $V^{\alpha}(V^{\beta} w) = V^{\alpha + \beta} w$. We thus need to show
\begin{enumerate}
 \item For all $\ell \geq 1$, $\alpha, \gamma \in \mathbb{R}$ and $0 < \delta < \delta'< 1/6$, 
 $$ \sup_{\substack{\alpha_n \in \overline{B}(\alpha n, n^{1 - \delta}) \\ \gamma_n \in \overline{B}(\gamma n, n^{1 - \delta})}} |\langle u_n^{\alpha_n} (V^{\beta} w)[n]) - (V^{\alpha+ \beta} w)[n], u_n^{\gamma_n} (e_{\ell}) \rangle |
= O(n^{-\delta'}).$$
\item For all $T > 0$ and $0 < \delta < \delta' < 1/6$, 
$$ \sup_{\alpha \in [-T,T]} \sup_{\alpha_n \in \overline{B}(\alpha n, n^{1 - \delta})}
\|u_n^{\alpha_n} (V^{\beta} w) [n] - (V^{\alpha+ \beta} w)[n] \| = O \left(n^{\frac{1}{2} -\delta'} \right).$$
\end{enumerate}
Note that the semigroup property follows from this because $V^{\alpha + \beta}w$ satisfies the conditions of the definition for $V^\alpha (V^\beta w)$.

Let us therefore fix the parameters $\ell \geq 1$, $\alpha, \gamma \in \R$, and $0 < \delta' < \delta < 1/6$. Consider any choice of $\alpha_n \in \overline{B}(\alpha n, n^{1- \delta})$ and $\gamma_n \in \overline{B}(\gamma n, n^{1 - \delta})$. Define the sequence $(\beta_n)_{n \geq 1}$, $\beta_n \in \Z$ by
\[
  \beta_n = \begin{cases} \floor{\beta n}, & \alpha_n \geq \alpha n \\ \floor{\beta n} + 1, & \alpha_n < \alpha n. \end{cases}
\]
It is easy to check that $\beta_n \in \overline{B}(\beta n, n^{1 - \delta})$ and $\alpha_n + \beta_n \in \overline{B}((\alpha + \beta) n, n^{1 - \delta})$. Now, by the triangle inequality,
\begin{multline*}
\abs{\langle u_n^{\alpha_n} (V^{\beta} w)[n] - (V^{\alpha+ \beta} w)[n], u_n^{\gamma_n} (e_{\ell}) \rangle} \leq \abs{ \langle u_n^{\alpha_n} (V^{\beta} w)[n] - u_n^{\alpha_n + \beta_n}(w[n]), u_n^{\gamma_n} (e_{\ell}) \rangle} \\ + \abs{\langle u_n^{\alpha_n + \beta_n}(w[n]) - (V^{\alpha+ \beta} w)[n], u_n^{\gamma_n} (e_{\ell}) \rangle}.
\end{multline*}
By unitary invariance, we see that
\[
\abs{ \langle u_n^{\alpha_n} (V^{\beta} w)[n] - u_n^{\alpha_n + \beta_n}(w[n]), u_n^{\gamma_n} (e_{\ell}) \rangle} = \abs{\langle (V^{\beta} w)[n] - u_n^{ \beta_n}(w[n]), u_n^{\gamma_n-\alpha_n} (e_{\ell}) \rangle}
\]
and therefore, taking suprema,
\begin{multline*}
\sup_{\substack{\alpha_n \in \overline{B}(\alpha n, n^{1-\delta}) \\ \gamma_n \in \overline{B}(\gamma n, n^{1 - \delta})}}  
 |\langle u_n^{\alpha_n} (V^{\beta} w)[n] - (V^{\alpha+ \beta} w)[n], u_n^{\gamma_n} (e_{\ell}) \rangle |
 \\ \leq \sup_{\substack{\alpha_n \in \overline{B}(\alpha n, n^{1-\delta}) \\ \gamma_n \in \overline{B}(\gamma n, n^{1 - \delta})}}
|\langle (V^{\beta} w)[n] - u_n^{ \beta_n}(w[n]), u_n^{\gamma_n-\alpha_n} (e_{\ell}) \rangle |
\\ + \sup_{\substack{\alpha_n \in \overline{B}(\alpha n, n^{1-\delta}) \\ \gamma_n \in \overline{B}(\gamma n, n^{1 - \delta})}}
 | \langle u_n^{\alpha_n + \beta_n}(w[n]) - (V^{\alpha+ \beta} w)[n], u_n^{\gamma_n} (e_{\ell}) \rangle |.
\end{multline*}
Denoting $\alpha'_n = \alpha_n + \beta_n$ and $\gamma'_n = \gamma_n - \alpha_n$, we have 
$|\beta_n - \beta n| \leq 1$, 
$|\alpha'_n - (\alpha + \beta)n| \leq n^{1 - \delta}$ and 
$ |\gamma'_n - (\gamma - \alpha)n| \leq 2 n^{1- \delta}$. Hence,
\begin{multline*}
\sup_{\substack{\alpha_n \in \overline{B}(\alpha n, n^{1 - \delta}) \\ \gamma_n \in \overline{B}(\gamma n, n^{1 - \delta})}} |\langle u_n^{\alpha_n} (V^{\beta} w)[n] - (V^{\alpha+ \beta} w)[n], u_n^{\gamma_n} (e_{\ell}) \rangle |
\\ \leq \sup_{\substack{\beta_n \in \overline{B}(\beta n, 1) \\ \gamma'_n \in \overline{B}((\gamma - \alpha)n, 2 n^{1 - \delta})}}
|\langle (V^{\beta} w)[n] - u_n^{ \beta_n}(w[n]), u_n^{\gamma'_n} (e_{\ell}) \rangle |
\\ + \sup_{\substack{\alpha'_n \in \overline{B}((\alpha + \beta) n, n^{1 - \delta}) \\ \gamma_n \in \overline{B}(\gamma n, n^{1 - \delta})}}
 | \langle u_n^{\alpha'_n}(w[n]) - (V^{\alpha+ \beta} w)[n], u_n^{\gamma_n} (e_{\ell}) \rangle | 
\end{multline*}
For $\delta'' \in (\delta', \delta)$, we deduce for $n$ large enough (so that
$2n^{-\delta} \leq n^{-\delta''})$, 
\begin{multline*}
\sup_{\substack{\alpha_n \in \overline{B}(\alpha n, n^{1 - \delta}) \\ \gamma_n \in \overline{B}(\gamma n, n^{1 - \delta})}} |\langle u_n^{\alpha_n} (V^{\beta} w)[n] - (V^{\alpha+ \beta} w)[n], u_n^{\gamma_n} (e_{\ell}) \rangle |
\\ \leq \sup_{\substack{\beta_n \in \overline{B}(\beta n, n^{1 - \delta''}) \\ \gamma'_n \in \overline{B}((\gamma - \alpha) n, n^{1 - \delta''})}} |\langle (V^{\beta} w)[n] - u_n^{ \beta_n}(w[n]), u_n^{\gamma'_n} (e_{\ell}) \rangle |
\\ + \sup_{\substack{\alpha'_n \in \overline{B}((\alpha + \beta) n, n^{1 - \delta}) \\ \gamma_n \in \overline{B}(\gamma n, n^{1 - \delta})}} |\langle u_n^{\alpha'_n}(w[n]) - (V^{\alpha+ \beta} w)[n], u_n^{\gamma_n} (e_{\ell}) \rangle | 
\end{multline*}
Now, this last expression is dominated by $n^{-\delta'}$, by 
definition of the space $\mathcal{F}$ and the maps $V^{\beta}$ and $V^{\alpha + \beta}$. This verifies the first condition.

The second condition follows from a similar computation. Indeed, we have
\begin{multline*}
\| u_n^{\alpha_n} (V^{\beta} w)[n] - (V^{\alpha+ \beta} w)[n] \| \leq \|u_n^{\alpha_n} (V^{\beta} w)[n] - u_n^{\alpha_n + \beta_n} w[n]\| 
\\ 
+ \|u_n^{\alpha_n + \beta_n} w[n] - (V^{\alpha+ \beta} w)[n] \|
\end{multline*}
which, again by unitary invariance, reduces to
\[
\| u_n^{\alpha_n} (V^{\beta} w)[n] - (V^{\alpha+ \beta} w)[n] \| \leq \|(V^{\beta} w)[n] - u_n^{\beta_n} w[n]\|
 + \|u_n^{\alpha_n + \beta_n} w[n] - (V^{\alpha+ \beta} w)[n] \|
\]
Defining $\alpha' = \alpha + \beta$, $\alpha'_n = \alpha_n + \beta_n$, $T' =  T + |\beta|$ and 
taking suprema for $\alpha \in [-T,T]$ and $\alpha_n \in \overline{B}(\alpha n, n^{1 - \delta})$, we get
\begin{multline*}
\sup_{\alpha \in [-T,T]} \sup_{\alpha_n \in \overline{B}(\alpha n, n^{1 - \delta})}
\|u_n^{\alpha_n} (V^{\beta} w) [n] - (V^{\alpha+ \beta} w)[n] \|
\\ \leq \sup_{\beta_n \in \overline{B}(\beta n, 1)} \|(V^{\beta} w)[n] - u_n^{\beta_n} w[n]\| 
\\ + \sup_{\alpha' \in [-T',T']} \sup_{\alpha'_n \in \overline{B}(\alpha' n, n^{1 - \delta})} \|u_n^{\alpha'_n} w[n] - (V^{\alpha'} w)[n] \|
\end{multline*}
Both of the quantities on the right hand side are bounded by $O(n^{\frac12 - \delta'})$. We have therefore proven the stability of $\mathcal{F}$ by the family of maps $(V^{\alpha})_{\alpha \in
\mathbb{R}}$ and the semigroup property.

It remains to show that the eigenvectors are contained in $\mathcal{F}$ and $V^\alpha$ restricts to $U^\alpha$ there, but this is an immediate consequence of 
Propositions \ref{prop:distanceeigenvectors} and \ref{prop:distancecomponentseigenvectors}. 
\end{proof}

We have now defined a random space $\mathcal{F}$ containing $\mathcal{E}$, on which the 
flow $(U^{\alpha})_{\alpha \in \mathbb{R}}$ is extended to a flow of operators
$(V^{\alpha})_{\alpha \in \mathbb{R}}$. The definition of $\mathcal{F}$ is quite complicated, but 
it has the strong advantage, compared with the case of $\mathcal{E}$, to be given intrinsically
 in terms of $(u_n)_{n \geq 1}$,
without referring explicitly to eigenvectors. A natural question which can now be asked is the following: 
by extending the space $\mathcal{E}$ to $\mathcal{F}$, do there exist eigenvectors of the flow which are not contained in $\mathcal{E}$? The answer is negative, in the following precise sense.
\begin{definition}
 For $w \in \mathcal{F}$ different from zero, we say that $w$ is an eigenvector of
 the flow $(V^{\alpha})_{\alpha \in \mathbb{R}}$, if and only if there exists $\chi \in \mathbb{R}$ such 
that $V^{\alpha} (w) = e^{2 i \pi \alpha \chi} w $ for all $\alpha \in \mathbb{R}$. 
\end{definition}
Using this definition, we see that for all $k \in \mathbb{Z}$, 
the sequences $(t_{k, \ell})_{\ell \geq 1}$ are eigenvectors of $(V^{\alpha})_{\alpha \in \mathbb{R}}$, 
corresponding to $\chi = y_k$. The following result shows that these sequences are the only eigenvectors of the flow: 
\begin{theorem} \label{thm:onlyeigenvectors}
 The only eigenvectors of $(V^{\alpha})_{\alpha \in \mathbb{R}}$ are the non-zero
 sequences which are proportional to 
$(t_{k, \ell})_{\ell \geq 1}$ (or $(g_{k, \ell})_{\ell \geq 1}$) for some $k \in \mathbb{Z}$. 
\end{theorem}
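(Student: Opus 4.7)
Plan: Let $w \in \mathcal{F}$ be non-zero with $V^\alpha w = e^{2\pi i\alpha\chi}w$ for all $\alpha \in \mathbb{R}$. Expand $w[n] = \sum_{k=1}^n c_k^{(n)} f_k^{(n)}$ with $c_k^{(n)} := \langle w[n], f_k^{(n)}\rangle$ in the orthonormal eigenbasis of $u_n$; the goal is to show $w$ is proportional to some $g_{k_0}$ with $y_{k_0} = \chi$.

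The first step is a spectral concentration estimate. Property~(2) of $\mathcal{F}$ applied to the eigenvector equation gives
\[
\sum_{k=1}^n |c_k^{(n)}|^2 \bigl|e^{i\lfloor\alpha n\rfloor \theta_k^{(n)}} - e^{2\pi i\alpha\chi}\bigr|^2 = O_T(n^{1-2\delta'})
\]
uniformly in $\alpha \in [-T, T]$. Integrating in $\alpha$ and evaluating the integrand $I_k^{(n)}(T) := \int_0^T |e^{i\lfloor\alpha n\rfloor\theta_k^{(n)}} - e^{2\pi i\alpha\chi}|^2\, d\alpha$ via a geometric-sum calculation shows that $I_k^{(n)}(T) \geq T$ whenever $y_k^{(n)} := n\theta_k^{(n)}/(2\pi)$ is separated from $\chi$ by a threshold decaying in $T$. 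Since the sine-kernel process $(y_k)_{k \in \mathbb{Z}}$ is a.s.\ discrete and meets $\chi$ in at most one index $k_0$, for $T$ large enough (measurably in the realization) one obtains
\[
\sum_{k \neq k_0} |c_k^{(n)}|^2 = O(n^{1-2\delta'}).
\]

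Next I would show that such a $k_0$ must exist. If not, the previous estimate yields $\|w[n]\|^2 = O(n^{1-2\delta'})$. Fix $\ell$ with $w_\ell \neq 0$: property~(1) of $\mathcal{F}$ gives
\[
\sum_k c_k^{(n)}\langle f_k^{(n)}, e_\ell\rangle\, e^{i\lfloor\alpha n\rfloor\theta_k^{(n)}} \longrightarrow e^{2\pi i\alpha\chi} w_\ell
\]
pointwise in $\alpha$ with rate $O(n^{-\delta'})$. Viewing the left-hand side as the Fourier transform, at frequency $\alpha$, of the complex measure $\nu_\ell^{(n)} := \sum_k c_k^{(n)}\langle f_k^{(n)}, e_\ell\rangle\,\delta_{y_k^{(n)}}$, and testing against a compactly supported smooth bump $\phi$ localized near $\chi$ but vanishing on the discrete set $\{y_k\}_{k\in\mathbb{Z}}$, Fourier inversion gives $\int\phi\,d\nu_\ell^{(n)} \to w_\ell\phi(\chi) \neq 0$, while direct evaluation gives $\int\phi\,d\nu_\ell^{(n)} \to 0$ since $\phi(y_k^{(n)}) \to \phi(y_k) = 0$ for each $k$—a contradiction, so $k_0$ exists.

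Finally, the spectral concentration together with $g_{k_0}^{(n)} = D_{k_0}^{(n)} f_{k_0}^{(n)}$ and Proposition~\ref{prop:distanceeigenvectors} yields $\|w[n] - \gamma_n g_{k_0}[n]\| = O(n^{1/2-\delta'})$ for $\gamma_n := c_{k_0}^{(n)}/D_{k_0}^{(n)}$. Pick any $\ell_0$ with $g_{k_0,\ell_0} \neq 0$ (which holds a.s.\ by the Gaussian description of $(t_{k,\ell})$) and set $\gamma := w_{\ell_0}/g_{k_0,\ell_0}$; the sequence $h := w - \gamma g_{k_0} \in \mathcal{F}$ is again an eigenvector of $V^\alpha$ with eigenvalue $\chi$ but satisfies $h_{\ell_0} = 0$. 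Re-applying the preceding analysis to $h$ and using that its only possible surviving spectral component is at $k_0$ together with the vanishing $h_{\ell_0} = 0$ and $g_{k_0,\ell_0} \neq 0$, one concludes $h = 0$, whence $w = \gamma g_{k_0}$. The principal difficulty lies in the existence of $k_0$: the weak $L^2$ bound $\|w[n]\| = O(n^{1/2-\delta'})$ does not by itself exclude a nontrivial $w$, and one must convert the pointwise Fourier convergence from property~(1) into a support statement for the measures $\nu_\ell^{(n)}$ via a Fourier-orthogonality argument exploiting the almost-sure discreteness of $(y_k)$.
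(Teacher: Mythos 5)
Your overall plan --- expand $w[n]$ in the eigenbasis of $u_n$, average (over $j$ or $\alpha$) against the eigenvector equation to get spectral concentration near $\chi$, split on whether $\chi$ equals some $y_{k_0}$, and finally peel off a multiple of $g_{k_0}$ --- mirrors the paper's. The concrete technical lemma you'd need to justify the ``$I_k^{(n)}(T)\geq T$ off a window around $\chi$'' step is essentially the paper's Lemma~\ref{lem:boundMnc}, so that part is recoverable. The real divergence is in the two places you flag yourself.

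First, the central missing ingredient is precisely the paper's Proposition~\ref{prop:nosmallvectors}: if $w\in\mathcal{F}$ and $\|w[n_q]\|=O(n_q^{1/2-\delta})$ along any subsequence, then $w=0$. This is what the paper invokes to close \emph{both} your cases --- to kill $w$ when $\chi\notin\{y_k\}$ and, later, to kill the remainder $w-\kappa g_{k_0}$. Your proposed Fourier-bump replacement does not work as stated. To compute $\int\phi\,d\nu_\ell^{(n)}$ from $\hat\nu_\ell^{(n)}$ you must integrate over all $\alpha\in\mathbb{R}$, but property~(1) of $\mathcal{F}$ only gives the convergence $\hat\nu_\ell^{(n)}(\alpha)\to e^{2\pi i\alpha\chi}w_\ell$ with a constant allowed to depend on $\alpha$; the definition of $\mathcal{F}$ provides no decay or uniformity of this constant as $|\alpha|\to\infty$, so the tail of the Fourier inversion integral is not controlled. (On top of this, $e^{i\lfloor\alpha n\rfloor\theta_k^{(n)}}$ is only approximately $e^{2\pi i\alpha y_k^{(n)}}$, adding further uncontrolled errors.) The paper's proof of Proposition~\ref{prop:nosmallvectors} is instead a quantitative argument: a $p$-step discrete average to express $w_\ell$ through the Fej\'er-type kernel $M_p$, the almost-sure beta bound $|\langle f_k^{(n)},e_\ell\rangle|=O(n^{-1/2+\eps})$, Cauchy--Schwarz, and a second-moment bound on $\sum_k|M_p(\lambda_k^{(n)})|^2$ via the Diaconis--Shahshahani trace moments. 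None of this is a soft Fourier-compactness argument, and I do not see how to patch your version without essentially reproducing it.

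Second, your closing step is circular. Having fixed $\gamma=w_{\ell_0}/g_{k_0,\ell_0}$ and $h=w-\gamma g_{k_0}$, you want to conclude $h=0$ from $h_{\ell_0}=0$ by ``reapplying the preceding analysis.'' But the spectral concentration for $h$ only controls $\sum_{k\neq k_0}|c_k^{(n)}(h)|^2$; it says nothing directly about $c_{k_0}^{(n)}(h)$, and the crude bound $|\langle R_n,e_{\ell_0}\rangle|\leq\|R_n\|=O(n^{1/2-\delta'})$ is far too weak to force $c_{k_0}^{(n)}(h)$ small, since $\langle f_{k_0}^{(n)},e_{\ell_0}\rangle$ is only of order $n^{-1/2}$. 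Controlling the $\ell_0$th coordinate of the orthogonal remainder requires a delocalization estimate (Lemma~\ref{lem:delocalization} in the paper), which you have not invoked. The paper avoids this trap altogether: it does not pre-commit to a scalar $\gamma$, but instead sets $\kappa_n=\eta_{k_0}^{(n)}/D_{k_0}^{(n)}$, proves $(\kappa_n)$ is Cauchy with rate $O(n^{-\delta})$, and then applies Proposition~\ref{prop:nosmallvectors} to $w-\kappa g_{k_0}$. That Cauchy argument is the missing mechanism in your step 3.
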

From this theorem, we deduce in particular that the set of parameters $\chi$ associated to the flow $(V^{\alpha})_{\alpha \in \mathbb{R}}$ is a determinantal sine-kernel process.

Before we begin the proof of this theorem, it is convenient to separately establish two partial results which will aid in the proof. For convenience, we will define
\[
  M_p(\lambda) := \frac{1}{p} \sum_{j = 0}^{p-1} \lambda^j = \frac{1 - \lambda^p}{p(1 - \lambda)}
\]
for $p \geq 1$ and $\lambda \in \C$, $\abs{\lambda} = 1$.

First, we show that a non-zero element of $\mathcal{F}$ cannot be too small in a precise sense. This in particular shows that $\mathcal{F} \cap \ell^2 = \{0\}$.
\begin{proposition} \label{prop:nosmallvectors} 
 Let $w$ be an element of $\mathcal{F}$. Suppose there exists a $\delta > 0$ and a strictly increasing sequence $(n_q)_{q \geq 1}$ of integers such that $\|w[n_q]\| = O(n_q^{\frac{1}{2} - \delta})$. Then, $w$ is identically equal to zero. 
\end{proposition}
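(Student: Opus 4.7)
The plan is to show that if $\|w[n_q]\|$ grows polynomially slower than $\sqrt{n_q}$ on a subsequence, then every coordinate of the flow trajectory $\alpha\mapsto (V^\alpha w)_\ell$ vanishes, so $w=0$. The idea is to combine the propagation of smallness under the flow, an $L^2$ average of $|(V^\alpha w)_\ell|^2$ computed through the eigenbasis of $u_n$, and the delocalization of the eigenvectors.

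First, I would observe that the smallness is inherited by the whole orbit. By Property~(2) of $\mathcal F$ with $\alpha_n=\lfloor\alpha n\rfloor$ and $0<\delta'<\delta<1/6$, $\|V^\alpha w[n]\|\leq \|w[n]\|+Cn^{1/2-\delta'}$; combined with the hypothesis, $\|V^\alpha w[n_q]\|=O(n_q^{1/2-\min(\delta,\delta')})$, uniformly for $\alpha$ in compact subsets of $\mathbb R$. In particular every $V^\beta w$ satisfies the same smallness bound, uniformly in $\beta$ on compacts.

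Next, fix $\ell\geq 1$ and, for $N=N(n)\to\infty$, study the Birkhoff-type sum
\[
\frac{1}{N}\sum_{j=0}^{N-1}\bigl|\langle u_n^j w[n],e_\ell\rangle\bigr|^2.
\]
Expanding $w[n]=\sum_k c_k^{(n)}f_k^{(n)}$ and using the sine-kernel gap estimate $\min_{k\neq k'}|\theta_k^{(n)}-\theta_{k'}^{(n)}|\gtrsim 1/n$ to control off-diagonal contributions, the sum equals $\sum_k |c_k^{(n)}|^2 |(f_k^{(n)})_\ell|^2+O(\|w[n]\|^2\,n/N)$. Haar unitary eigenvectors are, up to phase, uniform on the sphere, so a standard Borel--Cantelli argument based on the Beta$(1,n-1)$ tail gives almost surely $\max_{1\leq k,\ell\leq n}|(f_k^{(n)})_\ell|^2\leq C\log n/n$ for all sufficiently large $n$. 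Together with $\sum_k|c_k^{(n)}|^2=\|w[n]\|^2$, the main term is bounded by $O(\|w[n]\|^2 \log n/n)$, which along $n_q$ is $O(n_q^{-2\delta}\log n_q)\to 0$; choosing $N=N(n)$ with $N\gg n^{1-2\delta}$ also makes the error vanish.

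Finally, I would convert the finite-dimensional average into a continuum one: by Property~(1) pointwise in $\alpha_j=j/n$ and Property~(2) uniformly on $[0,T]$ with $T=N/n$, $\tfrac1T\int_0^T |(V^\alpha w)_\ell|^2\,d\alpha\to 0$ along $n_q$. Applying the same argument to $V^\beta w$ in place of $w$ (which has the same smallness by Step~1), and using the uniformity in $\beta$ on compacts, one obtains $\sup_{\beta}\tfrac1T\int_\beta^{\beta+T}|(V^\alpha w)_\ell|^2\,d\alpha\to 0$. Combined with the H\"older continuity of $\alpha\mapsto (V^\alpha w)_\ell$ (coming from Property~(1) applied with $\alpha_n$ sweeping the ball $\overline B(\alpha n,n^{1-\delta})$), this upgrade to "uniform mean zero over all windows" forces $(V^\alpha w)_\ell=0$ for every $\alpha\in\mathbb R$; specializing to $\alpha=0$ gives $w_\ell=0$ for every $\ell$, hence $w=0$.

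The main obstacle is the final step: quantitatively matching the per-coordinate error $O(n^{1/2-\delta'})$ from Property~(2) against the favourable gain $O(n^{-2\delta}\log n)$ coming from delocalization, with the right choice of window length $N(n)$, and then upgrading vanishing of long-time $L^2$ means to a pointwise conclusion. This requires the shift-invariance from Step~1 to obtain control uniformly in $\beta$, and the H\"older continuity in $\alpha$ to pass from a.e.\ to everywhere; a bootstrap proving the proposition first for large $\delta$ (where Property~(1) alone suffices) and then for small $\delta$ may be needed.
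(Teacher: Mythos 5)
Your proposal captures the right high-level idea (time-average against the spectrum, use delocalization of eigenvectors, propagate smallness along the subsequence), and this is close in spirit to the paper's argument, but there is a concrete gap that makes Step~2 break down.

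You assert a gap estimate $\min_{k\neq k'}|\theta_k^{(n)}-\theta_{k'}^{(n)}|\gtrsim 1/n$ to control the off-diagonal terms in the Birkhoff sum. This is false: the sine-kernel process is repulsive only at second order, and a.s.\ the nearest-neighbour gap for $u_n$ can be as small as roughly $n^{-4/3}$; the a.s.\ bound the paper actually works with is only $\theta_{k+1}^{(n)}-\theta_k^{(n)}\gtrsim n^{-5/3-\eps}$ (event $E_3$ in Section~\ref{sec:apriori}). With the correct gap, the window $N$ is bounded above by $\sim n^{1-\delta/3}$ (imposed by Property~(1) of $\mathcal F$), while the off-diagonal terms $\frac{1}{N}\sum_j(\lambda_k/\lambda_{k'})^j$ only decay for pairs at distance $\gg 1/N$; summing over all pairs using $|\langle f_{k'},e_\ell\rangle|^2\lesssim n^{-1+\eps}$ and the $n^{-5/3-\eps}$ gap bound gives an off-diagonal contribution of order $n^{2/3-5\delta/3+o(1)}\,\|w[n]\|^2/n$, which for small $\delta$ does not tend to zero. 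So the Birkhoff $L^2$ average does not obviously vanish, and the argument fails precisely in the most interesting regime $0<\delta\ll 1$.

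The paper's proof circumvents this by \emph{not} squaring first. It averages the linear functional $\langle u_n^{\alpha_n}w[n],e_\ell\rangle$ over $\alpha_n\in\{0,\dots,p-1\}$, $p\sim n^{1-\delta/3}$, getting $w_\ell=\lim_n\sum_k M_p(\lambda_k^{(n)})\eta_k^{(n)}\langle f_k^{(n)},e_\ell\rangle$; Cauchy--Schwarz together with the delocalization $|\langle f_k^{(n)},e_\ell\rangle|\lesssim n^{-1/2+\delta/3}$ then reduces the problem to the single quantity $\sum_k|M_p(\lambda_k^{(n)})|^2$, which has no off-diagonal cross terms. Bounding that sum by $O(n^\delta)$ requires the Diaconis--Shahshahani moment identity for traces of Haar unitary matrices (expressing $\sum_k|M_p(\lambda_k^{(n)})|^2$ through $\tr u_n^j$, $|j|<p$), a gaussian concentration for the resulting linear combination of traces, Markov's inequality with a high moment, and Borel--Cantelli. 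This is the step missing from your outline: without the Diaconis--Shahshahani moment bound or a comparable ingredient, the worst-case eigenvalue gap is simply not strong enough. Your final "Hölder continuity $\Rightarrow$ pointwise vanishing" step is also overly elaborate; once you control the linear average $\sum_k M_p(\lambda_k^{(n)})\eta_k^{(n)}\langle f_k^{(n)},e_\ell\rangle$ as the paper does, you directly conclude $w_\ell=0$ coordinate by coordinate.
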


\begin{proof}
In the first part of the definition of $\mathcal{F}$, let us take $(\delta/3) \wedge (1/7)$ instead 
of $\delta$, and $\alpha = \gamma_n = 0$. Since $V^0 w = w$, we deduce, for all $\ell \geq 1$, 
$$\underset{|\alpha_n| \leq n^{1- \frac{\delta}{3}}}{\sup} |\langle u_n^{\alpha_n} (w[n]) - w[n], e_{\ell}
\rangle| = O(n^{-\delta'}),$$
for (say) $\delta' = (\delta/4) \wedge (1/8) > 0$. Hence, if we decompose $w[n]$ in terms of eigenvectors 
of $u_n$: 
$$w[n] = \sum_{k = 1}^n \eta_k^{(n)} f_k^{(n)},$$
we get: 
$$\underset{|\alpha_n| \leq n^{1- \frac{\delta}{3}}}{\sup} 
\left| w_{\ell} - \sum_{k=1}^n (\lambda_k^{(n)})^{\alpha_n} \eta_k^{(n)} \langle  f_k^{(n)}, e_{\ell} 
\rangle \right| = O(n^{-\delta'}),$$
Taking $p = \lfloor n^{1-\frac{\delta}{3}} \rfloor + 1$ and averaging for 
$\alpha_n \in \{0, 1, \dots, p-1\}$ gives:
$$w_{\ell} = \underset{n \rightarrow \infty}{\lim}  \sum_{k = 1}^n  M_p(\lambda_k^{(n)})
\eta_k^{(n)} \langle  f_k^{(n)}, e_{\ell} \rangle.$$
Moreover, for all $n \geq 1$ and $k \in \{1, \dots, n\}$,
   $|\langle  f_k^{(n)}, e_{\ell} \rangle|^2$ is a beta random variable of parameters $1$ and $n-1$. 
From the Borel-Cantelli lemma, we deduce that almost surely, $$\langle  f_k^{(n)}, e_{\ell} \rangle
 = O\left(n^{-\frac{1}{2} + \frac{\delta}{3}}\right),$$
and then, using 
Cauchy-Schwarz inequality, we get: 
\begin{align}
 |w_{\ell}| & \lesssim  n^{-\frac{1}{2} + \frac{\delta}{3}} \, \left(\sum_{k = 1}^n  |M_p(\lambda_k^{(n)})|^2
\right)^{1/2} \left(\sum_{k = 1}^n  |\eta_k^{(n)}|^2 \right)^{1/2} \nonumber
\\ & =  n^{-\frac{1}{2} + \frac{\delta}{3}} \|w[n]\| \left(\sum_{k = 1}^n  |M_p(\lambda_k^{(n)})|^2
\right)^{1/2}. \label{boundwl}
\end{align}
Let us now assume the following bound 
\begin{equation}
\sum_{k = 1}^n  |M_p(\lambda_k^{(n)})|^2 = O(n^{\delta}). \label{boundMp}
\end{equation}
Then, for $n = n_q$ ($q \geq 1$), the right-hand side of \eqref{boundwl} is dominated by $n_q^{-\delta/6}$, which 
is only possible if $w_{\ell} = 0$. Hence, Proposition~\ref{prop:nosmallvectors} is proven if we are able to 
show \eqref{boundMp}. 
Now, we have
$$|M_p(\lambda)|^2 = \frac{1}{p^2} \sum_{j = -p+1}^{p-1} (p - |j|) \lambda^j,$$
which implies 
$$\sum_{k = 1}^n  |M_p(\lambda_k^{(n)})|^2 = 
\frac{1}{p^2} \sum_{j = -p+1}^{p-1} (p - |j|) \tr(u_n^j).$$
By \cite{DS94}, it is known, that for all integers $j_1, j_2, \dots, j_r$
such that $|j_1|+|j_2| + \dots + |j_r| \leq n$, one has
$$\mathbb{E}\left[ \prod_{s = 1}^r \tr(u_n^{j_s})
\right] = \mathbb{E}\left[ \prod_{s = 1}^r Z_{j_s}
\right],$$
where $Z_0 = n$, $(Z_j/\sqrt{j})_{j \geq 1}$ are iid standard complex gaussian variables
($\mathbb{E} [|Z_j|^2] = j$), and 
$Z_{-j} = \overline{Z_j}$ for $j \geq 1$. 
For a fixed integer $A > 0$, we deduce, from the fact that $p = o(n)$, that for $n$ large enough: 
$$\mathbb{E} \left[ \left(\sum_{k = 1}^n  |M_p(\lambda_k^{(n)})|^2 \right)^A \right]
 = \mathbb{E} [W^A],$$
where 
$$ W :=  \frac{1}{p^2} \sum_{j = -p+1}^{p-1} (p - |j|) Z_j.$$
The variable $W$ is a real-valued gaussian variable, with expectation 
$n/p$ (coming from the term $j=0$), and variance 
$$\frac{1}{p^4} \sum_{1 \leq |j|  \leq p-1} |j| (p - |j|)^2
\leq \frac{1}{p^4} \sum_{1 \leq |j|  \leq p-1} p^3 \leq 1.$$
Hence 
$$\|W\|_{L^A} \leq (n/p) + \|W - (n/p) \|_{L^A}
\leq (n/p) + \|N(0,1)\|_{L^A} \leq c(A) + n/p,$$
where $c(A) > 0$ depends only on $A$. We deduce (for $A$ fixed):
$$\mathbb{E} \left[ \left(\sum_{k = 1}^n  |M_p(\lambda_k^{(n)})|^2 \right)^A \right] 
\lesssim (n/p)^A  \lesssim n^{A\delta/3}.$$
Using Markov's inequality, we get: 
$$\mathbb{P} \left[ \sum_{k = 1}^n  |M_p(\lambda_k^{(n)})|^2 \geq n^{\delta}\right] 
\leq n^{-A\delta} \mathbb{E} \left[ \left(\sum_{k = 1}^n  |M_p(\lambda_k^{(n)})|^2 \right)^A \right] 
\lesssim n^{-2A \delta/3}.$$
Taking any $A$ strictly larger that $3/2\delta$ and using the Borel-Cantelli lemma gives the estimate \eqref{boundMp}. 
\end{proof}

We also require the following easy technical lemma.
\begin{lemma} \label{lem:boundMnc}
 There exists a universal constant $c> 0$, such that for all $\lambda \in \C$, $\abs{\lambda} = 1$, and
$n \geq 1$,
$$|M_n(\lambda)|^2 \leq 1 - c((n|\lambda-1|) \wedge 1)^2.$$
\end{lemma}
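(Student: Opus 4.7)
Parametrize $\lambda = e^{2iy}$ with $y \in [0, \pi/2]$ (which is possible by replacing $\lambda$ by $\overline\lambda$ if necessary), so that $\abs{\lambda-1} = 2\sin y$ and a direct computation gives $\abs{M_n(\lambda)}^2 = \sin^2(ny)/(n\sin y)^2$. The starting point of my proof is the Fej\'er-type identity
\[
  1 - \abs{M_n(\lambda)}^2 = \frac{4}{n^2}\sum_{m=1}^{n-1}(n-m)\sin^2(my),
\]
which follows from the expansion $\abs{M_n(\lambda)}^2 = n^{-2}\sum_{m=-(n-1)}^{n-1}(n-\abs{m})\lambda^m$ already used in the proof of Proposition \ref{prop:nosmallvectors} together with the Fej\'er sum $\sum_{m=-(n-1)}^{n-1}(n-\abs{m}) = n^2$. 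From here the strategy is to split according to whether $ny \leq \pi/2$ or not and bound the sum from below in each regime.

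In the regime $ny \leq \pi/2$, every term of the sum has $my \in [0, \pi/2]$, so Jordan's inequality $\sin x \geq (2/\pi) x$ gives $\sin^2(my) \geq (4/\pi^2)m^2 y^2$. Combined with the closed form $\sum_{m=1}^{n-1}(n-m)m^2 = n^2(n^2-1)/12$ this yields
\[
  1 - \abs{M_n(\lambda)}^2 \geq \frac{4(n^2-1)}{3\pi^2}\,y^2.
\]
Since $\sin y \leq y$ one has $(n\abs{\lambda-1})^2 = 4n^2\sin^2 y \leq 4n^2 y^2$, so this bound is at least $\frac{n^2-1}{3\pi^2 n^2}(n\abs{\lambda-1})^2 \geq \frac{1}{4\pi^2}(n\abs{\lambda-1})^2$ as soon as $n \geq 2$. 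When $n\abs{\lambda-1} \leq 1$ this is precisely the desired estimate; when $n\abs{\lambda-1} > 1$ while still $ny \leq \pi/2$, the constraint $\sin y > 1/(2n)$ forces $y > 1/(2n)$, and the same lower bound already exceeds a universal positive constant, so $((n\abs{\lambda-1}) \wedge 1)^2 = 1$ is also dominated.

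In the complementary regime $ny > \pi/2$, the monotonicity of $\sin$ on $[0,\pi/2]$ gives $n\sin y \geq n\sin(\pi/(2n))$, and a short computation shows the sequence $n \mapsto n\sin(\pi/(2n))$ is non-decreasing in $n$ (differentiating in the continuous variable $n$ produces $\sin x - x\cos x$ with $x = \pi/(2n)$, which is non-negative on $[0,\pi/2]$). Consequently $n\sin y \geq 2\sin(\pi/4) = \sqrt 2$ for every $n \geq 2$, so $\abs{M_n(\lambda)}^2 \leq (n\sin y)^{-2} \leq 1/2$ and hence $1 - \abs{M_n(\lambda)}^2 \geq 1/2$. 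In this regime $n\abs{\lambda-1} \geq 2\sqrt 2 > 1$, so $((n\abs{\lambda-1}) \wedge 1)^2 = 1$ and the estimate holds with the same $c$. Combining the two regimes proves the lemma (for $n \geq 2$; the case $n=1$ is degenerate since $M_1 \equiv 1$, but the lemma is only used with the parameter $p = \lfloor n^{1-\delta/3}\rfloor + 1 \geq 2$ for large $n$ in the proof of Proposition \ref{prop:nosmallvectors}). The only delicate point is keeping the constant $c$ absolute across the transitional regime $ny \approx \pi/2$, which is exactly what the uniform lower bound $n\sin(\pi/(2n)) \geq \sqrt 2$ provides.
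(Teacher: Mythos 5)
Your proof is correct and follows essentially the same strategy as the paper's: expand $1 - \abs{M_n(\lambda)}^2$ via the Fej\'er identity and split into two regimes according to whether $n$ times the argument of $\lambda$ lies below or above a fixed threshold, handling the small regime by a quadratic lower bound on $1 - \cos$ (you use Jordan's inequality with explicit constants; the paper uses the equivalent $1 - \cos x \gtrsim x^2$ on a compact interval, with an unspecified threshold at $\abs{\lambda-1} = 3/n$) and the large regime by bounding $\abs{M_n(\lambda)}^2$ directly from the closed formula. You are also right that the statement as printed fails at $n=1$ whenever $\lambda \neq 1$, since $M_1 \equiv 1$, and that the paper's own proof silently excludes this case --- harmless because the lemma is applied only as $n \to \infty$; the one slip is the parenthetical attribution: Lemma~\ref{lem:boundMnc} is invoked on $M_n$ in the proof of Theorem~\ref{thm:onlyeigenvectors}, not on $M_p$ in Proposition~\ref{prop:nosmallvectors}, where the sums $\sum_k \abs{M_p(\lambda_k^{(n)})}^2$ are instead controlled via moments of traces.
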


\begin{proof} If $|\lambda - 1| \geq 3/n$, we have  
$$|M_n(\lambda)|^2 = \frac{|1 - \lambda^n|^2}{n^2 |1 - \lambda|^2} \leq 4/9,$$
and the inequality is true for $c = 5/9$. 
If $|\lambda - 1| \leq 3/n$, we can write $\lambda = e^{i \theta}$
where $$|\lambda - 1| \leq |\theta| \leq \pi |\lambda - 1|/2 \leq 3 \pi/2n .$$ Then, 
\begin{align*}
1 - |M_n(\lambda)|^2 
& = \frac{1}{n^2} \sum_{j = -n+1}^{n-1} (n-|j|)( 1- \lambda^j ) \\ 
& = \frac{1}{n^2} \sum_{j = -n+1}^{n-1} (n-|j|) ( 1- \cos(j\theta)),
\end{align*}
where $|j \theta| \leq 3 \pi/2$, which implies that $1 - \cos(j\theta) \gtrsim j^2 \theta^2$. Hence, 
\begin{align*} 1 - |M_n(\lambda)|^2 & \gtrsim \frac{1}{n^2} \sum_{j = -n+1}^{n-1} (n-|j|)  j^2 \theta^2 
\\ & \gtrsim n^2 \theta^2 \geq |\lambda-1|^2 n^2. \qedhere
\end{align*}
\end{proof}

\begin{proof}[Proof of Theorem~\ref{thm:onlyeigenvectors}]
Let $w$ be an eigenvector of $(V^{\alpha})_{\alpha \in \mathbb{R}}$, and let $\chi$ be the corresponding 
eigenvalue. Taking, in the second part of the definition of $\mathcal{F}$, $T = 1$, $\alpha = j/n$ 
for $j \in \{0,1, \dots, n-1\}$, $\alpha_n = \alpha n = j$, we obtain, for any $\delta \in (0, 1/6)$: 
$$\underset{0 \leq j \leq n-1}{\sup} \|u_n^j w[n] - e^{2 i \pi \chi j/n} w[n]\| = 
O\left(n^{\frac{1}{2} - \delta} \right),$$
or equivalently, 
$$w[n] = u_n^j e^{-2 i \pi \chi j/n} w[n] + v_{n,j},$$
where 
$$\underset{0 \leq j \leq n-1}{\sup} \|v_{n,j}\|  = 
O\left(n^{\frac{1}{2} - \delta} \right).$$
Decomposing in the eigenvector basis of $u_n$ gives, with the notation of the proof of Proposition~\ref{prop:nosmallvectors}: 
$$w[n] =  v_{n,j} + \sum_{k=1}^n (\lambda_k^{(n)} e^{-2 i \pi \chi/n} )^j \eta_k^{(n)} f_k^{(n)}.$$
Averaging with respect to $j \in \{0, \dots, n-1\}$ gives: 
$$w[n] = v_n + \sum_{k = 1}^n M_n(\lambda_k^{(n)} e^{-2 i \pi \chi/n} ) \eta_k^{(n)} f_k^{(n)},$$
where $$\|v_n\| = O\left(n^{\frac{1}{2} - \delta}\right).$$
Hence, 
\begin{equation}
\|w[n]\| =  \left(\sum_{k = 1}^n | M_n(\lambda_k^{(n)} e^{-2 i \pi \chi/n} )|^2  |\eta_k^{(n)}|^2 
\right)^{1/2} + O\left(n^{\frac{1}{2} - \delta}\right). \label{expressionwnMn}
\end{equation}
First let us assume that $\chi$ is not one of the values of $y_k$ for $k \in \mathbb{Z}$. Then, there exists
$k \in \mathbb{Z}$, $\chi_1, \chi_2 \in \mathbb{R}$, such 
that $$y_k < \chi_1 < \chi < \chi_2 < y_{k+1}.$$
Hence, for $n$ large enough, 
$$\frac{n \theta_k^{(n)}}{2 \pi} <  \chi_1 < \chi < \chi_2 < \frac{n \theta_{k+1}^{(n)}}{2 \pi},$$
and there are no eigenangles of $u_n$ between $2 \pi \chi_1/n$ and $2 \pi \chi_2/n$. 
One deduces that for all $k \in \{1, \dots, n\}$, 
$$ | \lambda_k^{(n)} e^{-2 i \pi \chi/n} - 1| \gtrsim 1/n,$$
and by Lemma \ref{lem:boundMnc}, there exists $d < 1$ such that for all $n$ large enough, and 
all $k \in \{1, \dots, n\}$, 
$$| M_n(\lambda_k^{(n)} e^{-2 i \pi \chi/n} )| \leq d.$$
Therefore, \eqref{expressionwnMn} gives, for $n$ large enough: 
$$\|w[n]\| \leq  d  \left( \sum_{k = 1}^n  |\eta_k^{(n)}|^2 \right)^{1/2} +  O\left(n^{\frac{1}{2} - \delta}\right)
= d \|w[n]\| +  O\left(n^{\frac{1}{2} - \delta}\right),$$
and, since $1-d > 0$ is independent of $n$,  
$$\|w[n]\| \lesssim (1-d) \|w[n]\| = O\left(n^{\frac{1}{2} - \delta}\right).$$
By Proposition~\ref{prop:nosmallvectors}, $w$ is identically zero, which implies that $(V^{\alpha})_{\alpha \in 
\mathbb{R}}$ has no eigenvectors for $\chi \notin \{y_k, k \in \mathbb{Z}\}$. 

Now, let us assume that $\chi = y_k$ for $k \geq 1$ (the case $k \leq 0$ is similar) and let $w_n$ be
 the projection of $w[n]$ on the orthogonal 
of $f_k^{(n)}$. We have 
$$w[n] = w_n + \eta_k^{(n)} f_k^{(n)},$$
and then
$$u_n^j w[n] - e^{2 i \pi \chi j/n} w[n] =[ u^j (w_n) - e^{2 i \pi \chi j/n} w_n] + 
\eta_k^{(n)} (e^{2i \pi j \theta_k^{(n)}} -e^{2i \pi j \chi/n})  f_k^{(n)},$$
where the two terms in the last sum are orthogonal vectors. Hence, 
$$ \underset{0 \leq j \leq n-1}{\sup} \|u^j (w_n) - e^{2 i \pi \chi j/n} w_n\| \leq
 \underset{0 \leq j \leq n-1}{\sup} \|u_n^j w[n] - e^{2 i \pi \chi j/n} w[n]\| 
\lesssim n^{\frac{1}{2} - \delta}.$$
Now, decomposing $w_n$ in the eigenvector basis of $u_n$, and performing the same computation as for 
$w[n]$, we obtain an estimate which is similar to \eqref{expressionwnMn}:
\begin{equation}
\|w_n\| =  \left(\sum_{1 \leq k' \leq n, k' \neq k}
 | M_n(\lambda_{k'}^{(n)} e^{-2 i \pi \chi/n} )|^2  |\eta_k^{(n)}|^2 
\right)^{1/2} + O\left(n^{\frac{1}{2} - \delta}\right). \label{expressionwnMn2}
\end{equation}
Notice that the term $k' = k$ is not in the sum, since $w_n$ is obtained from $w[n]$ by removing the component 
proportional to $f_k^{(n)}$. Now, it is easy to check that for $k' \in \{1, \dots n\} \backslash \{k\}$, one
has the estimate 
$$|\lambda_k^{(n)} e^{-2 i \pi \chi/n} - 1 | \gtrsim 1/n$$
(note that this estimate is not true for $k' = k$, since $n \theta_k^{(n)}/2 \pi$ tends to $\chi = y_k$ when 
$n$ goes to infinity). Hence, for $n$ large enough, 
 $| M_n(\lambda_{k'}^{(n)} e^{-2 i \pi \chi/n} )|$ is uniformly bounded by a quantity which strictly smaller 
than $1$. As above for $w[n]$, this implies the estimate: 
$$\|w_n\| = O  \left(n^{\frac{1}{2} - \delta}\right).$$ 
If we set $\kappa_n := \eta_k^{(n)} / D_k^{(n)}$, we deduce
$$\|w[n] - \kappa_n g_k^{(n)} \| = O  \left(n^{\frac{1}{2} - \delta}\right),$$
and then 
\begin{equation}
\|w[n] -  \kappa_n g_k[n] \| = O \left((1 + |\kappa_n|)n^{\frac{1}{2} - \delta}\right), \label{equationkappan}
\end{equation}
since $$\|g_k[n] - g_k^{(n)}\| = O  \left(n^{\frac{1}{2} - \delta}\right).$$
Now, for any integer $m$ such that $n \leq m \leq 2n$, we have
$$\|w[m] -  \kappa_m g_k[m] \| \lesssim (1 + |\kappa_m|)m^{\frac{1}{2} - \delta} 
= O \left((1 + |\kappa_m|)n^{\frac{1}{2} - \delta}\right),$$
and taking the $n$ first components, we obtain:  
\begin{equation}
\|w[n] -  \kappa_m g_k[n] \| = O \left((1 + |\kappa_m|)n^{\frac{1}{2} - \delta}\right). \label{equationkappam}
\end{equation}
Comparing \eqref{equationkappan} and \eqref{equationkappam} gives the following: 
$$|\kappa_m - \kappa_n| \|g_k [n]\| = O \left((1 + |\kappa_n| + |\kappa_m|)n^{\frac{1}{2} - \delta}\right),$$
and then 
\begin{equation}
|\kappa_m - \kappa_n|  = O \left((1 + |\kappa_n| + |\kappa_m|)n^{- \delta}\right), \label{Cauchykappa}
\end{equation}
since $\|g_k[n]\|$ is equivalent to a strictly positive constant times $\sqrt{n}$. 
In particular, for $n$ large enough and $n \leq m \leq 2n$, 
$$|\kappa_m| - |\kappa_n| \leq |\kappa_m - \kappa_n| \leq  \frac{1}{2} (1 + |\kappa_n| + |\kappa_m|),$$
which implies 
$$ |\kappa_m| \leq 1 + 3 |\kappa_n| = O(1 + |\kappa_n|),$$
and \eqref{Cauchykappa} can be replaced by
\begin{equation}
|\kappa_m - \kappa_n|  = O \left((1 + |\kappa_n|)n^{- \delta}\right). \label{Cauchykappa2}
\end{equation}
Hence, 
$$|\kappa_m| = |\kappa_n| +  O \left((1 + |\kappa_n|)n^{- \delta}\right),$$
$$\sup_{n \leq m \leq 2n} (1 + |\kappa_m|) =  (1+ |\kappa_n|) (1 + O(n^{-\delta})),$$
and 
$$S_{q+1} \leq S_q (1 + O(2^{-\delta q})),$$
where $S_q$ denotes the supremum of $1 + |\kappa_m|$ for $2^q \leq m \leq 2^{q+1}$. 
We deduce that the sequence $(S_q)_{q \geq 0}$, and then the sequence $(\kappa_n)_{n \geq 1}$, are
bounded. The estimate  \eqref{Cauchykappa2} becomes: 
$$|\kappa_m - \kappa_n| = O \left(n^{- \delta}\right),$$
for $n \leq m \leq 2n$. If for $q \geq 1$, $2^q n \leq m \leq 2^{q+1} n$, we also get:
$$|\kappa_m - \kappa_n|  \leq |\kappa_m - \kappa_{2^{q}n}| 
+ \sum_{r = 0}^{q-1} |\kappa_{2^{r+1}n} - \kappa_{2^r n}| 
\lesssim \sum_{r = 0}^q (2^r n)^{-\delta} = O(n^{-\delta}).$$
Hence $(\kappa_n)_{n \geq 1}$ is a Cauchy sequence, and if $\kappa$ denotes its limit, one has 
$$|\kappa - \kappa_n| = O(n^{-\delta}).$$
Using \eqref{equationkappan}, we deduce:
\begin{align*}
 \| (w - \kappa g_k) [n] \| & \leq \| w[n] - \kappa_n g_k[n] \| + |\kappa - \kappa_n| \|g_k[n]\|
\\ & \lesssim (1 + |\kappa_n|) n^{\frac{1}{2} - \delta} + n^{-\delta} \sqrt{n} 
= O\left(n^{\frac{1}{2}- \delta}\right).
\end{align*}
Now, $w - \kappa g_k$ is a sequence in $\mathcal{F}$: by Proposition~\ref{prop:nosmallvectors}, $w - \kappa g_k = 0$, which implies that $w$ is proportional to $g_k$.
\end{proof}

\section{Further questions}

A natural question which can be asked is whether one 
could construct a similar flow of operators for 
other ensembles of random matrices.
If after a suitable scaling, the joint law 
of the eigenvalues and the eigenvectors converges 
to the law of a determinantal sine-kernel process and 
an independent family of iid complex gaussian variables, then one can make a coupling for which the convergence is almost sure, by Skorokhod's theorem.
Then, one can deduce the construction of a flow of 
operators similar to $(V^{\alpha})_{\alpha \in \mathbb{R}}$. 
This situation applies for a number of unitary invariant ensembles of hermitian and unitary matrices, 
as the GUE. However, the coupling given by Skorokhod's theorem is artificial. We are therefore led to the following.
\begin{question}
    Is there a natural coupling between all random matrix ensembles with determinental eigenvalue statistics so that the eigenvalues converge almost surely and the eigenvectors converge to independent gaussians?
\end{question}
Another problem consists to see if there exists a natural version of the flow $(V^{\alpha})_{\alpha \in \mathbb{R}}$, which is defined on a random space with a Hilbert structure. In this case, $V^{\alpha}$ would be 
a "true" unitary operator on this space. 
Moreover, it would be possible to define 
a self-adjoint operator $H$, whose spectrum is 
 the determinantal sine-kernel process $(y_m)_{m \in \Z}$, and which is equal to $1/2i\pi$ times the infinitesimal generator of the flow $(V^{\alpha})_{\alpha \in \mathbb{R}}$. 
 Note that $H$ would be an unbounded operator, and then its domain would no be the whole Hilbert space where $(V^{\alpha})_{\alpha \in 
 \mathbb{R}}$ is defined. However, the operator $H^{-1}$ would be a bounded, and even compact operator.

\appendix 
\section{A priori estimates for unitary matrices}\label{sec:apriori}
 Let us fix $\eps > 0$. The goal of this section is the proof that the event 
 $E := E_0 \, \cap \, E_1 \,  \cap E_2 \,  \cap \,  E_3$ holds almost surely under the Haar measure on the space of virtual isometries, for 
\begin{align*}
    E_0 &= \{\theta_0^{(1)} \neq 0 \} \cap \{ \forall n \geq 1, \nu_n \neq 0 \} \cap \{\forall n \geq 1, 1 \leq k \leq n, \mu_k^{(n)} \neq 0\} \\
    E_1 &= \{\exists n_0 \geq 1, \forall n \geq n_0, \abs{\nu_n} \leq n^{-\frac12+\eps}\} \\
    E_2 &= \{\exists n_0 \geq 1, \forall n \geq n_0, 1 \leq k \leq n, \abs{\mu_k^{(n)}} \leq n^{-\frac12+\eps}\} \\
    E_3 &= \{\exists n_0 \geq 1, \forall n \geq n_0, k \geq 1, n^{-\frac53 - \eps} \leq \theta_{k+1}^{(n)} - \theta_k^{(n)} \leq n^{-1 + \eps}\}.
\end{align*}

\begin{remark}
    In \cite{BNN12} an analogous event was defined to avoid small values of the dimension $n$ where the behavior of the eigenvalues can be more erratic. This event will serve the same purpose, although we have chosen the exponents more carefully to sharpen our results.
\end{remark}

We begin by showing that for any fixed basis of $\C^n$, the coefficients of a uniform random vector on the unit sphere are almost surely $O(n^{-\frac12+\eps})$ for any $\eps > 0$.
\begin{lemma} \label{lem:nobigmu}
    Suppose $v_1, ..., v_n \in \C^n$ is an orthonormal basis and $x \in \C^n$, $\norm{x} = 1$ is chosen uniformly from the unit sphere. Then
 if we write $x = x_1 v_1 + \cdots + x_n v_n$, we have  bound
    \[
      \mathbb{P}(\abs{x_j}^2 > \delta) = O(\exp(-\delta n/6)),
    \]
for all $\delta > 0$ and $j = 1, ..., n$.
\end{lemma}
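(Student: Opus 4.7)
The plan is to exploit the rotational invariance of the uniform measure on the complex sphere $S^n \subset \mathbb{C}^n$ to reduce to a concrete, computable distribution.

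First I would note that the distribution of $x$ is invariant under unitary transformations of $\mathbb{C}^n$, so the joint law of $(x_1, \dots, x_n)$ depends only on the fact that $(v_1, \dots, v_n)$ is an orthonormal basis, not on the specific choice of basis. In particular, I may assume without loss of generality that $v_j = e_j$, the standard basis, and then further by rotational symmetry that $j = 1$.

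Next, I would use the standard realization of the uniform measure on $S^n$: if $Z = (Z_1, \dots, Z_n)$ is a vector of iid standard complex gaussians (so $|Z_i|^2$ is a standard exponential with mean $1$), then $Z/\|Z\|$ is uniform on $S^n$. Hence
\[
\abs{x_1}^2 \stackrel{d}{=} \frac{|Z_1|^2}{|Z_1|^2 + \dots + |Z_n|^2},
\]
which is a $\beta(1,n-1)$ random variable. The density of this distribution on $[0,1]$ is $(n-1)(1-u)^{n-2}$, and the exact tail is therefore
\[
\mathbb{P}(\abs{x_1}^2 > \delta) = (1 - \delta)^{n-1}
\]
for $\delta \in [0,1]$, and $0$ for $\delta > 1$.

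Finally, I would apply the elementary inequality $1 - \delta \leq e^{-\delta}$ to conclude $(1-\delta)^{n-1} \leq e^{-\delta(n-1)}$, and then absorb the factor $e^{\delta}$ into the constant (or simply observe that $\delta(n-1) \geq \delta n/6$ for $n \geq 2$) to obtain the stated bound $O(\exp(-\delta n/6))$, uniformly in $\delta > 0$. There is no real obstacle here; the key observation is simply that the invariance of the uniform measure makes each coefficient a beta variable whose tail admits an explicit exponential bound.
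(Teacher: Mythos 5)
Your proof is correct. Both you and the paper begin from the same gaussian/exponential representation $x \stackrel{d}{=} Z/\|Z\|$, but the middle steps diverge: the paper does not compute the exact law of $|x_j|^2$ but instead applies a union bound, first controlling the event $\{e_1 + \cdots + e_n < n/2\}$ with a Chernoff-type estimate and then bounding $\mathbb{P}(e_j > \delta n / 2)$ on the complement. You instead observe directly that $|x_j|^2$ is a $\beta(1,n-1)$ variable with exact tail $(1-\delta)^{n-1}$, and then apply $1-\delta \leq e^{-\delta}$. Your route is shorter and yields the sharper exponent $\delta(n-1)$ (which clearly dominates $\delta n/6$ for $n \geq 2$, and the $n=1$ case is absorbed in the implied constant); the paper's approach is a bit more roundabout here but is the style of argument that generalizes when no clean closed form is available. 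It is worth noting that the paper itself uses the exact beta-distribution characterization elsewhere (e.g.\ in Lemma~\ref{lem:delocalization}), so your computation is fully in the spirit of the paper.
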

\begin{remark}
We will prove this statement for deterministic vectors $v_1, \dots, v_n \in \mathbb{C}^n$. 
However, by conditioning, one deduces that the result remains true if the vectors $v_1, \dots, v_n$ are random, as soon as they are independent of $x$. 
\end{remark}
\begin{proof}
Let us assume $\delta < 1$, since the probability is zero otherwise. 
Let $\phi_1, ..., \phi_n$ be iid uniform random phases in $S^1$ and let 
$e_1, ..., e_n$ be independent standard exponential random variable. Then, it is
 well-known that $x$ has the same distribution as the random vector $y = y_1 v_1 + \cdots + y_n v_n$ where
\[
  y_j = \phi_j \sqrt{\frac{e_j}{e_1 + \cdots + e_n}}.
\]

Now, 
\[
\mathbb{P}(e_1 + \cdots + e_n < \frac{n}{2}) \leq e^{n/2} \mathbb{E} [\exp(-e_1 - \dots - e_n)] \leq e^{n/2} 2^{-n} \leq e^{-n/6},
\]
 so
\begin{align*}
  \mathbb{P}(\abs{x_j}^2 > \delta) &\leq \mathbb{P}(e_j > \frac{\delta n}{2}) + O(\exp(-n/6)) \\ &= O(\exp(-\delta n/6)),
\end{align*}
since $\delta \in (0,1)$. 
\end{proof}
From this estimate, we deduce the following bound on the 
coordinates of the eigenvectors $f_k^{(n)}$. 
\begin{lemma} \label{lem:uniformbeta}
Let $\eps > 0$. Then, almost surely, we have 
\[
\sup_{1 \leq j, \ell \leq n}  \abs{\langle f_j^{(n)}, e_\ell \rangle}^2 = O( n^{-1+\eps})
\]
and 
\[
\sup_{1 \leq j, \ell \leq n}  \mathbb{E} \left[
\abs{\langle f_j^{(n)}, e_\ell \rangle}^2 | \mathcal{A} \right] = O( n^{-1+\eps}),
\]
where the implied constant may depend on $\epsilon$ and $(u_m)_{m \geq 1}$. 
\end{lemma}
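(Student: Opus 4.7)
The plan is to show that, for each fixed $n$ and $j$, the eigenvector $f_j^{(n)}$ is marginally distributed uniformly on the unit sphere of $\C^n$, and then to extract the two pointwise almost sure bounds from tail estimates and moment bounds on the Beta distribution, combined with a union bound over $(j,\ell) \in \{1,\dots,n\}^2$ and Borel--Cantelli.

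The key distributional input is the following. Since $u_n$ is Haar distributed on $U(n)$, the conjugation invariance of the Haar measure implies that the joint distribution of the ordered eigenvalues of $u_n$ and the matrix $F_n := (f_1^{(n)},\dots,f_n^{(n)})$ of unit length eigenvectors is invariant, up to right multiplication by a diagonal unitary, under $F_n \mapsto V F_n$ for any fixed $V \in U(n)$. Since permuting columns of a Haar matrix preserves its law, for every fixed $j$ the column $f_j^{(n)}$, with any measurable phase convention, is uniformly distributed on the unit sphere of $\C^n$; in particular, the $\mathbb{U}$-valued ambiguity in the definition of $f_j^{(n)}$ plays no role because we only study the phase-invariant quantity $|\langle f_j^{(n)},e_\ell\rangle|^2$, which is therefore distributed as $\beta(1,n-1)$.

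For the first (deterministic) bound, I would apply Lemma \ref{lem:nobigmu} with $x = f_j^{(n)}$ and $v_1,\dots,v_n$ the standard basis, giving
\[
  \mathbb{P}\bigl(|\langle f_j^{(n)},e_\ell\rangle|^2 > n^{-1+\eps}\bigr) = O\bigl(\exp(-n^{\eps}/6)\bigr).
\]
A union bound over $j,\ell \in \{1,\dots,n\}$ costs at most a factor $n^2$, and the resulting bound on $\mathbb{P}\bigl(\sup_{j,\ell\leq n}|\langle f_j^{(n)},e_\ell\rangle|^2 > n^{-1+\eps}\bigr)$ is summable in $n$, so the Borel--Cantelli lemma yields the first claim.

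For the conditional estimate, I would use the moments of the Beta distribution together with Jensen's inequality: for any integer $p \geq 1$,
\[
  \mathbb{E}\bigl[(\mathbb{E}[|\langle f_j^{(n)},e_\ell\rangle|^2 \mid \mathcal{A}])^p\bigr]
  \;\leq\; \mathbb{E}\bigl[|\langle f_j^{(n)},e_\ell\rangle|^{2p}\bigr] = \frac{p!\,(n-1)!}{(n-1+p)!} = O_p(n^{-p}).
\]
Markov's inequality then gives
\[
  \mathbb{P}\bigl(\mathbb{E}[|\langle f_j^{(n)},e_\ell\rangle|^2 \mid \mathcal{A}] > n^{-1+\eps}\bigr) = O_p\bigl(n^{-p\eps}\bigr),
\]
and a union bound over the $n^2$ pairs $(j,\ell)$ produces $O_p(n^{2-p\eps})$. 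Choosing any integer $p > 3/\eps$ makes this summable in $n$, and Borel--Cantelli yields the second claim.

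The only nontrivial step is the justification that, for each fixed $n$ and $j$, $f_j^{(n)}$ is uniform on the sphere of $\C^n$; once this is granted, both estimates reduce to standard Beta-distribution tail and moment computations combined with a union bound, and nothing specific to the virtual isometry structure is needed here (the $\sigma$-algebra $\mathcal{A}$ enters only through Jensen's inequality).
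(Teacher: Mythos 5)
Your proposal is correct and follows essentially the same strategy as the paper: use conjugation invariance of the Haar measure to show $|\langle f_j^{(n)}, e_\ell\rangle|^2 \sim \beta(1,n-1)$, then apply the tail bound from Lemma~\ref{lem:nobigmu} together with a union bound and Borel--Cantelli for the first claim, and a moment/Markov/Borel--Cantelli argument for the second. The only real difference in the second part is one of ordering: the paper establishes an almost sure bound on the \emph{conditional} high moment $\mathbb{E}[|\langle f_j^{(n)},e_\ell\rangle|^{8/\eps}\mid\mathcal{A}]$ via Markov and Borel--Cantelli and then applies H\"older to the conditional expectation at the end, whereas you apply (conditional) Jensen first, $\mathbb{E}[(\mathbb{E}[X\mid\mathcal{A}])^p]\le\mathbb{E}[X^p]$, to reduce to unconditional Beta moments before invoking Markov. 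The two manipulations are logically equivalent, and your use of the closed-form Beta moments in place of integrating the exponential tail bound is a mild simplification but carries no additional content.
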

\begin{proof}
Consider the vector $f_j^{(n)}$ for each fixed $j$ and $n$. By the invariance by conjugation of the Haar measure on $U(n)$, this eigenvector is, up to multiplication by a complex of modulus $1$, a uniform vector on the unit sphere of $\mathbb{C}^n$. More precisely, if 
$\xi \in \mathbb{C}$ is uniform on the unit circle, and independent of $f_j^{(n)}$, 
then $\xi f_j^{(n)}$ is uniform on the unit sphere. One deduces that for all $n, j, \ell$, 
\[
  \mathbb{P}(\abs{\langle f_j^{(n)}, e_\ell \rangle}^2 > n^{-1+\eps}) = O(\exp(- n^\eps/6)).
\]
Using the Borel-Cantelli lemma gives the first result. Moreover, 
\begin{align*}
 \mathbb{E} [ | \langle f_j^{(n)}, e_{\ell} \rangle|^{8 / \eps}]
 & = \int_{0}^{\infty} \mathbb{P} [| \langle f_j^{(n)}, e_{\ell} \rangle|^{2} \geq
\delta^{\eps/4} ] d \delta
 \\ & \lesssim \int_0^{\infty} e^{- n \delta^{\eps/4}/6} d \delta
\\ & = \int_0^{\infty} e^{-z^{\eps/4}/6} d (z/n^{4/\eps}) = 
O(n^{-4/\eps}).  
\end{align*}
We deduce:
\begin{align*}
\mathbb{P}\left( \mathbb{E} [ | \langle f_j^{(n)}, e_{\ell} \rangle|^{8 / \eps} | \mathcal{A} ] \geq n^{4 - \frac{4}{\eps}}
\right) & \leq  n^{\frac{4}{\eps} -4} 
\mathbb{E}\left[ \mathbb{E} [ | \langle f_j^{(n)}, e_{\ell} \rangle|^{8 / \eps} | \mathcal{A} ]
\right] \\ & = n^{\frac{4}{\eps} - 4} \mathbb{E} [ | \langle f_j^{(n)}, e_{\ell} \rangle|^{8 / \eps} ] 
= O(n^{-4}). 
\end{align*}
By the Borel-Cantelli lemma, for all but finitely many $n \geq 1$, $1 \leq j, \ell \leq n$, 
\[
  \mathbb{E} [ \abs{\langle f_j^{(n)}, e_{\ell} \rangle}^{8 / \eps} | \mathcal{A} ] \leq n^{4 - \frac{4}{\eps}}.
\]
By the H\"older inequality applied to the conditional expectation, for $\eps$ sufficiently small,  
\[
\mathbb{E} [ | \langle f_j^{(n)}, e_{\ell} \rangle|^{2} | \mathcal{A} ] \leq \left( \mathbb{E} [ | \langle f_j^{(n)}, e_{\ell} \rangle|^{8 / \eps} | \mathcal{A} ] \right)^{\eps/4} \leq n^{- 1 + \eps}. \qedhere
\]
\end{proof}
Another consequence of Lemma~\ref{lem:nobigmu} is the following: 
\begin{proposition}
The events $E_0$, $E_1$, $E_2$ all hold almost surely.
\end{proposition}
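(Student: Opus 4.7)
The plan is to handle the three events separately, reducing each one to an application of Lemma~\ref{lem:nobigmu} for the random vector $x_{n+1} = u_{n+1}(e_{n+1})$, which by Proposition~\ref{prop:haartovectors} is uniform on the unit sphere of $\C^{n+1}$ and independent of $(x_1,\dots,x_n)$, hence independent of the orthonormal basis $f_1^{(n)},\dots,f_n^{(n)},e_{n+1}$ used to define the coefficients $\mu_k^{(n)}$ and $\nu_n$.

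First I would dispatch $E_0$. Since $\theta_0^{(1)}$ is the argument of a single eigenvalue of a Haar-distributed element of $U(1)$, which is uniform on $\mathbb U$, it is almost surely nonzero. For each fixed $n \geq 1$ and $1 \leq k \leq n$, the coefficient $\mu_k^{(n)} = \langle x_{n+1}, f_k^{(n)} \rangle$ (resp.\ $\nu_n = \langle x_{n+1}, e_{n+1} \rangle$) is the inner product of a uniform random vector on the sphere of $\C^{n+1}$ with a unit vector independent of $x_{n+1}$ (the remark following Lemma~\ref{lem:nobigmu} applies), hence has an absolutely continuous distribution on $\C$, and in particular is a.s.\ nonzero. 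Since $E_0$ is a countable intersection of full-measure events, it holds almost surely.

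For $E_1$ and $E_2$, I would apply Lemma~\ref{lem:nobigmu} to the decomposition of $x_{n+1}$ in the basis $f_1^{(n)},\dots,f_n^{(n)},e_{n+1}$ of $\C^{n+1}$, with $\delta = n^{-1+2\eps}$ (shrinking $\eps$ by a harmless factor). This gives, for each $1 \leq k \leq n$,
\[
\mathbb{P}\bigl(\abs{\mu_k^{(n)}}^2 > n^{-1+2\eps}\bigr) = O\!\bigl(\exp(-n^{2\eps}/6)\bigr),
\]
and similarly $\mathbb{P}(\abs{\nu_n}^2 > n^{-1+2\eps}) = O(\exp(-n^{2\eps}/6))$. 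A union bound over $1 \leq k \leq n$ yields
\[
\mathbb{P}\Bigl( \max_{1 \leq k \leq n} \abs{\mu_k^{(n)}}^2 > n^{-1+2\eps} \text{ or } \abs{\nu_n}^2 > n^{-1+2\eps} \Bigr) = O\!\bigl((n+1)\exp(-n^{2\eps}/6)\bigr),
\]
which is summable in $n$. By the Borel--Cantelli lemma, the bad event occurs only for finitely many $n$, so $E_1 \cap E_2$ holds almost surely (with the original exponent $\eps$, after adjusting constants).

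There is no serious obstacle here: the main point is simply the remark that $x_{n+1}$ is uniform on the sphere and is independent of the previously constructed basis, so Lemma~\ref{lem:nobigmu} applies directly to each coordinate of its decomposition. The slightly oversized exponent $-\tfrac12 + \eps$ (rather than the Gaussian scale $n^{-1/2}$) is exactly what allows the union bound over $k$ and the summation over $n$ to go through without loss.
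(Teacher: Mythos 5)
Your proof is correct and follows the same strategy as the paper's: apply Lemma~\ref{lem:nobigmu} to the decomposition of $x_{n+1}$ in the orthonormal basis $f_1^{(n)},\dots,f_n^{(n)},e_{n+1}$ of $\C^{n+1}$ (using the remark that the lemma extends to a random basis independent of the uniform vector), take a union bound over $k$, and invoke Borel--Cantelli. The only cosmetic differences are that you spell out the argument for $E_0$ (the paper leaves it implicit) and you choose $\delta = n^{-1+2\eps}$ rather than the paper's $n^{-1+\eps}$; both give summable tail bounds and either implies the bound $\abs{\mu_k^{(n)}} \leq n^{-\frac12+\eps}$ appearing in $E_2$.
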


\begin{proof}
We apply Lemma~\ref{lem:nobigmu} to the decomposition
\[
  x_{n+1} = \sum_{j=1}^n \mu_j^{(n)} f_j^{(n)} + \nu_n e_{n+1}, 
\]
which gives
\[
  \mathbb{P}(\abs{\mu_k^{(n)}}^2 > n^{-1+\eps}) = O(\exp(-n^{\eps}/6))
\]
so, in particular,
\[
  \sum_{n \geq 1} \sum_{1 \leq k \leq n} \mathbb{P}(\abs{\mu_k^{(n)}}^2 > n^{-1+\eps}) = O(1).
\]
Therefore, by the Borel-Cantelli lemma, almost surely only a finite number of the events
 $\{\abs{\mu_k^{(n)}}^2 > n^{-1+\eps}\}$ hold simultaneously. A similar argument 
controls the coefficients $\nu_n$.
\end{proof}

Before we can control $E_3$, we require some estimates on the eigenvalues of a Haar unitary random matrix. 
Recall that if $u_n$ is distributed according to the Haar measure, then one can define, for $1 \leq p \leq n$, 
the $p$-point correlation function $\rho_p^{(n)}$ of the eigenangles, as follows: for any bounded, measurable 
function $\phi$ from $\mathbb{R}^p$ to $\mathbb{R}$, 
\begin{align*}
& \mathbb{E} \left[ \sum_{1 \leq j_1 \neq \dots \neq j_p \leq n} \phi(\theta_{j_1}^{(n)}, \dots, 
\theta_{j_p}^{(n)}) \right] \\ &  = \int_{[0, 2 \pi)^p} \rho_p^{(n)} (t_1, \dots, t_p)
\phi(t_1, \dots, t_p)
 dt_1 \dots dt_p.
\end{align*}
Moreover, if the kernel $K$ is defined by 
\[
  K(t) := \frac{\sin(n t/2)}{2 \pi \sin(t/2)}
\]
then the $p$-point correlation function can be given by 
\[
  \rho_p^{(n)}(t_1, ..., t_n) = \det \begin{pmatrix} K(t_j - t_k) \end{pmatrix}_{j,k=1}^p.
\]

Let us first show that the gaps between eigenvalues cannot be asymptotically much larger than average.
\begin{lemma}\label{lem:nobiggaps}
    Let $I \subseteq [0,2\pi)$ be Lebesgue measurable. Then
    \[
      \mathbb{P}(\text{all of the eigenvalues of } u_n \text{ are in } I) \leq \exp(- \frac{\abs{I^c}}{2 \pi} n).
    \]
\end{lemma}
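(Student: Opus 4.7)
The plan is to exploit the determinantal structure of the CUE eigenvalue process. The event that every eigenvalue lies in $I$ is the same as the void event that no eigenvalue lies in $A := I^c \cap [0, 2\pi)$. Since the eigenangles of a Haar-distributed $u_n \in U(n)$ form a determinantal point process on $[0, 2\pi)$ whose kernel is exactly $K(s,t) = \frac{\sin(n(s-t)/2)}{2 \pi \sin((s-t)/2)}$ already introduced above, and since this kernel is the reproducing kernel of the $n$-dimensional subspace of $L^2([0,2\pi))$ spanned by $\{e^{ikt}\}_{k=0}^{n-1}$, it acts as an orthogonal projection. By the general void-probability formula for determinantal processes,
\[
\mathbb{P}(\text{no eigenvalue in } A) \;=\; \det(I - K_A),
\]
where $K_A$ denotes the integral operator on $L^2(A)$ with kernel $K(s,t)\mathbf{1}_A(s)\mathbf{1}_A(t)$.

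Next, I would observe that $K_A$ equals $\mathbf{1}_A P \mathbf{1}_A$ for $P$ the rank-$n$ projection with kernel $K$; hence $K_A$ is a positive contraction of rank at most $n$, and so its eigenvalues $\lambda_1, \dots, \lambda_r$ lie in $[0,1]$. Applying the elementary inequality $1 - \lambda \leq e^{-\lambda}$ for each $\lambda \in [0,1]$ yields
\[
\det(I - K_A) \;=\; \prod_{j} (1 - \lambda_j) \;\leq\; \exp\Bigl(-\sum_{j} \lambda_j\Bigr) \;=\; \exp(-\operatorname{tr} K_A).
\]
The trace is immediate: since $K(t,t) = n/(2\pi)$ for all $t$, we have $\operatorname{tr} K_A = \int_A K(t,t)\,dt = \frac{n}{2\pi}|A| = \frac{n}{2\pi}|I^c|$, which gives precisely the claimed bound.

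There is no real obstacle here; the only thing to pin down is the identification of the kernel and the fact that it is a rank-$n$ projection, which is classical and already consistent with the one-point density used implicitly elsewhere in the paper. If one wishes to avoid invoking general determinantal machinery, the inequality $\det(I - K_A) \leq \exp(-\operatorname{tr} K_A)$ can alternatively be obtained directly from the Weyl integration formula: expanding $\mathbb{P}(\text{all eigenangles in } I)$ as $\frac{1}{n!} \int_{I^n} |\det(K(\theta_j,\theta_k))|_{j,k=1}^n \, d\theta$ and using Hadamard's inequality reduces the problem to the same rank-$n$ trace computation.
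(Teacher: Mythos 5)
Your proof is correct and is essentially the same argument as the paper's, just phrased in the dual language: the paper applies the Andreiev--Heine identity to get $\mathbb{P}(\cdot) = \det M^I$ for the $n\times n$ Gram matrix $M^I$, uses $M^I + M^{I^c} = \mathrm{Id}$ to identify the eigenvalues as $1-\tau_j$ with $\tau_j \in [0,1]$, and then applies $1-\tau \le e^{-\tau}$ and $\operatorname{tr} M^{I^c} = \frac{n}{2\pi}|I^c|$; your version expresses the same quantity as the Fredholm determinant $\det(I - K_A)$ of the compressed projection kernel and runs the identical $1-\lambda \le e^{-\lambda}$ and trace computation, the nonzero spectrum of $\mathbf{1}_A P \mathbf{1}_A$ being precisely the spectrum of $M^{I^c}$. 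The only small reservation is your final aside about deducing the bound from Hadamard's inequality applied to $\frac{1}{n!}\int_{I^n}\det(K(\theta_j,\theta_k))\,d\theta$: Hadamard would give $\det \le \prod_j K(\theta_j,\theta_j)$, which does not obviously collapse to the exponential trace bound, so that alternative route as stated is not quite a proof (whereas Andreiev--Heine, which is what the paper actually invokes there, does work).
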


\begin{proof} 
We recall the Andreiev-Heine identity \cite{Sos00}, which says that
\[
  \mathbb{P}(\text{all of the eigenvalues of } u_n \text{ are in } I) = \det M^I
\]
where $M^I$ is an $n \times n$ matrix with entries
\[
M_{j,k}^I = \int_I \exp(i(j-k) t) \frac{dt}{2 \pi}
\]
for $j,k$ between $1$ and $n$. Note that the matrix $\begin{pmatrix} \exp(i(j-k) t) \end{pmatrix}_{j,k=1}^n$ is hermitian and positive, $M^I$ is also; likewise $M^{I^c}$. Moreover, by computing the entries of 
 $M^I + M^{I^c}$, one checks that this sum is the identity matrix: hence, $M^I$, $M^{I^c}$ have the same eigenvectors and, if we denote by $(\tau_j)_{1 \leq j \leq n}$  the eigenvalues of $M^{I^c}$, then $(1 - \tau_j)_{1 \leq j \leq n}$ are the eigenvalues of $M^I$. The eigenvalues of each matrix must lie in the interval $[0,1]$, as otherwise one of the eigenvalues of the other matrix would be negative. Now,
\[
  \det M^I = \prod_{j=1}^n (1 - \tau_j) \leq \exp(- \sum_{j=1}^n \tau_j) = \exp(- \tr M^{I^c}) = \exp(- \frac{I^c}{2 \pi} n)
\]
as was to be shown.
\end{proof}

Note that the previous lemma applies to all measurable subsets, although we will only need to apply it to intervals.

Next we control the gaps between eigenvalues from below.

\begin{lemma} \label{lem:nosmallgaps}
 
Suppose $t_1, ..., t_p \in I$ lie in an interval of length $\abs{I} = \delta \leq 1/n$. Then we
 have the estimate
\[
  \rho_p^{(n)}(t_1, ..., t_p) = O_p(\delta^{2p-2} n^{3p-2}).
\]
\end{lemma}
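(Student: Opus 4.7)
My plan is to exploit the factorization of $M := (K(t_j - t_k))_{j,k=1}^p$ as a Gram matrix, and to bound the resulting generalized Vandermonde determinants using the proximity of the $t_j$'s. Starting from the Fourier representation
\[
  K(t) = \frac{1}{2\pi} \sum_{r \in I} e^{irt},
\]
where $I$ is a set of $n$ evenly spaced frequencies with $|r| \leq n/2$, I would write $M = AA^*$ with $A_{jr} = (2\pi)^{-1/2} e^{irt_j}$. The Cauchy--Binet formula then gives
\[
  \det M = (2\pi)^{-p} \sum_{J \subset I,\, |J|=p} \bigl|\det(z_j^{r_\alpha})_{j,\alpha}\bigr|^2,
\]
with $z_j := e^{it_j}$ and $J = \{r_1 < \cdots < r_p\}$.

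For each $J$, I would factor out the smallest exponent and apply the standard Schur-polynomial identity, obtaining
\[
  \det(z_j^{r_\alpha}) = \Bigl(\prod_j z_j^{r_1}\Bigr) \cdot s_{\mu}(z_1, \ldots, z_p) \cdot \prod_{1 \leq j < k \leq p} (z_k - z_j),
\]
where $\mu$ is the partition with parts $\mu_\alpha = r_{p+1-\alpha} - r_1 - (p - \alpha)$. The prefactor has modulus $1$; the Vandermonde factor is bounded by $\delta^{p(p-1)/2}$ since $|z_k - z_j| \leq |t_k - t_j| \leq \delta$; and the Schur factor satisfies $|s_\mu(z)| \leq s_\mu(1,\ldots,1)$ (because $|z_j|=1$ and $s_\mu$ has nonnegative coefficients in the monomial basis, coming from semistandard Young tableaux), which by the Weyl dimension formula equals $\prod_{i<j}(r_j - r_i)/(j - i) = O_p(n^{p(p-1)/2})$ since $|r_\alpha| \leq n/2$.

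Summing over the $\binom{n}{p} = O_p(n^p)$ subsets $J$ would yield $\det M = O_p(\delta^{p(p-1)} n^{p^2})$. The hypothesis $\delta \leq 1/n$ then closes the argument via the identity
\[
  \delta^{p(p-1)} n^{p^2} = \delta^{2p-2} \,(n\delta)^{(p-1)(p-2)}\, n^{3p-2} \leq \delta^{2p-2} n^{3p-2},
\]
which uses $(p-1)(p-2) \geq 0$ and $n\delta \leq 1$. The main obstacle will be the Schur-polynomial bookkeeping, in particular verifying the $O_p(n^{p(p-1)/2})$ bound on $s_\mu(1,\ldots,1)$ uniformly in $J$; an alternative, more elementary route would be to Taylor expand each entry $K(t_j-t_k)$ around $0$ using evenness of $K$ (so that odd derivatives at the origin vanish), then apply multilinearity of the determinant and extract the leading contribution via a Cauchy--Binet-type factorization involving the Vandermonde in the $t_j$'s, arriving at the same bound $O_p(\delta^{p(p-1)}n^{p^2})$ but with heavier combinatorics.
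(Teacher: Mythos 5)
Your proof is correct but takes a genuinely different route from the paper. The paper works directly with the closed form $K(t) = \sin(nt/2)/(2\pi\sin(t/2))$: it Taylor-expands each matrix entry to second order (odd terms vanish by evenness of $K$), factors out $n^p/(2\pi)^p$, and writes each column of the remaining matrix as $\mathbf{1} - w_j$ with every entry of $w_j$ of size $O(n^2\delta^2)$. Multilinearity and inclusion-exclusion then annihilate every term of the expansion in which more than one column equals $\mathbf{1}$, so each surviving term carries at least $p-1$ small factors, giving $\rho_p^{(n)} = O_p\bigl(n^p (n^2\delta^2)^{p-1}\bigr) = O_p(\delta^{2p-2}n^{3p-2})$ directly, with no use of the hypothesis $n\delta \leq 1$. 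Your approach instead exploits the Gram structure $M = AA^*$ via the Fourier representation of $K$ and Cauchy--Binet, reducing the problem to bounding generalized Vandermonde minors, which you control through the bialternant/Schur identity and the Weyl dimension formula. Your bookkeeping is heavier, but it yields the sharper intermediate bound $O_p(\delta^{p(p-1)}n^{p^2}) = O_p\bigl(n^p(n\delta)^{p(p-1)}\bigr)$, which reflects the true order of vanishing of $\rho_p^{(n)}$ at the diagonal; the paper's estimate is strictly weaker for $p \geq 3$, and you correctly observe that the hypothesis $n\delta \leq 1$ lets you downgrade to it. The ``alternative, more elementary route'' you sketch at the end is essentially the paper's argument.
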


\begin{proof}
We have
\[
  \rho_p^{(n)}(t_1, \dots, t_p)= \det \begin{pmatrix} K(t_i - t_j) \end{pmatrix}_{i,j = 1}^p.
\]
The Taylor series for the sine function shows that for $|t| \leq 1/n$, 
\[
  K(t) = \frac{n}{2 \pi} \left(1 - \frac{1}{24} (n^2 - 1) t^2 + O(n^4t^4) \right).
\]
Thus, we have:
\[
  \rho_p^{(n)}(t_1, \dots, t_p) = \frac{n^p}{(2 \pi)^p} \det \begin{pmatrix} 1 - \frac{1}{24} (n^2 - 1) (t_i - t_j)^2 + O(n^4(t_i - 
t_j)^4) \end{pmatrix}_{i,j=1}^p
\]
Let $A$ denote the $p \times p$ matrix in the last display, let $1$ denote the column vector of all ones and let $w_j$ denote the column vector whose $i$th entry is
\[
  (w_j)_i = 1 - A_{ij} = \frac{1}{24} (n^2 - 1) (t_i - t_j)^2 + O(n^4
(t_i - t_j)^4).
\]
Then by multilinearity and the inclusion-exclusion principle,
\[
  \det A = \sum_{\sigma \subset [p]} (-1)^{\abs{\sigma}} \det \begin{pmatrix} v_1 & \cdots & v_p \end{pmatrix}, \qquad \text{where } v_j = \begin{cases} w_j, & j \in \sigma \\ 1, & \text{otherwise} \end{cases}
\]
Clearly each term is zero if more than one of the columns is equal to $1$, so we get
\[
  \det A = (-1)^{p-1} \sum_{j=1}^p \det M_j +  (-1)^p \det M
\]
where $M$ is the matrix with columns $w_1, ..., w_p$ and $M_j$ is $M$ with the $j$th column replaced with $1$. Then in the expansion of each determinant we can bound each term by $O_p( (n^2 \delta^2)^{p-1})$, and the conclusion follows.
\end{proof}

\begin{proposition}
    The event $E_3$ holds almost surely.
\end{proposition}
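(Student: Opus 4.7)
The event $E_3$ decomposes into an upper bound on the maximum gap, $\theta_{k+1}^{(n)}-\theta_k^{(n)}\leq n^{-1+\eps}$, and a lower bound on the minimum gap, $\theta_{k+1}^{(n)}-\theta_k^{(n)}\geq n^{-5/3-\eps}$, both uniform in $k$ for all $n$ large enough (by $n$-periodicity in $k$ it suffices to handle the $n$ gaps around the circle). My plan is to treat the two tails separately using the two a priori estimates Lemmas~\ref{lem:nobiggaps} and~\ref{lem:nosmallgaps} and finish each with the Borel--Cantelli lemma.

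For the upper bound, if the maximum gap of $u_n$ exceeds $n^{-1+\eps}$ then the unit circle contains an arc of length at least $n^{-1+\eps}$ free of eigenvalues. I would cover $[0,2\pi)$ by a deterministic family of $O(n^{1-\eps})$ sub-arcs of length $\tfrac{1}{3}n^{-1+\eps}$, chosen so that every arc of length $n^{-1+\eps}$ contains at least one sub-arc of the cover. For each such sub-arc $J$, Lemma~\ref{lem:nobiggaps} applied to $I=J^c$ gives
\[
\mathbb{P}\bigl(u_n \text{ has no eigenvalue in } J\bigr)\;\leq\;\exp\!\bigl(-|J|\,n/(2\pi)\bigr)\;=\;e^{-n^{\eps}/(6\pi)}.
\]
A union bound then yields a failure probability at level $n$ bounded by $O(n^{1-\eps}e^{-n^{\eps}/(6\pi)})$, which is summable in $n$. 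The Borel--Cantelli lemma produces the upper bound almost surely for all large $n$.

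For the lower bound, I would specialize Lemma~\ref{lem:nosmallgaps} to $p=2$: whenever $|t_1-t_2|\leq 1/n$ one has $\rho_2^{(n)}(t_1,t_2)=O(|t_1-t_2|^2 n^4)$. Set $\delta_n:=n^{-5/3-\eps}$ (which satisfies $\delta_n\ll 1/n$, so the lemma applies), and let $N_n$ count the ordered pairs $(j,k)$ with $j\neq k$ and $|\theta_j^{(n)}-\theta_k^{(n)}|\leq \delta_n$. The defining property of the two-point correlation function gives
\[
\mathbb{E}[N_n]\;=\;\int\!\!\!\int_{|t_1-t_2|\leq\delta_n}\rho_2^{(n)}(t_1,t_2)\,dt_1\,dt_2\;=\;O(n^4\delta_n^3)\;=\;O(n^{-1-3\eps}).
\]
Markov's inequality turns this into $\mathbb{P}(N_n\geq 1)=O(n^{-1-3\eps})$, which is summable, and Borel--Cantelli gives $N_n=0$ for all but finitely many $n$. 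This is exactly the required lower bound on all gaps, not only adjacent ones.

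I do not anticipate a genuine obstacle: the substantive analytic work has been isolated into Lemmas~\ref{lem:nobiggaps} and~\ref{lem:nosmallgaps}, and what remains is bookkeeping with Borel--Cantelli. The only point demanding any care is the calibration of exponents in the lower bound: the threshold $-5/3-\eps$ is precisely the one at which $n^{4}\delta_n^{3}$ becomes summable, so that the bound furnished by Lemma~\ref{lem:nosmallgaps} at $p=2$ is exactly strong enough to force $\theta_{k+1}^{(n)}-\theta_k^{(n)}\geq n^{-5/3-\eps}$ and no stronger estimate would follow from this two-point input alone.
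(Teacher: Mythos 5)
Your proof is correct and follows essentially the same strategy as the paper: for the upper bound on gaps, the paper partitions $[0,2\pi)$ into intervals of length $\delta=n^{-1+\eps}$ and applies Lemma~\ref{lem:nobiggaps} plus a union bound (your covering by overlapping sub-arcs of length $\tfrac13 n^{-1+\eps}$ is a cosmetic variant); for the lower bound it likewise specializes Lemma~\ref{lem:nosmallgaps} to $p=2$, integrates $\rho_2^{(n)}$ over the near-diagonal region to get $O(n^4\delta_n^3)$, and finishes with Borel--Cantelli (your pair-counting variable $N_n$ together with Markov's inequality is the same computation, phrased slightly more explicitly). The exponent bookkeeping, including the role of $n$-periodicity in reducing to the $n$ circular gaps, matches the paper's.
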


\begin{proof}
Fix $n \geq 1$. The probability that two adjacent eigenvalues of $u_n$ differ by at least $2 \delta$ is bounded above by the probability that one of the parts of the partition
\[
  (0, \delta) \cup (\delta, 2 \delta) \cup \dotsb \cup (\lfloor 2 \pi \delta^{-1} \rfloor \delta,
\lfloor 2 \pi \delta^{-1} + 1 \rfloor \delta)
\]
contains no eigenvalue. This, by Lemma~\ref{lem:nobiggaps}, is bounded by
\[
  \lfloor 2 \pi \delta^{-1} + 1 \rfloor \exp(- \delta n/ 2 \pi).
\]
Now we let $\delta = n^{-1+\eps}$ and apply the Borel-Cantelli lemma to show that at most a finite number of the $u_n$ have gaps larger than $n^{-1 + \eps}$.

Next, we see by Lemma~\ref{lem:nosmallgaps} that the probability that two adjacent eigenvalues of $u_n$ differ by at most $\delta \leq 1/n$ is bounded by
\[
  \iint_{\abs{t_1 - t_2} < \delta} \rho_2^{(n)}(t_1, t_2) \, dt_1 \, dt_2 = O(n^4 \delta^3)
\]
which, when we specialize $\delta = n^{-\frac53-\eps}$, is $O(n^{-1-\eps})$; summing over $n$ and applying the Borel-Cantelli lemma shows that these events occur at must a finite number of times as well.
\end{proof}

\section{Proof of Proposition \ref{Laplacefunctionals}}
In the course of the proof we shall use the standard facts that for $1 \leq r \leq n$, and for all $y_1, \dots, y_r \in (-n/2, n/2]$, we have
$$0 \leq \rho_r^{(n)}(y_1, \dots, y_r) \leq 1,$$ and that the correlation functions of a  Poisson point process with intensity $1$ are all equal to $1$ (and hence are the correlation functions $\rho_r^{(n)}$ are dominated by those of a Poisson point process with intensity $1$).

We first note the following identity: for any integer $p \geq 0$,
$$ \left(\sum_{x \in E_n} f(x)  \right)^p 
= \sum_{m=1}^{u_p}
\sum_{x_1 \neq x_2 \neq \dots \neq x_{r_{p,m}} \in 
E_n}
G_{f,p,m} (x_1, \dots, x_{r_{p,m}} ),$$
where $u_p$ depends only on $p$, 
$r_{p,m}$ on $p$ and  $m \leq u_p$, 
and $G_{f,p,m}$ being a measurable, bounded  function with compact support
from $\mathbb{R}^{r_{p,m}}$ to $\mathbb{R}$, and depending only on $f, p$ and $m$. 
For instance
$$ \left(\sum_{x \in E_n} f(x)  \right)^3
= \sum_{x_1 \in E_n} (f(x_1))^3 
+ 3 \sum_{x_1 \neq x_2 \in E_n} (f(x_1))^2 f(x_2) 
+ \sum_{x_1 \neq x_2 \neq x_3 \in E_n}
f(x_1) f(x_2) f(x_3),$$
with
$$u_3 = 3, r_{3,1} = 1, r_{3,2} = 2,
r_{3,3} = 3,$$
$$G_{f,3,1}(x_1) = (f(x_1))^3,$$
$$G_{f,3,2}(x_1, x_2) = 3(f(x_1))^2 f(x_2),$$
$$G_{f,3,3}(x_1, x_2,x_3) =f(x_1) f(x_2)
f(x_3).$$
We can hence write
$$\mathbb{E} \left[\left(\sum_{x \in E_n} f(x)  \right)^p \right]
= \sum_{m=1}^{u_p}
\int_{(-n/2,n/2]^{r_{p,m}}}
G_{f,p,m} (y_1, \dots, y_{r_{p,m}} )
\rho_{r_{p,m}}^{(n)} (y_1, \dots, y_{r_{p,m}} ) \, dy_1 \dots dy_{r_{p,m}},$$
provided the above expression converges absolutely, which we now check.
Since $G_{f,p,m}$ is measurable, bounded with compact support, we can find
 $A_{f,p,m}> 0$ such that
$$|G_{f,p,m}(y_1, \dots, y_{r_{p,m}})| \leq 
A_{f,p,m} \mathds{1}_{|y_1|, \dots, 
|y_{r_{p,m}}| \leq A_{f,p,m}}$$
for $y_1, \dots, y_{r_{p,m}} \in 
\mathbb{R}$. 
Moreover from the remark above on the correlation functions, we have 
$$|\rho_{r_{p,m}}^{(n)} (y_1, \dots, y_{r_{p,m}} ) | \mathds{1}_{y_1, \dots, 
y_{r_{p,m}} \in (-n/2, n/2]}
 \leq  1.$$
Consequently the expression we are dealing with can be  bounded from above by
$$\sum_{m=1}^{u_p} 
\int_{[-A_{f,p,m}, A_{f,p,m}]^{r_{p,m}}}
A_{f,p,m} 
\leq  \sum_{m=1}^{u_p} 
(2A_{f,p,m})^{r_{p,m} + 1},$$
which is finite. 
Moreover our upper bound is independent 
of $n$. Now since the kernel 
$K^{(n)}$ converges pointwise to
$K^{(\infty)}$, we also have $$\rho_{r_{p,m}}^{(n)} (y_1, \dots, y_{r_{p,m}} ) \mathds{1}_{y_1, \dots, 
y_{r_{p,m}\in (-n/2, n/2]}}
\underset{n \rightarrow \infty}{\longrightarrow} \rho_{r_{p,m}}^{(\infty)} (y_1, \dots, y_{r_{p,m}} ),$$
and we can apply the dominated convergence theorem to obtain
$$\mathbb{E}[(X_f^{(n)})^p] 
\underset{n \rightarrow \infty}{\longrightarrow} M_{f,p}^{(\infty)}$$
where
$$X_f^{(n)} = \sum_{x \in E_n} f(x) $$ 
and 
$$ M_{f,p}^{(\infty)} = \sum_{m=1}^{u_p}
\int_{\mathbb{R}^{r_{p,m}}}
G_{f,p,m} (y_1, \dots, y_{r_{p,m}} )
\rho_{r_{p,m}}^{(\infty)} (y_1, \dots, y_{r_{p,m}} ) \, dy_1 \dots dy_{r_{p,m}}.$$
We also note that
$$\mathbb{E}[|X_f^{(n)}|^p]  \leq 
\mathbb{E}[(X_{|f|}^{(n)})^p] \leq 
\mathbb{E} \left[ \left(\sum_{x  \in N} 
|f(x)| \right)^p \right],$$
where $N$ is a Poisson point process defined on $\mathbb{R}$ with intensity $1$ (the last inequality follows from the fact that the correlation functions of  $E_n$ are smaller or equal than 1, and hence smaller or equal than the correlation functions of $N$).  

Now for every $\lambda \in \mathbb{R}$, each term of the series
$$\sum_{p \geq 0} \frac{(i\lambda)^p}{p!} \mathbb{E}[(X_f^{(n)})^p]$$
is uniformly dominated in absolute value and independently of $n$, by the corresponding term in the series
$$\sum_{p \geq 0} \frac{|\lambda|^p}{p!} \mathbb{E} \left[ \left(\sum_{x  \in N} 
|f(x)| \right)^p \right]
=\mathbb{E} \left[
 \exp \left( |\lambda| \sum_{x  \in N} |f(x)| \right) \right].$$
If we choose $A_f >  0$ in such a way that $|f|\leq A_f$ and such that the support of $f$ is contained in
$[-A_f, A_f]$, we have
$$\mathbb{E} \left[
 \exp \left( |\lambda| \sum_{x  \in N} |f(x)| \right) \right]
 \leq \mathbb{E} \left[
 \exp \left( |\lambda| A_f \operatorname{Card} (N \cap [-A_f, A_f]) \right) \right] = \mathbb{E} [e^{|\lambda|
 A_f Y_{2 A_f}}], $$
 $Y_{2 A_f}$ standing for a Poisson random variable with parameter  $2 A_f$. 
The latter is finite and we can thus apply the dominated convergence theorem to obtain $$\mathbb{E} \left[e^{i \lambda X_f^{(n)}} \right] \underset{n \rightarrow \infty}{\longrightarrow}
 \sum_{p \geq 0} \frac{(i\lambda)^p}{p!} M_{f,p}^{(\infty)},$$
 the last series in display being absolutely convergent and  bounded from above by
 \begin{align*}\left|1 - \sum_{p \geq 0} \frac{(i\lambda)^p}{p!} M_{f,p}^{(\infty)} \right|
  & \leq  \sum_{p \geq 1} \frac{|\lambda|^p}{p!} M_{|f|,p}^{(\infty)}
   \leq 
   \sum_{p \geq 1} \frac{|\lambda|^p}{p!} \sup_{n \geq 1} \mathbb{E} [|X_f^{(n)}|^p]
   \\ & \leq 
   \sum_{p \geq 1} \frac{|\lambda|^p}{p!}
   \mathbb{E} \left[
   \left( \sum_{x \in N} |f(x)| \right)^p
   \right]
    = \mathbb{E} \left[
 \exp \left( |\lambda| \sum_{x  \in N} |f(x)| \right) \right] - 1
 \\ & \leq \mathbb{E} [e^{|\lambda|
 A_f Y_{2 A_f}}] - 1 = e^{2 A_f( e^{|\lambda| A_f} - 1)} - 1.
 \end{align*}
Consider now a finite number $f_1, f_2, \dots, f_q$ of measurable and bounded functions with compact support, and let $A > 0$
be such that $|f_j| \leq A \mathds{1}_{[-A,A]}$ 
for  $j \in \{1, \dots, q\}$, and take
$\lambda, \lambda_1, \dots, \lambda_q \in \mathbb{R}$.
It follows from the definition of  $X_f^{(n)}$ that
$$\sum_{j=1}^q \lambda_j X_{f_j}^{(n)}
 = X_g^{(n)}$$
 where $$g := \sum_{j=1}^q \lambda_j f_j,$$
which implies that
 $$\mathbb{E} \left[e^{i \lambda \sum_{j=1}^q 
 \lambda_j X_{f_j}^{(n)}} \right]
  \underset{n \rightarrow \infty}{\longrightarrow}
 \sum_{p \geq 0} \frac{(i\lambda)^p}{p!} M_{g,p}^{(\infty)}.$$
 Now since $g$ is bounded by $ A \sum_{j=1}^q |\lambda_j| $ and since the support of $g$ is included in $[-A,A]$, we have
 $$\left| 1 -  \sum_{p \geq 0} \frac{(i\lambda)^p}{p!} M_{g,p}^{(\infty)} \right|
 \leq e^{2 A (1 +  \sum_{j=1}^q |\lambda_j|) \left( e^{|\lambda| A 
 (1 +  \sum_{j=1}^q |\lambda_j|) } - 1 
 \right)} - 1. $$
 If $\nu_1, \dots, \nu_q$ are real numbers not all equal to zero, we set
 $$\lambda = \sum_{j=1}^q |\nu_j|, 
\lambda_j = \nu_j/\lambda,$$
which implies $\sum_{j=1}^q |\lambda_j|
= 1$, and
$$\mathbb{E} \left[e^{i\sum_{j=1}^q 
 \nu_j X_{f_j}^{(n)}} \right]
  \underset{n \rightarrow \infty}{\longrightarrow}
  Q(f_1, \dots, f_q, \nu_1, \dots, \nu_q)$$
where
  $$ |Q(f_1, \dots, f_q, \nu_1, \dots, \nu_q) - 1| \leq 
  e^{4 A  ( e^{2|\lambda| A 
  } - 1 )} - 1.$$
 For fixed $f_1, \dots, f_q$, 
the quantity  
$ Q(f_1, \dots, f_q, \nu_1, \dots, \nu_q)$ hence tends to $1$ when 
$ (\nu_1, \dots, \nu_q)$ tends to zero. 
 It follows from L\'evy's convergence theorem that the vector
 $(X_{f_1}^{(n)}, \dots, X_{f_q}^{(n)})$
 converges in law, when $n$ goes to infinity, to a random variable 
 with values in $\mathbb{R}^q$ with characteristic function given by
 $$(\nu_1, \dots, \nu_q) 
 \mapsto Q(f_1, \dots, f_q, \nu_1, \dots, 
 \nu_q).$$
 Consequently we have shown that there exists a family of random variables  $(X_{f}^{(\infty)})_{f \in \mathcal{A}}$ indexed by the set  $\mathcal{A}$ of bounded and measurable functions with compact support 
 from $\mathbb{R}$ to $\mathbb{R}$, satisfying 
 $$(X_{f}^{(n)})_{f \in \mathcal{A}}
\underset{n \rightarrow \infty}{\longrightarrow} (X_{f}^{(\infty)})_{f \in \mathcal{A}},$$
in the sense of finite dimensional distributions. 
 Now for  $x \geq 0$, define
 $$X^{(n)} (x) = X_{\mathds{1}_{[0,x]}}^{(n)}, \; X^{(\infty)} (x) = X_{\mathds{1}_{[0,x]}}^{(\infty)},$$
and for  $x <0$, 
 $$X^{(n)} (x) = - X_{\mathds{1}_{(x,0)}}^{(n)}, \; X^{(\infty)} (x) = - X_{\mathds{1}_{(x,0)}}^{(\infty)}.$$
Note that for $y \geq x$, 
 $X^{(n)}(y) - X^{(n)}(x) $ represents the number of points of $E_n$ in the interval $(x,y]$. 
 Moreover we saw that $(X^{(n)}(x))_{x \in \mathbb{Q}}$
 converges in law (in the sense of finite dimensional distributions) 
 to $(X^{(\infty)}(x))_{x \in \mathbb{Q}}$. It follows from Skorokhod's representation theorem that there exist random variables  $(Y^{(n)}(x))_{x \in \mathbb{Q}}$
  and $(Y^{(\infty)}(x))_{x \in \mathbb{Q}}$, with respectively the same distributions as
  $(X^{(n)}(x))_{x \in \mathbb{Q}}$ and
 $(X^{(\infty)}(x))_{x \in \mathbb{Q}}$, 
 such that almost surely, 
 $Y^{(n)}(x)$ converges to $Y^{(\infty)}(x)$ for all  $x \in \mathbb{Q}$. 
By construction  $(Y^{(n)}(x))_{x \in \mathbb{Q}}$ is almost surely integer valued and increasing as a function of $x$: the same thing holds for $(Y^{(\infty)}(x))_{x \in \mathbb{Q}}$. Moreover by taking the limits from the right, we can extend $(Y^{(n)}(x))_{x \in \mathbb{Q}}$ et $(Y^{(\infty)}(x))_{x \in \mathbb{Q}}$ to c\`adl\`ag functions defined on $\mathbb{R}$. 
It is clear that  $(Y^{(n)}(x))_{x \in \mathbb{R}}$ has then the same law as 
 $(X^{(n)}(x))_{x \in \mathbb{R}}$, because
 $(X^{(n)}(x))_{x \in \mathbb{R}}$ is also c\`adl\`ag, with the same law when restricted to   $\mathbb{Q}$. We can thus conclude that like for $(X^{(n)}(x))_{x \in \mathbb{R}}$,  $(Y^{(n)}(x))_{x \in \mathbb{R}}$ is also the distribution function of some $\sigma$-finite measure
 $\mathcal{M}_n$, with the same law as the sum of the Dirac measures taken at the points of 
  $E_n$.
Almost surely, for $x \in \mathbb{Q}$, $(Y^{(n)}(x))$ converges to
 $(Y^{(\infty)}(x))$: hence this convergences also holds at all continuity points of $(Y^{(\infty)}(x))$. Consequently $\mathcal{M}_n$ converges weakly, in the sense of convergence in law on compact subsets, to a limiting random measure $\mathcal{M}_{\infty}$, with distribution function $Y^{(\infty)}$.  
 On can thus write
 $$\mathcal{M}_n = \sum_{k \in \mathbb{Z}}
 \delta_{t^{(n)}_k}, \; \mathcal{M}_{\infty} = \sum_{k \in \mathbb{Z}}
 \delta_{t^{(\infty)}_k},$$
 where $\{t^{(n)}_k, k \in \mathbb{Z}\}$ 
 is a set of points with the same distribution as  $E_n$. 
 The weak convergence of $\mathcal{M}_n$ 
 to $\mathcal{M}_{\infty}$ implies that 
 for $r \geq 0$, $F$ continuous with 
 compact support from $\mathbb{R}^r$ to 
 $\mathbb{R}$, 
 $$\sum_{k_1 \neq k_2 \neq \dots
 \neq k_r} F(t^{(n)}_{k_1}, \dots, t^{(n)}_{k_r})
 \underset{n \rightarrow \infty}{\longrightarrow} \sum_{k_1 \neq k_2 \neq \dots 
 \neq k_r} F(t^{(\infty)}_{k_1}, \dots 
 t^{(\infty)}_{k_r}).$$
 Indeed the left hand side can be written as: 
 $$\sum_{m = 1}^{u_r} 
 \int_{\mathbb{R}^{s_{r,m}}}
 H_{F,r,m}(y_1, \dots, y_{s_{r,m}})
 d\mathcal{M}_n(y_1) \dots 
 d\mathcal{M}_n(y_{s_{r,m}}),$$
 where $u_r$ depends only on $r$, $s_{r,m}$ on $r$ and on $m$ and $H_{F,r,m}$ 
 depends on $F, r,m$,  and the right hand side can be written in similar way 
 with $\mathcal{M}_n$ replaced with
 $\mathcal{M}_{\infty}$. 
For instance 
 \begin{align*}\sum_{k_1 \neq k_2 \neq k_3}
  F(t^{(n)}_{k_1}, \dots, t^{(n)}_{k_3})
&  = \int_{\mathbb{R}^3} F(y_1, y_2,y_3) 
 d\mathcal{M}_n(y_1) d\mathcal{M}_n(y_2)
 d\mathcal{M}_n(y_3)
\\ &  - \int_{\mathbb{R}^2} [F(y_1, y_2,y_2) 
 + F(y_2, y_1,y_2) + F(y_1, y_2,y_2)]
 d\mathcal{M}_n(y_1) d\mathcal{M}_n(y_2)
\\ &  + 2 \int_{\mathbb{R}} F(y_1, y_1,y_1) 
 d\mathcal{M}_n(y_1).
 \end{align*}
If we assume that  $F$ is positive, 
 it follows from Fatou's lemma that
 \begin{align*}
 \mathbb{E} \left[\sum_{k_1 \neq k_2 \neq \dots 
 \neq k_r} F(t^{(\infty)}_{k_1}, \dots 
 t^{(\infty)}_{k_r}) \right]
 & \leq \underset{n \rightarrow \infty}{\lim \inf} \, \mathbb{E} \, \left[\sum_{k_1 \neq k_2 \neq \dots 
 \neq k_r} F(t^{(n)}_{k_1}, \dots 
 t^{(n)}_{k_r}) \right]
 \\ & = \underset{n \rightarrow \infty}{\lim \inf} \int_{\mathbb{R}^r} 
 F(y_1, \dots, y_r) \rho_r^{(n)}
 (y_1, \dots, y_r)dy_1 \dots dy_r
\\ &  = \int_{\mathbb{R}^r} 
 F(y_1, \dots, y_r) \rho_r^{(\infty)}
 (y_1, \dots, y_r) dy_1 \dots dy_r.
 \end{align*}
Reproducing the same computations as in the beginning of our proof yields, 
for $f$ continuous and 
positive with compact support, and $p$ a positive integer:
$$N^{(\infty)}_{f,p} := \mathbb{E} \left[ \left(\sum_{k \in \mathbb{Z}}
f(t^{(\infty)}_k) \right)^p \right]
\leq M^{(\infty)}_{f,p}.$$
The bounds that we previously obtained for
$M^{(\infty)}_{f,p}$, and which obviously apply to
$N^{(\infty)}_{f,p}$ as well, allow us to deduce that for all $\lambda \in \mathbb{R}$,
$$\mathbb{E} \left[ \exp \left( i \lambda 
 \sum_{k \in \mathbb{Z}}
f(t^{(\infty)}_k) \right) \right]
= \sum_{p \geq 0} \frac{(i \lambda)^p}{p!}
N_{f,p}^{(\infty)}.$$
Moreover an application of the dominated convergence theorem yields
\begin{align*}\mathbb{E} \left[ \exp \left( i \lambda 
 \sum_{k \in \mathbb{Z}}
f(t^{(\infty)}_k) \right) \right]
& = \underset{n \rightarrow \infty}{\lim}
\mathbb{E} \left[ \exp \left( i \lambda 
 \sum_{k \in \mathbb{Z}}
f(t^{(n)}_k) \right) \right]
\\ & =\underset{n \rightarrow \infty}{\lim}
\mathbb{E} \left[e^{i \lambda X_f^{(n)}} 
\right] = \sum_{p \geq 0} \frac{(i \lambda)^p}{p!}
M_{f,p}^{(\infty)}. 
\end{align*}
We can hence conclude that the coefficients of both series in  $\lambda$ are equal, i.e. 
$M_{f,p}^{(\infty)} = N_{f,p}^{(\infty)}$. 
Going back to the expression of the expansion of the moment of order $p$ that we gave earlier in the proof, we see that the equality can hold only if 
$$\mathbb{E} \left[\sum_{k_1 \neq k_2 \neq \dots 
 \neq k_r} F(t^{(\infty)}_{k_1}, \dots 
 t^{(\infty)}_{k_r}) \right] 
 = \int_{\mathbb{R}^r} 
 F(y_1, \dots, y_r) \rho_r^{(\infty)}
 (y_1, \dots, y_r) dy_1 \dots dy_r,$$
for all $F, r$ such that 
 $r = r_{p,m}, F = G_{f,p,m}$, with 
 $1 \leq m \leq u_p$. Indeed the left hand side is always smaller or equal than the right hand side, 
 and if one of the inequalities were a strict inequality, we would obtain by summing up all terms that $ N_{f,p}^{(\infty)} < M_{f,p}^{(\infty)}$. The only term for which $r_{p,m} = p$
 gives
 $$\mathbb{E} \left[\sum_{k_1 \neq k_2 \neq \dots 
 \neq k_p} F(t^{(\infty)}_{k_1}, \dots 
 t^{(\infty)}_{k_p}) \right] 
 = \int_{\mathbb{R}^p} 
 F(y_1, \dots, y_p) \rho_p^{(\infty)}
 (y_1, \dots, y_p) dy_1 \dots dy_p,$$
 where 
 $$F(y_1, \dots, y_p) = f(y_1)\dots f(y_p).$$
 This then extends to all functions $F$ which are measurable, positive and continuous with compact support: 
 indeed there always exists an  $f$ which is continuous
 with compact support on $\mathbb{R}$ such that  $ F \leq G$, with
 $$G(y_1, \dots, y_p) = f(y_1)\dots f(y_p).$$
 Since we have inequalities for both functions
 $F$ et $G-F$ and an equality for their sum $G$, we in fact have an equality everywhere. 
 
We see that with a monotone class argument the previous equality then extends to functions $F$ which are measurable, bounded and with compact support. This shows the existence of a point process $E_{\infty}$ with the same correlation functions as those given in the statement of the Proposition, provided we do not exclude a priori point processes with multiple points. More precisely we take for $E_{\infty}$ the set of points $t_k^{(\infty)}$ of the support of the measure $\mathcal{M}_{\infty}$, taken with their multiplicities. 
Going back to our earlier computations, we see that for functions $f$ which are measurable, bounded with compact support from $\mathbb{R}$ to $\mathbb R$, and taking into account multiplicities, we have:
$$\mathbb{E} \left[ \left(\sum_{x \in E_{\infty}}
f(x) \right)^p \right] = M_{f,p}^{(\infty)}$$
and 
$$\mathbb{E} \left[ \exp \left( i \lambda \sum_{x \in E_{\infty}}
f(x) \right) \right] = \sum_{p \geq 0} 
\frac{(i \lambda)^p}{p!} M_{f,p}^{(\infty)}.$$
Consequently 
$$\mathbb{E} \left[ e^{i \lambda X_f^{(n)}}
\right] \underset{n \rightarrow \infty}{\longrightarrow} \mathbb{E} \left[ \exp \left( i \lambda \sum_{x \in E_{\infty}}
f(x) \right) \right],$$
which corresponds to the convergence in law stated in the Proposition. 

It only remains to show that $E_{\infty}$ does not have multiple points.  Indeed, if $E_{\infty}$ is the set of points
$(t_k^{(\infty)})_{k \in \mathbb{Z}}$, taken with multiplicities, then  
for any  mesurable bounded function $F$ with compact support from 
$\mathbb{R}^2$ in $\mathbb{R}$, 
$$\mathbb{E} \left( \sum_{k_1 \neq k_2} 
F(t_{k_1}^{(\infty)}, t_{k_2}^{(\infty)})
\right) = \int_{\mathbb{R}^2}
F(y_1, y_2) \rho_2^{(\infty)} (y_1,y_2) 
dy_1 dy_2.$$
Taking $F(y_1,y_2) = \mathds{1}_{y_1 = y_2}$ above yields 
$$\mathbb{E}\left[\operatorname{Card} \left\{( k_1,k_2) \in 
\mathbb{Z}^2, k_1 \neq k_2, 
t_{k_1}^{(\infty)} = t_{k_2}^{(\infty)}
\right\}\right] = 0,$$
which shows that $E_{\infty}$ does almost surely not have multiple points.

 \newpage

\end{document}